\documentclass[12pt]{article}
\usepackage{amssymb}
\usepackage{multirow}
\usepackage{makecell}
\usepackage{graphicx}
\usepackage{subcaption}
\usepackage{caption} % 在导言区添加
\usepackage{array}
\usepackage{multirow}
\usepackage{tabularray}
\UseTblrLibrary{booktabs}
\usepackage{amsmath}
\usepackage{multirow}
\usepackage{mathrsfs}
\usepackage{amsfonts}
\usepackage{indentfirst}
\usepackage{colortbl,dcolumn}
\usepackage{color}
\usepackage{xcolor}
\newenvironment{redtext}
  {\color{black}\ignorespaces}
  {\ignorespacesafterend}

\newenvironment{bluetext}
  {\color{black}\ignorespaces}
  {\ignorespacesafterend}
\usepackage{parskip}  % 导言区添加，取消所有段落缩进，同时增加段落间距

\usepackage{amsmath}
\allowdisplaybreaks[4]
\usepackage{psfrag}

\usepackage{array}
\usepackage{booktabs}
\usepackage{cite}
\usepackage[hidelinks]{hyperref}
\usepackage{amsthm}
\allowdisplaybreaks
\numberwithin{equation}{section}
\usepackage[top=1in, bottom=1in, left=0.8in, right=0.8in]{geometry}
\renewenvironment{proof}%
  {\begin{trivlist}%
   \item[]\ignorespaces}%
  {\qed\end{trivlist}}

\usepackage{caption}
\usepackage{appendix} 
\usepackage{graphicx}
\usepackage{subcaption}
\usepackage{subcaption}
\usepackage{float}
\usepackage{makecell}
\newcommand{\R}{\mathbb{R}}
\newcommand{\N}{\mathbb{N}}
\newcommand{\E}{\mathbb{E}}
\newcommand{\X}{\bar{X}}
\newcommand{\V}{\bar{V}}
\renewcommand{\H}{\mathcal{H}}
\newcommand{\M}{\mathcal{M}}
\newcommand{\dd}{\mathrm{d}}
\newcommand{\U}{\nabla U}

\newtheorem{thm}{Theorem}[section]

\newtheorem{lem}{Lemma}[section]
\newtheorem{prop}{Proposition}[section]
\newtheorem{cor}{Corollary}[section]

\newtheorem{assumption}{Assumption}[section]
\newtheorem{con}{Condition}[section]

\begin{document}
\bibliographystyle{plain}
\title{A Unified Framework for Wasserstein Convergence of ULMC Methods beyond Log-Concavity: Old and New
%A Unified Analysis of Numerical Integrators for Underdamped Langevin Dynamics 
%Wasserstein convergence guarantees of randomized methods for underdamped Langevin dynamics beyond log-concavity: from existing  algorithms to new low-cost ones
%Randomized Midpoint Methods Revisited: New Algorithms and Non-aymptotic Error Bounds beyond Log-Concavity
\footnotemark[2] {}}
%%%
%%%%%%%%%%%%%%
	\author{
          \ 
    Wanjie Lyu$^{1}$,  
    Xiaojie Wang$^{1}$, 
    Bin Yang$^{1}$
		\\
		\footnotesize $^{1}$ School of Mathematics and Statistics, HNP-LAMA,  Central South University, %Hunan Research Center of the Basic Discipline for Analytical Mathematics,  
\\
\footnotesize   Changsha, 410083, Hunan, China \\
	}
\date{}

\maketitle
\footnotetext[2]{This work was supported by Natural Science Foundation of China (12471394, 12371417) and Key Scientific Research Project of the Education Department of Hunan Province (No. 25A0010). \\
                 E-mail addresses:
 lvwjie@csu.edu.cn, x.j.wang7@csu.edu.cn, 
 b.yang1@csu.edu.cn.}

\begin{abstract}
As a fundamental task across  computational statistics, scientific computing and machine learning, sampling from high-dimensional probability distributions has received increasing attention in recent years.  Numerous sampling algorithms have been proposed, among which underdamped Langevin Monte Carlo (ULMC) methods based on underdamped Langevin dynamics (ULD) have emerged as a class of efficient ones.
In this work, we introduce a ``universal" predictor-corrector formulation that bridges Euler-type, UBU-type and randomized schemes through different choices of method parameters.
Notably, the ``universal" integrator induces two novel classes of low-cost integrators, termed low-cost randomized integrators (LC-RIs) and low-cost UBU integrators (LC-UBUIs), as well as their exponential-free variants based on polynomial and rational approximations. The resulting new UBU-type and randomized schemes require only one gradient evaluation and two Gaussians per iteration, considerably reducing the number of gradient evaluations or Gaussians per iteration required by existing counterparts.
%Rather than analyzing individual algorithms in isolation, 
Further, a general framework of long-time error analysis is developed for general discretization schemes in a probability metric. Under 
%gradient Lipschitz condition, dissipativity, and 
certain smoothness and non-log-concavity conditions, 
%and strong convexity outside a ball, 
we rely on the unified framework to establish non-asymptotic $\mathcal{W}_1$-error bounds of both old and new schemes, revealing convergence rates of order $\mathcal{O}(d^{\frac{1}{2}}h)$ for Euler-type schemes, order $\mathcal{O}(d h^2)$ for UBU-type ones and order $\mathcal{O}(d^{\frac{1}{2}}h^{\frac{3}{2}})$ for randomized ones. 
%order $\mathcal{O}(d^{\frac{1}{2}}h^{\frac{3}{2}})$ order $\mathcal{O}(d h^2)$ 
%resulting in a mixing time complexity of order $\widetilde{\mathcal{O}}(d^{\frac{1}{3}}\epsilon^{-\frac{2}{3}})$. 
In the strongly convex setting, the same non-asymptotic error bounds can be recovered in $\mathcal{W}_2$-distance. Numerical experiments corroborate the theoretical findings.  
\\
\\
\noindent{\textbf{AMS subject classification:}} 60H35, 65C05, 65C30.\\

\noindent{\textbf{Keywords:} Underdamped Langevin Monte Carlo, Randomized methods,   Error bounds in Wasserstein distance, Log-Concavity, Non-Log-Concavity.}
\end{abstract}

\section{Introduction}

%%%%%%%%%%%%%%%%%%%%%%%%%%%%%%%%%%

Sampling from a target distribution 
$
\pi_0 (\mathrm{d} x) \propto \exp (-U( x)) \, \mathrm{d} x,
    \,
    x \in \mathbb{R}^d,
    \,
    d \gg 1
$
arises in a wide range of applications, including Bayesian inference \cite{mandt2017stochastic,ma2015complete,durmus2019high}, machine learning \cite{andrieu2003introduction}, statistical physics and molecular dynamics \cite{pastor1988analysis}. 
%In many such problems, the normalizing constant of the potential is unavailable or prohibitively expensive to compute, especially in high dimensions, which makes direct sampling infeasible.
%Instead, one can generate approximate samples by simulating a stochastic process whose marginal distribution becomes close to the target law $\pi_0$.
%
In the literature, a substantial number of sampling algorithms have been proposed, among which underdamped Langevin Monte Carlo (ULMC) methods turn out to be a class of efficient ones \cite{cao2023explicit,cheng2018underdamped,dalalyan2020sampling}.
More precisely, ULMC is based on the following underdamped Langevin dynamics (ULD):
\begin{equation}
\label{eq:ULD} 
\left\{
\begin{aligned}
&  \mathrm{d}X_t 
= 
V_t \,\mathrm{d}t, 
\\  &
\mathrm{d}V_t 
= 
-
\gamma V_t \,\mathrm{d}t 
- 
\alpha\nabla U(X_t) \,\mathrm{d}t
+ 
\sqrt{2\gamma\alpha} \,\mathrm{d}W_t,
\end{aligned}
\right.
\quad 
\begin{aligned}
& X_0   =  x_0,
\\
&V_0  =  v_0, 
\end{aligned}
\quad \, 
\forall \, t \ge 0,
\end{equation}
where $W_\cdot := \big(W^1_\cdot, W^2_\cdot, \cdots , W^d_\cdot\big)^\top \colon [0, \infty)\times \Omega_W \to \R^d$ is a $d$-dimensional Brownian motion defined on a filtered probability space $\big(\Omega_W , \mathcal{F}^W , (\mathcal{F}^W_t)_{t\geq0}, \mathbb{P}_W \big)$, satisfying the usual conditions.
Here $(X_t)_{t\geq 0}$ and $(V_t)_{t\geq 0}$ are $\mathbb{R}^d$-valued stochastic processes representing the position and velocity, respectively. Moreover, $\gamma, \alpha > 0$ are the friction and gradient coefficients and the initial data $x_0, v_0 : \Omega_W \to \R^d $ are assumed to be $\mathcal{F}_0^W$-measurable.

%Here, $(W_t)_{t \geq 0}$ denotes a $d$-dimensional standard Brownian motion defined on the filtered probability space $(\Omega_W,\mathcal{F}^W,(\mathcal{F}^W_t)_{t \geq 0}, \mathbb{P}_W)$.
%$W$ is a $d$-dimensional Brownian motion defined on a %filtered probability space $(\Omega_W, \mathcal{F}^W, 
%\{\mathcal{F}^W_t\}_{t\geq 0},  \P_W)$.
%, satisfying the usual conditions.

Under mild assumptions, ULD \eqref{eq:ULD} admits a unique strong solution and is ergodic with respect to the Gibbs measure 
\(
\pi (\mathrm{d} x
\mathrm{d} v )
\propto 
\exp
 (-H(x,v))
 \mathrm{d} x
\mathrm{d} v,
\)
associated with the Hamiltonian
\(
H(x,v)
:= 
U(x)
+ 
\frac{1 }{2\alpha}\|v\|^2
\)
(see e.g., Proposition 6.1 of
\cite{pavliotis2014stochastic}).
%More accurately, its invariant measure is given by 
As the $x$-marginal of \(\pi\) is the target distribution \(\pi_0\), 
%the law of \(X_t\) converges to \(\pi_0\) as \(t\to\infty\), making 
simulating the position component $X$ of ULD becomes a natural approach for sampling from \(\pi_0\). 
In recent years, the non-asymptotic error analysis of ULMC sampling algorithms has attracted considerable attention, revealing the convergence rate and dimension dependence under proper regularity assumptions on the potential.

Classical explicit discretizations of ULD are widely used for sampling, such as Euler--Maruyama (EM) \cite{schuh2024convergence}, exponential Euler (EE) \cite{cheng2018underdamped, dalalyan2020sampling, sanz2021wasserstein}, and UBU splitting schemes \cite{zapatero2017word,leimkuhler2024contraction,sanz2021wasserstein}.
%due to their simplicity and low per-step cost. 
Under the usual gradient Lipschitz condition, these methods typically require only one gradient evaluation per iteration and achieve a non-asymptotic error bound
\(
\mathcal O(d^{\frac12}h),
\)
leading to a mixing time complexity
\(
\widetilde{\mathcal O}(d^{\frac12}\epsilon^{-1}).
\)
Such results were established in the \(\mathcal W_2\)-distance under log-concavity \cite{cheng2018underdamped, dalalyan2020sampling, sanz2021wasserstein, schuh2024convergence} and, more recently, in  \(\mathcal W_1\) \cite{schuh2024convergence} or total variation (TV) \cite{zhang2023improved} distance beyond log-concavity.
In contrast to first-order convergence of EM and EE, UBU splitting schemes with only one gradient evaluation and three Gaussians per iteration \cite{sanz2021wasserstein} can achieve a higher convergence rate by leveraging higher-order smoothness of the potential $U$. Specifically, by imposing Hessian Lipschitz continuity, UBU achieves a convergence rate of order two and thus an improved mixing time complexity of $\widetilde{\mathcal{O}}(d^{\frac{1}{2}}\epsilon^{-\frac{1}{2}})$, in $\mathcal{W}_2$-distance under log-concavity \cite{sanz2021wasserstein}. 
Beyond log-concavity, the convergence analysis of UBU is highly non-trivial and the authors of \cite{schuh2024convergence} recently established a convergence rate of order $\mathcal{O}(d h^2)$ in $\mathcal{W}_1$-distance under non-log-concavity and Hessian Lipschitz conditions. 
%%
%%%%%%%%%%%%%%%%%%
%When the potential further satisfies a strongly Hessian Lipschitz condition, the dimension dependence can be sharpened to $\widetilde{\mathcal{O}}(d^{\frac{1}{4}}\epsilon^{-\frac{1}{2}})$ \cite{schuh2024convergence}. 
%%%%%%%%%%%%%%%%%%%
%%
To conclude, the enhanced sampling efficiency of UBU comes at the price of stronger regularity assumptions on the potential $U$.
Next we ask the following question:
\vspace{0.2cm}

{\bf (Q1).} {\it
Can the existing UBU scheme be further improved in terms of computational costs?
}
\vspace{0.2cm}

Before answering this question, let us look at an alternative line of research that focuses on randomized methods with improved convergence rates.
%dependence on the accuracy parameter $\epsilon$ under the basic gradient-Lipschitz condition. 
Randomization techniques have long been used in numerical solutions of ordinary differential equations (ODEs) \cite{heinrich2008randomized,jentzen2009random,stengle1995error} and stochastic differential equations (SDEs) \cite{kruse2017error,kruse2019randomized,bou2025randomized}, especially to improve quadrature accuracy with low-regularity coefficients. 
In the context of sampling algorithms, randomized methods have also been applied to overdamped and underdamped Langevin dynamics \cite{wang2025langevin,shen2019randomized,he2020ergodicity,yu2024langevin,altschuler2025shifted}, allowing one to obtain comparable non-asymptotic error bounds under weaker smoothness assumptions on the potential.

Under the log-concavity and gradient Lipschitz conditions, the randomized midpoint method (RMM), first introduced in \cite{shen2019randomized} and further studied by \cite{he2020ergodicity,yu2024langevin,altschuler2025shifted} for ULD, achieves a non-asymptotic error bound $\mathcal{O}(d^{\frac{1}{2}}h^{\frac{3}{2}})$ and a mixing time complexity $\widetilde{\mathcal{O}}(d^{\frac{1}{3}}\epsilon^{-\frac{2}{3}})$ in $\mathcal{W}_2$ distance. Such a strong approximation rate matches the known information-complexity lower bound proved in \cite{cao2020complexity}, which may be regarded as order-optimal \cite{cao2020complexity,sanz2021wasserstein,mou2021high}. However, each iteration of RMM requires two gradient evaluations and three Gaussians. 
To further reduce the computational cost, the authors of \cite{hu2021optimal} introduced an accelerated randomized ULMC method, referred to as ALUM, which achieves the same convergence rate as RMM in $\mathcal{W}_2$ distance under the same regularity and log-concavity conditions, while requiring only a single gradient evaluation and three Gaussians per step. 
%though it still requires three Gaussian draws.

Without log-concavity, attaining the desired convergence rate $\mathcal{O}(d^{\frac{1}{2}}h^{\frac{3}{2}})$ for randomized methods is, however, not trivial. %just under the gradient-Lipschitz condition. 
Very recently, Altschuler et al. \cite{altschuler2025shifted} proposed a double midpoint RMM (DRMM), with three gradient evaluations and four Gaussians per iteration. Based on a shifted composition rule, the authors of \cite{altschuler2025shifted} showed a mixing complexity of order $\widetilde{\mathcal{O}}(d^{\frac{1}{3}}\epsilon^{-\frac{2}{3}})$ in the $\mathrm{TV}$ distance in both log-concave and non-log-concave  (log-Sobolev inequality, LSI) settings. 

%%%%%%%Poisson-midpoint [kandasamy2024poisson]%%%%%
%%%%%%%%%%%%%%%%%%%%%%%%%%%%%%%%%%%%%%%%%%%%%%%%%%%%
%Alternatively, the Poisson-midpoint acceleration of ULMC (PULMC) proposed by \cite{kandasamy2024poisson} approximates multiple small-step updates via a single macro-step. Under the gradient-Lipschitz condition and the LSI assumption, PULMC achieves a $\mathrm{TV}$ mixing complexity of $\widetilde{\mathcal{O}}(d^{\frac{5}{12}}\epsilon^{-\frac{1}{2}})$, requiring two gradient evaluations and four Gaussian draws per step in expectation. Notably, neither of these two studies provides non-asymptotic error bounds in the setting beyond log-concavity.
%%%%%%%%%%%%%%%%%%%%%%%%%%%%%%%%%%%%%%%%%%%%%%%%%%%%
%
%%%%%%%%%%%%%%%%Trade-off%%%%%%%%%%%%%%%%%%%%%%%%
%In general, existing studies reveal a clear trade-off in the design of explicit ULMC algorithms: improving the accuracy or reducing the overall mixing time requires either imposing stronger smoothness assumptions (such as Hessian Lipschitz or strongly Hessian Lipschitz continuity, as in UBU), increasing the number of gradient evaluations per step (as in RMM and DRMM), or requiring three or more Gaussian draws per iteration (as in RMM, ALUM, and DRMM). This leaves an open question as to whether an algorithm can achieve the optimal mixing complexity $\widetilde{\mathcal{O}}(d^{\frac{1}{3}}\epsilon^{-\frac{2}{3}})$ across both convex and non-convex settings while maintaining a low per-step computational cost of a single gradient evaluation and two Gaussian draws.
%%%%%%%%%%%%%%%%%%%%%%%%%%%%%%%%%%%%%%%%%%%%%%%%%
%
Two natural and interesting questions thus arise:
\vspace{0.2cm}

{\bf (Q2).} {\it
Can one construct low-cost randomized methods for ULD,
%with only one gradient evaluation per iteration, still 
still enjoying the desired non-asymptotic error bound of order $\mathcal{O}(d^{\frac{1}{2}}h^{\frac{3}{2}})$ even beyond log-concavity?
}
\vspace{0.2cm}

{\bf (Q3).} {\it
Can one clarify the connection between UBU and randomized methods?
}
\vspace{0.2cm}

In this paper, we aim to answer the above three questions affirmatively. 
Given another probability space $\big(\Omega_\tau,\mathcal{F}^\tau, \mathbb{P}_\tau\big)$ with $\mathcal{F}^\tau$
%carries the auxiliary random variables $\{\tau_k\}_{k\geq 0}$, 
being independent of $\mathcal{F}^W$, we introduce a unified predictor-corrector scheme for ULD \eqref{eq:ULD}, abbreviated as U-ULMC and given by: 
\begin{equation}
\label{eq:intro-uni-process}
\left\{
\begin{aligned}
&\bar{X}_{k+\tau_k}
=
\bar{X}_k
+ 
\tau_k h\Phi_1(\tau_kh)\bar{V}_k
-
\ell_1 \alpha \gamma^{-1}
\tau_k h 
(1-\Phi_2(\tau_k h))
\nabla U (\bar{X}_k)
+
\ell_2\sqrt{2\gamma \alpha}
\Delta W_{k+\tau_k}^{\Phi_3,1}, \\
&\bar{X}_{k+1}
=
\bar{X}_k
+
h\Phi_1(h)\bar{V}_k
-
\alpha 
h(h-\tau_k h)\Phi_2(h-\tau_kh)\nabla U(\bar{X}_{k+\tau_k})
+
\sqrt{2\gamma \alpha}
\Delta W_{k+1}^{\Phi_3,1},\\
&\bar{V}_{k+1}
=
\Psi_1(h)\bar{V}_k
-
h\Psi_2(h-\tau_k h)\alpha
\nabla U(\bar{X}_{k+\tau_k})
+
\sqrt{2\gamma\alpha}
\Delta W_{k+1}^{\Psi_3,2},
\quad k \in \mathbb{N}_0,
\end{aligned}
\right.
\end{equation}
where
%$k \in \mathbb{N}_0$ denotes the $k$-th iteration step, 
$(\tau_k)_{k\geq0} \colon \Omega_\tau \to [0,1]$ is a sequence of i.i.d. random variables, $\ell_1,\ell_2\in\{0,1\}$, and $\Phi_i, \Psi_i \in C^1([0,\infty); \mathbb{R})$ for $i \in \{1,2,3\}$ are functions specified later.
By choosing different method parameters, the ``universal" predictor-corrector formulation bridges Euler-type, UBU-type and randomized schemes. More precisely, taking $\tau_k \equiv 0 $ and choosing appropriate functions $\Phi_i, \Psi_i, i \in \{ 1,2,3\}$ give Euler-type schemes. Instead, setting $\tau_k \equiv \tfrac12 $ and $\tau_k \sim \mathcal{U}(0,1)$ yield UBU-type and randomized integrators, respectively (see Table \ref{tab:unified-ulmc}). In particular, the classical UBU is a splitting integrator with deterministic one-half step splitting, while randomized methods such as ALUM can be regarded as splitting integrators with randomized-step splitting. This observation hence clarifies the connection between UBU and randomized methods and gives an answer to (Q3).

More importantly, the unified ULMC \eqref{eq:intro-uni-process} induces a low-cost randomized exponential integrator (LC-REI):
\begin{equation}\label{eq:intro-LC-REI}
\left\{
\begin{aligned}
&
\widehat{X}^{\mathrm{R}}_{k+\tau_k} 
=  
\widehat{X}^{\mathrm{R}}_k 
+ 
\tau_k h\phi(\tau_kh)\widehat{V}^{\mathrm{R}}_k,
\\
&
\widehat{X}^{\mathrm{R}}_{k+1} 
=  
\widehat{X}^{\mathrm{R}}_k + h\phi (h)\, \widehat{V}^{\mathrm{R}}_k -
\alpha h
(h-\tau_k h) 
\phi (h-\tau_k h) \nabla U(\widehat{X}^{\mathrm{R}}_{k+\tau_k}) 
+ 
\sqrt{2\gamma\alpha} 
\Delta W_{k+1}^{\phi,1},
\\
&\widehat{V}^{\mathrm{R}}_{k+1} =   \psi(h) \widehat{V}^{\mathrm{R}}_k -\alpha h \psi(h-\tau_k h ) \nabla U (\widehat{X}^{\mathrm{R}}_{k+\tau_k}) +
\sqrt{2\gamma\alpha} 
\Delta W_{k+1}^{\psi,2},
\end{aligned}
\right.
\end{equation}
and a low-cost UBU (LC-UBU):
\begin{equation}\label{eq:intro-LC-UBU}
\left\{
\begin{aligned}
&
\widehat{X}^{\mathrm{U}}_{k+\frac12} 
=  
\widehat{X}^{\mathrm{U}}_k 
+ 
\tfrac h 2 \phi(\tfrac h 2) \widehat {V}^{\mathrm{U}}_k,
\\
&
\widehat{X}^{\mathrm{U}}_{k+1} 
=  
\widehat{X}^{\mathrm{U}}_k + h\phi (h)\, \widehat{V}^{\mathrm{U}}_k -
\alpha \tfrac {h^2} {2} 
\phi ( \tfrac{h}{2} ) \nabla U(\widehat {X}^{\mathrm{U}}_{k+\frac12}) 
+ 
\sqrt{2\gamma\alpha} 
\Delta W_{k+1}^{\phi,1},
\\
&\widehat {V}^{\mathrm{U}}_{k+1} =   \psi(h) \widehat {V}^{\mathrm{U}}_k -\alpha h \psi( \tfrac{h}{2} ) \nabla U (\widehat { X }^{\mathrm{U}}_{k+\frac12}) +
\sqrt{2\gamma\alpha} 
\Delta W_{k+1}^{\psi,2},
\end{aligned}
\right.
\end{equation}
where $\phi, \psi$, $\Delta W_{k+1}^{\phi,1}, \Delta W_{k+1}^{\psi,2}$ are defined by \eqref{eq:define-psi-phi}-\eqref{eq:define-Delta-W}, and $\{\tau_k\}_{k\geq 0}$ is a sequence of i.i.d. random
variables uniformly distributed on $[0,1]$.
Leveraging Taylor expansions and Pad\'e approximations of $\psi$ and $\phi$, we construct exponential-free variants for both the randomized and UBU-type ULMC. In particular, applying these two approximations to LC-REI gives rise to LC-RTI \eqref{eq:ex-free-rs-1} and LC-RPI \eqref{eq:PRUL}, while their applications to LC-UBU produce the exponential-free schemes LCT-UBU \eqref{eq:LCT-UBU} and LCP-UBU \eqref{eq:LCP-UBU}.
The newly proposed low-cost randomized integrators (LC-RIs) and low-cost UBU-type integrators (LC-UBUIs) all achieve the same convergence rates as the old ones and require only one gradient evaluation and two Gaussians per iteration, considerably reducing the number of gradient evaluations or Gaussian random variables per iteration required by existing methods such as RMM, ALUM and UBU (see Tables \ref{tab:algorithm_comparison_log-concavity}, \ref{tab:algorithm_comparison_no_log-concavity} for comparisons). 
Therefore, questions (Q1) and (Q2) are answered affirmatively.
Another interesting observation from numerical tests in Section \ref{subsec:experiment-linear} is that, the newly proposed LC-REI \eqref{eq:intro-LC-REI} with lower costs even produces smaller long-time errors than ALUM \cite{hu2021optimal} with one gradient evaluation and three Gaussians, when applied to solve the linear underdamped Ornstein--Uhlenbeck dynamics.
%
%Our approach combines a deterministic simplified predictor with randomized quadrature, leading to a new class of low-cost randomized integrators (LC-RIs). 
% These noval randomized ULMC algorithms only require two Gaussian random vectors and one gradient evaluation per iteration. 
%

To obtain the non-asymptotic error bounds, we develop a general framework for the non-asymptotic error analysis of one-step approximation schemes of general SDEs in a probability metric (see Section \ref{sec:framework}). 
More precisely, 
the roadmap of the framework can be summarized as follows:
\begin{itemize}
\item First, uniform-in-time moment bounds are established for both ULD and numerical methods; 
%see Conditions~\ref{con:bound-solu} and \ref{con:bound-num}.
%
\item Second, finite-time error estimates with exponential dependence on time length are obtained for numerical methods;
%and exponential ergodicity of ULD; see Conditions~\ref{con:finite-error} and \ref{con:exp-erg}.
%
\item Finally, we rely on the exponential ergodicity of ULD and the uniform-in-time moment bounds to transfer finite-time error estimates to uniform-in-time ones.
%Finally, under these conditions, the desired non-asymptotic error and mixing-time bounds are obtained; see Theorem~\ref{thm:rho-main-result} and Proposition~\ref{prop:rho-mixing-time}.
\end{itemize}
%The framework allows us to transfer finite-time convergence rates to uniform-in-time ones, based on the exponential ergodicity of SDEs and uniform-in-time moment bounds of the exact and numerical solutions.
%
Equipped with the ``universal" formulation \eqref{eq:intro-uni-process},
%
%For both the proposed and existing randomized methods, 
the developed framework allows us to obtain the desired non-asymptotic error bounds of both old and new schemes, revealing convergence rates of order $\mathcal{O}(d^{\frac{1}{2}}h)$ for Euler-type schemes, order $\mathcal{O}(d h^2)$ for UBU-type ones and order $\mathcal{O}(d^{\frac{1}{2}}h^{\frac{3}{2}})$ for randomized ones, in $\mathcal{W}_2$-distance for the strongly convex setting (see Theorems \ref{thm:convex-main-result}, \ref{thm:convex-main-result-UBU}, \ref{thm:convex-main-result-Euler}) and in $\mathcal{W}_1$-distance for a non-convex setting (see Theorems \ref{thm:non-convex-main-result}, \ref{thm:non-convex-main-result-UBU}, \ref{thm:non-convex-main-result-Euler}).
%
%both under no additional smoothness conditions other than the gradient Lipschitz condition.
%

Our main contributions are summarized as follows.

\begin{itemize}
    \item \textbf{A unified ULMC formulation.}
    A unified ULMC method is proposed to connect Euler-type, UBU-type and randomized schemes through different choices of method parameters.
    Also, this clarifies the connection between UBU and randomized methods.
    \item \textbf{New low-cost randomized and UBU methods.}
    Novel low-cost randomized and UBU-type methods are induced for ULD \eqref{eq:ULD}. Compared to existing randomized and UBU methods, ours are computationally more efficient, requiring only one gradient evaluation and two Gaussians per iteration (see Tables \ref{tab:algorithm_comparison_log-concavity} and \ref{tab:algorithm_comparison_no_log-concavity} for comparisons).
%Furthermore, we introduce Taylor- and Padé-based exponential-free variants that replace costly exponential evaluations with polynomial or rational approximations. Ultimately, these innovations significantly reduce computational cost while preserving accuracy.
    %
    %
    \item \textbf{Uniform-in-time moment bounds of ULMC methods.} By introducing a modified Lyapunov function \eqref{eq:Lyapunov-modified}, we derive uniform-in-time moment bounds of Euler-type, UBU-type and randomized schemes in a unified way.
    \item \textbf{A general framework of non-asymptotic error analysis.}
    % We develop a general framework for the non-asymptotic analysis of discretizations of Itô SDEs under a prescribed probability metric. 
    % The framework relies on the exponential ergodicity of the exact dynamics, together with uniform-in-time moment bounds for the exact and numerical processes, to propagate finite-time discretization error estimates into long-time error guarantees.
    %
    %
    A general framework is developed to analyze non-asymptotic error bounds of one-step discretization schemes for general SDEs under a probability metric. Leveraging the exponential ergodicity of the continuous dynamics, together with uniform-in-time moment bounds of the exact and numerical processes, the framework transfers the finite-time error bounds to uniform-in-time ones. 
%    Notably, the framework does not impose convexity assumptions directly and requires neither ergodicity nor an invariant measure for the numerical chain.
    
     \item  \textbf{Wasserstein convergence of old and new ULMC methods beyond log-concavity.} 
     In addition to uniform-in-time moment bounds, we derive finite-time error estimates for the unified ULMC schemes covering existing Euler-type, UBU and randomized schemes as well as the newly proposed low-cost UBU-type and randomized methods. Combining these with \(\mathcal W_1\)-exponential ergodicity of ULD under strong convexity at infinity (i.e., Assumption \ref{as:convex-at-infinity}), we employ the developed framework to establish non-asymptotic \(\mathcal W_1\)-error bounds of all integrators with convergence rates.
%%%%%%%%%%%%%%%%%%%%%%Error-bound4Randomized%%%%%%%%%%
\iffalse     
\(
\mathcal{O}
(
d^{\frac12}h^{\frac32}
),
\)
resulting in a mixing-time complexity
\(
\widetilde{\mathcal O}
(
d^{\frac13}\epsilon^{-\frac23}
).
\)
\fi
%%%%%%%%%%%%%%%%%%%%%%%%%%%%%%%%%%%%%%%%%%%%%%%%%%
%
To the best of our knowledge, this is the first Wasserstein convergence guarantee for randomized ULMC methods beyond log-concavity (see Table \ref{tab:algorithm_comparison_no_log-concavity}).
%%%%%%%%%%%%%%%%%%%%%%W2-error-bound%%%%%%%%%%%%%%%
\iffalse
%
The same methodology yields a non-asymptotic \(\mathcal W_2\)-error bound 
\(
\mathcal{O}
(
d^{\frac12}h^{\frac32}
)
\) in the strongly convex case.
%%
\fi
%%%%%%%%%%%%%%%%%%%%%%%%%%%%%%%%%%%%%%%%%%%%%%%%%%%%
%%
\end{itemize}

\begin{table}[!htb]
\centering
\caption{Comparison of different ULMC methods under log-concavity.}
\label{tab:algorithm_comparison_log-concavity}
\begin{tblr}{
  colspec = {l l c c c c c},
  row{1} = {font=\bfseries},
  row{2} = {abovesep=0.8ex, belowsep=0.8ex},
  row{5-8} = {rowsep=0.1ex},
  % 全部移到行内用 \SetCell，不要在这里写 cell{...}{r=2}{m}
}
\toprule
Algorithm & Regularity &  Grad & Gauss & Metric & \mbox{E-B} & \mbox{M-T} \\
\midrule
DRMM \cite{altschuler2025shifted} & G-L & 3 & 4 & $\mathrm{TV}$ & $\mathcal{O}(d^{\frac{1}{2}}h^{\frac{3}{2}})$ & $\widetilde{\mathcal{O}}(d^{\frac{1}{3}} \epsilon^{-\frac{2}{3}})$ \\
RMM \cite{shen2019randomized, yu2024langevin}  & G-L & 2 & 3 & $\mathcal{W}_2$ & $\mathcal{O}(d^{\frac{1}{2}}h^{\frac{3}{2}})$ & $\widetilde{O}(d^{\frac{1}{3}} \epsilon^{-\frac{2}{3}})$ \\ 
ALUM\cite{hu2021optimal} & G-L & 1 & 3 & $\mathcal{W}_2$ & $\mathcal{O}(d^{\frac{1}{2}}h^{\frac{3}{2}})$ &  $\widetilde{O}(d^{\frac{1}{3}} \epsilon^{-\frac{2}{3}})$ \\
\addlinespace[0.4em]
\SetCell[r=2]{m} UBU \cite{sanz2021wasserstein} & G-L &\SetCell[r=2]{m} 1 &\SetCell[r=2]{m} 3 &\SetCell[r=2]{m} $\mathcal{W}_2$ &$\mathcal{O}(d^{\frac{1}{2}}h)$  &  $\widetilde{O}(d^{\frac{1}{2}} \epsilon^{-1})$ \\ 
 & + H-L  & & & & $\mathcal{O}(dh^{2})$ &   $\widetilde{O}(d^{\frac{1}{2}} \epsilon^{-\frac{1}{2}})$ \\ 
\addlinespace[0.4em]
EE \cite{cheng2018underdamped, dalalyan2020sampling, sanz2021wasserstein} & G-L & 1 & 2 & $\mathcal{W}_2$  & $\mathcal{O}(d^{\frac{1}{2}}h)$ & $\widetilde{O}(d^{\frac{1}{2}}\epsilon^{-1})$ \\
\textbf{LC-RIs (Ours)} & G-L & 1 & 2 & $\mathcal{W}_2$  &$\mathcal{O}(d^{\frac{1}{2}}h^{\frac{3}{2}})$  & $\widetilde{O}(d^{\frac{1}{3}} \epsilon^{-\frac{2}{3}})$ \\
\addlinespace[0.3em]
\SetCell[r=2]{m}\textbf{LC‑UBUIs (Ours)} & G-L &\SetCell[r=2]{m} 1 &\SetCell[r=2]{m} 2 &\SetCell[r=2]{m} $\mathcal{W}_2$ & $\mathcal{O}(d^{\frac{1}{2}}h)$ & $\widetilde{O}(d^{\frac{1}{2}} \epsilon^{-1})$ \\
 & + H-L & & & & $\mathcal{O}(dh^{2})$ & $\widetilde{O}(d^{\frac{1}{2}} \epsilon^{-\frac{1}{2}})$ \\
\bottomrule
\end{tblr}
\end{table}

\begin{table}[!htb]
\centering
\caption{Comparison of different ULMC methods beyond log-concavity.}
\label{tab:algorithm_comparison_no_log-concavity}
\begin{tblr}{
  colspec = {l l l c c c c c},
  row{1} = {font=\bfseries},
  row{2} = {abovesep=0.8ex, belowsep=0.8ex},
  row{3-10} = {rowsep=0.1ex},
}
\toprule
Algorithm & Regularity & T-A & Grad & Gauss & Metric & \mbox{E-B} & \mbox{M-T} \\
\midrule
DRMM \cite{altschuler2025shifted} & G-L & LSI & 3 & 4 & $\mathrm{TV}$ & / & $\widetilde{O}(d^{\frac{1}{3}} \epsilon^{-\frac{2}{3}})$ \\
\addlinespace[0.3em]
\SetCell[r=2]{m} UBU \cite{schuh2024convergence} & G-L & \SetCell[r=2]{m}SCI & \SetCell[r=2]{m}1 &\SetCell[r=2]{m} 3 &\SetCell[r=2]{m} $\mathcal{W}_1$ & $\mathcal{O}(d^{\frac{1}{2}}h)$ & $\widetilde{O}(d^{\frac{1}{2}} \epsilon^{-1})$ \\ 
 & + H-L  & & & & & $\mathcal{O}(dh^2)$ & $\widetilde{O}(d^{\frac{1}{2}} \epsilon^{-\frac{1}{2}})$ \\ 
\addlinespace[0.3em]
EE \cite{zhang2023improved} & G-L & LSI & 1 & 2 & $\mathrm{TV}$ & $\mathcal{O}(d^{\frac{1}{2}}h)$   & $\widetilde{O}(d^{\frac{1}{2}}\epsilon^{-1})$ \\
EM \cite{schuh2024convergence} & G-L & SCI & 1 & 1 & $\mathcal{W}_1$  & $\mathcal{O}(d^{\frac{1}{2}}h)$ & $\widetilde{O}(d^{\frac{1}{2}}\epsilon^{-1})$ \\
\textbf{LC-RIs (Ours)} & G-L & SCI & 1 & 2 & $\mathcal{W}_1$ & $\mathcal{O}(d^{\frac{1}{2}}h^{\frac{3}{2}})$  & $\widetilde{O}(d^{\frac{1}{3}} \epsilon^{-\frac{2}{3}})$ \\
\addlinespace[0.3em]
\SetCell[r=2]{m}\textbf{LC‑UBUIs (Ours)} & G-L &\SetCell[r=2]{m} SCI & \SetCell[r=2]{m} 1 & \SetCell[r=2]{m} 2 & \SetCell[r=2]{m} $\mathcal{W}_1$ & $\mathcal{O}(d^{\frac{1}{2}}h)$ & $\widetilde{O}(d^{\frac{1}{2}} \epsilon^{-1})$ \\
 & + H-L & & & & & $\mathcal{O}(dh^2)$ & $\widetilde{O}(d^{\frac{1}{2}} \epsilon^{-\frac{1}{2}})$ \\
\bottomrule
\end{tblr}
\vspace{0.5em}
\caption*{\footnotesize\centering \textbf{Note:} 
Grad/Gauss: Gradient evaluations/Gaussian draws per iteration.
T-A: Tail Assumption.\\
G-L: Gradient Lipschitz.
H-L: Hessian Lipschitz. E-B: Non-asymptotic Error Bound.\\
M-T: Mixing Time.
LSI: Log-Sobolev Inequality.
SCI: Strong Convexity at Infinity.
}
\end{table}

The remainder of this paper is structured as follows. The next section collects some notation and presents the existing and newly proposed ULMC methods. In Section \ref{sec:framework},  a general uniform-in-time convergence framework is established. Section \ref{sec:preliminary} then formulates the unified ULMC scheme alongside essential preliminary estimates, paving the way for the main non-asymptotic guarantees presented in Section \ref{sec:non-asym-error-bounds}. The proofs of uniform-in-time moment bounds and finite-time error estimates are deferred to Sections \ref{sec:pf-scheme-bound} and \ref{sec:pf-finite-error}, respectively. Finally, numerical validations are reported in Section \ref{sec:num-ex}, followed by concluding remarks in Section \ref{sec:conclusion}.

\section{ULMC algorithms of order up to two: old and new}
\label{sec:preliminaries}

\subsection{Notation}
Throughout this paper, let $\mathbb{N}$ denote the set of positive integers and let $\mathbb{N}_0 := \mathbb{N} \cup \{0\}$. For any $n \in \mathbb{N}$, define $[n] := \{1,2,\ldots,n\}$ and $[n]_0 := \{0,1,\ldots,n\}$.
Let $\langle\cdot,\cdot\rangle$ denote the inner product of vectors in $\R^{d}$. We denote by $|\cdot|$ the Euclidean norm of vectors in $\R^{d_1}$ and by $\|\cdot\|$ the induced operator norm of matrices in $\R^{d_1\times d_2}$ with $d_1,d_2\in \mathbb{N}$.
We use $a\wedge b$ and $a\vee b$ to denote $\min\{a,b\}$ and $\max\{a,b\}$, respectively. The notation $\widetilde{\mathcal{O}}(\cdot)$  refers to $\mathcal{O}(\cdot \log^{\mathcal{O}(1)}(\cdot))$.
%By convention, $0^0 = 1$.  The notation $\tilde{O}(\cdot)$ stands for $O(\cdot \log^{O(1)}(\cdot))$. The inner product and Euclidean norm in $\mathbb{R}^d$ are denoted by $\langle \cdot, \cdot \rangle$ and $|\cdot|$, respectively.  For a function $f : \mathbb{R}^d \to \mathbb{R}$, its gradient is defined as $\nabla f := (\partial_1 f, \ldots, \partial_d f)^{\mathsf T}$.
Let $\mathcal{P}(\mathbb{R}^{d})$ denote the space of Borel probability measures on
$\mathbb{R}^{d}$.
Throughout this paper, $\mathcal{L}(\cdot)$ denotes the probability law of its argument; for instance,
$\mathcal{L}\big((X^\top,V^\top)^\top\big)$ denotes the joint distribution of $X$ and $V$.

We write $\textit{Dist}$ for an arbitrary metric on $\mathcal P(\mathbb R^{d})$
and $\delta_0$ for the Dirac measure at the origin in $\mathbb R^{d}$.
For $ p \geq 1$, the $L^p$-Wasserstein distance, denoted by $\mathcal{W}_p$, between
$\nu_1,\nu_2\in\mathcal{P}(\mathbb{R}^{d})$ is defined by
\begin{align}
    \mathcal W_{p}(\nu_1,\nu_2)
    :=
    \left(
    \inf_{\varrho \in \Gamma(\nu_1,\nu_2)}
    \int_{\mathbb R^{d}\times \mathbb R^{d}}
    |x-y|^{ p}\,\mathrm d\varrho(x,y)
    \right)^{\frac{1}{p}},
\end{align}
where $\Gamma(\nu_1,\nu_2)$ denotes the collection of all probability measures
on $\mathbb R^{d} \times \mathbb R^{d}$ whose first and second marginal
distributions are $\nu_1$ and $\nu_2$, respectively.
For $p\ge 1$ and $\mu\in\mathcal P(\mathbb R^d)$, define
\begin{align}
    \mathcal H_p(\mu)
    :=
    \int_{\mathbb R^d}|x|^{2p}\,\mu(\mathrm dx).
\end{align}
Moreover, we consider the product probability space
\begin{align}
\label{def: pps}
(\Omega,\mathcal{F},\mathbb{P})
:=
\big(\Omega_W \times \Omega_\tau,\,
\mathcal{F}^W \otimes \mathcal{F}^\tau,\,
\mathbb{P}_W \otimes \mathbb{P}_\tau \big),
\end{align}
where $\big(\Omega_W,\mathcal{F}^W,\mathbb{P}_W\big)$ carries the Brownian motion in ULD \eqref{eq:ULD} and $\big(\Omega_\tau,\mathcal{F}^\tau,\mathbb{P}_\tau\big)$ is another probability space with $\mathcal{F}^\tau$
%carries the auxiliary random variables $\{\tau_k\}_{k\geq 0}$, 
independent of $\mathcal{F}^W$.
For a uniform stepsize \(h>0\), let \(t_k:=kh\), \(k\in\mathbb N_0\), and let \(\mathcal F_{t_k}^\tau\) denote the $\sigma$-field generated by \(\tau_0,\tau_1,\ldots,\tau_{k-1}\). Then we define
$$
\mathcal F_{t_k}
:=
\mathcal F_{t_k}^W\otimes\mathcal F_{t_k}^\tau,
$$
where \((\mathcal F_t^W)_{t\geq0}\) is the filtration previously introduced for the Brownian motion.
% $$
% \mathcal F_t^W:=\sigma\bigl(W_s:0\leq s\leq t\bigr),\qquad
% \mathcal F_t^\tau:=\sigma\bigl(\tau_j:t_{j+1}\leq t\bigr),
% \qquad
% \mathcal F_t:=\mathcal  F_t^W\otimes\mathcal F_t^\tau,
% $$
% denote the filtrations generated by the Brownian motion, the auxiliary random variables, and their product on the underlying product probability space.
%
Accordingly, we denote by $\mathbb{E}$, $\mathbb{E}_W$, and $\mathbb{E}_\tau$ the expectations with respect to $\mathbb{P}$, $\mathbb{P}_W$, and $\mathbb{P}_\tau$, respectively.
For \(0\leq s\leq t\), let
\(
\big(
X(s,x,v;t)^\top,
V(s,x,v;t)^\top
\big)^\top
\in\mathbb R^{2d}
\)
denote the solution to \eqref{eq:ULD} at time \(t\) starting from the initial condition
\(
(
X_s^\top,
V_s^\top
)^\top
=
(
x^\top,
v^\top
)^\top
\in\mathbb R^{2d}.
\)

\subsection{Existing ULMC algorithms}

In this subsection, we revisit existing ULMC algorithms of order up to two, together with their corresponding error bounds. 
%%%%%%%%%%%%%%%%%%%%%%%%%%%%%%%%%%%%%%%%%%%%%%%%%%%%%%%%%%%
\iffalse
In the error analysis of Langevin sampling algorithms, the standard assumptions are the strong convexity (i.e., log-concavity) condition, namely, for some $m>0$,
\begin{equation}
\label{eq:log-concave_condition}
\left\langle
  x-y,
  \nabla U(x)-\nabla U(y)
\right\rangle 
\geq 
m |x-y|^2, 
\quad 
\forall x,y \in \mathbb{R}^d ,
\end{equation} and the gradient Lipschitz condition, namely, for some $L>0$, 
\begin{equation}
\label{eq:gradient_Lips_condition}
|
  \nabla U(x)-\nabla U(y)
|
\le
L |x-y|, 
\quad 
\forall x,y \in \mathbb{R}^d .
\end{equation}
\fi
%%%%%%%%%%%%%%%%%%%%%%%%%%%%%%%%%%%%%%%%%%%%%%%%%%%%%%%%%%%%%%%%%%%%%%%%%%%%%%%%%%%%%%%%%%
Let $h\in(0,1)$ be a uniform timestep and define
$t_k:=kh$ for $k\in\mathbb{N}_0$.

\subsubsection{Euler-type methods: First-order schemes}

The most direct discretization of ULD is the explicit
Euler--Maruyama method (EM):
\begin{equation}
\label{eq:intro-EM}
\left\{
\begin{aligned}
    \Bar{\Bar{X}}_{k+1}^{EM}
    &=
    \Bar{\Bar{X}}_{k}^{EM}
    +
    h\Bar{\Bar{V}}_{k}^{EM},
    \\
    \Bar{\Bar{V}}_{k+1}^{EM}
    &=
    (1-\gamma h)\Bar{\Bar{V}}_{k}^{EM}
    -
    \alpha h
    \nabla U(\Bar{\Bar{X}}_{k}^{EM})
    +
    \sqrt{2\gamma\alpha}\,
    \Delta W_{k+1},
\end{aligned}
\right.
\end{equation}
where
\(
    \Delta W_{k+1}
    :=
    W_{t_{k+1}}-W_{t_k}
    .
\)
Under the gradient Lipschitz and strong convexity  conditions, EM achieves a non-asymptotic error bound in $\mathcal W_2$-distance of order $\mathcal O(d^{\frac{1}{2}}h)$ (see Theorem \ref{thm:convex-main-result-Euler}).
In the non-convex setting, \cite{schuh2024convergence} established an error bound of the same order for the EM method in the \(\mathcal W_1\)-distance (see also Theorem \ref{thm:non-convex-main-result-Euler}).

%Although EM is straightforward to implement, it does not exploit the exact solvability of the linear friction component and may require a restrictive timestep for numerical stability.

Another commonly used first-order discretization of ULD is the
exponential Euler method (EE). In contrast to the
EM method, which directly discretizes the original
ULD, EE is constructed based on the variation-of-constants formula applied to ULD. 
%and then discretizing the remaining nonlinear force term.
%
More precisely, we first apply the
variation-of-constants formula to $V$ and then
integrate $X$ over $[t_k,t_{k+1}]$ to arrive at
\begin{equation}
\label{eq:intro-ex-ULD}
\left\{
\begin{aligned}
    &X_{t_{k+1}}
    =
    X_{t_k}
    +
    h\phi(h)V_{t_k}
    - 
    \alpha
    \int^{t_{k+1}}_{t_k} (t_{k+1}-s)
    \phi(t_{k+1}-s)
    \nabla U(X_s)\, \mathrm{d}s
    +
    \sqrt{2\gamma\alpha}
    \Delta W_{k+1}^{\phi,1},  \\
    &V_{t_{k+1}}
    =
    \psi(h)V_{t_k}
    -
    \alpha
    \int^{t_{k+1}}_{t_k}
    \psi(t_{k+1}-s)
    \nabla U(X_s) \, \mathrm{d}s
    +
    \sqrt{2\gamma\alpha}
    \Delta W_{k+1}^{\psi,2},
\end{aligned}
\right.
\end{equation}
where we define
\begin{align}
\label{eq:define-psi-phi}
\psi(u) &:=
e^{-\gamma u},
\qquad 
\phi(u) :=
\tfrac{1-e^{-\gamma u}}{\gamma u},
\qquad u\in [0,+\infty),
\end{align}
and for any deterministic square-integrable function $g\colon \mathbb{R}\to \mathbb{R}$,
\begin{equation}
\label{eq:define-Delta-W}
\Delta W_{k+1}^{g,1}
:=
\int^{t_{k+1}}_{t_k}
(t_{k+1}-s)
g(t_{k+1}-s) \,\mathrm{d}W_{s},
\quad 
\Delta W_{k+1}^{g,2}
:=
\int^{t_{k+1}}_{t_k}
g (t_{k+1}-s) \,\mathrm{d}W_{s}.
\end{equation}
%%
%%
%
%Motivated by the exponential structure of the variation-of-constants representation \eqref{eq:intro-ex-ULD}, 
%
%Approximating $\nabla U(X_s)$ by $\nabla U(X_{t_k})$ for every $s\in[t_k,t_{k+1}]$ yields the exponential Euler method (EE) \cite{cheng2018underdamped,dalalyan2020sampling,sanz2021wasserstein}. More precisely,
%
The exponential Euler discretization is now obtained 
%by freezing the nonlinear force over each timestep, namely, 
by using the left-endpoint approximation
\[
\nabla U(X_s)
\approx
\nabla U(X_{t_k}),
\qquad
s\in[t_k,t_{k+1}].
\]
Substituting this approximation into
\eqref{eq:intro-ex-ULD} yields the EE scheme:
\begin{equation}
\label{eq:intro-EE}
\left\{
\begin{aligned}
    \Bar{\Bar{X}}_{k+1}^{EE}
    &=
    \Bar{\Bar{X}}_{k}^{EE}
    +
    h\phi(h)\Bar{\Bar{V}}_{k}^{EE}
    -
    \alpha h^2
    \chi(h)
    \nabla U(\Bar{\Bar{X}}_{k}^{EE})
    +
    \sqrt{2\gamma\alpha}
    \Delta W_{k+1}^{\phi,1},
    \\
    \Bar{\Bar{V}}_{k+1}^{EE}
    &=
    \psi(h)\Bar{\Bar{V}}_{k}^{EE}
    -
    \alpha h\phi(h)
    \nabla U(\Bar{\Bar{X}}_{k}^{EE})
    +
    \sqrt{2\gamma\alpha}
    \Delta W_{k+1}^{\psi,2},
\end{aligned}
\right.
\end{equation}
where 
\begin{equation}
\label{eq:define-chi}
\chi(u)
:=
\tfrac{1-\phi(u)}{\gamma u},
\qquad
u\in[0,+\infty).
\end{equation}
%By approximating $\nabla U(X_s)$ by $\nabla U(X_{t_k})$ for $s\in[t_k,t_{k+1}]$, one can propose the exponential Euler discretization of ULD, abbreviated as EE \cite{cheng2018underdamped,dalalyan2020sampling,sanz2021wasserstein}.
The values $\phi(0)$ and $\chi(0)$ are defined by continuous extension.
Under the strong convexity condition and the gradient Lipschitz condition, 
the authors of \cite{cheng2018underdamped,dalalyan2020sampling} established non-asymptotic error bounds in \(\mathcal W_2\)-distance for EE of order $\mathcal{O}(d^{\frac{1}{2}}h)$ (see also Theorem \ref{thm:convex-main-result-Euler}). Under a non-convex condition, an error bound of the same order is established by Theorem \ref{thm:non-convex-main-result-Euler} in the \(\mathcal W_1\)-distance.

\subsubsection{Randomized-type methods: Order-1.5 schemes}

%Although EE is easy to implement, its accuracy is still limited by the first-order convergence. Recently, randomized midpoint methods with a higher convergence rate have received considerable attention \cite{shen2019randomized,yu2024langevin,hu2021optimal,he2020ergodicity}. The randomized midpoint method (RMM) for ULD, proposed by \cite{shen2019randomized}, improves the approximation of the nonlinear drift integral in \eqref{eq:intro-ex-ULD} by introducing a randomized intermediate point.

Due to their use of the left-endpoint
gradient, classical Euler-type discretizations are generally limited to
first-order convergence. By evaluating the gradient at a randomized intermediate
point, randomized quadrature improves the convergence rate to
order $1.5$ without requiring additional smoothness of the
potential.
%
%%
%%
%More precisely, let $\{\tau_k\}_{k\geq 0}$ be a sequence of i.i.d. random variables uniformly distributed on $[0,1]$ and defined on $(\Omega_\tau,\mathcal F^\tau,\mathbb P_\tau)$, independent of the Brownian motion $W$.The randomized intermediate position is then constructed by evolving the position component over the random subinterval $[t_k,t_k+\tau_k h]$:
The first randomized ULMC method of this type is the randomized
midpoint method (RMM), proposed by
\cite{shen2019randomized}.
More precisely, let $\{\tau_k\}_{k\geq0}$ be a sequence of
independent and identically distributed random variables with
\(
\tau_k\sim\mathcal U[0,1],
\)
defined on
$(\Omega_\tau,\mathcal F^\tau,\mathbb P_\tau)$ and independent
of the Brownian motion $W$. On each timestep
$[t_k,t_{k+1}]$, RMM first constructs an approximation of the
position at the randomized intermediate time
$t_k+\tau_kh$ by evolving the position component $X$ over the random
subinterval $[t_k,t_k+\tau_kh]$:
\begin{equation}
\label{intro-eq:random-position}
\begin{aligned}
\Bar{\Bar{X}}_{k+\tau_k}^{RMM}
= &
\Bar{\Bar{X}}_{k}^{RMM}
+
\tau_k h
\phi(\tau_k h)
\Bar{\Bar{V}}_{k}^{RMM}
- 
\alpha
\int^{t_{k}+\tau_k h}_{t_k} (t_{k}+\tau_k h-s)
\phi(t_{k}+\tau_k h-s)
\nabla U(\Bar{\Bar{X}}_k^{RMM})\, \mathrm{d}s 
\\   &
+
\sqrt{2\gamma\alpha} \Delta W_{k+\tau_k}^{\phi,1}
,
\end{aligned}
\end{equation}
where
\begin{equation}
\label{eq:define-Delta-W-t+tau}
\Delta W_{k+\tau_k}^{g,1}
:=
\int_{t_k}^{t_k+\tau_k h}
(t_k+\tau_k h-s)
g(t_k+\tau_k h-s)\,\mathrm dW_s,
\end{equation}
for any deterministic function $g: \R \to \R$.
The gradient is then evaluated at the randomized position $\Bar{\bar{X}}^{RMM}_{k+\tau_k}$ and used in the full-step update:
\begin{equation}
\label{eq:intro-RMM}
\left\{
\begin{aligned}
&
\Bar{\Bar{X}}_{k+1}^{RMM}
    =
    \Bar{\Bar{X}}_{k}^{RMM}
    +
    h\phi(h)\Bar{\Bar{V}}_{k}^{RMM}
    - 
   \alpha
  h
  (h-\tau_k h )
  \phi(h-\tau_k h)
    \nabla U(\Bar{\Bar{X}}_{k+\tau_k}^{RMM})
    +
    \sqrt{2\gamma\alpha}
    \Delta W_{k+1}^{\phi,1},  \\
    &
    \Bar{\Bar{V}}_{k+1}^{RMM}
    =
    \psi(h)
    \Bar{\Bar{V}}_{k}^{RMM}
    -
    \alpha h
    \psi(h-\tau_k h)
    \nabla U(\Bar{\Bar{X}}_{k+\tau_k}^{RMM}) 
    +
    \sqrt{2\gamma\alpha}
    \Delta W_{k+1}^{\psi,2}.
\end{aligned}
\right.
\end{equation}

Subsequently, \cite{hu2021optimal} proposed an accelerated randomized ULMC method (referred to as ALUM) by simplifying the randomized intermediate position. More precisely, the intermediate point $\Bar{\Bar{X}}_{k+\tau_k}^{RMM}$ in \eqref{intro-eq:random-position} is replaced by
\begin{equation}
\label{intro-eq:random-position-1}
\Bar{\Bar{X}}_{k+\tau_k}^{ALUM}
=
\Bar{\Bar{X}}_k^{ALUM}
+
\tau_k h
\phi(\tau_k h)
\Bar{\Bar{V}}_{k}^{ALUM}
+
\sqrt{2\gamma\alpha}
\Delta W_{k+\tau_k}^{\phi,1},
\end{equation}
which avoids an additional gradient evaluation at the intermediate stage. The resulting ALUM full-step update is given by
\begin{equation}
\label{eq:intro-ALUM}
\left\{
\begin{aligned}
&
\Bar{\Bar{X}}_{k+1}^{ALUM}
    =
    \Bar{\Bar{X}}_{k}^{ALUM}
    +
    h\phi(h)\Bar{\Bar{V}}_{k}^{ALUM}
    - 
   \alpha
  h
  (h-\tau_k h )
  \phi(h-\tau_k h)
    \nabla U(\Bar{\Bar{X}}_{k+\tau_k}^{ALUM})
    +
    \sqrt{2\gamma\alpha}
    \Delta W_{k+1}^{\phi,1},  \\
    &
    \Bar{\Bar{V}}_{k+1}^{ALUM}
    =
    \psi(h)
    \Bar{\Bar{V}}_{k}^{ALUM}
    -
    \alpha h
    \psi(h-\tau_k h)
    \nabla U(\Bar{\Bar{X}}_{k+\tau_k}^{ALUM}) 
    +
    \sqrt{2\gamma\alpha}
    \Delta W_{k+1}^{\psi,2}.
\end{aligned}
\right.
\end{equation}

%Under the assumptions of strong convexity and gradient Lipschitz continuity,  the authors of \cite{shen2019randomized,yu2024langevin} derived a \(\mathcal W_2\)-error bound of order $\mathcal{O}(d^{\frac{1}{2}}h^{\frac{3}{2}})$, matching the known information-complexity lower bound \cite{cao2020complexity} for randomized approximation of ULD.

%Under the same assumptions, ALUM attains the same non-asymptotic error bound of order $\mathcal{O}(d^{\frac{1}{2}} h^\frac{3}{2})$ in \(\mathcal W_2\)-distance.

Under the strong convexity and gradient Lipschitz conditions,
\cite{shen2019randomized,yu2024langevin} and
\cite{hu2021optimal} proved that both RMM and
ALUM achieve a non-asymptotic error bound of order
$\mathcal O(d^{\frac12}h^{\frac32})$ in $\mathcal W_2$-distance (see also Theorem \ref{thm:convex-main-result}). This rate matches the
information-complexity lower bound for randomized approximations
of ULD \cite{cao2020complexity}. As shown by Theorem \ref{thm:non-convex-main-result} below, we establish the same error bounds for both RMM and ALUM in the $\mathcal W_1$-distance in a non-convex setting.

\subsubsection{UBU: A second-order scheme}

%A closely related deterministic midpoint integrator is the UBU scheme. It can be obtained from the ALUM construction by replacing the randomized variable $\tau_k$ with the deterministic value $\frac{1}{2}$. Accordingly, the randomized intermediate time $t_k+\tau_k h$ is replaced by the midpoint $t_k+\frac{h}{2}$.

As a second-order integrator, the UBU scheme \cite{zapatero2017word} is derived from a
symmetric operator-splitting strategy, where the ULD system is
decomposed into two exactly solvable subdynamics, denoted by $U$
and $B$, and their flows are composed as
$U(\frac h2)B(h)U(\frac h2)$.
By setting $\tau_k\equiv\frac12$ for ALUM, UBU can also be viewed as the deterministic counterpart of ALUM, where the
randomized intermediate step $\tau_kh$ is replaced with the deterministic
half-step $\frac h2$. 

More precisely, the intermediate position is defined by
\begin{equation}
\label{eq:intro-UBU-midpoint}
\begin{aligned}
\Bar{\Bar{X}}_{k+\frac12}^{UBU}
={}&
\Bar{\Bar{X}}_{k}^{UBU}
+
\tfrac{h}{2}
\phi\left(\tfrac{h}{2}\right)
\Bar{\Bar{V}}_{k}^{UBU}
+
\sqrt{2\gamma\alpha}
\Delta W_{k+\frac{1}{2}}^{\phi,1},
\end{aligned}
\end{equation}
where
\begin{equation}
\label{eq:define-Delta-W-t+1/2h}
\Delta W_{k+\frac{1}{2}}^{g,1}
:=
\int_{t_k}^{t_k+\frac{h}{2}}
(t_k+\tfrac{h}{2}-s)
g(t_k+\tfrac{h}{2}-s)\,\mathrm dW_s.
\end{equation}
The gradient is evaluated once at this intermediate position,
and the full UBU update is given by
\begin{equation}
\label{eq:intro-UBU}
\left\{
\begin{aligned}
\Bar{\Bar{X}}_{k+1}^{UBU}
={}&
\Bar{\Bar{X}}_{k}^{UBU}
+
h\phi(h)\Bar{\Bar{V}}_{k}^{UBU}
-
\tfrac{\alpha h^2}{2}
\phi\left(\tfrac{h}{2}\right)
\nabla U
(
\Bar{\Bar{X}}_{k+\frac12}^{UBU}
)
+
\sqrt{2\gamma\alpha}
\Delta W_{k+1}^{\phi,1},
\\
\Bar{\Bar{V}}_{k+1}^{UBU}
={}&
\psi(h)\Bar{\Bar{V}}_{k}^{UBU}
-
\alpha h
\psi\left(\tfrac{h}{2}\right)
\nabla U(
\Bar{\Bar{X}}_{k+\frac12}^{UBU}
)
+
\sqrt{2\gamma\alpha}
\Delta W_{k+1}^{\psi,2}.
\end{aligned}
\right.
\end{equation}
Under the gradient and Hessian Lipschitz conditions,
\cite{sanz2021wasserstein} established a non-asymptotic error bound of order
$\mathcal O(dh^2)$ for UBU in $\mathcal W_2$-distance under
strong convexity, while \cite{schuh2024convergence} obtained the same error bound in $\mathcal W_1$-distance in a non-convex
setting (see also Theorems \ref{thm:convex-main-result-UBU} and \ref{thm:non-convex-main-result-UBU}).

\subsection{New low-cost high-order ULMC algorithms}

In this subsection, we address the aforementioned questions (Q1) and (Q2) by developing two families of low-cost high-order
ULMC algorithms: randomized integrators and their UBU-type
counterparts. Compared with the original ones, the proposed methods use fewer Gaussian random vectors per timestep, thereby
reducing the cost of random-number generation and storage in
high-dimensional or massively parallel simulations
\cite{vono2022high,manssen2012random,gross2018massively}. Also, we
introduce Taylor- and Pad\'e-based variants that avoid repeated
evaluations of exponential functions while retaining the desired convergence rates.

%The aforementioned high-order ULMC schemes still require sampling three correlated Gaussian random vectors, which may incur non-negligible computational and memory overhead, particularly in high-dimensional or massively parallel settings \cite{vono2022high,manssen2012random,gross2018massively}. To address these limitations while preserving accuracy, we propose a family of low-cost ULMC algorithms.
% In this paper, we answer this question affirmatively by proposing several low-cost randomized ULMC algorithms.

\subsubsection{Low-cost randomized integrators (LC-RIs)}

%We first propose a low-cost randomized method for ULMC. At each step, a simplified predictor is constructed at the randomized time $\tau_k h$ by
We begin with the low-cost randomized exponential integrator
(LC-REI). For the random intermediate time $\tau_kh$, LC-REI
employs the following simplified noise-free predictor:
\begin{equation}\label{eq:LC-REI-1}
\widehat{X}^{\mathrm{R}}_{k+\tau_k} 
=  
\widehat{X}^{\mathrm{R}}_k 
+ 
\tau_k h\phi(\tau_kh)\widehat{V}^{\mathrm{R}}_k,
\quad 
\widehat{X}^{\mathrm{R}}_{0}
=
X_0,
\quad 
\widehat{V}^{\mathrm{R}}_{0}
=
V_0,
\end{equation}
and the full-step update of LC-REI is given by:
%The gradient evaluated at this predictor is then used in the full-step update
\begin{equation}
\label{eq:M-RSULMC}
\left\{
\begin{aligned}
&
\widehat{X}^{\mathrm{R}}_{k+1} 
=  
\widehat{X}^{\mathrm{R}}_k + h\phi (h)\, \widehat{V}^{\mathrm{R}}_k -
\alpha h
(h-\tau_k h) 
\phi (h-\tau_k h) \nabla U(\widehat{X}^{\mathrm{R}}_{k+\tau_k}) 
+ 
\sqrt{2\gamma\alpha} 
\Delta W_{k+1}^{\phi,1},
\\
&\widehat{V}^{\mathrm{R}}_{k+1} =   \psi(h) \widehat{V}^{\mathrm{R}}_k -\alpha h \psi(h-\tau_k h ) \nabla U (\widehat{X}^{\mathrm{R}}_{k+\tau_k}) +
\sqrt{2\gamma\alpha} 
\Delta W_{k+1}^{\psi,2}.
\end{aligned}
\right.
\end{equation}

We would like to mention that RMM \cite{shen2019randomized,yu2024langevin} and ALUM
\cite{hu2021optimal} require three correlated Gaussian random
vectors per iteration, whereas LC-REI uses only two in each one-step update. Moreover, LC-REI
evaluates the gradient once per iteration, matching the gradient
cost of ALUM and saving one evaluation, compared to RMM.
An interesting observation from numerical tests in Section \ref{subsec:experiment-linear} is, the newly proposed LC-REI \eqref{eq:intro-LC-REI} with lower costs even produces smaller long-time errors than ALUM \cite{hu2021optimal}, when applied to solve the linear underdamped Ornstein--Uhlenbeck dynamics.
%It therefore reduces Gaussian sampling and storage costs without increasing the gradient complexity.
%
%Overall, the newly proposed LC-REI algorithm can be computationally cheaper than the existing  randomized midpoint methods.
%In Section \ref{subsec:experiment-linear}, we take the linear underdamped Ornstein--Uhlenbeck process for a test model to compare ALUM with LC-REI. An interesting finding is, LC-REI with cheaper costs even produces smaller long-time errors than ALUM.
%suggesting that the deterministic predictor \eqref{eq:LC-REI-1} may mitigate the propagation of predictor noise into the position component.

%In large-scale batched simulations or hardware implementations, 
%In large-scale batched simulations, repeated evaluations of transcendental operations such as exponential functions are relatively expensive. To remedy it, we therefore introduce two exponential-free randomized ULMC schemes.
% On the one hand, we directly apply Taylor expansions to $\psi$ and $\phi$, thereby yielding polynomial approximations of the exponential functions:
Repeated evaluations of exponential functions may nevertheless
become costly in large-scale batched simulations. To remedy it, one can replace $\psi$ and $\phi$ with their
Taylor polynomials:
\begin{equation}
\psi (u) 
= 
\psi_{n}^T  (u) 
+
\mathcal{O}(u^{n+1}),
\quad 
\phi (u) 
= 
\phi_{n}^T  (u) 
+
\mathcal{O}(u^{n+1}),
\quad 
|u| \to 0,
\quad
n \in  \mathbb{N}_0,
\end{equation} 
where 
\begin{equation}
\psi_{n}^T (u) 
:=
\sum_{i=0}^n
\tfrac{\psi^{(i)}(0)}{i!} u^i
=
\sum_{i=0}^n
\tfrac{(-\gamma)^i}{i!} u^i
,
\quad 
\phi_{n}^T  (u) 
:=
\sum_{i=0}^n
\tfrac{\phi^{(i)}(0)}{i!} u^i
=
\sum_{i=0}^n
\tfrac{(-\gamma)^i}{(i+1)!} u^i
.
\end{equation}
Choosing the polynomial degrees required to retain the target
accuracy gives the low-cost randomized Taylor integrator
(LC-RTI):
%This leads to the Taylor-expansion-based variant of LC-REI for ULMC, termed LC-RTI, given by:
\begin{equation}
\label{eq:ex-free-rs-1}
\left\{
\begin{aligned}
& \widetilde{X}^{\mathrm{R}}_{k+\tau_k}
=
\widetilde{X}^{\mathrm{R}}_k
+
\tau_k h \phi^T_1(\tau_k h) \, \widetilde{V}^{\mathrm{R}}_k,
\quad 
\widetilde{X}^{\mathrm{R}}_{0}
=
X_0,
\quad 
\widetilde{V}^{\mathrm{R}}_{0}
=
V_0,
\\
&
\widetilde{X}^{\mathrm{R}}_{k+1}
=
\widetilde{X}^{\mathrm{R}}_k
+
h\phi^T_1(h)\widetilde{V}^{\mathrm{R}}_k
-
\alpha h(h-\tau_k h)\phi^T_0(h-\tau_k h)
\nabla U(\widetilde{X}^{\mathrm{R}}_{k+\tau_k})
+
\sqrt{2\gamma\alpha}
\Delta W_{k+1}^{\phi_0^T,1},
\\
&
\widetilde{V}^{\mathrm{R}}_{k+1}
= 
\psi^T_2(h) \widetilde{V}^{\mathrm{R}}_k
-
\alpha h\psi^T_1(h-\tau_k h)
\nabla U(\widetilde{X}^{\mathrm{R}}_{k+\tau_k})
+
\sqrt{2\gamma\alpha}
\Delta W_{k+1}^{\psi_1^T,2}.
\end{aligned}
\right.
\end{equation}
Here the Taylor polynomials used in \eqref{eq:ex-free-rs-1} are explicitly given by
\begin{align}\label{eq:taylor-poly}
\phi_0^T(u)
= 1, 
\quad 
\phi_1^T(u) 
= 
1-\tfrac{\gamma}{2}u, 
\quad
\psi_1^T(u) 
= 1-\gamma u,
\quad
\psi_2^T(u) = 1-\gamma u+\tfrac{\gamma^2}{2}u^2 .
\end{align}

%On the other hand, we apply Padé approximations to $\psi$ and $\phi$ to obtain their rational representations:
As a rational alternative, one can also approximate $\psi$ and $\phi$ by
Pad\'e approximants:
\begin{equation}
\psi (u) 
=
\psi^P_{m_p,n_p}(u)
+
\mathcal{O}
(u^{m_p+n_p+1})
,
\quad
\phi (u)
= 
%\phi_{m,n}^P (u)
\phi^P_{m_p,n_p}(u)
+
\mathcal{O}
(u^{m_p+n_p+1})
,
\quad
|u| \to 0,
\quad
m_p,n_p
\in 
\mathbb{N}_0
.
\end{equation}
Here, $m_p,n_p\in\mathbb N_0$, and
$\psi_{m_p,n_p}^P$ and $\phi_{m_p,n_p}^P$ denote the corresponding
$[m_p/n_p]$ Pad\'e approximants. Each is a rational function whose
Maclaurin expansion agrees with that of the original function
through degree $m_p+n_p$. Using the low-degree approximants
specified below gives the low-cost randomized Pad\'e integrator
(LC-RPI):
%where $\psi^P_{m_p,n_p}$ and $\phi^P_{m_p,n_p}$ denote the $[m_p/n_p]$ Padé approximants to $\psi$ and $\phi$, respectively. These are rational functions—with a numerator of degree $m_p$ and a denominator of degree $n_p$—whose Maclaurin series match those of $\psi$ and $\phi$ up to order $m_p+n_p$.
%Typically, $m_p$ and $n_p$ are chosen to be nearly equal (or differ by at most one), yielding diagonal or sub-diagonal Padé approximants. This gives the Padé-approximation-based variant of LC-REI for ULMC, called LC-RPI, which is given as follows:
\begin{equation}\label{eq:PRUL}
\left\{
\begin{aligned}
& 
\check{X}^{\mathrm{R}}_{k+\tau_k}
=
\check{X}^{\mathrm{R}}_k
+
\tau_kh \phi^P_{0,1}(\tau_k h)\check{V}^{\mathrm{R}}_k,
\quad 
\check{X}^{\mathrm{R}}_{0}
=
X_0,
\quad 
\check{V}^{\mathrm{R}}_{0}
=
V_0,
\\
& 
\check{X}^{\mathrm{R}}_{k+1}
=
\check{X}^{\mathrm{R}}_k
+
h\phi^P_{0,1}(h)
\check{V}^{\mathrm{R}}_k
-
\alpha h(h-\tau_k h) \phi^P_{0,0}(h-\tau_k h)
\U(\check{X}^{\mathrm{R}}_{k+\tau_k})
+
\sqrt{2\gamma\alpha}
\Delta W_{k+1}^{\phi^P_{0,0},1}, \\
&  
\check{V}^{\mathrm{R}}_{k+1}
= 
\psi^P_{1,1}(h)\check{V}^{\mathrm{R}}_k
-
\alpha h \psi^P_{0,1}(h-\tau_k h) \U(\check{X}^{\mathrm{R}}_{k+\tau_k})
+
\sqrt{2\gamma\alpha}
\Delta W_{k+1}^{\psi^P_{0,1},2}.
\end{aligned}
\right.
\end{equation}
The Pad\'e approximants used in \eqref{eq:PRUL} are explicitly given by
\begin{equation}\label{eq:pade-poly}
\phi^{P}_{0,0}(u)=1,
\quad
\phi^{P}_{0,1}(u)
=
\tfrac{2}{2+\gamma u},
\quad
\psi^{P}_{0,1}(u)
=
\tfrac{1}{1+\gamma u},
\quad
\psi^{P}_{1,1}(u)
=
\tfrac{2-\gamma u}{2+\gamma u}.
\end{equation}
We refer to LC-REI, LC-RTI and LC-RPI collectively as the
low-cost randomized integrators (LC-RIs). Given a
Lipschitz-continuous gradient, all three schemes achieve
non-asymptotic error bounds of order
$\mathcal O(d^{\frac12}h^{\frac32})$: in
$\mathcal W_2$-distance under global strong convexity
(see Theorem~\ref{thm:convex-main-result}), and in $\mathcal W_1$-distance beyond strong convexity
(see Theorem~\ref{thm:non-convex-main-result}).
%%
%Under the gradient Lipschitz condition, we establish non-asymptotic error bounds of order $\mathcal{O}(d^{\frac{1}{2}}h^{\frac{3}{2}})$ for LC-RIs in $\mathcal{W}_1$-distance under strong convexity outside a ball (see Theorem~\ref{thm:non-convex-main-result}) and in $\mathcal{W}_2$-distance under strong convexity (see Theorem~\ref{thm:convex-main-result}).

\subsubsection{Low-cost UBU integrators (LC-UBUIs)}
%This subsection will introduce a class of low-cost UBU-type ULMC methods. At each iteration, a deterministic predictor is constructed at the intermediate time $\frac{h}{2}$ by
Fixing the randomized intermediate time in LC-REI at
$\tau_k\equiv\frac12$ yields its deterministic midpoint counterpart,
the low-cost UBU integrator (LC-UBU), with the predictor
\begin{equation}\label{eq:LC-UBU-1}
\widehat{X}^{\mathrm{U}}_{k+\frac{1}{2}} 
=  
\widehat{X}^{\mathrm{U}}_k 
+ 
\tfrac{h}{2}\phi(\tfrac{h}{2})\widehat{V}^{\mathrm{U}}_k,
\quad 
\widehat{X}^{\mathrm{U}}_{0}
=
X_0,
\quad 
\widehat{V}^{\mathrm{U}}_{0}
=
V_0.
\end{equation}
The full-step update for LC-UBU is then defined by
\begin{equation}
\label{eq:LC-UBU}
\left\{
\begin{aligned}
&
\widehat{X}^{\mathrm{U}}_{k+1} 
=  
\widehat{X}^{\mathrm{U}}_k + h\phi (h)\, \widehat{V}^{\mathrm{U}}_k -
\alpha 
\tfrac{h^2}{2} 
\phi (\tfrac{h}{2}) \nabla U(\widehat{X}^{\mathrm{U}}_{k+\frac{1}{2}}) 
+ 
\sqrt{2\gamma\alpha} 
\Delta W_{k+1}^{\phi,1},
\\
&\widehat{V}^{\mathrm{U}}_{k+1} =   
\psi(h) \widehat{V}^{\mathrm{U}}_k
-
\alpha h \psi(\tfrac{h}{2} ) \nabla U (\widehat{X}^{\mathrm{U}}_{k+\frac{1}{2}})
+
\sqrt{2\gamma\alpha} 
\Delta W_{k+1}^{\psi,2}.
\end{aligned}
\right.
\end{equation}
Like LC-REI, LC-UBU requires only one gradient evaluation and two
correlated Gaussians per iteration. It can be interpreted as a UBU-type scheme with a
noise-free, low-cost midpoint.

%Similar to LC-REI, the proposed algorithm demands only one gradient evaluation and two Gaussian random vectors at each iteration. This design admits two interpretations: it can be regarded as an LC-REI variant with $\tau_k \equiv \frac{1}{2}$, or as a UBU-type scheme incorporating a low-cost midpoint.
%As in the previous section, we further employ Taylor and Padé approximations for $\phi$ and $\psi$, obtaining the low-cost Taylor-based UBU (LCT-UBU) and Padé-based UBU (LCP-UBU), respectively. Specifically, LCT-UBU is given by

Similarly, applying the Taylor approximations 
% in \eqref{eq:taylor-poly}
to LC-UBU thus produces the low-cost
Taylor-based UBU method (LCT-UBU):
\begin{equation}
\label{eq:LCT-UBU}
\left\{
\begin{aligned}
& \widetilde{X}^{\mathrm{U}}_{k+\frac{1}{2}}
=
\widetilde{X}^{\mathrm{U}}_k
+
\tfrac{h}{2} \phi^T_1(\tfrac{h}{2}) \, \widetilde{V}^{\mathrm{U}}_k,
\quad 
\widetilde{X}^{\mathrm{U}}_{0}
=
X_0,
\quad 
\widetilde{V}^{\mathrm{U}}_{0}
=
V_0,
\\
&
\widetilde{X}^{\mathrm{U}}_{k+1}
=
\widetilde{X}^{\mathrm{U}}_k
+
h\phi^T_1(h)\widetilde{V}^{\mathrm{U}}_k
-
\alpha \tfrac{h^2}{2}\phi^T_0(\tfrac{h}{2})
\nabla U(\widetilde{X}^{\mathrm{U}}_{k+\frac{1}{2}})
+
\sqrt{2\gamma\alpha}
\Delta W_{k+1}^{\phi_0^T,1},
\\
&
\widetilde{V}^{\mathrm{U}}_{k+1}
= 
\psi^T_2(h) \widetilde{V}^{\mathrm{U}}_k
-
\alpha h\psi^T_1(\tfrac{h}{2})
\nabla U(\widetilde{X}^{\mathrm{U}}_{k+\frac{1}{2}})
+
\sqrt{2\gamma\alpha}
\Delta W_{k+1}^{\psi_1^T,2}.
\end{aligned}
\right.
\end{equation}
Here the Taylor polynomials are defined in \eqref{eq:taylor-poly}. 
%With the Pad\'e approximants given in \eqref{eq:pade-poly}, LCP-UBU is given as follows
Replacing the exponential functions instead by the Pad\'e
approximants in \eqref{eq:pade-poly} gives the low-cost
Pad\'e-based UBU method (LCP-UBU):
\begin{equation}\label{eq:LCP-UBU}
\left\{
\begin{aligned}
& 
\check{X}^{\mathrm{U}}_{k+\frac{1}{2}}
=
\check{X}^{\mathrm{U}}_k
+
\tfrac{h}{2} \phi^P_{0,1}(\tfrac{h}{2})\check{V}^{\mathrm{U}}_k,
\quad 
\check{X}^{\mathrm{U}}_{0}
=
X_0,
\quad 
\check{V}^{\mathrm{U}}_{0}
=
V_0,
\\
& 
\check{X}^{\mathrm{U}}_{k+1}
=
\check{X}^{\mathrm{U}}_k
+
h\phi^P_{0,1}(h)
\check{V}^{\mathrm{U}}_k
-
\alpha \tfrac{h^2}{2} \phi^P_{0,0}(\tfrac{h}{2})
\U(\check{X}^{\mathrm{U}}_{k+\frac{1}{2}})
+
\sqrt{2\gamma\alpha}
\Delta W_{k+1}^{\phi^P_{0,0},1}, \\
&  
\check{V}^{\mathrm{U}}_{k+1}
= 
\psi^P_{1,1}(h)\check{V}^{\mathrm{U}}_k
-
\alpha h \psi^P_{0,1}(\tfrac{h}{2}) \U(\check{X}^{\mathrm{U}}_{k+\frac{1}{2}})
+
\sqrt{2\gamma\alpha}
\Delta W_{k+1}^{\psi^P_{0,1},2}.
\end{aligned}
\right.
\end{equation}

We call LC-UBU, LCT-UBU and LCP-UBU collectively the low-cost
UBU integrators (LC-UBUIs). If both the gradient and Hessian of
$U$ are Lipschitz continuous, these methods attain
non-asymptotic error bounds of order $\mathcal O(dh^2)$: in 
$\mathcal W_2$-distance under global strong convexity
(see Theorem~\ref{thm:convex-main-result-UBU}) and in
$\mathcal W_1$-distance beyond strong convexity (see Theorem~\ref{thm:non-convex-main-result-UBU}).

%Under the gradient and Hessian Lipschitz conditions, we establish non-asymptotic error bounds of order $\mathcal{O}(dh^2)$ for LC-UBUIs in $\mathcal{W}_1$-distance under strong convexity outside a ball (see  Theorem~\ref{thm:non-convex-main-result-UBU}), and in $\mathcal{W}_2$-distance under strong convexity (see Theorem~\ref{thm:convex-main-result-UBU}).

\section{A general framework for uniform-in-time error analysis of discretization schemes of SDEs}
\label{sec:framework}

This section is devoted to developing a theoretical framework for the uniform-in-time convergence analysis of general discretization schemes of general SDEs under a probability metric. 
%Motivated by the proof techniques in \textcolor{red}{...}, the proposed framework provides a convenient and systematic approach for deriving non-asymptotic error bounds for numerical schemes.  
%we begin by introducing the general class of SDEs under consideration and their numerical approximations.
%
%\subsection{SDEs and Their Numerical Approximations}
First, we consider the following Itô SDE in a general form:
\begin{align}\label{eq:general_sde}
    \mathrm{d}\mathbb{X}_t = f\left(\mathbb{X}_t\right)\mathrm{d}t + \sum_{k=1}^{q} g^k\left(\mathbb{X}_t\right)\mathrm{d}W_t^k,
    \quad
    t >  0,
    \quad
    \mathbb{X}_0 = x_0',
\end{align}
where \( f \colon \mathbb{R}^d \to \mathbb{R}^d \) is the drift coefficient,
\( g=(g^1,\ldots,g^q) \colon \mathbb{R}^d \to \mathbb{R}^{d\times q} \) is the diffusion coefficient  and
\( W_t=(W_t^1,\ldots,W_t^q) \) is a \(q\)-dimensional Wiener process.
For \(0\le s\le t\), we denote by
\[
    \mathbb{X}_{s,x}(t)
    =
    \mathbb{X}(s,x;t)
\]
the solution to \eqref{eq:general_sde} at time \(t\) starting from the initial value
\(x\) at time \(s\). Let \(\{P_t\}_{t\ge0}\) be the transition semigroup associated
with \eqref{eq:general_sde}. 
If \(\nu'=\mathcal{L}(x_0')\), then
\[
    \nu'P_t := \mathcal{L}(\mathbb{X}_{0,x_0'}(t)).
\]
Assume further that \eqref{eq:general_sde} admits an invariant measure, denoted by $\Pi$.

Let \(h>0\) be a uniform stepsize and set
\(t_n=nh\), \(n\geq0\). For the SDE \eqref{eq:general_sde},
we introduce a numerical approximation
$(\mathbb Y_n)_{n\in\mathbb N_0}$ defined by
\begin{equation}\label{eq:approx_general_sde}
    \mathbb Y_0=x_0',
    \qquad
    \mathbb Y_{n+1}
    =\mathbb Y_n+\Upsilon(t_n, \mathbb Y_n,h;\xi_n),
\end{equation}
where $\Upsilon\colon[0, +\infty) \times \R^d\times(0,\infty)
\times\R^q\to\R^d$ is a deterministic measurable function
and
$(\xi_n)_{n\geq0}$ is a sequence of i.i.d. random variables
defined on $(\Omega,\mathcal F,\mathbb P)$. 
Moreover, $\xi_n$ is independent of $\mathbb Y_0, \mathbb Y_1, \cdots, \mathbb Y_n$ for all $n\ge 0$.
% Consequently, for each fixed $h$, $(\mathbb Y_n)_{n\geq0}$
% is a time-homogeneous Markov chain.
% \begin{equation}
% \label{eq:approx_general_sde}
% \mathbb{Y}_k
%     :=
%     \mathbb{Y}(0,x_0';t_k)
%     =
%     \mathbb{Y}(t_{k-1},\mathbb{Y}_{k-1};t_k),
%     \quad 
%     k\ge1,
%     \quad
%     \mathbb{Y}_0=x_0'.
% \end{equation}

Write \(\{Q_k\}_{k\ge0}\) for the transition kernels associated with the numerical approximation \eqref{eq:approx_general_sde}. If \(\nu'=\mathcal{L}(x_0')\), then
\[
    \nu'Q_k := \mathcal{L}(\mathbb{Y}_k).
\]

In what follows, let \(\textit{Dist}\) denote
an arbitrary metric on \(\mathcal{P}(\mathbb{R}^d)\). For \(r\ge1\) and
\(\mu\in\mathcal{P}(\mathbb{R}^d)\), define

\[
    \mathcal{H}_r(\mu)
    :=
    \int_{\mathbb{R}^d}|x|^{2r}\,\mu(\mathrm{d}x).
\]
% \begin{redtext}
% Notice that, for every $r\geq1$,
% \begin{equation}\label{eq:equal-H-r}
% 1+\int_{\mathbb R^d}|x|^{2r}\,\mu(\mathrm dx)
% \leq
% \mathcal H_r(\mu)
% \leq
% 2^{r-1}
% \left(
% 1+\int_{\mathbb R^d}|x|^{2r}\,\mu(\mathrm dx)
% \right).
% \end{equation}
% Moreover, if $1\leq r\leq s$, then Lyapunov's inequality gives
% \[
% \mathcal H_r(\mu)
% \leq
% \mathcal H_s(\mu)^{r/s}.
% \]
% Consequently, all moment bounds stated below in terms of standard power moments also yield the corresponding bounds for
% $\mathcal H_r$.
% \end{redtext}

We are now in a position to state the fundamental conditions and establish the uniform-in-time convergence theorem.
In general, the long-time error analysis of a numerical method is considerably more challenging than its finite-time counterpart.
Several works have proposed a strategy based on the triangle inequality framework to address long-time error analysis \cite{mattingly2010convergence, mckean1967propagation, ye2024error, yang2025non}. 
%li2025ergodicity
Following this idea, we establish the long-time convergence result in a metric by applying a suitable triangle inequality to decompose the error into several terms that can be estimated separately.

First, the objective of long-time error analysis  is to derive a bound for $\textit{Dist} (\nu' Q_n, \Pi)$, $\forall n \in \mathbb{N}$. Applying the triangle inequality and considering a fixed time $T := n_1 h$, we have
\begin{equation}
\label{eq:tri-inequ-framework} 
\textit{Dist} (\nu' Q_n, \Pi)
\leq  
\underbrace{
\textit{Dist}  (\nu' 
Q_{n-n_{1}}
Q_{n_{1}},
\nu' 
Q_{n-n_{1}} 
P_{T} )  
}_{\text{Finite-time error}}
 +
\underbrace{
\textit{Dist}   ( \nu' 
Q_{n-n_{1}} 
P_{T}, 
\Pi)
}_{\text{Exponential ergodicity}},
\quad
n \geq n_1,
\end{equation}
where we shall treat the finite-time error between the solution of the SDE \eqref{eq:general_sde} and the numerical solution \eqref{eq:approx_general_sde}, and  need the exponential ergodicity of the SDE \eqref{eq:general_sde}. Next we require two conditions.

\begin{con}[\textbf{Finite-time error bound}]
\label{con:finite-error}
Let the uniform stepsize $h$ satisfy $h \leq h_0$ for some positive constant $h_0$.
For fixed $n_1\in \mathbb{N}$, set $T:=n_1h$. Then there exist positive constants $C_F, C_F',\zeta_1>0$ and $r\ge 1$, independent of $d$, such that for any 
initial distribution $\nu'$,
\begin{equation}
\sup_{0\leq n\leq n_1}
\textit{Dist}
(\nu' P_{nh}, \nu' Q_n) 
\le
C(T) 
\big( 
C_F
\mathcal{H}_{r}
(\nu')
+
C_F'
d^{r}
\big)^{\frac{1}{2}}
h^{\zeta_1},
\end{equation}
where $C(T)$ is a constant depending only on $T$ and nondecreasing in $T$.
\end{con}

\begin{con}[\textbf{Exponential ergodicity}]\label{con:exp-erg}
There exist positive constants $C_E,\lambda>0$, independent of $d$,   such that  for any 
initial distribution $\nu'$,
\begin{align}
\textit{Dist}
(\nu' P_t, \Pi) 
\leq  
C_E e^{-\lambda t} \, \textit{Dist}
(\nu', \Pi),
\quad 
\forall\,
t \geq 0.
\end{align}
\end{con}

The subsequent analysis relies on uniform-in-time moment bounds. To this end, we introduce the following uniform estimates for the exact solution of the SDE and its numerical approximation.

\begin{con}[\textbf{Uniform-in-time moment bounds for SDE}]\label{con:bound-solu}
There exist some constants $K_1$ and $r_1 \ge r $, independent of $d$, such that for any  $t\geq 0$  and  any initial distribution $\nu'$,
\begin{equation}
    \textit{Dist}
    (\nu' P_t, \delta_0) 
    \le 
    % \big(
    % \mathcal{H}_{r_1}
    % (\nu' P_t)
    % \big)^{\frac{1}{2}}
    % \le
    \big(
    K_1
    \big(
    \mathcal{H}_{r_1}
    (\nu')
    +
    d^{r_1}
    \big)
    \big)^{\frac{1}{2}}.
\end{equation}
\end{con}

\begin{con}[\textbf{Uniform-in-time moment bounds for the numerical scheme}]\label{con:bound-num}
Let the uniform stepsize $h$ satisfy $h \leq h_0$ for some positive constant $h_0$.
There exist constants $K_2$ and $r_2\ge r$, independent of $d$, such that  for any  $n\in \mathbb{N}_0$ and any    initial distribution $\nu'$,
\begin{redtext}
\begin{equation}
\begin{aligned}
\textit{Dist}
(\nu' Q_n, \delta_0) 
\le&
\big(
K_2
\big(
\mathcal{H}_{r_2}
(\nu')
+
d^{r_2}
\big)
\big)^{\frac{1}{2}}, \\
\mathcal{H}_{r_2}
(\nu' Q_n)
\le&
K_2
\big(\mathcal{H}_{r_2}
(\nu')
+
d^{r_2}
\big).
\end{aligned}
\end{equation}
\end{redtext}

\end{con}

Combining the finite-time error estimate, the exponential ergodicity of the exact dynamics and the above uniform-in-time moment bounds, we obtain the following uniform-in-time error estimate.

\begin{thm}\label{thm:rho-main-result}
Let Conditions \ref{con:finite-error}--\ref{con:bound-num} hold. Assume further that, for some dimension-independent constants $\sigma'>0$ and $r_3\geq r \vee r_1 \vee r_2$, 
the initial distribution $\nu'$ satisfies 
\[
\mathcal{H}_{r_3} (\nu') \leq \sigma' d^{r_3}.
\]
Then, for all $n\in\mathbb{N}_0$, it holds that
\begin{align}
\textit{Dist} (\nu' Q_n, \Pi) 
\leq 
\mathcal{K}_1 d^{\frac{r}{2}}
h^{\zeta_1} 
+ 
\mathcal{K}_2
d^{\frac{r_1\vee r_2}{2}} 
e^{-\lambda' nh}, 
\end{align}
where 
\begin{align}
\mathcal{K}_1
:=  &
2
C(\Theta)
\big(
C_F(K_2)^{\frac{r}{r_2}} 
(\sigma')^{\frac{r}{r_3}}
+
C_F(K_2)^{\frac{r}{r_2}}+C'_{F}
\big)^{\frac{1}{2}}
,  
& &
\mathcal{K}_2
:=  
2e
\Big(
K_2\big(
(\sigma')^{\frac{r_2}{r_3}}
+1\big)
+K_1
\Big)
^{\frac{1}{2}}
,
\\ 
\lambda'
:=  &
\tfrac{ \lambda }{\log C_E+1+\lambda h_0}
,
& &
\Theta
  :=     
\tfrac{\log C_E + 1}{\lambda}
+
h_0.
\end{align}
\end{thm}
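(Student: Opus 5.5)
We will prove the bound by iterating the triangle-inequality decomposition \eqref{eq:tri-inequ-framework} with a fixed block length $n_1$, chosen so that the exact semigroup contracts by a factor $e^{-1}$ over one block.

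\textbf{Choice of block length.} Let
\[
n_1:=\Big\lceil \tfrac{\log C_E+1}{\lambda h}\Big\rceil ,
\qquad T:=n_1h .
\]
Then $C_Ee^{-\lambda T}\le e^{-1}$, and $T\le \tfrac{\log C_E+1}{\lambda}+h_0=\Theta$. Since $C(\cdot)$ is nondecreasing, $C(T)\le C(\Theta)$.

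\textbf{One-block recursion.} Let $a_m:=\textit{Dist}(\nu'Q_{m},\Pi)$. For $n\ge n_1$, \eqref{eq:tri-inequ-framework} with $\mu:=\nu'Q_{n-n_1}$ gives two terms.
\begin{itemize}
\item By Condition \ref{con:exp-erg}, the ergodicity term is at most $C_Ee^{-\lambda T}a_{n-n_1}\le e^{-1}a_{n-n_1}$.
\item By Condition \ref{con:finite-error} with initial law $\mu$, the finite-time term is at most $C(\Theta)\big(C_F\mathcal H_r(\mu)+C_F'd^r\big)^{1/2}h^{\zeta_1}$.
\end{itemize}
To control $\mathcal H_r(\mu)$, use Lyapunov/Jensen together with Condition \ref{con:bound-num}:
\[
\mathcal H_r(\mu)\le \mathcal H_{r_2}(\mu)^{r/r_2}\le \big(K_2(\mathcal H_{r_2}(\nu')+d^{r_2})\big)^{r/r_2}.
\]
Next apply $\mathcal H_{r_2}(\nu')\le \mathcal H_{r_3}(\nu')^{r_2/r_3}\le (\sigma')^{r_2/r_3}d^{r_2}$ and subadditivity of $x\mapsto x^{r/r_2}$ (valid since $r\le r_2$). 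This yields
\[
\mathcal H_r(\mu)\le K_2^{r/r_2}\big((\sigma')^{r/r_3}+1\big)d^r .
\]
Hence the finite-time term is at most $\tfrac12\mathcal K_1 d^{r/2}h^{\zeta_1}$. The recursion is
\[
a_n\le e^{-1}a_{n-n_1}+\tfrac12\mathcal K_1d^{r/2}h^{\zeta_1}.
\]

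\textbf{Unrolling the recursion.} Write $n=jn_1+i$ with $0\le i<n_1$. Iterating gives
\[
a_n\le e^{-j}a_i+\tfrac12\mathcal K_1 d^{r/2}h^{\zeta_1}\sum_{l\ge0}e^{-l}.
\]
The geometric sum equals $\tfrac{e}{e-1}\le 2$, so the error part is at most $\mathcal K_1d^{r/2}h^{\zeta_1}$.

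\textbf{Bounding the remainder.} For $a_i$, $i<n_1$, use the triangle inequality through $\delta_0$:
\[
a_i\le \textit{Dist}(\nu'Q_i,\delta_0)+\textit{Dist}(\delta_0,\Pi).
\]
\begin{itemize}
\item Condition \ref{con:bound-num} together with the moment estimate above bounds the first term by $\big(K_2((\sigma')^{r_2/r_3}+1)d^{r_2}\big)^{1/2}$.
\item The second term is $\textit{Dist}(\Pi,\delta_0)$. Since $\Pi P_t=\Pi$, apply Condition \ref{con:bound-solu} with $\nu'=\Pi$. Alternatively, apply it to $\nu'P_t$ with $\nu'$ our initial law and let $t\to\infty$, using $\textit{Dist}(\nu'P_t,\Pi)\to0$ from Condition \ref{con:exp-erg}. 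This gives $\textit{Dist}(\Pi,\delta_0)\le (K_1(\mathcal H_{r_1}(\nu')+d^{r_1}))^{1/2}$, and $\mathcal H_{r_1}(\nu')\lesssim d^{r_1}$ as before.
\end{itemize}
Combining the two square roots via $\sqrt a+\sqrt b\le\sqrt{2(a+b)}$ and absorbing constants gives a bound of the form $\tfrac{1}{e}\mathcal K_2 d^{(r_1\vee r_2)/2}$.

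\textbf{Exponent of the decay.} The exponent satisfies $j\ge n/n_1-1$, so $e^{-j}\le e\,e^{-nh/T}$. Moreover
\[
T\le \frac{\log C_E+1}{\lambda}+h_0=\frac{\log C_E+1+\lambda h_0}{\lambda}=\frac{1}{\lambda'},
\]
so $e^{-j}\le e\,e^{-\lambda' nh}$. This produces the prefactor $e$ in $\mathcal K_2$.

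\textbf{Main obstacle.} The step requiring the most care is bounding $\textit{Dist}(\Pi,\delta_0)$. Condition \ref{con:bound-solu} is phrased for $\nu'P_t$, and $\mathcal H_{r_1}(\Pi)$ is not assumed finite. I would therefore go through the limit $t\to\infty$ with the given $\nu'$, relying on the triangle inequality and Condition \ref{con:exp-erg}. Once that is in place, only the constant bookkeeping (powers of $\sigma'$ and $K_2$, and the factors of $2$) remains.
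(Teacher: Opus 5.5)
Your proof is the paper's proof. You use the same block length $n_1=\lceil(\log C_E+1)/(\lambda h)\rceil$, the same one-block recursion with contraction factor $e^{-1}$, and the same Lyapunov/subadditivity bookkeeping for $\mathcal{H}_r(\nu'Q_{n-n_1})$. Your explicit unrolling (geometric sum $\le 2$, remainder $e^{-j}a_i$ with $e^{-j}\le e\,e^{-\lambda' nh}$) is what the paper gets by citing Lemma D.1 of \cite{yang2025non}. The initial block is handled through $\delta_0$ in both.

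\textbf{The gap: the stated $\mathcal{K}_2$ does not come out.} Running the limit $t\to\infty$ from your initial law $\nu'$ bounds $\textit{Dist}(\Pi,\delta_0)$ by $\big(K_1((\sigma')^{r_1/r_3}+1)d^{r_1}\big)^{1/2}$. The resulting bound $\sqrt{K_2((\sigma')^{r_2/r_3}+1)d^{r_2}}+\sqrt{K_1((\sigma')^{r_1/r_3}+1)d^{r_1}}$ is not dominated in general by $\tfrac1e\mathcal{K}_2\, d^{(r_1\vee r_2)/2}=2\big(K_2((\sigma')^{r_2/r_3}+1)+K_1\big)^{1/2}d^{(r_1\vee r_2)/2}$. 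For instance, take $(\sigma')^{r_1/r_3}>3$ and let $K_1\to\infty$ with the other constants fixed. So ``absorbing constants'' proves the theorem only with a larger, $\sigma'$-dependent $\mathcal{K}_2$. Your other option, Condition~\ref{con:bound-solu} with $\nu'=\Pi$, brings in the uncontrolled $\mathcal{H}_{r_1}(\Pi)$ instead.

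\textbf{The fix: run the same limit from $\delta_0$.}
\begin{itemize}
\item Since $\mathcal{H}_{r_1}(\delta_0)=0$, Condition~\ref{con:bound-solu} gives $\textit{Dist}(\delta_0P_t,\delta_0)\le (K_1d^{r_1})^{1/2}$ for every $t$.
\item Condition~\ref{con:exp-erg} gives $\textit{Dist}(\delta_0P_t,\Pi)\le C_Ee^{-\lambda t}\,\textit{Dist}(\delta_0,\Pi)\to 0$; the distance is finite because $\textit{Dist}$ is a metric.
\item Hence $\textit{Dist}(\Pi,\delta_0)\le (K_1d^{r_1})^{1/2}$, with no $\sigma'$.
\end{itemize}
Your inequality $\sqrt a+\sqrt b\le\sqrt{2(a+b)}$ then gives
$a_i\le\sqrt2\,\big(K_2((\sigma')^{r_2/r_3}+1)+K_1\big)^{1/2}d^{(r_1\vee r_2)/2}\le\tfrac1e\mathcal{K}_2\, d^{(r_1\vee r_2)/2}$.
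This is the $\sigma'$-free $K_1d^{r_1}$ term the paper uses; it writes the looser $(2K_2(\cdots)+4K_1d^{r_1})^{1/2}$. Everything else in your bookkeeping, including $\tfrac12\mathcal{K}_1$ per block and $T\le 1/\lambda'$, matches the stated constants.
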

\begin{proof}
\textbf{Proof.}
By the triangle inequality, for any $n\geq n_1$, $n, n_1\in\mathbb{N}$, we obtain
\begin{align}
\textit{Dist}
(\nu' Q_n,\Pi) 
\leq 
\textit{Dist}
(\nu' Q_{n-n_1}Q_{n_1},
\nu' Q_{n-n_1}P_{n_1h})
+
\textit{Dist}
(\nu' Q_{n-n_1}P_{n_1h},
\Pi).
\end{align}
These two terms on the right-hand side are estimated separately.
First, invoking Conditions \ref{con:finite-error} and \ref{con:bound-num} yields
\begin{align}
\textit{Dist} 
(\nu' Q_{n-n_1} Q_{n_1}, 
\nu' Q_{n-n_1}P_{n_1h})
\leq
& 
C(n_1 h)
\big(
C_F
\mathcal{H}_r(\nu' Q_{n-n_1})
+
C'_{F}
d^{r}
\big)^{\frac{1}{2}}
h^{\zeta_1} 
\nonumber
\\
\leq
& 
C(n_1 h)
\Big(
C_F
\big(
\mathcal{H}_{r_2}(\nu' Q_{n-n_1})
\big)^{\frac{r}{r_2}}
+
C'_{F}
d^{r}
\Big)^{\frac{1}{2}}
h^{\zeta_1} 
\nonumber
\\
\leq
&
C(n_1h)
\Big(
C_F
(K_2) ^{\frac{r}{r_2}}
\big(\mathcal{H}_{r_2}(\nu')\big)^{\frac{r}{r_2}}
+
\big(C_F (K_2)^{\frac{r}{r_2}}
+
C'_{F}\big)d^{r}
\Big)^{\frac{1}{2}}
h^{\zeta_1}.
\end{align}
In addition, Condition \ref{con:exp-erg} implies
\begin{align}
\textit{Dist}
(\nu'Q_{n-n_1}P_{n_1h},
\Pi) 
\leq 
C_E
e^{-\lambda n_1h}
\textit{Dist}
(\nu' Q_{n-n_1},
\Pi).
\end{align}
Given the stepsize $h>0$, we now choose a positive integer $n_1$:
\begin{align}
\label{eq:framework-n_1}
    n_1 = \left\lceil \tfrac{\log C_E + 1}{\lambda h} \right\rceil.
\end{align}
Since $h\leq h_0$, it follows from the definition of $n_1$ that
\begin{equation}
n_1  h 
\le
\big(
\tfrac{\log C_E + 1}{\lambda h}
+
1
\big)
h 
\le
\tfrac{\log C_E + 1}{\lambda}
+
h_0
=:
\Theta.
\end{equation}
Moreover, 
\begin{align}
    0 < C_E e^{-\lambda n_1 h} \leq e^{-1} < 1.
\end{align}
Therefore, using the monotonicity of $C(\cdot)$, the preceding estimates lead to
\begin{align}
    \textit{Dist}(\nu' Q_n,\Pi)
    \leq & 
    C(\Theta)
    \Big(
    C_F
    (K_2) ^{\frac{r}{r_2}}
    \big(\mathcal{H}_{r_2}(\nu')\big)^{\frac{r}{r_2}}
    +
    \big(C_F (K_2)^{\frac{r}{r_2}}
    +
    C'_{F}\big)d^{r}
    \Big)^{\frac{1}{2}}
    h^{\zeta_1}
    \nonumber \\ 
    &+
    e^{-1}
    \textit{Dist}
    \big(
        \nu' Q_{n-n_1},
        \Pi
    \big).
\end{align}
Applying Lemma D.1 of \cite{yang2025non} to the above recursive inequality, we deduce that
\begin{align}
    \textit{Dist}(\nu' Q_n , \Pi) 
    \leq&
    2 C(\Theta)
    \Big(
    C_F
    (K_2)^{\frac{r}{r_2}}
    \big(\mathcal{H}_{r_2}(\nu')\big)^{\frac{r}{r_2}}
    +
    \big(C_F 
    (K_2)^{\frac{r}{r_2}}
    +
    C'_{F}
    \big)d^{r}
    \Big)^{\frac{1}{2}}
    h^{\zeta_1}
    \nonumber\\
    &
    +
    e^{1-\frac{n}{n_1}}\sup_{k\in[n_1-1]_0}\textit{Dist}(\nu' Q_k,\Pi).
\end{align}
By the triangle inequality, together with Conditions \ref{con:bound-solu} and \ref{con:bound-num}, we have
\begin{align}
    \sup_{k\in [n_1-1]_0} \textit{Dist}(\nu' Q_{k},\Pi) 
    \leq&
    \sup_{k\geq 0} 
    \big(
    \textit{Dist}
    (\nu' Q_{k},\delta_0)
    +
    \textit{Dist}
    (\Pi,  \delta_0)
    \big)
    \nonumber \\
    \leq
    &
    \Big(
    2
    K_2
    \big(
    \mathcal{H}_{r_2}(\nu')
    +
    d^{r_2}
    \big)
    +
    4K_1
    d^{r_1}
    \Big)^{\frac{1}{2}}.
\end{align}
Furthermore, by \eqref{eq:framework-n_1} and $h\leq h_0$, we obtain
\begin{align}
    \tfrac{n}{n_1}\geq \tfrac{n}{\frac{\log C_E+1}{\lambda h}+1} \geq \tfrac{ \lambda n  h}{\log C_E+1+\lambda h_0}
    = : \lambda'nh.
\end{align}
Consequently,
\[
e^{-\frac{n}{n_1}}\leq e^{-\lambda'nh}.
\]
Combining the above estimates and 
using the moment assumption
$\mathcal{H}_{r_3}(\nu') \leq \sigma' d^{r_3}$, we arrive at
\begin{align}
    \textit{Dist}(\nu' Q_n , \Pi)     
    \leq
    &  
    2
    C(\Theta)
    \Big(
    C_F
    (K_2) ^{\frac{r}{r_2}}
    \big(\mathcal{H}_{r_2}(\nu')\big)^{\frac{r}{r_2}}
    +
    \big(C_F (K_2)^{\frac{r}{r_2}}
    +
    C'_{F}\big)d^{r}
    \Big)^{\frac{1}{2}}
    h^{\zeta_1}
    \\   &
    +
    2e
    \Big(
    K_2
    \big(
    \mathcal{H}_{r_2}(\nu')
    +
    d^{r_2}
    \big)
    +
    K_1
    d^{r_1}
    \Big)^{\frac{1}{2}}
    e^{-\lambda'nh}\nonumber
    \\
    \leq
    &
    2
    C(\Theta)
    \big(
    C_F(K_2)^{\frac{r}{r_2}} 
    (\sigma')^{\frac{r}{r_3}}
    +
    C_F(K_2)^{\frac{r}{r_2}}+C'_{F}
    \big)^{\frac{1}{2}}
    d^{\frac{r}{2}}
    h^{\zeta_1}
    \nonumber\\
    &
    +
    2e
    \Big(
    K_2\big(
    (\sigma')^{\frac{r_2}{r_3}}
    +1\big)
    +K_1
    \Big)
    ^{\frac{1}{2}}
    d^{\frac{r_1 \vee r_2}{2}}
    e^{-\lambda'nh}.
\end{align}
We thus get the desired assertion.
\end{proof}

\begin{prop} \label{prop:rho-mixing-time}
    Let all conditions of Theorem \ref{thm:rho-main-result} hold. To achieve a given accuracy tolerance $ \epsilon > 0$ under $\textit{Dist}$-distance, a required number of iterations is of order $\widetilde{\mathcal{O}}\Bigl(d^{\frac{r}{2\zeta_1}}\epsilon^{-\frac{1}{\zeta_1}}\Bigr)$.
\end{prop}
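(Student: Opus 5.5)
The plan is to read off the mixing-time complexity directly from the bound in Theorem \ref{thm:rho-main-result}. That bound splits the error into a discretization term and an exponentially decaying term, and we make each at most $\epsilon/2$.

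The discretization term is $\mathcal{K}_1 d^{\frac{r}{2}} h^{\zeta_1}$. We choose the stepsize
\[
h = \min\Big\{ h_0,\ \big(\tfrac{\epsilon}{2\mathcal{K}_1}\big)^{\frac{1}{\zeta_1}} d^{-\frac{r}{2\zeta_1}} \Big\},
\]
so that $\mathcal{K}_1 d^{\frac r2}h^{\zeta_1}\le \epsilon/2$. Here $\mathcal{K}_1$ does not depend on $d$, $h$ or $\epsilon$: it involves only $C(\Theta)$, where $\Theta$ is fixed through $h_0$, $C_E$ and $\lambda$, together with $C_F$, $C_F'$, $K_2$ and $\sigma'$.

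The transient term is $\mathcal{K}_2 d^{\frac{r_1\vee r_2}{2}} e^{-\lambda' nh}$. It is at most $\epsilon/2$ as soon as
\[
nh \ge \tfrac{1}{\lambda'}\log\big(2\mathcal{K}_2 d^{\frac{r_1\vee r_2}{2}}\epsilon^{-1}\big).
\]
Since $\lambda'$ depends only on $\lambda$, $C_E$ and $h_0$, the required time horizon is $\mathcal{O}(\log(d/\epsilon))$, i.e.\ logarithmic in $d$ and $\epsilon^{-1}$.

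Dividing this horizon by the chosen stepsize gives
\[
n = \Big\lceil \tfrac{1}{\lambda' h}\log\big(2\mathcal{K}_2 d^{\frac{r_1\vee r_2}{2}}\epsilon^{-1}\big)\Big\rceil = \mathcal{O}\Big(d^{\frac{r}{2\zeta_1}}\epsilon^{-\frac{1}{\zeta_1}}\log(d/\epsilon)\Big).
\]
This holds in the regime where the second entry of the minimum defining $h$ is active, which covers small $\epsilon$. When $h=h_0$ instead, $n$ is merely logarithmic and is dominated by the same expression. Absorbing the logarithm into $\widetilde{\mathcal{O}}$ yields the claim.

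There is no real obstacle. The only point requiring care is bookkeeping: checking that $\mathcal{K}_1$, $\mathcal{K}_2$ and $\lambda'$ are uniform in $h\le h_0$ and in $d$, so the choice of $h$ does not feed back into the constants. The exponent $r_1\vee r_2$ enters only through a logarithm and therefore does not affect the polynomial order.
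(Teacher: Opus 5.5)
Your proposal is correct and follows the same route as the paper. Like the paper, you split the bound of Theorem \ref{thm:rho-main-result} into the discretization term and the transient term, force each below $\epsilon/2$ by taking $h \asymp (\epsilon/(2\mathcal K_1))^{1/\zeta_1} d^{-r/(2\zeta_1)}$, and then take $n \asymp \log(2\mathcal K_2 d^{(r_1\vee r_2)/2}/\epsilon)/(\lambda' h)$; your explicit cap $h\le h_0$, and your choice of $h$ at the threshold rather than merely below it, give the upper bound on the iteration count a bit more cleanly than the paper, which only states the required inequalities.
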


\begin{proof}
    \textbf{Proof.}
    Given an error tolerance $\epsilon >0$, one can derive from Theorem \ref{thm:rho-main-result} that one may choose $n$ to be large enough and $h$ to be small enough such that
    \begin{align}\label{eq:framework-eps/2}
        \mathcal{K}_1d^{\frac{r}{2}}h^{\zeta_1} \leq \tfrac{\epsilon}{2}, \quad \mathcal{K}_2d^{\frac{r_1\vee r_2}{2}}e^{-\lambda' n h} \leq \tfrac{\epsilon}{2}.
    \end{align}
    It thus follows that 
    \begin{align}
        \textit{Dist}(\nu' Q_n, \Pi) \leq \epsilon.
    \end{align}
    Solving the first term of \eqref{eq:framework-eps/2} gives
    \begin{align}
        \tfrac{1}{h} \geq \tfrac{(2\mathcal{K}_1)^{1/\zeta_1}d^{r/2\zeta_1}}{\epsilon^{1/\zeta_1}}.
    \end{align}
    The second term of \eqref{eq:framework-eps/2} then indicates that
    \begin{align}
        n &\geq 
        \log (\tfrac{2\mathcal{K}_2d^{(r_1\vee r_2)/2}}{\epsilon})\tfrac{1}{\lambda' h} 
        \geq \tfrac{\zeta_1(2\mathcal{K}_1)^{1/\zeta_1}}{\lambda'}\tfrac{d^{r/(2\zeta_1)}}{\epsilon^{1/\zeta_1}}
        \log (\tfrac{(2\mathcal{K}_2)^{1/\zeta_1}d^{(r_1\vee r_2)/(2\zeta_1)}}{\epsilon^{1/\zeta_1}}) 
        = \widetilde{\mathcal{O}}\Bigl(d^{\frac{r}{2\zeta_1}}\epsilon^{-\frac{1}{\zeta_1}}\Bigr).
    \end{align}
    The proof is thus completed.
\end{proof}

\section{A unified ULMC formulation with uniform-in-time moment bounds and finite-time error estimates}
\label{sec:preliminary}

\subsection{Uniform-in-time moment bounds for ULD}

In this subsection, we establish uniform-in-time moment bounds for ULD. Before that, we present several standard assumptions on the potential $U$.

\begin{assumption}[\textbf{Gradient Lipschitz}]\label{as:Lip}
The potential is continuously differentiable with an $L$-Lipschitz gradient, meaning that there exists a dimension-independent constant $L > 0$ such that
\begin{align}\label{eq:lip}
    |\nabla U(x) - \nabla U(y)| \leq L |x - y|,
    \quad 
    \forall
    x, y \in
    \mathbb{R}^d.
    \end{align}
\end{assumption}

\begin{assumption}\label{as:U_0-bound}
The potential $U$ is non-negative and satisfies 
\begin{align}
|U(0)|
\leq 
L_1, 
\quad 
|\nabla U(0)|
\leq 
L_2 d^{\frac{1}{2}},
\end{align}
where $L_1, L_2 > 0$ are constants independent of dimension $d$.
\end{assumption}

Since the potential is defined only up to an additive constant, the non-negativity requirement in Assumption \ref{as:U_0-bound} can be imposed without loss of generality whenever \(U\) is bounded from below. Moreover, Assumptions \ref{as:Lip} and \ref{as:U_0-bound} imply
\begin{align}\label{eq:lip-2}
    |\nabla U(x)| 
    \leq 
    |\nabla U(x)-\nabla U(0)| + |\nabla U(0)|
    \leq 
    L |x| + L_2 d^{\frac{1}{2}},
    \quad 
    \forall
    x \in \mathbb{R}^d
    .
\end{align}

For overdamped Langevin dynamics (OLD) and overdamped Langevin Monte Carlo (OLMC), a dissipativity assumption is typically imposed to ensure long-time moment stability \cite{majka2020nonasymptotic,mou2022improved,yang2025non}. 
A similar condition is also standard for ULD and ULMC, which plays a crucial role in establishing uniform-in-time moment estimates \cite{Xu2024error}. 
%%%

\begin{assumption}[\textbf{Dissipativity}]\label{as:dissipativity}
    There exist two constants $\mu, \mu'>0$, independent of dimension $d$, such that the potential $U$ in \eqref{eq:ULD} obeys
    \begin{equation}
        \big\langle 
        x, \nabla U(x)
        \big\rangle
        \geq 
        \mu |x|^2-\mu'd,
        \quad 
        \forall
        x \in \mathbb{R}^d.
    \end{equation}
\end{assumption}

For ULD, proving uniform-in-time moment bounds is more delicate than in the overdamped case.
%, because the noise acts directly only on the velocity variable (i.e. degenerate noise). 
%%
Usually, one needs to construct a suitable Lyapunov function that couples the position and velocity variables. 
Motivated by the Lyapunov structure introduced in \cite{mattingly2002ergodicity}, see also  \cite{wu2001large, eberle2019couplings},
%to prove uniform-in-time moment bounds for ULD, 
we define the Lyapunov function $\mathcal{V}: \mathbb{R}^{2d} \to \mathbb{R}$ by
\begin{equation}
\label{eq:lya-def}
    \mathcal{V}(x, v) := \alpha U(x) + 
    \tfrac{\gamma^2}{4}
    |x 
    +
    \gamma^{-1}
    v
    |^2
    +
\tfrac{1}{4}
    |v
    |^2
    -\tfrac{\gamma^2\vartheta}{4}
    |x|^2
    ,
\end{equation}
where the constant $\vartheta$ satisfies $0 < \vartheta \leq \frac{1}{4} \wedge \frac{2\alpha\mu}{4\alpha L + \gamma^2}$. 
%This Lyapunov function was originally introduced in \cite{mattingly2002ergodicity} and later used in \cite{eberle2019couplings, schuh2024global} to prove uniform-in-time moment bounds for ULD.
Since $U$ is non-negative, it follows directly from \eqref{eq:lya-def} that
\begin{align}\label{eq:lya-geq}
    0 \leq \max\left\{ \tfrac{1 - 2\vartheta}{4(1 - \vartheta)} |v|^2,\ \tfrac{\gamma^2}{8}(1 - 2\vartheta) |x|^2 \right\} \leq \mathcal{V}(x, v),\quad 
    \forall \, v, x \in \mathbb{R}^d.
\end{align}

Following the argument in \cite[Lemma 3.1]{Xu2024error}, under Assumptions \ref{as:U_0-bound}, \ref{as:Lip} and \ref{as:dissipativity}, for each $p \geq 1$, there exists a positive dimension-independent constant $M_1(p)$, depending on $p$, such that for any $t \geq 0$ and any initial random vector $(X_0^\top,V_0^\top)^\top$,
\begin{align}\label{eq:ULD-moment-lya-bound}
\mathbb{E}
\Big[
\mathcal{V}
\big(
X_t, V_t
\big)^p
\Big] 
\leq 
e^{-\vartheta \gamma t} 
\mathbb{E}
\Big[
\mathcal{V}
\big(
X_0, V_0
\big)^p
\Big]
+ M_1(p) d^p.
\end{align}
Moreover, by \cite[Lemma 2]{raginsky2017non}, Assumptions \ref{as:Lip}, \ref{as:U_0-bound} and  \ref{as:dissipativity} imply the following quadratic growth bound for $U$:
\begin{align}\label{eq:u-x-bound}
0
\leq 
    U(x)\leq L|x|^2+\tfrac{L^2_2d}{2L}+L_1\leq L|x|^2+\big(\tfrac{L^2_2}{2L}+L_1\big)d .
\end{align}
Combining \eqref{eq:ULD-moment-lya-bound} with the coercivity estimate \eqref{eq:lya-geq} and the growth bound \eqref{eq:u-x-bound}, we immediately obtain the following lemma.
\begin{lem}[\textbf{Uniform-in-time moment bounds for ULD}]
\label{lem:ULD-moment bound}
Let Assumptions \ref{as:Lip}-\ref{as:dissipativity} hold.  Then there exist constants $\bar{K}_1(p):=C(\gamma, L, \alpha, L_1, L_2, \mu, \mu', p)$, independent of $d$, such that
% \begin{equation}\label{eq:moment-bound}
% \sup_{ t \geq 0}
% \Big(
% \mathbb{E}
% \big[
% | X_t |^2
% \big]
% +
% \mathbb{E}
% \big[
% | V_t |^2
% \big]
% \Big)
% \le
% \bar{K}_1
% \Big(
% \mathbb{E}
% \big[
% | X_0 |^2
% \big]
% +
% \mathbb{E}
% \big[
% | V_0 |^2
% \big]
% +
% d
% \Big)
% ,
% \end{equation}
\begin{redtext}
\begin{equation}\label{eq:moment-bound-4}
\sup_{ t \geq 0}
\mathbb{E}
\Big[
\big(
| X_t |^2
+
| V_t |^2
\big)^{p}
\Big]
\le
\bar K_1(p)
\left(
\mathbb E
\left[
\left(
|X_0|^2+|V_0|^2
\right)^p
\right]
+d^p
\right)
, \quad 
\forall p \in \mathbb{N}.
\end{equation}
\end{redtext}
\end{lem}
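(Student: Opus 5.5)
The plan is to sandwich $|x|^2+|v|^2$ between multiples of the Lyapunov function $\mathcal{V}$, and then use the Lyapunov moment estimate \eqref{eq:ULD-moment-lya-bound}.

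\emph{Lower bound.} From the coercivity estimate \eqref{eq:lya-geq}, since $\vartheta\le\frac14$, both $\tfrac{1-2\vartheta}{4(1-\vartheta)}$ and $\tfrac{\gamma^2}{8}(1-2\vartheta)$ are bounded below by positive constants depending only on $\gamma$ (for instance $\tfrac16$ and $\tfrac{\gamma^2}{16}$). Summing the two bounds in \eqref{eq:lya-geq} gives
\[
|x|^2+|v|^2\le c_0\,\mathcal{V}(x,v), \qquad c_0:=2\big(6\vee 16\gamma^{-2}\big).
\]

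\emph{Upper bound.} Insert the quadratic growth bound \eqref{eq:u-x-bound} into the definition \eqref{eq:lya-def}. Drop the negative term $-\tfrac{\gamma^2\vartheta}{4}|x|^2$, and use $|x+\gamma^{-1}v|^2\le 2|x|^2+2\gamma^{-2}|v|^2$. This yields
\[
\mathcal{V}(x,v)\le c_1\big(|x|^2+|v|^2\big)+c_2 d,
\]
with $c_1=\alpha L+\tfrac{\gamma^2}{2}+\tfrac34$ and $c_2=\alpha\big(\tfrac{L_2^2}{2L}+L_1\big)$. Both constants are independent of $d$.

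\emph{Combining.} Raise the lower bound to the power $p$, take expectations, and apply \eqref{eq:ULD-moment-lya-bound} with $e^{-\vartheta\gamma t}\le1$:
\[
\mathbb{E}\big[(|X_t|^2+|V_t|^2)^p\big]\le c_0^p\Big(\mathbb{E}\big[\mathcal{V}(X_0,V_0)^p\big]+M_1(p)d^p\Big).
\]
Next, control the initial term through the upper bound and the elementary inequality $(a+b)^p\le 2^{p-1}(a^p+b^p)$:
\[
\mathbb{E}\big[\mathcal{V}(X_0,V_0)^p\big]\le 2^{p-1}\Big(c_1^p\,\mathbb{E}\big[(|X_0|^2+|V_0|^2)^p\big]+c_2^p d^p\Big).
\]
Collecting terms gives \eqref{eq:moment-bound-4} with
\[
\bar K_1(p)=c_0^p\big(2^{p-1}(c_1^p+c_2^p)+M_1(p)\big),
\]
uniformly in $t\ge0$.

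The only substantive ingredient is \eqref{eq:ULD-moment-lya-bound}, which is taken from \cite{Xu2024error} (via Itô's formula applied to $\mathcal{V}^p$ together with dissipativity). Given that result, the remaining work is bookkeeping: checking that $c_0$, $c_1$, $c_2$ do not depend on $d$, and that all $d$-dependence enters only through the additive $d$ in \eqref{eq:u-x-bound} and the $d^p$ term in \eqref{eq:ULD-moment-lya-bound}. If the initial moment is infinite the inequality holds trivially, so no integrability issue arises.
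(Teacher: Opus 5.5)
Your proposal is correct and follows the same route as the paper. The paper obtains the lemma in one line by combining the Lyapunov moment estimate \eqref{eq:ULD-moment-lya-bound} with the coercivity bound \eqref{eq:lya-geq} and the quadratic growth bound \eqref{eq:u-x-bound}; your constants $c_0$, $c_1$, $c_2$ (including the lower bound $\tfrac16$, valid uniformly for $\vartheta\le\tfrac14$) simply make that bookkeeping explicit.
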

\begin{redtext}
In what follows, for simplicity of notation, we write
$\bar K_1:=\bar K_1(1) \vee \bar K_1(2)$. 
\end{redtext}

%The above continuous-time estimate clarifies the Lyapunov structure of ULD and motivates the corresponding stability analysis for randomized ULMC schemes.

\subsection{A unified predictor-corrector formulation with uniform-in-time moment bounds and finite-time error estimates}\label{subsec:uni-process-uni-bounds}
In what follows, we introduce a unified formulation that includes several discretizations of ULD as special cases. 
%The uniform-in-time moment bounds and finite-time error estimates established for this scheme then serve as the main ingredients for the subsequent long-time error analysis.
%%
%%
Precisely, for $\tau_k \colon \Omega_\tau \to [0,1]$ independent of the Brownian motion, $\ell_1,\ell_2\in\{0,1\}$, we propose the following unified predictor-corrector scheme, abbreviated as U-ULMC: 
\begin{equation}
\label{eq:uni-process}
\left\{
\begin{aligned}
&\bar{X}_{k+\tau_k}
=
\bar{X}_k
+ 
\tau_k h\Phi_1(\tau_kh)\bar{V}_k
-
\ell_1 \alpha \gamma^{-1}
\tau_k h 
(1-\Phi_2(\tau_k h))
\nabla U (\bar{X}_k)
+
\ell_2\sqrt{2\gamma \alpha}
\Delta W_{k+\tau_k}^{\Phi_3,1}, \\
&\bar{X}_{k+1}
=
\bar{X}_k
+
h\Phi_1(h)\bar{V}_k
-
\alpha 
h(h-\tau_k h)\Phi_2(h-\tau_kh)\nabla U(\bar{X}_{k+\tau_k})
+
\sqrt{2\gamma \alpha}
\Delta W_{k+1}^{\Phi_3,1},\\
&\bar{V}_{k+1}
=
\Psi_1(h)\bar{V}_k
-
h\Psi_2(h-\tau_k h)\alpha
\nabla U(\bar{X}_{k+\tau_k})
+
\sqrt{2\gamma\alpha}
\Delta W_{k+1}^{\Psi_3,2},
\end{aligned}
\right.
\end{equation}
\begin{redtext}
with $(\bar X_0, \bar V_0)=(X_0, V_0)$. This formulation \eqref{eq:uni-process} encompasses several
Euler-type, randomized, and UBU-type discretizations through different
choices of the intermediate time $\tau_k$, the switches
$\ell_1,\ell_2$, and the coefficient functions $\Phi_i$ and $\Psi_i$, summarized in
Table~\ref{tab:unified-ulmc}. %To describe the Euler-type scheme,
%we additionally define
%\begin{align}
%\chi(r)
%:=
%\tfrac{1-\phi(r)}{\gamma r},
%\quad \forall r\in (0,\infty);
%\qquad
%\phi(0):=1,\qquad \psi(0):=1,\qquad 
%\chi(0):=\tfrac12.
%\nonumber
%\end{align}
% The coefficient functions displayed in the table are evaluated at the
% arguments prescribed by \eqref{eq:uni-process}.

When taking $(\tau_k)_{k\ge 0}\overset{\text{i.i.d.}}{\sim}\mathcal U(0,1)$,
the resulting schemes are the randomized methods including RMM, ALUM, LC-RIs. Instead, fixing $(\tau_k)_{k\ge 0}\equiv1/2$ yields the UBU-type family. In this case, $\ell_1=0,\ell_2=1$ recovers
the classical UBU scheme, while $\ell_1=\ell_2=0$ gives LC-UBUIs. 
In addition, taking $\tau_k \equiv 0$ reduces the unified scheme to the Euler-type schemes such as EM and EE.

Table~\ref{tab:unified-ulmc} records the algebraic coverage of the
U-ULMC formulation. The regularity and coefficient-approximation
conditions imposed in the subsequent analysis are specified separately
for the randomized and deterministic-midpoint regimes.
\end{redtext}

\begin{table}[!htb]
    \centering
    \caption{U-ULMC \eqref{eq:uni-process} with different choices of parameter and coefficient.}
    \label{tab:unified-ulmc}
    \renewcommand{\arraystretch}{1.25}
    \setlength{\tabcolsep}{3.5pt}
    \small
    \begin{tabular*}{\textwidth}{@{\extracolsep{\fill}} lccccccccc @{}}
        \toprule
        \multirow{2}{*}{Method} & \multirow{2}{*}{$\tau_k$} & \multirow{2}{*}{$\ell_1$} & \multirow{2}{*}{$\ell_2$} & \multicolumn{3}{c}{Coefficient $\Phi$} & \multicolumn{3}{c}{Coefficient $\Psi$} \\
        \cmidrule(lr){5-7} \cmidrule(lr){8-10}
        & & & & $\Phi_1$ & $\Phi_2$ & $\Phi_3$ & $\Psi_1$ & $\Psi_2$ & $\Psi_3$ \\
        \midrule
        \multicolumn{10}{l}{\textit{Euler-type schemes}} \\
        \addlinespace[2pt]
        EM  & $0$ & $0$ & $0$ & $1$ & $0$ & $0$ & $\psi^{T}_1$ & $1$ & $1$ \\
        EE  & $0$ & $0$ & $0$ & $\phi$ & $\chi$ & $\phi$ & $\psi$ & $\phi$ & $\psi$ \\
        \midrule
        \multicolumn{10}{l}{\textit{Randomized schemes}} \\
        \addlinespace[2pt]
        RMM     & $\mathcal{U}(0,1)$ & $1$ & $1$ & $\phi$ & $\phi$ & $\phi$ & $\psi$ & $\psi$ & $\psi$ \\
        ALUM    & $\mathcal{U}(0,1)$ & $0$ & $1$ & $\phi$ & $\phi$ & $\phi$ & $\psi$ & $\psi$ & $\psi$ \\
        LC-REI  & $\mathcal{U}(0,1)$ & $0$ & $0$ & $\phi$ & $\phi$ & $\phi$ & $\psi$ & $\psi$ & $\psi$ \\
        LC-RTI  & $\mathcal{U}(0,1)$ & $0$ & $0$ & $\phi^{T}_1$ & $\phi^{T}_0$ & $\phi^{T}_0$ & $\psi^{T}_2$ & $\psi^{T}_1$ & $\psi^{T}_1$ \\
        LC-RPI  & $\mathcal{U}(0,1)$ & $0$ & $0$ & $\phi^{P}_{0,1}$ & $\phi^{P}_{0,0}$ & $\phi^{P}_{0,0}$ & $\psi^{P}_{1,1}$ & $\psi^{P}_{0,1}$ & $\psi^{P}_{0,1}$ \\
        \midrule
        \multicolumn{10}{l}{\textit{UBU-type schemes}} \\
        \addlinespace[2pt]
        UBU           & $\frac{1}{2}$ & $0$ & $1$ & $\phi$ & $\phi$ & $\phi$ & $\psi$ & $\psi$ & $\psi$ \\
        LC-UBU        & $\frac{1}{2}$ & $0$ & $0$ & $\phi$ & $\phi$ & $\phi$ & $\psi$ & $\psi$ & $\psi$ \\
        LCT-UBU & $\frac{1}{2}$ & $0$ & $0$ & $\phi^{T}_1$ & $\phi^{T}_0$ & $\phi^{T}_0$ & $\psi^{T}_2$ & $\psi^{T}_1$ & $\psi^{T}_1$ \\
        LCP-UBU & $\frac{1}{2}$ & $0$ & $0$ & $\phi^{P}_{0,1}$ & $\phi^{P}_{0,0}$ & $\phi^{P}_{0,0}$ & $\psi^{P}_{1,1}$ & $\psi^{P}_{0,1}$ & $\psi^{P}_{0,1}$ \\
        \bottomrule
    \end{tabular*}
\end{table}

% This unified scheme covers ALUM \cite{hu2021optimal}, RMM \cite{shen2019randomized,yu2024langevin} and our proposed schemes \eqref{eq:M-RSULMC}, \eqref{eq:ex-free-rs-1}, \eqref{eq:PRUL} on different choices of $\ell_1,\ell_2$ and coefficient functions (see Table \ref{tab:unified-ulmc}).
% %
% \begin{table}[!tbp]
% \centering
% \caption{Choices of parameters and coefficient functions in the U-ULMC scheme.}
% \label{tab:unified-ulmc}
% \renewcommand{\arraystretch}{1.25}
% \begin{tabular}{c|cc|ccc|ccc}
% \hline
% Methods 
% & $\ell_1$ 
% & $\ell_2$ 
% & $\Phi_1$ 
% & $\Phi_2$ 
% & $\Phi_3$ 
% & $\Psi_1$ 
% & $\Psi_2$ 
% & $\Psi_3$ \\
% \hline
% RMM 
% & $1$ 
% & $1$ 
% & $\phi$ 
% & $\phi$ 
% & $\phi$ 
% & $\psi$ 
% & $\psi$ 
% & $\psi$ \\
% ALUM 
% & $0$ 
% & $1$ 
% & $\phi$ 
% & $\phi$ 
% & $\phi$ 
% & $\psi$ 
% & $\psi$ 
% & $\psi$ \\
% LC-REI 
% & $0$ 
% & $0$ 
% & $\phi$ 
% & $\phi$ 
% & $\phi$ 
% & $\psi$ 
% & $\psi$ 
% & $\psi$ \\
% LC-RTI 
% & $0$ 
% & $0$ 
% & $\phi^T_1$ 
% & $\phi^T_0$ 
% & $\phi^T_0$ 
% & $\psi^T_2$ 
% & $\psi^T_1$ 
% & $\psi^T_1$ \\
% LC-RPI 
% & $0$ 
% & $0$ 
% & $\phi^P_{0,1}$ 
% & $\phi^P_{0,0}$ 
% & $\phi^P_{0,0}$ 
% & $\psi^P_{1,1}$ 
% & $\psi^P_{0,1}$ 
% & $\psi^P_{0,1}$ \\
% \hline
% \end{tabular}
% \end{table}
%%
%%
%
This unified formulation allows us to develop a common analysis for the considered underdamped LMC methods, rather than treating each method separately. 
To this end, we impose the following mild consistency condition on the coefficient functions appearing in \eqref{eq:uni-process}. 
%
% The randomized schemes listed in
% Table~\ref{tab:unified-ulmc} satisfy
% Assumption~\ref{as:uni-approxi}, and the UBU-type schemes satisfy
% Assumption~\ref{as:approx-UBU}.

\begin{redtext}
%%%%%%%Original Assumption for numerical bounds%%%%%%%
%%%%%%%%%%%%%%%%%%%%%%%%%%%%%%%%%%%%
% \begin{assumption}
% \label{as:coeff-ULMC-bound}
% For any $h\in(0,1\wedge\gamma^{-1}]$ and for all $r\in[0,h]$, the coefficient functions in \eqref{eq:uni-process} satisfy
% \begin{equation}\label{eq:U-ULMC-co-bound}
% |\Psi_i(r)-\psi(r)|\vee
% |\Phi_i(r)-\phi(r)|
% \leq \kappa ,
% \quad
% \forall\,
% i\in \{1,2,3\};
% \end{equation}
% \begin{equation}\label{eq:U-ULMC-approx-1}
% |\Phi_1(r)-\phi(r)|
% \leq \kappa r,
% \qquad
% |\Psi_1(r)-\psi(r)|
% \leq \kappa r^2,
% \qquad
% |\Psi_2(r)-\psi(r)|
% \leq \kappa r,
% \end{equation}
% where $\kappa>0$ is independent of $h$, $r$, and the dimension.

% \end{assumption}
%%%%%%%%%%%%%%%%%%%%%%%%%%%%%%

\begin{assumption}[\textbf{Consistency conditions on coefficients}]
\label{as:coeff-approx-general}
Let \(\boldsymbol{\eta}=(\eta_1,\dots,\eta_6)\) be a vector with each \(\eta_i\ge 0\).
% $$
% \boldsymbol{\eta}
% :=
% (\eta_1,\eta_2,\eta_3,\eta_4,\eta_5,\eta_6)
% \in [0,\infty)^6.
% $$
For any uniform stepsize
\(h\in(0,1\wedge\gamma^{-1}]\) and all \(r\in(0,h]\), assume that the coefficient functions in \eqref{eq:uni-process} satisfy
\begin{equation}
\label{eq:general-coeff-approx}
\begin{aligned}
|\Phi_1(r)-\phi(r)|
&\leq \kappa r^{\eta_1},
&
|\Phi_2(r)-\phi(r)|
&\leq \kappa r^{\eta_2},
&
|\Phi_3(r)-\phi(r)|
&\leq \kappa r^{\eta_3},
\\
|\Psi_1(r)-\psi(r)|
&\leq \kappa r^{\eta_4},
&
|\Psi_2(r)-\psi(r)|
&\leq \kappa r^{\eta_5},
&
|\Psi_3(r)-\psi(r)|
&\leq \kappa r^{\eta_6},
\end{aligned}
\end{equation}
where \(\kappa\geq0\) is independent of \(h\), \(r\), and the dimension $d$.

\end{assumption}

Different choices of \(\boldsymbol{\eta}\) will be used for different purposes in the subsequent analysis. In particular, the following four choices are usually used for the four regimes considered in this paper:
\begin{equation}
\label{eq:eta-regimes}
\begin{aligned}
\boldsymbol{\eta}_{\mathrm{M}}
&= (1,0,0,2,1,0),
&\qquad
\boldsymbol{\eta}_{\mathrm{E}}
&= (1,0,0,2,1,1),
\\
\boldsymbol{\eta}_{\mathrm{R}}
&= \left( \tfrac32,\tfrac12,1, \tfrac52,\tfrac32,2 \right),
&\qquad
\boldsymbol{\eta}_{\mathrm{U}}
&= (2,1,1,3,2,2).
\end{aligned}
\end{equation}
% corresponding, respectively, to the coefficient conditions required for the uniform-in-time moment bounds, Euler-type schemes, randomized schemes, and UBU-type schemes.

As
\(
\boldsymbol{\eta}_{\mathrm{M}}
\leq
\boldsymbol{\eta}_{\mathrm{E}}
\leq
\boldsymbol{\eta}_{\mathrm{R}}
\leq
\boldsymbol{\eta}_{\mathrm{U}}
\)
componentwise, the corresponding coefficient conditions form a nested hierarchy whenever \(h\leq1\).

All schemes listed in Table~\ref{tab:unified-ulmc} satisfy Assumption~\ref{as:coeff-approx-general} with $\boldsymbol{\eta}=\boldsymbol{\eta}_{\mathrm M}$ for a suitable constant $\kappa$. 
More specifically, this holds with $\boldsymbol{\eta}=\boldsymbol{\eta}_{\mathrm E}$ for the Euler-type schemes, with $\boldsymbol{\eta}=\boldsymbol{\eta}_{\mathrm R}$ for the randomized schemes, and with $\boldsymbol{\eta}=\boldsymbol{\eta}_{\mathrm U}$ for the UBU-type schemes.
Indeed, one may take \(\kappa=0\) for RMM, ALUM, LC-REI, UBU, and LC-UBU; 
$\kappa
=
1\vee
\gamma\vee
\frac{\gamma^2}{2}\vee
\frac{\gamma}{2}
$
for EM and EE; 
$\kappa
=
\frac{\gamma}{2}\vee
\frac{\gamma^2}{2}\vee
\frac{\gamma^3}{6}
$ 
for the Taylor-based schemes;
and 
$\kappa
=
\frac{\gamma}{2}\vee
\frac{\gamma^2}{2}\vee
\frac{\gamma^3}{12}
$ for the Padé-based schemes. Thus, a single sufficiently large constant \(\kappa\), depending only on \(\gamma\), can be chosen uniformly for all schemes considered here.

Moreover, since $h\leq \gamma^{-1}$, the exponential functions satisfy
$|\psi(r)|\leq 1$ and $|\phi(r)|\leq 1$. Consequently, Assumption \ref{as:coeff-approx-general} implies that
\begin{align}\label{eq:bound-Psi-Phi}
    |\Psi_i(r)|\leq (1+\kappa),
    \quad |\Phi_i(r)| \leq (1+\kappa), \quad  i \in 
    \{1,2,3\},
    \quad \forall \, r\in(0,h\wedge\gamma^{-1}].
\end{align}

\end{redtext}

%%
%%

% For the exponential-type schemes, Assumption \ref{as:uni-approxi} holds with $\kappa=0$;
% for the Taylor-based and Pad\'e-based schemes, Assumption \ref{as:uni-approxi} holds with 
% $\kappa
% =
% \max\left\{
% \frac{\gamma}{2},
% \frac{\gamma^2}{2},
% \frac{\gamma^3}{6}
% \right\}
% $ 
% and 
% $\kappa
% =
% \max\left\{
% \frac{\gamma}{2},
% \frac{\gamma^2}{2},
% \frac{\gamma^3}{12}
% \right\}
% $, respectively.
%

%%

%

%Throughout the remainder of this paper, we assume that the U-ULMC scheme satisfies Assumption \ref{as:uni-approxi}. 
We next establish uniform-in-time moment bounds for U-ULMC, which are crucial in later long-time error estimates.
To this end, we introduce a modified Lyapunov function  $\bar{\mathcal{V}} \colon \mathbb{R}^{2d} \to \mathbb{R}$: 
\begin{align}\label{eq:Lyapunov-modified}
\bar{\mathcal{V}}(x,v)
=
\alpha U (x)
+
\tfrac{\gamma^2}{4}
|x+\gamma^{-1}v|^2
+
\tfrac{1}{4}
|v|^2.
    \end{align}
Thanks to \eqref{eq:u-x-bound}, the Cauchy-Schwarz inequality and the Young inequality, one can easily obtain
\begin{align}\label{eq:ineq_lya_m}
    0\leq C_1(|x|^2+|v|^2)\leq \bar{\mathcal{V}}(x,v)\leq C_2(|x|^2+|v|^2+d),
\end{align}
where
\begin{align}
\label{eq:const-C1-and C2}
C_1:
=
\tfrac{\gamma^2}{16}
\wedge
\tfrac{1}{8}; 
\quad 
C_2
:=
(\tfrac{\gamma^2}{2}+\alpha L)
\vee
\tfrac{3}{4}
\vee
\alpha(\tfrac{L_2^2}{2L}+L_1).
\end{align}

The next propositions present uniform-in-time second- and fourth-moment bounds of U-ULMC.
\begin{prop} [\textbf{Uniform-in-time second-moment bounds of U-ULMC}]
\label{prop:numerical-bound}
Let Assumptions \ref{as:Lip}--
\ref{as:dissipativity} hold and suppose that
Assumption \ref{as:coeff-approx-general} is satisfied with
$\boldsymbol{\eta}=\boldsymbol{\eta}_{\mathrm M}$.
Let $\{(\bar X_k,\bar V_k)\}_{k\geq0}$ be generated by the U-ULMC scheme \eqref{eq:uni-process}. 
Assume that the uniform stepsize $h$ satisfies
\begin{equation}
\label{eq:condition-of-stepsize}
h 
\leq
h_\star
:=
1
\wedge \gamma^{-1}
\wedge L^{-1}
\wedge
\tfrac{\alpha \gamma  \mu}{4\M^x}
\wedge
\tfrac{\gamma}{4\M^v}
\wedge
\tfrac{2C_2}{(\alpha\mu \wedge 1) \gamma}
,
\end{equation}
where 
%$C_2$ is given in  \eqref{eq:const-C1-and C2} and 
$\M^v,\M^x$ are defined in \eqref{eq:M-total-constants}.
Then there exists a dimension-independent constant $\bar K_2(1)>0$ such that, for all $k\in\mathbb N_0$,
\begin{equation}
\label{prop-eq:uniform-moment-bound-compact}
\mathbb E
\big[
|\bar X_k|^2
\big]
+
\mathbb E
\big[
|\bar V_k|^2
\big]
\leq
\bar{K}_2(1)
\left(
\mathbb E
\big[
| X_0|^2
\big]
+
\mathbb E
\big[
| V_0|^2
\big]
+
d
\right).
\end{equation}
\end{prop}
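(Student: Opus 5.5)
The plan is a one-step Lyapunov drift argument for the modified function $\bar{\mathcal V}$ of \eqref{eq:Lyapunov-modified}. Concretely, we aim to show that for $h\le h_\star$
\begin{equation*}
\mathbb E\big[\bar{\mathcal V}(\bar X_{k+1},\bar V_{k+1})\big]
\le
\big(1-c\gamma h\big)\mathbb E\big[\bar{\mathcal V}(\bar X_k,\bar V_k)\big]
+ C h d ,
\end{equation*}
with dimension-independent constants $c,C>0$. Iterating this gives $\sup_k \mathbb E[\bar{\mathcal V}(\bar X_k,\bar V_k)] \le \mathbb E[\bar{\mathcal V}(X_0,V_0)] + Cd/(c\gamma)$. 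The two-sided bound \eqref{eq:ineq_lya_m} then converts this into \eqref{prop-eq:uniform-moment-bound-compact}, with $\bar K_2(1)$ of order $(C_2\vee C/(c\gamma))/C_1$.

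For the one-step estimate, write $\bar X_{k+1}=\bar X_k+\Delta X$ and $\bar V_{k+1}=\bar V_k+\Delta V$, and handle each part of $\bar{\mathcal V}$ separately.

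\emph{Potential part.} Use the descent-type inequality from Assumption \ref{as:Lip}:
\begin{equation*}
U(\bar X_{k+1})\le U(\bar X_k)+\langle\nabla U(\bar X_k),\Delta X\rangle+\tfrac L2|\Delta X|^2 .
\end{equation*}

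\emph{Quadratic parts.} Expand $|\bar X_{k+1}+\gamma^{-1}\bar V_{k+1}|^2$ and $|\bar V_{k+1}|^2$ exactly.

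\emph{Noise terms.} Take conditional expectations given $\mathcal F_{t_k}$ and $\tau_k$. The Itô-integral increments $\Delta W^{\Phi_3,1}_{k+1}$ and $\Delta W^{\Psi_3,2}_{k+1}$ are centred and independent of $(\bar X_k,\bar V_k,\tau_k)$, so they contribute only through second moments. By \eqref{eq:bound-Psi-Phi} these are $\mathcal O(h^3 d)$ and $\mathcal O(hd)$ respectively. The $\Psi_3$ term gives the $Chd$ source; the $\mathcal O(h^3d)$ term and the smaller cross terms are absorbed there too.

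\emph{Deterministic drift.} Replace the coefficients by their exact counterparts, with $\Phi_1(h)=\phi(h)+\mathcal O(h)=1-\tfrac{\gamma h}{2}+\mathcal O(h)$ and $\Psi_1(h)=\psi(h)+\mathcal O(h^2)=1-\gamma h+\mathcal O(h^2)$. This is exactly where $\boldsymbol\eta_{\mathrm M}$ enters: $\eta_4=2$ is needed so that the velocity contraction $1-\gamma h$ is not spoiled at order $h$, $\eta_1=1$ controls $\Phi_1$, and $\eta_5=1$ keeps $\Psi_2(h-\tau_kh)=1+\mathcal O(h)$. The leading $\mathcal O(h)$ terms then reproduce the continuous generator computation for $\bar{\mathcal V}$, namely
\begin{equation*}
h\Big(-\tfrac{\gamma}{2}|v|^2-\tfrac{\alpha\gamma}{2}\langle x,\nabla U(x)\rangle+\text{(cancelling cross terms)}\Big).
\end{equation*}
The $\alpha\langle\nabla U,v\rangle$ terms from $U$ and from the $|x+\gamma^{-1}v|^2,|v|^2$ parts cancel, as in \cite{mattingly2002ergodicity}. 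Assumption \ref{as:dissipativity} then turns $-\langle x,\nabla U(x)\rangle$ into $-\mu|x|^2+\mu'd$.

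\emph{Main obstacle: remainders and the shifted gradient.} All remaining terms are of size $h^2(|\bar X_k|^2+|\bar V_k|^2+d)$. They are collected into constants $\M^x,\M^v$, and the step-size restrictions $h\le\alpha\gamma\mu/(4\M^x)$ and $h\le\gamma/(4\M^v)$ let these be absorbed into half of the negative drift. The substantive issue is that the gradient is evaluated at $\bar X_{k+\tau_k}$ rather than $\bar X_k$. I would write
\begin{equation*}
\nabla U(\bar X_{k+\tau_k})=\nabla U(\bar X_k)+\big(\nabla U(\bar X_{k+\tau_k})-\nabla U(\bar X_k)\big)
\end{equation*}
and bound the difference by $L|\bar X_{k+\tau_k}-\bar X_k|$. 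From the predictor line of \eqref{eq:uni-process}, $|\bar X_{k+\tau_k}-\bar X_k|\lesssim h|\bar V_k|+h\,\ell_1\tau_kh|\nabla U(\bar X_k)|+\ell_2\,|\Delta W^{\Phi_3,1}_{k+\tau_k}|$, where $\ell_1\tau_kh|\nabla U(\bar X_k)|$ is controlled via $1-\Phi_2=\mathcal O(1)$ together with \eqref{eq:lip-2}, and $\mathbb E|\Delta W^{\Phi_3,1}_{k+\tau_k}|^2=\mathcal O(h^3d)$. Since this difference is multiplied by $h$ inside inner products, it contributes only $\mathcal O(h^2)$ terms. One point needs care: the predictor noise $\Delta W^{\cdot,1}_{k+\tau_k}$ is correlated with $\Delta W^{\Psi_3,2}_{k+1}$, so the cross term $h\langle\nabla U(\bar X_{k+\tau_k}),\Delta W^{\Psi_3,2}_{k+1}\rangle$ is not centred. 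I would bound it by Cauchy--Schwarz and the Lipschitz bound as $\mathcal O(h\cdot h^{3/2}\cdot h^{1/2}d)=\mathcal O(h^3d)$.

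Finally, to obtain the contraction with respect to $\bar{\mathcal V}$ itself, use $\bar{\mathcal V}\le C_2(|x|^2+|v|^2+d)$ from \eqref{eq:ineq_lya_m}. The negative drift $-\tfrac{\gamma h}{4}(\alpha\mu|x|^2+|v|^2)$ is at most $-\tfrac{(\alpha\mu\wedge1)\gamma h}{4C_2}\bar{\mathcal V}+\mathcal O(hd)$, and the condition $h\le 2C_2/((\alpha\mu\wedge1)\gamma)$ ensures the resulting factor $1-c\gamma h$ stays in $(0,1)$.
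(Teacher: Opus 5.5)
Your proposal is correct and is essentially the paper's own proof. The paper likewise splits the one-step drift of $\bar{\mathcal V}$ into the potential, $\tfrac{\gamma^2}{4}|x+\gamma^{-1}v|^2$ and $\tfrac14|v|^2$ parts (Lemmas \ref{lem:potential-one-step}--\ref{lem:velocity-one-step}). It controls $\nabla U(\bar X_{k+\tau_k})-\nabla U(\bar X_k)$ through the predictor increment (Lemma \ref{lem:nabla-tau-delta}), and the $\langle\nabla U,v\rangle$ terms cancel up to the residual $\alpha h(\Phi_1(h)-1)\langle \bar V_k,\nabla U(\bar X_k)\rangle=\mathcal O(h^2)$. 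It then applies dissipativity, absorbs the $h^2$ remainders via $h\le \tfrac{\alpha\gamma\mu}{4\M^x}\wedge\tfrac{\gamma}{4\M^v}$, and converts to a contraction using $C_2$ before iterating.

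The differences are minor. The paper bounds the non-centred noise/shifted-gradient cross terms crudely by Cauchy--Schwarz and Young, which only adds to the $\mathcal O(hd)$ source, rather than isolating the predictor noise as you do. Also, under $\boldsymbol{\eta}_{\mathrm M}$ the $\ell_1$ term in the predictor is only $\mathcal O(h)|\nabla U(\bar X_k)|$, not $\mathcal O(h^2)$ as your displayed bound suggests; this is harmless because it still enters multiplied by $h$.
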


\begin{redtext}

\begin{prop}[\textbf{Uniform-in-time fourth-moment bounds of U-ULMC}]
\label{prop:uniform-fourth-moment-u-ulmc}
Let Assumptions \ref{as:Lip}--
\ref{as:dissipativity} hold and suppose that
Assumption \ref{as:coeff-approx-general} is satisfied with
$\boldsymbol{\eta}=\boldsymbol{\eta}_{\mathrm M}$.
Let
\(
\{(\bar X_k,\bar V_k)\}_{k\geq0}
\)
be generated by the U-ULMC scheme \eqref{eq:uni-process}, and assume that
\begin{equation}\label{eq:h-4-define}
h\leq h_{\star\star}:=h_\star
\wedge
\tfrac{(\alpha\mu\wedge1)\gamma}
{8C_2\mathcal H},
\end{equation}
where $h_\star$ and $\mathcal H$ are given in \eqref{eq:condition-of-stepsize} and \eqref{eq:H-fourth-moment}, respectively. Then there exists a dimension-independent constant
\(\bar K_2(2)>0\) such that, for every
\(k\in\mathbb N_0\),
% \begin{equation}
% \label{eq:uniform-fourth-moment-urulmc}
% \begin{aligned}
% \mathbb E
% \left[
% \left(
% |\bar X_k|^2+|\bar V_k|^2
% \right)^2
% \right]\leq
% \bar K_2
% \left(
% \exp
% \left(
% -\frac{(\alpha\mu\wedge1)\gamma}{4C_2}kh
% \right)
% \mathbb E
% \left[
% \left(
% |\bar X_0|^2+|\bar V_0|^2
% \right)^2
% \right]
% +d^2
% \right).
% \end{aligned}
% \end{equation}
% In particular,
\begin{equation}
\label{eq:uniform-fourth-moment-compact}
\mathbb E
\left[
\left(
|\bar X_k|^2+|\bar V_k|^2
\right)^2
\right]
\leq
\bar K_2(2)
\left(
\mathbb E
\left[
\left(
|\bar X_0|^2+|\bar V_0|^2
\right)^2
\right]
+d^2
\right).
\end{equation}
\end{prop}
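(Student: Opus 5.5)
The plan is to run the same Lyapunov argument as in Proposition~\ref{prop:numerical-bound}, but for the square of the modified Lyapunov function $\bar{\mathcal V}$ from \eqref{eq:Lyapunov-modified}. We write $\bar{\mathcal V}_k:=\bar{\mathcal V}(\bar X_k,\bar V_k)$. By \eqref{eq:ineq_lya_m} we have $C_1^2(|x|^2+|v|^2)^2\le \bar{\mathcal V}^2\le 3C_2^2\big((|x|^2+|v|^2)^2+d^2\big)$. So it suffices to prove a one-step contraction
\[
\mathbb E\big[\bar{\mathcal V}_{k+1}^2\big]\le \Big(1-\tfrac{(\alpha\mu\wedge1)\gamma}{4C_2}h\Big)\mathbb E\big[\bar{\mathcal V}_k^2\big]+C d^2 h .
\]
Iterating this and summing the geometric series gives \eqref{eq:uniform-fourth-moment-compact}, with $\bar K_2(2)$ built from $C_1,C_2$ and the constant in front of $d^2h$.

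\textbf{Step 1 (second-moment expansion, reused).} The proof of Proposition~\ref{prop:numerical-bound} gives a one-step decomposition
\[
\bar{\mathcal V}_{k+1}=\bar{\mathcal V}_k+\Delta_k,\qquad \Delta_k=A_k+M_k+R_k .
\]
Here $A_k$ is the deterministic drift part, which by dissipativity (Assumption~\ref{as:dissipativity}) satisfies $\mathbb E[A_k\mid\mathcal F_{t_k}]\le -\tfrac{(\alpha\mu\wedge1)\gamma}{2C_2}h\bar{\mathcal V}_k+Cdh$ after using \eqref{eq:ineq_lya_m}. $M_k$ collects terms linear in the Gaussian increments $\Delta W^{\Phi_3,1}_{k+1},\Delta W^{\Psi_3,2}_{k+1},\Delta W^{\Phi_3,1}_{k+\tau_k}$, so $\mathbb E[M_k\mid\mathcal F_{t_k},\tau_k]=0$. $R_k$ is a remainder of size $h^2(|\bar X_k|^2+|\bar V_k|^2+d)$ plus quadratic noise terms whose conditional mean is $\mathcal O(dh)$. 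The constants $\mathcal M^x,\mathcal M^v$ and the condition $h\le h_\star$ are exactly what make this hold. I will take this decomposition as established and make explicit the conditional bounds on the higher moments of the pieces.

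\textbf{Step 2 (square and take conditional expectations).} Expanding gives $\bar{\mathcal V}_{k+1}^2=\bar{\mathcal V}_k^2+2\bar{\mathcal V}_k\Delta_k+\Delta_k^2$. The cross term $2\bar{\mathcal V}_k\,\mathbb E[\Delta_k\mid\mathcal F_{t_k}]$ is at most $-\tfrac{(\alpha\mu\wedge1)\gamma}{C_2}h\bar{\mathcal V}_k^2+Cdh\bar{\mathcal V}_k$, because $\bar{\mathcal V}_k$ is $\mathcal F_{t_k}$-measurable and $M_k$ has zero conditional mean. For the square term, the Lipschitz bound \eqref{eq:lip-2}, the coefficient bounds \eqref{eq:bound-Psi-Phi}, and Gaussian moments $\mathbb E|\Delta W^{g,i}|^{2}\lesssim dh^{2i-1}$, $\mathbb E|\Delta W^{g,i}|^{4}\lesssim d^2h^{4i-2}$ should give
\[
\mathbb E[\Delta_k^2\mid\mathcal F_{t_k}]\le \mathcal H\, h\big(h\bar{\mathcal V}_k^2+d\bar{\mathcal V}_k+d^2h\big).
\]
The dominant piece is $\mathbb E[M_k^2]\lesssim h(|\bar X_k|^2+|\bar V_k|^2)\,d$, coming from $\langle \bar V_k,\Delta W^{\Psi_3,2}_{k+1}\rangle$. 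This is the source of the constant $\mathcal H$ in \eqref{eq:H-fourth-moment}. Young's inequality then turns $Cdh\bar{\mathcal V}_k$ into $\epsilon h\bar{\mathcal V}_k^2+C_\epsilon d^2h$.

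\textbf{Step 3 (absorb and close).} Collecting terms, the coefficient of $h\bar{\mathcal V}_k^2$ is $-\tfrac{(\alpha\mu\wedge1)\gamma}{C_2}+\mathcal H h+\epsilon$. The stepsize restriction $h\le \tfrac{(\alpha\mu\wedge1)\gamma}{8C_2\mathcal H}$ in \eqref{eq:h-4-define}, together with a suitable choice of $\epsilon$, leaves at least $-\tfrac{(\alpha\mu\wedge1)\gamma}{4C_2}$. Taking full expectations yields the contraction stated above. One needs $\mathbb E[\bar{\mathcal V}_0^2]<\infty$; otherwise the bound is trivial.

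\textbf{Main obstacle.} I expect the hardest step to be the bookkeeping of $\mathbb E[\Delta_k^2\mid\mathcal F_{t_k}]$ uniformly over the whole U-ULMC family. Three things must be controlled simultaneously: the random $\tau_k$ inside the predictor when $\ell_1,\ell_2=1$; the correlation between $\Delta W_{k+\tau_k}^{\Phi_3,1}$ and the full-step increments; and the fact that the gradient is evaluated at $\bar X_{k+\tau_k}$ rather than at $\bar X_k$. To handle these, I first bound $|\bar X_{k+\tau_k}-\bar X_k|^4$ by $h^4(|\bar V_k|^4+|\bar X_k|^4+d^2)+|\Delta W_{k+\tau_k}^{\Phi_3,1}|^4$ using \eqref{eq:lip-2}. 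This reduces every term to fourth moments of $\bar X_k,\bar V_k$ and of Gaussian integrals, and these are then bounded by Itô isometry together with Gaussian hypercontractivity.
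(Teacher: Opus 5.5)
Your proposal is correct and follows essentially the paper's route: square $\bar{\mathcal V}$, use the conditional drift inequality for the cross term $2\bar{\mathcal V}_k\,\mathbb E[\bar{\mathcal V}_{k+1}-\bar{\mathcal V}_k\mid\mathcal F_{t_k}]$, bound the conditional second moment of the Lyapunov increment through conditional second and fourth moments of $\delta\bar X_{k+1},\delta\bar V_{k+1}$ (including the fourth moment of the predictor increment $\bar X_{k+\tau_k}-\bar X_k$), and absorb via Young's inequality and $h\le c_1/(2\mathcal{H})$ with $c_1=\tfrac{(\alpha\mu\wedge1)\gamma}{4C_2}$. There are two harmless slips. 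First, once the $h^2$-remainder has been absorbed using $h\le h_\star$, the conditional drift rate is $c_1$, so the cross term gives $-2c_1h\bar{\mathcal V}_k^2$ rather than $-\tfrac{(\alpha\mu\wedge1)\gamma}{C_2}h\bar{\mathcal V}_k^2$; this still suffices, since $-2c_1+\mathcal{H}h+\tfrac{c_1}{2}\le -c_1$. Second, your Gaussian exponents are swapped: the correct scalings are $\mathbb E|\Delta W^{g,1}_{k+1}|^2\lesssim dh^3$ and $\mathbb E|\Delta W^{g,2}_{k+1}|^2\lesssim dh$, with fourth moments of order $d^2h^6$ and $d^2h^2$, respectively.
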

    
\end{redtext}

For the sake of conciseness, the technical proofs of Propositions \ref{prop:numerical-bound} and \ref{prop:uniform-fourth-moment-u-ulmc}
%including the derivation of the explicit forms for all involved constants, 
are deferred to Section \ref{sec:pf-scheme-bound}. 
\begin{redtext}
In what follows, we write
$\bar K_2:=\bar K_2(1) \vee \bar K_2(2)$ for simplicity.   
\end{redtext}

We now establish the finite-time error bound of U-ULMC, in order to verify Condition \ref{con:finite-error}, as required by the general framework established in Section \ref{sec:framework}.

%%%%%%original assumption for randomized%%%%%%%
%%%%%%%%%%%%%%%%%%%%%%%%%%%%%%%%%%%%%%%%%%%%%%%
% \begin{assumption}\label{as:uni-approxi}
% For any uniform stepsize $h\in (0,1\wedge \gamma^{-1}]$ and for all $r\in(0,h]$, the coefficient functions in \eqref{eq:uni-process} satisfy
% \begin{equation}\label{eq:uni-con-approxi}
%     \begin{aligned}
%         |\Psi_1(r)-\psi(r )| \leq \kappa r^{\frac{5}{2}}, \quad 
%         |\Psi_2(r)- \psi(r)|\leq& \kappa r^{\frac{3}{2}},  \quad
%         |\Psi_3(r)-\psi(r)|\leq \kappa r^2,
%         \\
%         |\Phi_1(r)-\phi(r)|\leq \kappa r^{\frac{3}{2}}, \quad
%         |\Phi_2 (r)-\phi(r)| \leq& \kappa r^{\frac{1}{2}},  \quad
%         |\Phi_3 (r)-\phi(r)| \leq \kappa  r,
%     \end{aligned}
% \end{equation}
% where $\kappa>0$ is independent of $h, r$ and the dimension.
% \end{assumption}
% \begin{redtext}
% This assumption imposes stronger coefficient conditions than Assumption \ref{as:coeff-ULMC-bound} and implies it with the same choice of the constant \(\kappa\).

% \end{redtext}
%%%%%%%%%%%%%%%%%%%%%%%%%%%%%%%%
%%%%%%%%%%%%%%%%%%%%%%%%%%%%%%%

\begin{prop}[\textbf{Finite-time error estimates for randomized ULMC}]
\label{prop:finite-error-rs}
Let Assumptions \ref{as:Lip}--\ref{as:dissipativity} hold and suppose that Assumption \ref{as:coeff-approx-general} is satisfied with $\boldsymbol{\eta}=\boldsymbol{\eta}_\mathrm{R}$. 
Denote the solution of ULD by
\(
\big(X(0,X_0,V_0;t),V(0,X_0,V_0;t)\big)_{t\ge 0}
\) 
, and for $t_n=nh$, 
let
\(
\big(\bar X^{\mathrm{R}}(0,X_0,V_0;t_n),\bar V^{\mathrm{R}}(0,X_0,V_0;t_n)\big)_{n\geq0}
\)
be produced by U-ULMC \eqref{eq:uni-process} with $(\tau_k)_{k\ge 0}\overset{i.i.d.}{\sim} \mathcal{U}(0,1)$
. 
If the uniform stepsize $h$ satisfies 
\begin{equation}
\label{eq:final-condition-of-stepsize}
h \le  h_\star \wedge 
\tfrac1{2L_b},
\end{equation}
with \(h_\star\)  and \(L_b\) being given by \eqref{eq:condition-of-stepsize} and \eqref{eq:def-L-b}, respectively, 
then it holds that
\begin{equation}
\label{eq:sup-second-monent-er-finite-time}
\begin{aligned}
&
\sup_{0\leq n\leq n_1}
\left(
\mathbb E
\Big[
\big|X(0,X_0,V_0;t_n)
-
\bar X^{\mathrm{R}}(0,X_0,V_0;t_n)
\big|^2
+
\big|V(0,X_0,V_0;t_n)
-\bar V^{\mathrm{R}}(0,X_0,V_0;t_n)
\big|^2
\Big]
\right)^{\frac{1}{2}}
\\
&\leq
\mathcal C(T)
\Big(
\mathcal C_{F,\mathrm{R}}
\mathbb E
\big[
|X_0|^2+|V_0|^2
\big]
+
\mathcal C_{F,\mathrm{R}}' d
\Big)^{\frac{1}{2}}
h^{\frac{3}{2}},
\end{aligned}
\end{equation}
where \(T=n_1h\), \(n_1\in\mathbb N\) is a fixed time,  \(\mathcal C(T), \ \mathcal C_{F,\mathrm{R}}, \  \mathcal C_{F,\mathrm{R}}'\) are given by \eqref{eq:constants-mathcalC(T)-and}.
\end{prop}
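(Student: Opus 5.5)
We plan to prove the finite-time estimate of Proposition \ref{prop:finite-error-rs} with the classical mean-square ``local error $\Rightarrow$ global error'' argument for randomized schemes. Here the randomization is used to gain an extra half order in the \emph{mean} of the local error. We couple the exact solution and U-ULMC through the same Brownian motion $W$ and initial data $(X_0,V_0)$. We set $e^x_k:=X_{t_k}-\bar X_k$, $e^v_k:=V_{t_k}-\bar V_k$, and write the exact dynamics through the variation-of-constants formula \eqref{eq:intro-ex-ULD}. Subtracting \eqref{eq:uni-process}, each step error splits into three parts. The first is a propagated part $\big(e^x_k+h\Phi_1(h)e^v_k,\ \Psi_1(h)e^v_k\big)$ plus gradient differences $\nabla U(X_{t_k+\tau_kh})-\nabla U(\bar X_{k+\tau_k})$ weighted by $h^2$ and $h$. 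The second is a local quadrature error $R_k=(R^x_k,R^v_k)$, which is the difference between the exact integrals $\int_{t_k}^{t_{k+1}}\psi(t_{k+1}-s)\nabla U(X_s)\,\mathrm ds$ (and its $x$-analogue) and the randomized rule $h\psi(h-\tau_kh)\nabla U(X_{t_k+\tau_kh})$. The third collects the coefficient-mismatch terms governed by Assumption \ref{as:coeff-approx-general} with $\boldsymbol\eta_{\mathrm R}$.

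The key structural fact is the unbiasedness $\mathbb E_\tau[R_k\,|\,\mathcal F^W]=0$, because $\tau_k\sim\mathcal U(0,1)$ is independent of $W$. We also have $\mathbb E|R_k|^2\lesssim h^2\sup_{s}\mathbb E|\nabla U(X_s)-\nabla U(X_{t_k})|^2\lesssim h^3(\mathbb E|X_{t_k}|^2+\mathbb E|V_{t_k}|^2+d)$, using Assumption \ref{as:Lip}, $|X_s-X_{t_k}|\le\int|V|$, and Lemma \ref{lem:ULD-moment bound}.

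Next we bound the predictor error $X_{t_k+\tau_kh}-\bar X_{k+\tau_k}$. Its leading part is $e^x_k+\tau_kh\Phi_1 e^v_k$. What remains is a term of order $h\,|\Phi_1-\phi|$ and, when $\ell_1=\ell_2=0$ (LC-RIs, ALUM), the omitted drift and noise pieces $\mathcal O(h^2)(|X_{t_k}|+d^{1/2})$ and $\sqrt{2\gamma\alpha}\Delta W^{\phi,1}_{k+\tau_k}$. The latter has $L^2$ size $h^{3/2}d^{1/2}$. Multiplied by the $h$ prefactor of the gradient term in the $V$-update, it contributes $h^{5/2}d^{1/2}$ locally. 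This term is \emph{not} mean-zero conditional on $\mathcal F_{t_k}$ after passing through $\nabla U$, so we will also need to account for its mean. We handle this by writing $\nabla U(X_{t_k+\tau_kh})-\nabla U(\bar X_{k+\tau_k})$ as a Lipschitz-bounded quantity applied to the error. The noise-induced piece then has local size $h\cdot h^{3/2}=h^{5/2}$ and is summed crudely: $n_1$ steps of $h^{5/2}$ give $h^{3/2}$, which is exactly the target rate. The coefficient mismatches, with exponents $\eta_{\mathrm R}=(\tfrac32,\tfrac12,1,\tfrac52,\tfrac32,2)$, are designed so that every such term is $\mathcal O(h^{5/2})$ in $L^2$. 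For example, $h|\Psi_1-\psi|\,|V|\lesssim h^{5/2}$, $h^2|\Phi_2-\phi|\lesssim h^{5/2}$, and the noise mismatch $\int(\Psi_3-\psi)\,\mathrm dW$ has $L^2$ norm at most $h^2\cdot h^{1/2}$.

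To combine these, we expand $\mathbb E|e_{k+1}|^2=\mathbb E|A_k e_k + B_k+R_k|^2$. Here $A_k e_k$ is the linear propagation plus Lipschitz gradient differences, with $\|A_k\|\le 1+L_bh$ for the constant $L_b$ of \eqref{eq:def-L-b}. $B_k$ denotes the $\mathcal O(h^{5/2})$ terms. The cross term $2\mathbb E\langle A_ke_k,R_k\rangle$ is the main obstacle: it vanishes only for the part of $A_k e_k$ that is $\mathcal F_{t_k}$-measurable and independent of $\tau_k$. But $A_k$ itself depends on $\tau_k$ through the gradient evaluated at the random point. We will split $A_ke_k=\bar A e_k+(A_k-\bar A)e_k$ with $\|A_k-\bar A\|\lesssim h$. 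This leaves the term $2\mathbb E\langle\bar Ae_k,R_k\rangle=0$ and a remainder bounded via Young by $h\,\mathbb E|e_k|^2+h^{-1}\cdot h^2\,\mathbb E|R_k|^2$. For the other pieces we use Young with weight $h$: $2\langle A e,B\rangle\le h|e|^2+h^{-1}|B|^2$, where $h^{-1}|B|^2\sim h^4$.

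This yields the recursion $\mathbb E|e_{k+1}|^2\le(1+Ch)\mathbb E|e_k|^2+C h^4(\mathbb E|X_{t_k}|^2+\mathbb E|V_{t_k}|^2+d)$, where the stepsize restriction $h\le\tfrac1{2L_b}$ absorbs the constants. We bound the moments by Lemma \ref{lem:ULD-moment bound} and Proposition \ref{prop:numerical-bound}. The discrete Gronwall inequality over $n\le n_1$ then gives $\sup_n\mathbb E|e_n|^2\le e^{CT}T\,C(\mathbb E[|X_0|^2+|V_0|^2]+d)h^3$. Taking square roots produces \eqref{eq:sup-second-monent-er-finite-time}, with $\mathcal C(T)\sim (Te^{CT})^{1/2}$ nondecreasing in $T$.
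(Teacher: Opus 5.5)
You have the right mechanism, but one exponent in your sketch is wrong, and as written it costs half an order. You claim $\mathbb E|R_k|^2\lesssim h^3(\mathbb E|X_{t_k}|^2+\mathbb E|V_{t_k}|^2+d)$. Since $R_k$ enters $e_{k+1}$ with the bare factor $\alpha$, the diagonal term $\alpha^2\mathbb E|R_k|^2$ in $\mathbb E|e_{k+1}|^2$ would then add $h^3$ per step. You never treat this term (you only discuss cross terms), and with your bound it gives $Th^2$ over $n_1$ steps, which is only order one.

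Your own ingredients give the correct bound. The estimate $|X_s-X_{t_k}|\le\int_{t_k}^s|V_u|\,\mathrm du$ yields $\mathbb E|\nabla U(X_s)-\nabla U(X_{t_k})|^2\lesssim h^2$, not $h$. The kernel variation $|\psi(h-r)-\psi(h)|\le\gamma r$ must also be included, and it is $O(h)$ as well. Together these give $\mathbb E|R_k|^2\lesssim h^2\cdot h^2=h^4$, matching \eqref{eq:R3-one-step-refined-final}. With that correction every inhomogeneous contribution in your recursion is $O(h^4)$ and the Gronwall step closes.

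Two minor slips. ALUM has $\ell_2=1$, so only the drift piece is dropped from its predictor. The velocity mismatch is $(\psi(h)-\Psi_1(h))V_{t_k}$ with no $h$ prefactor, and it is controlled by $\eta_4=\tfrac52$.

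Once fixed, your argument is a genuinely different route from the paper's. The paper never expands the global error. It proves one-step estimates for the exact flow and the scheme started from the same point $(x,v)$:
\begin{itemize}
\item Lemma~\ref{lem:delta_tau_xy} gives an $O(h^{3/2})$ predictor error.
\item Lemma~\ref{lem:one-step-weak-strong-error} gives a local mean error $O(h^{5/2})$, via $\mathbb E[\mathcal R_3]=0$, and a local mean-square error $O(h^4)$.
\end{itemize}
It then feeds these, the Lipschitz constant $L_b$ of the ULD drift, and the moment constants $\bar K_1,\bar K_2$ into the Milstein-type fundamental theorem \cite[Theorem 3.3]{yang2025non}.

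In your route, the identity $\mathbb E\langle P_k,R_k\rangle=0$ does the work of the local weak order $\tfrac52$. It holds for any propagated part measurable with respect to $\mathcal F^W\vee\sigma(\tau_0,\dots,\tau_{k-1})$. The $\tau_k$-dependent gradient difference is handled by Young's inequality, because it carries an extra factor $h$ and its non-centred noise part is $O(h^{3/2})$.

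What your approach buys: it is self-contained. Also, since all inhomogeneous terms live on the exact trajectory, it needs only Lemma~\ref{lem:ULD-moment bound} and not Proposition~\ref{prop:numerical-bound}.

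What the paper's approach buys: it is modular, since the same theorem yields Propositions~\ref{prop:finite-error-Euler} and~\ref{prop:finite-error-UBU} by changing $(p_1,p_2,r)$. It also delivers the explicit constants $\mathcal C(T)=\exp(\tfrac12(1+12L_b)T)$, $\mathcal C_{F,\mathrm R}$, $\mathcal C_{F,\mathrm R}'$ of \eqref{eq:constants-mathcalC(T)-and}. Your Gronwall argument gives the same bound with different dimension-free constants, e.g.\ $\mathcal C(T)\sim(Te^{CT})^{1/2}$. That suffices for every downstream use, but it is not literally the proposition with those constants.
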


\begin{redtext}

\begin{redtext}

\begin{prop}[\textbf{Finite-time error estimates for Euler-type ULMC}]
\label{prop:finite-error-Euler}
Let Assumptions \ref{as:Lip}--\ref{as:dissipativity} hold and suppose that Assumption \ref{as:coeff-approx-general} is satisfied with $\boldsymbol{\eta}= \boldsymbol{\eta}_\mathrm{E}$.
Denote the solution of ULD by
\(
\big(
X(0,X_0,V_0;t),
V(0,X_0,V_0;t)
\big)_{t\geq0}
\)
, and for $t_n=nh$, let
\(
\big(
\bar X^{\mathrm E}(0,X_0,V_0;t_n),
\bar V^{\mathrm E}(0,X_0,V_0;t_n)
\big)_{n\geq0}
\)
be generated by the U-ULMC approximation \eqref{eq:uni-process} with $\tau_k \equiv 0$.
If the uniform stepsize $h$ satisfies \eqref{eq:final-condition-of-stepsize},
% \begin{align}
% 0<h\leq
% h_\star\wedge\tfrac{1}{2L_b},
% \label{eq:final-condition-of-stepsize-Euler}
% \end{align}
then for fixed $T=n_1h$, \(n_1\in\mathbb N\), it holds
\begin{align}
&
\sup_{0\leq n\leq n_1}
\left(
\E
\left[
\left|
X(0,X_0,V_0;t_n)
-
\bar X^{\mathrm E}(0,X_0,V_0;t_n)
\right|^2
+
\left|
V(0,X_0,V_0;t_n)
-
\bar V^{\mathrm E}(0,X_0,V_0;t_n)
\right|^2
\right]
\right)^{\frac12}
\nonumber\\
\leq&
\mathcal C(T)
\left(
\mathcal C_{F,\mathrm E}
\E
\left[
|X_0|^2+|V_0|^2
\right]
+
\mathcal C_{F,\mathrm E}'d
\right)^{\frac12}
h,
\label{eq:finite-time-error-Euler}
\end{align}
where $\mathcal C(T)$ is defined in
\eqref{eq:constants-mathcalC(T)-and}, and $\mathcal C_{F,\mathrm E}, \mathcal C_{F,\mathrm E}'$ are defined in \eqref{eq:define-C-F-E}.
\end{prop}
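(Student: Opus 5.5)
We compare the scheme with $\tau_k\equiv0$ to the exact variation-of-constants representation \eqref{eq:intro-ex-ULD}. Throughout we write $e_k^x:=X_{t_k}-\bar X^{\mathrm E}_k$ and $e_k^v:=V_{t_k}-\bar V^{\mathrm E}_k$, and both processes are driven by the same Brownian motion. When $\tau_k=0$ and $\ell_1=\ell_2=0$, the predictor satisfies $\bar X_{k+\tau_k}=\bar X_k$. The one-step map of U-ULMC therefore reads
\begin{align*}
\bar X_{k+1}&=\bar X_k+h\Phi_1(h)\bar V_k-\alpha h^2\Phi_2(h)\nabla U(\bar X_k)+\sqrt{2\gamma\alpha}\,\Delta W^{\Phi_3,1}_{k+1},\\
\bar V_{k+1}&=\Psi_1(h)\bar V_k-\alpha h\Psi_2(h)\nabla U(\bar X_k)+\sqrt{2\gamma\alpha}\,\Delta W^{\Psi_3,2}_{k+1}.
\end{align*}

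\textbf{Step 1: error recursion.} We subtract this map from \eqref{eq:intro-ex-ULD}. The error recursion then splits as
\[
(e^x_{k+1},e^v_{k+1})=\mathcal A_k(e^x_k,e^v_k)+R_{k+1}.
\]
Here $\mathcal A_k$ is the stable part: it consists of $h\phi(h)e^v_k$ and $\psi(h)e^v_k$, together with the gradient differences $\nabla U(X_{t_k})-\nabla U(\bar X_k)$ multiplied by $\mathcal O(h)$ coefficients. Assumption \ref{as:Lip} and $|\phi|,|\psi|\le1$ give
\[
|\mathcal A_k(e^x_k,e^v_k)|^2\le(1+L_bh)(|e^x_k|^2+|e^v_k|^2),
\]
with $L_b$ from \eqref{eq:def-L-b}; the condition $h\le 1/(2L_b)$ keeps this factor harmless. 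The remainder $R_{k+1}$ collects three contributions:
\begin{itemize}
\item the quadrature errors $\alpha\int_{t_k}^{t_{k+1}}\psi(t_{k+1}-s)\bigl(\nabla U(X_s)-\nabla U(X_{t_k})\bigr)\,\mathrm ds$ and its analogue in the $X$ component;
\item the coefficient mismatches $(\Phi_1-\phi)(h)h\bar V_k$, $(\Psi_1-\psi)(h)\bar V_k$, and $(\Psi_2-\psi\text{ or }\phi)$ and $(\Phi_2-\chi)$ multiplying $\nabla U(\bar X_k)$;
\item the noise mismatches $\Delta W^{\Phi_3-\phi,1}_{k+1}$ and $\Delta W^{\Psi_3-\psi,2}_{k+1}$.
\end{itemize}

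\textbf{Step 2: local estimates.} We bound each piece of $R_{k+1}$ in mean square, aiming for $\mathbb E|R_{k+1}|^2\lesssim h^4(\mathbb E|X_0|^2+\mathbb E|V_0|^2+d)$ for the mean parts and $h^3(\cdots)$ for the martingale (noise) parts.
\begin{itemize}
\item \emph{Quadrature.} We use $|X_s-X_{t_k}|\le\int_{t_k}^s|V_u|\,\mathrm du$, so the velocity term is $\mathcal O(h^2)$ in $L^2$. The moment bounds of Lemma \ref{lem:ULD-moment bound} give $\mathbb E|V_u|^2\lesssim \mathbb E[|X_0|^2+|V_0|^2]+d$.
\item \emph{Coefficients.} Assumption \ref{as:coeff-approx-general} with $\boldsymbol\eta_{\mathrm E}=(1,0,0,2,1,1)$ gives $|\Phi_1-\phi|h\le\kappa h^2$, $|\Psi_1-\psi|\le\kappa h^2$ and $|\Psi_2-\psi|h\le\kappa h^2$. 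The term $\Phi_2$ has $\eta_2=0$ but is multiplied by $h^2$. These are combined with the numerical moment bound of Proposition \ref{prop:numerical-bound} and with \eqref{eq:lip-2}, which supplies the $d$ term. One point requires care: the scheme's velocity coefficient must be compared with $\phi$ or $\psi$ up to $\mathcal O(h)$ in the correct normalization. In this Euler setting $\Psi_2$ multiplies $\nabla U$ with weight $h$, and the exact weight is $\int\psi=h\phi(h)$; since $\phi(h)-\psi(h)=\mathcal O(h)$, the mismatch remains $\mathcal O(h^2)$.
\item \emph{Noise.} By the It\^o isometry, $\eta_6=1$ gives
\[
\mathbb E|\Delta W^{\Psi_3-\psi,2}_{k+1}|^2\le d\kappa^2\int_0^h r^2\,\mathrm dr\lesssim dh^3,
\]
and the position noise is even smaller.
\end{itemize}

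\textbf{Step 3: discrete Gronwall.} We expand
\[
\mathbb E|e_{k+1}|^2\le(1+Ch)\,\mathbb E|e_k|^2+2\,\mathbb E\langle\mathcal A_ke_k,R_{k+1}\rangle+\mathbb E|R_{k+1}|^2 .
\]
For the cross term, we split $R_{k+1}$ into its $\mathcal F_{t_k}$-conditional mean and a martingale increment. The martingale part has zero conditional mean, so its cross term vanishes. This includes the noise mismatches, and also the Brownian part of $\nabla U(X_s)-\nabla U(X_{t_k})$ if we expand further, though the crude $\mathcal O(h^2)$ bound suffices here. The mean part is handled by Young's inequality, $2ab\le h a^2+h^{-1}b^2$, which converts its $h^4$ local size into a contribution of $h^3$ per step. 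Hence $\mathbb E|e_{k+1}|^2\le(1+Ch)\mathbb E|e_k|^2+Ch^3(\cdots)$. Summing over $n\le n_1$ gives $e^{CT}Th^2(\cdots)$, i.e. global order $h$, with $\mathcal C(T)$ as in \eqref{eq:constants-mathcalC(T)-and}.

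\textbf{Main obstacle.} The main difficulty is the bookkeeping of Step 2 for the coefficient mismatches under the weakest exponents in $\boldsymbol\eta_{\mathrm E}$, especially $\eta_2=\eta_3=0$. This must be done uniformly over both the EM and EE choices, so that every term is genuinely $\mathcal O(h^2)$ locally. It also requires the constants to depend on $d$ only through the linear factor produced by the moment bounds and \eqref{eq:lip-2}.
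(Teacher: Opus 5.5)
Your argument is correct in substance, but it takes a genuinely different route from the paper. The paper writes no error recursion. It invokes the Milstein-type fundamental theorem \cite[Theorem 3.3]{yang2025non} and checks only three things: the ULD drift is globally Lipschitz with constant $L_b$; the exact and numerical second moments are uniformly bounded (Lemma~\ref{lem:ULD-moment bound}, Proposition~\ref{prop:numerical-bound}); and the one-step error from a \emph{common} starting point $(x,v)$ has weak size $\mathcal O(h^2)$ and mean-square size $\mathcal O(h^3)$ (Lemma~\ref{lem:one-step-weak-strong-error-Euler}, i.e.\ $p_1=2$, $p_2=\frac32$). In that route, stability comes from the exact flow and the local errors are evaluated at the numerical state.

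You instead propagate the error along one Brownian path and take stability from the numerical map through $\nabla U(X_{t_k})-\nabla U(\bar X_k)$. You measure consistency partly along the exact trajectory (quadrature) and partly at the numerical state (coefficient and noise mismatches). Your conditional-mean/martingale split of the cross term is exactly the weak/strong local-order mechanism of the fundamental theorem, carried out by hand. This makes your proof self-contained and elementary. It is clean here because with $\tau_k\equiv0$ the stable part $\mathcal A_k e_k$ is $\mathcal F_{t_k}$-measurable. For stages that carry current-step noise or a random $\tau_k$, that reduction needs more care, whereas the paper's black box is reused verbatim for all three families.

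What your route does not deliver is the specific constants in the statement. Your Gronwall step produces a factor like $e^{CT}T$ and a prefactor built from $\kappa,\alpha,\gamma,L,L_2,\bar K_1,\bar K_2$. It does not produce $\mathcal C(T)=\exp(\frac12(1+12L_b)T)$ together with $\mathcal C_{F,\mathrm E},\mathcal C'_{F,\mathrm E}$ from \eqref{eq:define-C-F-E}. So your closing claim ``with $\mathcal C(T)$ as in \eqref{eq:constants-mathcalC(T)-and}'' is asserted rather than derived. What you do prove is the same order $h$ and the same dependence on $\mathbb E[|X_0|^2+|V_0|^2]$ and $d$, with your own constants.

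Two minor slips do not affect the result:
\begin{itemize}
\item The one-step stability factor is $1+Ch$ with $C$ of size $1+\alpha L$, which can exceed $L_b$ (take $\gamma$ small and $\alpha L=1$), so the factor $1+L_bh$ you state is not right in general.
\item Since $\eta_3=0$, the position noise mismatch is of the same order $d\kappa^2h^3$ as the velocity one, not smaller.
\end{itemize}
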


\end{redtext}

To obtain the second-order finite-time error estimate for UBU-type ULMC, we additionally impose the following Hessian Lipschitz condition.

\begin{assumption}[\textbf{Hessian Lipschitz}]\label{as:Hessian-lip}
The potential $U$ is twice continuously differentiable, and there exists a dimension-independent constant $L_H > 0$ such that for all $x,y \in \mathbb{R}^d$,
\[
\|\nabla^2 U(x) - \nabla^2 U(y)\| \le L_H |x - y|.
\]
\end{assumption}
\end{redtext}

%%%%%%%%Original assumption for UBU%%%%%%%%%%%%%%%
%%%%%%%%%%%%%%%%%%%%%%%%%%%%%%%%%%%%%%%%%%%%%%%%%%
% \begin{redtext}
% \begin{assumption}\label{as:approx-UBU}
% For any uniform stepsize $h\in (0,1\wedge \gamma^{-1}]$ and for all $r\in(0,h]$, the coefficient functions in \eqref{eq:uni-process} satisfy
% \begin{equation}\label{eq:uni-con-approxi-UBU}
%     \begin{aligned}
%         |\Psi_1(r)-\psi(r )| \leq \kappa r^{3}, \quad 
%         |\Psi_2(r)- \psi(r)|\leq& \kappa r^{2},  \quad
%         |\Psi_3(r)-\psi(r)|\leq \kappa r^2,
%         \\
%         |\Phi_1(r)-\phi(r)|\leq \kappa r^{2}, \quad
%         |\Phi_2 (r)-\phi(r)| \leq& \kappa r,  \quad
%         |\Phi_3 (r)-\phi(r)| \leq \kappa  r,
%     \end{aligned}
% \end{equation}
% where $\kappa>0$ is independent of $h, r$ and the dimension.    
% \end{assumption}
% Similarly, this assumption implies Assumption \ref{as:coeff-ULMC-bound} and \ref{as:uni-approxi}  with the same constant \(\kappa\).

% \end{redtext}
%%%%%%%%%%%%%%%%%%%%%%%%%%%%%%%%

\begin{redtext}

\begin{prop}[\textbf{Finite-time error estimates for UBU-type ULMC schemes}]
\label{prop:finite-error-UBU}
%Let Assumptions \ref{as:Lip}, \ref{as:U_0-bound},\ref{as:dissipativity}, \ref{as:coeff-approx-general} hold, with $\boldsymbol{\eta}= \boldsymbol{\eta}_\mathrm{E}$.
Suppose that Assumptions \ref{as:Lip}--\ref{as:dissipativity} hold.
Let
\(
\bigl(X(0,X_0,V_0;t),V(0,X_0,V_0;t)\bigr)_{t\geq0}
\) denote the solution of ULD and let
\(
\bigl(
\bar X^{\mathrm{U}}(0,X_0,V_0;t_n),
\bar V^{\mathrm{U}}(0,X_0,V_0;t_n)
\bigr)_{n\geq0}
\)
be the approximation
generated by the U-ULMC scheme \eqref{eq:uni-process} with
$\tau_k\equiv\frac12$, where $t_n=nh$.
Let $T=n_1h$, $n_1\in\mathbb N$, be a fixed time.
\begin{enumerate}
\item 
Suppose Assumption~\ref{as:coeff-approx-general} holds with
$\boldsymbol{\eta}=\boldsymbol{\eta}_{\mathrm E}$ and the
uniform stepsize $h$ satisfies \eqref{eq:final-condition-of-stepsize}.
%%
%%
% \begin{equation}
% \label{eq:final-condition-of-stepsize-UBU}
% 0<h\leq h_\star\wedge\tfrac{1}{2L_b},
% \end{equation}
% %%
% %%
% where $h_\star$ and $L_b$ are defined in
% \eqref{eq:h-star-def} and \eqref{eq:def-L-b}, respectively.
Then
\begin{equation}
\label{eq:sup-second-moment-error-finite-time-UBU-gl}
\begin{aligned}
&
\sup_{0\leq n\leq n_1}
\left(
\mathbb E
\left[
\left|
X(0,X_0,V_0;t_n)
-
\bar X^{\mathrm{U}}(0,X_0,V_0;t_n)
\right|^2
+
\left|
V(0,X_0,V_0;t_n)
-
\bar V^{\mathrm{U}}(0,X_0,V_0;t_n)
\right|^2
\right]
\right)^{\frac12}
\\
&\leq
\mathcal C(T)
\left(
\mathcal C_{F,\mathrm{U}_1}
\mathbb E
\left[
|X_0|^2+|V_0|^2
\right]
+
\mathcal C_{F,\mathrm{U}_1}'d
\right)^{\frac12}
h,
\end{aligned}
\end{equation}
where $\mathcal C(T)$ is defined in
\eqref{eq:constants-mathcalC(T)-and},
$\mathcal C_{F,\mathrm U_1}$ and
$\mathcal C_{F,\mathrm U_1}'$ are defined in
\eqref{eq:constants-c-f-U1}.
%where $T:=n_1h$ for any fixed $n_1\in \mathbb{N}$, $\mathcal{C}(T)$ is defined in \eqref{eq:constants-mathcalC(T)-and} and $\mathcal C_{F,\mathrm{U}_1}, \mathcal C'_{F,\mathrm{U}_1}$ are given in \eqref{eq:constants-c-f-U1}.
\item 
Suppose
Assumption~\ref{as:coeff-approx-general} holds with
$\boldsymbol{\eta}=\boldsymbol{\eta}_{\mathrm U}$, Assumption~\ref{as:Hessian-lip} holds and the
uniform stepsize $h$ satisfies
\begin{equation}
\label{eq:final-condition-of-stepsize-UBU-second-order}
0<h\leq h_{\star\star}\wedge\tfrac{1}{2L_b},
\end{equation}
where $h_{\star\star}$ is defined in
\eqref{eq:h-4-define}. Then
\begin{equation}
\label{eq:sup-second-moment-error-finite-time-UBU-hl}
\begin{aligned}
&
\sup_{0\leq n\leq n_1}
\left(
\mathbb E
\left[
\left|
X(0,X_0,V_0;t_n)
-
\bar X^{\mathrm{U}}(0,X_0,V_0;t_n)
\right|^2
+
\left|
V(0,X_0,V_0;t_n)
-
\bar V^{\mathrm{U}}(0,X_0,V_0;t_n)
\right|^2
\right]
\right)^{\frac12}
\\
&\leq
\mathcal C(T)
\left(
\mathcal C_{F,\mathrm{U}_2}
\mathbb E
\left[
\left(
|X_0|^2+|V_0|^2
\right)^2
\right]
+
\mathcal C_{F,\mathrm{U}_2}'d^2
\right)^{\frac12}
h^2,
\end{aligned}
\end{equation}
where $\mathcal C_{F,\mathrm{U}_2}, \mathcal C'_{F,\mathrm{U}_2}$ are given in \eqref{eq:constants-c-f-U2}.
\end{enumerate}
\end{prop}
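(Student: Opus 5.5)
Both parts will be proved by the standard local-error/global-error argument: local mean-square and local mean errors over one step are propagated through a stability recursion for the error $e_k:=(X_{t_k}-\bar X^{\mathrm U}_k,\,V_{t_k}-\bar V^{\mathrm U}_k)$. As in Proposition~\ref{prop:finite-error-rs}, I compare the variation-of-constants representation \eqref{eq:intro-ex-ULD} of the exact solution on $[t_k,t_{k+1}]$, started from $(X_{t_k},V_{t_k})$, with one step of \eqref{eq:uni-process} with $\tau_k\equiv\frac12$ started from $(\bar X_k,\bar V_k)$. The difference splits into three pieces:
\begin{enumerate}
\item a propagation part, namely the scheme map applied to $(X_{t_k},V_{t_k})$ versus to $(\bar X_k,\bar V_k)$. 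By Assumption~\ref{as:Lip}, \eqref{eq:bound-Psi-Phi} and $h\le 1/(2L_b)$, this is Lipschitz with constant $1+L_bh$, where the constant $L_b$ of \eqref{eq:def-L-b} absorbs $\gamma$ and $\alpha L$.
\item a coefficient-approximation part, controlled by Assumption~\ref{as:coeff-approx-general}.
\item a quadrature part, comparing $\int_{t_k}^{t_{k+1}}g(t_{k+1}-s)\nabla U(X_s)\,ds$ with $h\,g(h/2)\nabla U(\hat X_{k+1/2})$, where $\hat X_{k+1/2}$ is the predictor built from the exact $(X_{t_k},V_{t_k})$.
\end{enumerate}
I then use the standard mean-square lemma (Milstein-type fundamental theorem). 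If the local error has $L^2$ size $O(h^{p_2})$ and conditional mean size $O(h^{p_1})$ with $p_1\ge p_2+\tfrac12$, the global error is $O(h^{p_2-1/2})$. The constants in these local estimates involve $\mathbb E[|X_{t_k}|^2+|V_{t_k}|^2]$ (resp.\ fourth moments), which Lemma~\ref{lem:ULD-moment bound} bounds uniformly in $k$. Gronwall then gives the factor $\mathcal C(T)$.

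\emph{Part 1 ($\boldsymbol\eta_{\mathrm E}$, first order).} Here I use the crude bound $|\nabla U(X_s)-\nabla U(\hat X_{k+1/2})|\le L|X_s-\hat X_{k+1/2}|$. Since $X_s-X_{t_k}=\int V$, and $\hat X_{k+1/2}-X_{t_k}=O(h|V_{t_k}|)$ plus an $O(h^{3/2}\sqrt d)$ noise term when $\ell_2=1$, this difference is $O(h)$ in $L^2$ with constant $(\mathbb E|X_{t_k}|^2+\mathbb E|V_{t_k}|^2+d)^{1/2}$. Multiplied by the weight $h$, the local error is $O(h^2)$. The coefficient errors under $\boldsymbol\eta_{\mathrm E}$ contribute the following:
\begin{itemize}
\item $h\cdot h^{\eta_1}|V|=O(h^2)$;
\item $h^{\eta_4}|V|=O(h^2)$;
\item $h\cdot h^{\eta_5}|\nabla U|=O(h^2)$;
\item $\sqrt{h}\,h^{\eta_6}$, from the Itô isometry on $\Delta W^{\Psi_3-\psi,2}$, which gives $O(h^{3/2})$ in $L^2$ but has zero mean.
\end{itemize}
So $p_2=3/2$ and $p_1=2$, and the global rate is $h$.

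\emph{Part 2 ($\boldsymbol\eta_{\mathrm U}$ plus Hessian Lipschitz, second order).} I expand $\nabla U(X_s)-\nabla U(\hat X_{k+1/2})=\nabla^2U(X_{t_k})(X_s-\hat X_{k+1/2})+R_s$ with $|R_s|\le L_H(|X_s-X_{t_k}|^2+|\hat X_{k+1/2}-X_{t_k}|^2)$. The remainder has $L^2$ size $O(h^2)$, but only with fourth moments $\mathbb E[(|X|^2+|V|^2)^2]+d^2$; this is why both the fourth-moment bound (Proposition~\ref{prop:uniform-fourth-moment-u-ulmc}, hence $h\le h_{\star\star}$) and the $d^2$ in the statement appear. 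For the linear term I write $X_s-\hat X_{k+1/2}=(s-t_k-\tfrac h2)V_{t_k}+O(h^2)$ plus stochastic terms, up to coefficient errors.
\begin{itemize}
\item The deterministic midpoint weight $(s-t_k-h/2)$ integrates against $g(t_{k+1}-s)$ to $O(h^3)$, because $g$ is smooth: the midpoint rule is exact for linear functions.
\item The stochastic terms $\int_{t_k}^s(\cdots)dW$ have zero conditional mean and $L^2$ size $h^{3/2}$. After multiplication by the weight $h$, they contribute $O(h^{5/2})$ in $L^2$ and $0$ in mean.
\end{itemize}
With the $\boldsymbol\eta_{\mathrm U}$ coefficient errors, which are all $O(h^3)$ or mean-zero $O(h^{5/2})$, this gives $p_2=5/2$, $p_1=3$, and the global rate $h^2$. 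It works for both $\ell_2\in\{0,1\}$ because the predictor noise is itself mean-zero.

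The main obstacle is the mean-error bound $p_1=3$ in Part~2. The product of the Hessian term with the stochastic increments is mean-zero only conditionally on $\mathcal F_{t_k}$. Moreover, the cross term between the local error and the accumulated error $e_k$ requires $\mathbb E[\langle e_k,\mathbb E(\text{local}\mid\mathcal F_{t_k})\rangle]$, so the conditional-mean bound must be uniform, with a random constant that is square-integrable in terms of fourth moments. I would first verify it by isolating all terms that are exact martingale increments, and then bound the rest pathwise using Assumption~\ref{as:Hessian-lip} and \eqref{eq:lip-2}.
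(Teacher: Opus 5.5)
Your proposal is correct, but it differs from the paper's proof in two places.

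\emph{Global step.} The paper does not redo the local-to-global argument. It feeds Lemma~\ref{lem:one-step-weak-strong-error-UBU} ($p_1=2,\ p_2=\tfrac32,\ r=1$, resp.\ $p_1=3,\ p_2=\tfrac52,\ r=2$) into Theorem~3.3 of \cite{yang2025non}, taking the numerical moment bounds $C_1^*=\bar K_2$ as input. This is where Proposition~\ref{prop:uniform-fourth-moment-u-ulmc}, and hence $h\le h_{\star\star}$, enters Part~2. It is also why the explicit $\mathcal C(T)$ and $\mathcal C_{F,\mathrm U_i}$, e.g.\ $\bar K_2(\mathcal T_{\mathrm U_i}+(3+2\bar K_1)\mathcal S_{\mathrm U_i})$, carry $\bar K_2$. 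You instead use local errors along the exact trajectory, propagated by the scheme map. That map is pathwise Lipschitz with constant $1+Ch$ because the additive noise cancels in differences. Your route therefore needs only exact moments (Lemma~\ref{lem:ULD-moment bound} with $p=1,2$). Contrary to your remark, Proposition~\ref{prop:uniform-fourth-moment-u-ulmc} is not used for this estimate in your route; it is needed only later, for Condition~\ref{con:bound-num}. The price is that you do the Gronwall step yourself and obtain constants different from the literal ones in \eqref{eq:constants-mathcalC(T)-and}, \eqref{eq:constants-c-f-U1} and \eqref{eq:constants-c-f-U2}. One small repair: the stability constant requires $|\Psi_1(h)|\le\psi(h)+\kappa h^{\eta_4}\le 1+\kappa h^2$, which comes from Assumption~\ref{as:coeff-approx-general}. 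The bound \eqref{eq:bound-Psi-Phi} only gives $1+\kappa$, which would blow up under iteration. Also, $C$ depends on $\kappa$, not just on $L_b$.

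\emph{Local step in Part~2.} You Taylor-expand once around the $\mathcal F_{t_k}$-measurable point $X_{t_k}$. Then the deterministic linear term is $O(h^3)$ by the symmetry $\int_0^h(s-\tfrac h2)\,ds=0$, the stochastic linear terms are exactly conditionally centered, and only a quadratic Hessian-Lipschitz remainder remains. The paper instead separates $\mathcal R_3^{\mathrm U}$ (midpoint quadrature with the exact midpoint state) from $\mathcal R_4^{\mathrm U}$ (exact versus predicted midpoint). It expands $\mathcal R_3^{\mathrm U}$ around $t+\tfrac h2$ by differentiating along the path. Because $\nabla^2U(X_{t+h/2+s})$ is correlated with the Brownian increment on $[t+\tfrac h2-s,t+\tfrac h2+s]$, this requires the extra step \eqref{eq:R3-Brownian-zero}--\eqref{eq:R3-Brownian-estimate}. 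Only $\mathcal R_4^{\mathrm U}$ is Taylor-expanded around $x$ as in your argument, so your single expansion treats both pieces at once and is shorter.
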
    
\end{redtext}

The detailed proofs of Propositions \ref{prop:finite-error-rs}--\ref{prop:finite-error-UBU} are postponed to Section \ref{sec:pf-finite-error}.

\section{Main results: non-asymptotic error bounds under and beyond log-concavity} \label{sec:non-asym-error-bounds}

Equipped with uniform-in-time moment bounds and finite-time error bounds of U-ULMC \eqref{eq:uni-process},
this section aims to establish non-asymptotic error bounds for U-ULMC under two different convexity assumptions on the potential.
% 
% In the applications below, we use the proof of Theorem
% \ref{thm:rho-main-result} directly with the standard power-moment
% bounds established for ULD and U-ULMC. This avoids enlarging the
% displayed constants through \eqref{eq:equal-H-r}.

\subsection{Non-asymptotic error bounds under log-concavity}

We first consider the strongly log-concave setting, where the potential \(U\) is strongly convex. 
%

%First, under the log-concavity condition, we derive a non-asymptotic error bound in the \(\mathcal W_2\)-distance by combining the finite-time discretization error with the exponential ergodicity of the exact ULD. 

%We first consider the strongly log-concave setting. 
%In this case, the potential \(U\) is assumed to be strongly convex, which implies log-concavity of the target distribution. 
%Under this condition, the exact ULD enjoys exponential ergodicity in Wasserstein distances. 
%This property will be used to control the long-time bias between the law of the exact diffusion and the invariant distribution.

\begin{assumption}[\textbf{Strong convexity}]
\label{as:strongly-convex}
There exists a constant \(m>0\), independent of the dimension $d$, such that the potential \(U\) satisfies
\begin{equation}
\label{eq:m-convex}
\big\langle
\nabla U(x)-\nabla U(y),
x-y
\big\rangle
\geq
m|x-y|^2,
\qquad
\forall x,y\in\mathbb R^d .
\end{equation}
\end{assumption}

%In this case, the exponential ergodicity of the ULD in \(\mathcal W_2\)-distance, together with the finite-time discretization error, yields a non-asymptotic error bound for U-ULMC in \(\mathcal W_2\)-distance.
%
The following \(\mathcal W_2\)-exponential ergodicity for ULD is a direct consequence of \cite[Theorem 1]{schuh2024global}.

\begin{lem}[\textbf{Exponential ergodicity of ULD under log-concavity}]
\label{lem:erg-convex}
Let Assumptions \ref{as:Lip} and  \ref{as:strongly-convex}  hold. 
Assume further that the friction coefficient satisfies
\begin{equation}
\label{eq:condition-of-friction-coefficient}
\gamma>2\sqrt{\tfrac{L\alpha}{3}}.
\end{equation}
Then, for any initial distribution \(\nu=\mathcal{L} \big((X_0^\top,V_0^\top)^\top
\big)
\), there exist constants
\(
\mathcal 
C_E^{sc}
(\gamma,\alpha,L,m)>0\) and 
\(
\bar{\lambda}_1(\gamma,\alpha,m)>0
\)
such that the transition semigroup \(p_t\) of ULD \eqref{eq:ULD} and its invariant distribution \(\pi\) satisfy
\begin{equation}
\label{eq:ULD-exp-erg-convex}
\mathcal W_{2}
(
\nu p_t,
\pi
)
\leq
\mathcal  C_E^{sc}
e^{- \bar{\lambda}_1 t}
\mathcal W_{2}
(
\nu,
\pi
),
\qquad
\forall  
\ t\geq0 .
\end{equation}
\end{lem}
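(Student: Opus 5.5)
The plan is a synchronous coupling argument in a twisted Euclidean metric, which is essentially the content of \cite[Theorem 1]{schuh2024global}. Let $(X_t,V_t)$ and $(X'_t,V'_t)$ be two solutions of \eqref{eq:ULD} driven by the same Brownian motion $W$. The initial pair $((X_0,V_0),(X'_0,V'_0))$ is an optimal $\mathcal W_2$-coupling of $\nu$ and $\pi$. Since $\pi$ is invariant, $(X'_t,V'_t)\sim\pi$ for all $t$, and therefore $\mathcal W_2(\nu p_t,\pi)^2\le \mathbb E[|X_t-X'_t|^2+|V_t-V'_t|^2]$.

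Write $z=X_t-X'_t$ and $w=V_t-V'_t$. The noise cancels, so $(z,w)$ solves the random ODE
\[
\dot z=w,\qquad \dot w=-\gamma w-\alpha H_t z,
\]
where $H_t=\int_0^1\nabla^2U(X'_t+s z)\,\mathrm ds$ if $U\in C^2$. In general one works directly with the increments $\nabla U(X_t)-\nabla U(X'_t)$ instead. By Assumptions \ref{as:Lip} and \ref{as:strongly-convex}, these satisfy $m|z|^2\le\langle \nabla U(X_t)-\nabla U(X'_t),z\rangle$ and $|\nabla U(X_t)-\nabla U(X'_t)|\le L|z|$. Moreover, the co-coercivity bound $\langle \nabla U(x)-\nabla U(y),x-y\rangle\ge L^{-1}|\nabla U(x)-\nabla U(y)|^2$ is available.

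Next I introduce a quadratic form $\rho(z,w)=|z|^2+2b\langle z,\gamma^{-1}w\rangle+c|\gamma^{-1}w|^2$. One can equivalently work in the variables $(z,\,z+\gamma^{-1}w)$, as in \eqref{eq:lya-def}. The constants $b,c$ are chosen so that $\rho$ is equivalent to $|z|^2+|w|^2$ with constants depending only on $(\gamma,\alpha,L,m)$. The key step is to show that
\[
\tfrac{\mathrm d}{\mathrm dt}\rho(z,w)\le-2\bar\lambda_1\,\rho(z,w).
\]
Differentiating gives terms in $|w|^2$, $\langle z,w\rangle$, $\langle z,\nabla U(X_t)-\nabla U(X'_t)\rangle$ and $\langle w,\nabla U(X_t)-\nabla U(X'_t)\rangle$. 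The cross term is handled by writing $\nabla U(X_t)-\nabla U(X'_t)=A z$, with $A$ symmetric, $mI\preceq A\preceq LI$, in the $C^2$ case. The problem then reduces to showing that a $2\times2$ symmetric matrix, depending on the eigenvalue $a\in[m,L]$, is uniformly negative definite. This holds uniformly in $a$ precisely when the friction satisfies $\gamma^2>4L\alpha/3$, which is condition \eqref{eq:condition-of-friction-coefficient}.

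Gronwall's inequality then gives $\rho(z_t,w_t)\le e^{-2\bar\lambda_1 t}\rho(z_0,w_0)$. Using the norm equivalence and taking expectations yields $\mathbb E[|z_t|^2+|w_t|^2]\le (\mathcal C_E^{sc})^2e^{-2\bar\lambda_1t}\,\mathbb E[|z_0|^2+|w_0|^2]$. The right-hand side equals $(\mathcal C_E^{sc})^2e^{-2\bar\lambda_1 t}\,\mathcal W_2(\nu,\pi)^2$ by the optimality of the initial coupling, which is \eqref{eq:ULD-exp-erg-convex}. The main obstacle is the algebraic selection of $b,c$ so that negativity holds for all $a\in[m,L]$ exactly under $\gamma>2\sqrt{L\alpha/3}$. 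Rather than redo it, I would cite \cite[Theorem 1]{schuh2024global}: its contraction in a twisted $\ell^2$ metric, under strong convexity and this friction condition, is exactly what is needed. The remaining work is to rescale time and velocity to match the parameters $\gamma$ and $\alpha$ in \eqref{eq:ULD}, and to convert the twisted metric back to the Euclidean one. That conversion is where the prefactor $\mathcal C_E^{sc}$ arises.
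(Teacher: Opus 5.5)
Your proposal is correct and takes the same route as the paper, which proves this lemma solely by invoking \cite[Theorem 1]{schuh2024global}; the synchronous-coupling sketch you give beforehand is extra detail that the paper omits. One small correction: the friction condition is sufficient but not sharp for the existence of $b,c$. For instance, $b=1$ and $c=2\gamma^2/(\alpha L)$ already make $\frac{\mathrm d}{\mathrm dt}\rho$ uniformly negative definite over $a\in[m,L]$ once $\gamma^2>\alpha L$, so the word ``precisely'' should be dropped.
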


% Under Assumption \ref{as:strongly-convex}, the dissipativity condition required in Assumption \ref{as:dissipativity} follows directly from strong convexity. 
% Indeed, for any $x\in\mathbb{R}^d$, we have
% \[
% \mu=\tfrac{m}{2},
% \qquad
% \mu'=\tfrac{L_2^2}{2m}.
% \]
% Hence, the estimates established under Assumption \ref{as:dissipativity} apply in the strongly convex setting.

% We now verify all conditions required by Theorem \ref{thm:rho-main-result}. 
% The finite-time error bound condition (Condition \ref{con:finite-error}) is guaranteed by Proposition \ref{prop:finite-error-rs}, and the exponential ergodicity condition in \(\mathcal W_2\)-distance (Condition \ref{con:exp-erg}) follows from Lemma \ref{lem:erg-convex}. 
% Also, under Assumption \ref{as:strongly-convex}, the dissipativity condition in Assumption \ref{as:dissipativity} is satisfied with 
% $\mu=\tfrac{m}{2},
% \mu'=\tfrac{L_2^2}{2m}$.
% Therefore, uniform-in-time moment bounds for the exact solution (Lemma \ref{lem:ULD-moment bound}) and its approximation (Proposition \ref{prop:numerical-bound}) remain valid in the strongly convex setting, validating Conditions \ref{con:bound-solu}-\ref{con:bound-num}. 
% Given all conditions of Theorem \ref{thm:rho-main-result} are satisfied, we obtain the following non-asymptotic error bound and mixing time results.

We first present the main results for the randomized schemes
under strong convexity.

\begin{thm}[\textbf{Non-asymptotic error bound of randomized ULMC under log-concavity}]
\label{thm:convex-main-result}
Let Assumptions \ref{as:Lip}, \ref{as:U_0-bound} and \ref{as:strongly-convex} hold and suppose that Assumption \ref{as:coeff-approx-general} is satisfied with $\boldsymbol{\eta}=\boldsymbol{\eta}_\mathrm{R}$. 
Assume that the friction coefficient \(\gamma\) satisfies \eqref{eq:condition-of-friction-coefficient} and that the uniform stepsize \(h\) satisfies \eqref{eq:final-condition-of-stepsize}. 
Let
\(
\nu:=\mathcal L\big((X_0^\top,V_0^\top)^\top\big)
\)
be the initial distribution and let \(\bar q_n^\mathrm{R}\) denote the \(n\)-step transition kernel of the U-ULMC scheme \eqref{eq:uni-process} with $(\tau_k)_{k\ge 0}\overset{\text{i.i.d.}}{\sim}\mathcal U(0,1)$.  
Assume that there exists a dimension-independent constant \(\bar \sigma>0\) such that
\begin{equation}
\label{eq:condition-of-initial-value}
\E 
\big[
|X_0|^2  +  |V_0|^2
\big]
\leq
\bar \sigma  d.
\end{equation}
Then, for any \(n\in\mathbb N\), the law \(\nu\bar q^\mathrm{R}_n\) of the \(n\)-th iterate satisfies
\begin{equation}
\label{eq:convex-main-result}
\mathcal W_{2}
\big(
\nu \bar q^\mathrm{R}_n,\pi
\big)
\leq
\bar{\mathcal K}^{sc}_1\,
d^{\frac{1}{2}}\,h^{\frac32}
+
\bar{\mathcal K}^{sc}_2\,
d^{\frac{1}{2}}\,e^{-\bar \lambda'_1 nh},
\end{equation}
where
\begin{align}\label{eq:define-non-asym-constants-convex}
\bar{\mathcal K}^{sc}_1
:=  &
2
    \mathcal C(\bar \Theta_1)
    \big(
    \mathcal C_{F,\mathrm{R}} \bar K_2 
    \bar{\sigma}
    +
    \mathcal C_{F,\mathrm{R}}\bar K_2+ \mathcal C'_{F,\mathrm{R}}
    \big)^{\frac{1}{2}}
,  
& &
\bar{\mathcal K}^{sc}_2
:=  
2e
\Big(
    \big(
    \bar K_1
    +
    \bar K_2
    \big)
    \big(
    \bar \sigma
    +
    1
    \big)
\Big)^{\frac{1}{2}}
,
\\ 
\bar \lambda_1'
:=  &
\tfrac{\bar \lambda_1 }{\log \mathcal C^{sc}_E+1+ \bar \lambda_1 L^{-1}}
,
& &
\bar \Theta_1
  :=     
\tfrac{\log \mathcal C^{sc}_E + 1}{\bar \lambda_1}
+
\tfrac1 L
.
\end{align}
\end{thm}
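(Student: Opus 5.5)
The plan is to apply Theorem \ref{thm:rho-main-result} on the phase space $\mathbb R^{2d}$, with $\textit{Dist}=\mathcal W_2$, $\mathbb X=(X^\top,V^\top)^\top$, $P_t=p_t$, $Q_n=\bar q^{\mathrm R}_n$ and $\Pi=\pi$, and with all exponents equal to one: $r=r_1=r_2=r_3=1$ and $\zeta_1=\tfrac32$. Under these choices $\mathcal H_1(\nu)=\E[|X_0|^2+|V_0|^2]$, so \eqref{eq:condition-of-initial-value} is exactly the moment hypothesis of Theorem \ref{thm:rho-main-result} with $\sigma'=\bar\sigma$. Because $r=1$, both the $h^{3/2}$ term and the exponential term carry only the factor $d^{1/2}$.

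The first step is to reduce to the dissipative setting. Under Assumption \ref{as:strongly-convex} together with \eqref{eq:lip-2}, taking $y=0$ gives
$\langle x,\nabla U(x)\rangle\ge m|x|^2-L_2d^{1/2}|x|\ge \tfrac m2|x|^2-\tfrac{L_2^2}{2m}d$.
Hence Assumption \ref{as:dissipativity} holds with $\mu=\tfrac m2$ and $\mu'=\tfrac{L_2^2}{2m}$, so Lemma \ref{lem:ULD-moment bound} and Proposition \ref{prop:numerical-bound} are available.

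The four conditions are then checked as follows.
\begin{itemize}
\item Condition \ref{con:finite-error}: this is Proposition \ref{prop:finite-error-rs}, which applies because $\boldsymbol\eta=\boldsymbol\eta_{\mathrm R}$ and $h$ satisfies \eqref{eq:final-condition-of-stepsize}. Coupling the exact and numerical processes synchronously from a common initial value gives the $\mathcal W_2$ bound, with $C_F=\mathcal C_{F,\mathrm R}$, $C_F'=\mathcal C'_{F,\mathrm R}$ and $C(T)=\mathcal C(T)$. Monotonicity of $\mathcal C(T)$ in $T$ should follow from its explicit form in \eqref{eq:constants-mathcalC(T)-and}.
\item Condition \ref{con:exp-erg}: this is Lemma \ref{lem:erg-convex}, with $C_E=\mathcal C_E^{sc}$ and $\lambda=\bar\lambda_1$, using \eqref{eq:condition-of-friction-coefficient}.
\item Condition \ref{con:bound-solu}: use $\mathcal W_2(\nu p_t,\delta_0)^2=\E|(X_t,V_t)|^2$ together with Lemma \ref{lem:ULD-moment bound} for $p=1$, which gives $K_1=\bar K_1$.
\item Condition \ref{con:bound-num}: Proposition \ref{prop:numerical-bound} gives both inequalities with $K_2=\bar K_2$. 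It requires only $\boldsymbol\eta_{\mathrm M}\le\boldsymbol\eta_{\mathrm R}$ (the hierarchy) and $h\le h_\star$.
\end{itemize}

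To close, take $h_0=h_\star\wedge\tfrac1{2L_b}\le L^{-1}$, and use $L^{-1}$ in place of $h_0$ in $\Theta$ and $\lambda'$. This is legitimate because $\Theta$ and $1/\lambda'$ are increasing in $h_0$ and $C(\cdot)$ is monotone. With this substitution, Theorem \ref{thm:rho-main-result} yields \eqref{eq:convex-main-result}. Matching constants: $\mathcal K_1$ becomes $\bar{\mathcal K}^{sc}_1$ since every exponent ratio is $1$. $\mathcal K_2=2e(\bar K_2(\bar\sigma+1)+\bar K_1)^{1/2}$ is bounded by $\bar{\mathcal K}^{sc}_2$. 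Finally, the $\mathcal W_2(\pi,\delta_0)$ term inside the proof of Theorem \ref{thm:rho-main-result} is controlled by $(\bar K_1 d)^{1/2}$, since $\pi$ is stationary.

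I expect little genuine difficulty. The main step needing care is Condition \ref{con:finite-error}: the randomness in $\tau_k$ lives on the product space \eqref{def: pps}. I would argue that the $L^2(\Omega)$ bound of Proposition \ref{prop:finite-error-rs} bounds $\mathcal W_2$ between $\nu p_{t_n}$ and the law of the iterate with $\tau$ integrated out, which is exactly $\nu\bar q_n^{\mathrm R}$. A second point is that the framework applies Condition \ref{con:finite-error} to the shifted initial law $\nu\bar q^{\mathrm R}_{n-n_1}$. This is allowed because the Markov structure with fresh independent $(\tau_k, \Delta W)$ makes the finite-time estimate hold for an arbitrary initial law, whose second moment is in turn bounded by Proposition \ref{prop:numerical-bound}.
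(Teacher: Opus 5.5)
Your proposal is correct and follows essentially the same route as the paper. Like the paper, you derive dissipativity from strong convexity with $\mu=\tfrac m2$, $\mu'=\tfrac{L_2^2}{2m}$, verify Conditions \ref{con:finite-error}--\ref{con:bound-num} via Proposition \ref{prop:finite-error-rs}, Lemma \ref{lem:erg-convex}, Lemma \ref{lem:ULD-moment bound} and Proposition \ref{prop:numerical-bound} with $r=r_1=r_2=r_3=1$ and $\zeta_1=\tfrac32$, and then invoke Theorem \ref{thm:rho-main-result}. Your extra remarks (replacing $h_0$ by $L^{-1}$ via monotonicity, bounding $\mathcal K_2$ by $\bar{\mathcal K}^{sc}_2$, and the coupling/Markov justification for arbitrary initial laws) spell out details the paper leaves implicit.
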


\begin{proof}
\textbf{Proof.}
The proof relies on Theorem \ref{thm:rho-main-result}.
Under strong convexity (Assumption \ref{as:strongly-convex}), the dissipativity condition in Assumption \ref{as:dissipativity} is satisfied with 
\(
    \mu=\tfrac{m}{2},\, \mu'=\tfrac{L_2^2}{2m}.
\)
%%%%%
%Hence, all results established under Assumption \ref{as:dissipativity} can be directly applied in the present strongly convex setting. Therefore, it remains to verify the conditions under which the theorem applies.
%
This together with Assumptions \ref{as:Lip}, \ref{as:U_0-bound}, \ref{as:coeff-approx-general} helps us obtain uniform-in-time moment bounds for ULD and U-ULMC (cf. Lemma \ref{lem:ULD-moment bound} and Proposition \ref{prop:numerical-bound}) and  the finite-time error estimate (cf. Proposition \ref{prop:finite-error-rs}) with
$
r=r_1=r_2=r_3=1,
$ 
and
$
\zeta_1=\frac32
$. Therefore, Conditions \ref{con:finite-error}, \ref{con:bound-solu}, and \ref{con:bound-num} are validated.
%
%It remains only to establish the ergodicity condition. As $U$ is $L$-smooth and $m$-strongly convex, 
Also, Lemma \ref{lem:erg-convex} guarantees the exponential convergence of ULD in  $\mathcal{W}_2$ distance, thereby verifying Condition \ref{con:exp-erg}.
All assumptions of Theorem \ref{thm:rho-main-result} are thus fulfilled and the desired result follows.
\end{proof}

As a direct consequence, we obtain the following corollary.

%Since Lemmas \ref{lem:ULD-moment bound} and \ref{lem:erg-convex}, along with Propositions \ref{prop:numerical-bound} and \ref{prop:finite-error-rs}, satisfy Conditions \ref{con:bound-solu}-\ref{con:finite-error}, Theorem \ref{thm:rho-main-result} and Proposition \ref{prop:rho-mixing-time} immediately yield Theorem \ref{thm:convex-main-result} and Proposition \ref{prop:mixing_time-convex}, which are stated as follows:

\begin{cor} [\textbf{Mixing time of randomized ULMC under log-concavity}]\label{prop:mixing_time-convex}
Let the conditions of Theorem \ref{thm:convex-main-result} hold. 
To achieve a prescribed accuracy tolerance \(\epsilon>0\) in \(\mathcal W_2\)-distance, the required number of iterations of U-ULMC \eqref{eq:uni-process} with $(\tau_k)_{k\ge 0}\overset{i.i.d.}{\sim}\mathcal U(0,1)$  is of order $\widetilde{\mathcal{O}}(d^{\frac{1}{3}}\epsilon^{-\frac{2}{3}})$.
\end{cor}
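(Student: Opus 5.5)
The plan is to read the corollary off Proposition \ref{prop:rho-mixing-time}, in the same way that Theorem \ref{thm:convex-main-result} was obtained from Theorem \ref{thm:rho-main-result}. In the proof of Theorem \ref{thm:convex-main-result}, all hypotheses of Theorem \ref{thm:rho-main-result} were checked with $\textit{Dist}=\mathcal W_2$, $r=r_1=r_2=r_3=1$ and $\zeta_1=\tfrac32$. The relevant inputs are:
\begin{itemize}
\item the finite-time bound of Proposition \ref{prop:finite-error-rs} (Condition \ref{con:finite-error});
\item the $\mathcal W_2$-ergodicity of Lemma \ref{lem:erg-convex} (Condition \ref{con:exp-erg});
\item the moment bounds of Lemma \ref{lem:ULD-moment bound} and Proposition \ref{prop:numerical-bound} (Conditions \ref{con:bound-solu}--\ref{con:bound-num}).
\end{itemize}
The initial assumption \eqref{eq:condition-of-initial-value} is $\mathcal H_1(\nu)\le\bar\sigma d$. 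Proposition \ref{prop:rho-mixing-time} then applies directly, giving an iteration count of order $\widetilde{\mathcal O}(d^{\frac{r}{2\zeta_1}}\epsilon^{-\frac1{\zeta_1}})=\widetilde{\mathcal O}(d^{\frac13}\epsilon^{-\frac23})$.

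For transparency, I would also redo the two-line computation explicitly from \eqref{eq:convex-main-result}. First choose $h$ so that $\bar{\mathcal K}^{sc}_1 d^{\frac12}h^{\frac32}\le\epsilon/2$, namely $h=(\epsilon/(2\bar{\mathcal K}^{sc}_1))^{\frac23}d^{-\frac13}$. This $h$ must also be capped by the stepsize restriction \eqref{eq:final-condition-of-stepsize}, which is dimension-free, so the cap only changes constants for small $\epsilon$. Next choose $n\ge(\bar\lambda_1' h)^{-1}\log(2\bar{\mathcal K}^{sc}_2 d^{\frac12}/\epsilon)$, which makes the second term at most $\epsilon/2$. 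Together these give
$$n=\mathcal O\big(d^{\frac13}\epsilon^{-\frac23}\log(d^{\frac12}/\epsilon)\big).$$

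The only point needing care is dimension-independence. The constants $\bar{\mathcal K}^{sc}_1$, $\bar{\mathcal K}^{sc}_2$ and $\bar\lambda_1'$ depend only on $\gamma,\alpha,L,L_1,L_2,m,\kappa$ and $\bar\sigma$, and not on $d$, $h$ or $\epsilon$. This follows because $\bar\Theta_1$ uses the bound $h_0=L^{-1}$, which is valid since $h\le h_\star\le L^{-1}$. With that in place, the logarithmic factor is absorbed into the $\widetilde{\mathcal O}$ notation, which completes the argument.
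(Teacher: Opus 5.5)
Your proposal is correct and follows the paper's route: the corollary is read off from Proposition \ref{prop:rho-mixing-time} with $r=r_1=r_2=r_3=1$ and $\zeta_1=\tfrac32$, and your explicit recomputation from \eqref{eq:convex-main-result} mirrors the proof of that proposition. One small wording slip: the dimension-free stepsize cap in \eqref{eq:final-condition-of-stepsize} is inactive for small $\epsilon$ and only binds when $\epsilon$ is of order $d^{\frac12}$ or larger, so it does not affect the stated order $\widetilde{\mathcal O}(d^{\frac13}\epsilon^{-\frac23})$; the paper's own proof of Proposition \ref{prop:rho-mixing-time} omits this point entirely.
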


\begin{redtext}

The following theorem establishes the main results
for UBU-type ULMC under strong convexity.

%We next present results for U-ULMC \eqref{eq:uni-process} with a fixed midpoint \(\tau_k\equiv\frac12\).

\begin{thm}[\textbf{Non-asymptotic error bound of UBU-type ULMC under log-concavity}]
\label{thm:convex-main-result-UBU}
Let Assumptions \ref{as:Lip}, 
\ref{as:U_0-bound} and
\ref{as:strongly-convex} hold.
Assume that the friction coefficient \(\gamma\) satisfies
\eqref{eq:condition-of-friction-coefficient}.
Let
\(
\nu
:=
\mathcal L
\big(
(X_0^\top,V_0^\top)^\top
\big)
\)
be the initial distribution, and let
\(\bar q^{\mathrm U}_n\) denote the \(n\)-step transition
kernel of the U-ULMC scheme \eqref{eq:uni-process} with
\(\tau_k\equiv\frac12\).
\begin{enumerate}
\item Suppose that Assumption \ref{as:coeff-approx-general} is satisfied with $\boldsymbol{\eta}=\boldsymbol{\eta}_\mathrm{E}$ and assume that $(X_0, V_0)$ satisfies the moment condition \eqref{eq:condition-of-initial-value}. If the uniform stepsize $h$ satisfies \eqref{eq:final-condition-of-stepsize}, 
then the law
\(\nu\bar q^{\mathrm U}_n\) of the \(n\)-th iterate
satisfies
\begin{equation}
\label{eq:convex-main-result-UBU-gl}
\mathcal W_2
\left(
\nu\bar q^{\mathrm U}_n,
\pi
\right)
\leq
\bar{\mathcal K}_{1,\mathrm{U}_1}^{sc}
d^{\frac{1}{2}} h
+
\bar{\mathcal K}_{2,\mathrm{U}_1}^{sc}
d^{\frac{1}{2}}
e^{-\bar\lambda_1'nh},
\end{equation}
where
\begin{align*}
\bar{\mathcal K}_{1,\mathrm{U}_1}^{sc}
:={}
2
\mathcal C(\bar\Theta_1)
\left(
\mathcal C_{F,\mathrm{U}_1}\bar K_2\bar\sigma 
+
\mathcal C_{F,\mathrm{U}_1}\bar K_2
+
\mathcal C_{F,\mathrm{U}_1}'
\right)^{\frac12},
\quad
\bar{\mathcal K}_{2,\mathrm{U}_1}^{sc}
:=
2e
\left(
(\bar K_2+\bar K_1)
(\bar\sigma +1)
\right)^{\frac12}.
\end{align*}
Here \(\bar\lambda_1', \bar\Theta_1\) are defined in
\eqref{eq:define-non-asym-constants-convex} and 
\(\mathcal C_{F,\mathrm{U}_1}, \mathcal C_{F,\mathrm{U}_1}'\) are given in \eqref{eq:constants-c-f-U1}.
\item 
Suppose that
Assumption~\ref{as:coeff-approx-general} holds with
$\boldsymbol{\eta}=\boldsymbol{\eta}_{\mathrm U}$, Assumption~\ref{as:Hessian-lip} holds and the uniform stepsize $h$ satisfies \eqref{eq:final-condition-of-stepsize-UBU-second-order}. If there exists a dimension-independent constant
\(\bar\sigma'>0\) such that
\begin{equation}
\label{eq:condition-of-initial-value-UBU-hl}
\mathbb E
\left[
\left(
|X_0|^2+|V_0|^2
\right)^2
\right]
\leq
\bar\sigma' d^2,
\end{equation}
then the law \(\nu\bar q^{\mathrm U}_n\) of the \(n\)-th iterate satisfies
\begin{equation}
\label{eq:convex-main-result-UBU}
\mathcal W_2
\left(
\nu\bar q^{\mathrm U}_n,
\pi
\right)
\leq
\bar{\mathcal K}_{1,\mathrm{U}_2}^{sc}
d h^2
+
\bar{\mathcal K}_{2,\mathrm{U}_2}^{sc}
d
e^{-\bar\lambda_1'nh},
\end{equation}
where
\begin{align*}
\bar{\mathcal K}_{1,\mathrm{U}_2}^{sc}
:={}&
2
\mathcal C(\bar\Theta_1)
\left(
\mathcal C_{F,\mathrm{U}_2}\bar K_2\bar\sigma' 
+
\mathcal C_{F,\mathrm{U}_2}\bar K_2
+
\mathcal C_{F,\mathrm{U}_2}'
\right)^{\frac12},
&\quad&
\bar{\mathcal K}_{2,\mathrm{U}_2}^{sc}
:=
2e
\left(
(\bar K_2+\bar K_1)
(\bar\sigma' +1)
\right)^{\frac12}.
\end{align*}
Here \(\mathcal C_{F,\mathrm{U}_2},\mathcal C_{F,\mathrm{U}_2}'\) are the constants defined in \eqref{eq:constants-c-f-U2}.
\end{enumerate}

% Here \(\bar\lambda_1'\) and \(\bar\Theta_1\) are defined in
% Theorem \ref{thm:convex-main-result},
% \(\bar K_2\) and \(\bar K_1\) are the numerical and exact
% fourth-moment constants in Proposition
% \ref{prop:uniform-fourth-moment-u-ulmc} and Lemma
% \ref{lem:ULD-moment bound}, respectively, and
% \(\mathcal C_{F,2},\mathcal C_{F,2}'\) are given by
% \eqref{eq:constants-c-f-2}.
\end{thm}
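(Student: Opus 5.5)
The plan is to apply Theorem \ref{thm:rho-main-result} with $\textit{Dist}=\mathcal W_2$ on $\mathcal P(\mathbb R^{2d})$, exactly as in the proof of Theorem \ref{thm:convex-main-result}. The only change is that the finite-time input now comes from Proposition \ref{prop:finite-error-UBU}. The two parts of the theorem differ only in the moment exponent $r$ and the order $\zeta_1$.

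First, under Assumption \ref{as:strongly-convex}, dissipativity (Assumption \ref{as:dissipativity}) holds with $\mu=\tfrac m2$ and $\mu'=\tfrac{L_2^2}{2m}$. Hence Lemma \ref{lem:ULD-moment bound} and Propositions \ref{prop:numerical-bound}, \ref{prop:uniform-fourth-moment-u-ulmc} are available. Since $\boldsymbol{\eta}_{\mathrm E}\ge\boldsymbol{\eta}_{\mathrm M}$ and $\boldsymbol{\eta}_{\mathrm U}\ge\boldsymbol{\eta}_{\mathrm M}$ componentwise and $h\le1$, the moment propositions apply in both parts. Condition \ref{con:exp-erg} is Lemma \ref{lem:erg-convex}, with $C_E=\mathcal C_E^{sc}$ and $\lambda=\bar\lambda_1$.

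Conditions \ref{con:bound-solu} and \ref{con:bound-num} are obtained through the synchronous coupling bound $\mathcal W_2(\mu,\delta_0)\le(\mathbb E|Z|^2)^{1/2}$ together with Jensen's inequality. In part 1 we take $r=r_1=r_2=r_3=1$, so $\mathcal H_1(\nu)=\mathbb E[|X_0|^2+|V_0|^2]$; this is exactly the setting of Theorem \ref{thm:convex-main-result}. In part 2 we take $r=r_1=r_2=r_3=2$, so $\mathcal H_2(\nu)=\mathbb E[(|X_0|^2+|V_0|^2)^2]$ and the initial hypothesis \eqref{eq:condition-of-initial-value-UBU-hl} reads $\mathcal H_2(\nu)\le\bar\sigma' d^2$. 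For the bound on $\textit{Dist}(\nu' P_t,\delta_0)$ we use $(\mathbb E|Z_t|^2)^{1/2}\le(\mathbb E|Z_t|^4)^{1/4}\le (\bar K_1(\mathcal H_2+d^2))^{1/4}$. This has to be brought to the form $(K_1(\mathcal H_2+d^2))^{1/2}$ required by the conditions. Because $\mathcal H_2+d^2\ge1$ for $d\ge1$, the fourth root is dominated by the square root after enlarging the constant to $K_1=\bar K_1\vee 1$. The same argument with Proposition \ref{prop:uniform-fourth-moment-u-ulmc} gives both lines of Condition \ref{con:bound-num} with $K_2=\bar K_2$.

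Condition \ref{con:finite-error} comes from Proposition \ref{prop:finite-error-UBU}. Item 1 gives $\zeta_1=1$ and $r=1$, using the constants $\mathcal C_{F,\mathrm U_1},\mathcal C'_{F,\mathrm U_1}$. Item 2 gives $\zeta_1=2$ and $r=2$, using $\mathcal C_{F,\mathrm U_2},\mathcal C'_{F,\mathrm U_2}$ and the stepsize restriction \eqref{eq:final-condition-of-stepsize-UBU-second-order}. In both cases the $L^2$ coupling of the exact and numerical solutions started from the same initial data bounds $\mathcal W_2(\nu p_{nh},\nu\bar q_n^{\mathrm U})$. The monotone function $C(T)$ is $\mathcal C(T)$.

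Finally, $h\le h_\star\le L^{-1}$, so we set $h_0=L^{-1}$. This yields $\Theta=\bar\Theta_1$ and $\lambda'=\bar\lambda_1'$. Substituting into $\mathcal K_1$ and $\mathcal K_2$ of Theorem \ref{thm:rho-main-result} gives the stated constants: $(\sigma')^{r/r_3}=\bar\sigma$ or $\bar\sigma'$, $K_2^{r/r_2}=\bar K_2$, and the bound $K_2(\sigma'+1)+K_1\le(\bar K_1+\bar K_2)(\sigma'+1)$. The dimension factor is $d^{r/2}$, which equals $d^{1/2}$ in part 1 and $d$ in part 2.

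The only delicate point is the exponent mismatch in part 2 between the $\mathcal W_2$-distance and the fourth-moment framework. It is handled by the Jensen step and the enlargement to $K_1\vee1$ described above. Everything else is routine substitution.
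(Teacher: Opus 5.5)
Your proposal is correct and follows the paper's own proof. You verify Conditions~\ref{con:finite-error}--\ref{con:bound-num} from Lemma~\ref{lem:ULD-moment bound} and Propositions~\ref{prop:numerical-bound}, \ref{prop:uniform-fourth-moment-u-ulmc}, \ref{prop:finite-error-UBU}, with $r=r_1=r_2=r_3=1$, $\zeta_1=1$ in part 1 and $r=r_1=r_2=r_3=2$, $\zeta_1=2$ in part 2, take Condition~\ref{con:exp-erg} from Lemma~\ref{lem:erg-convex}, and apply Theorem~\ref{thm:rho-main-result} with $h_0=L^{-1}$. The paper leaves implicit the Jensen step you use to put the $\mathcal W_2$ bounds into the fourth-moment form needed in part 2; your enlargement $K_1=\bar K_1\vee 1$ is harmless, since the moment bounds at time zero already force $\bar K_1,\bar K_2\geq 1$.
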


\begin{proof}
\textbf{Proof.}
The proof is analogous to that of Theorem
\ref{thm:convex-main-result}. Indeed, strong convexity implies
Assumption \ref{as:dissipativity}, while Lemma
\ref{lem:ULD-moment bound} and Propositions \ref{prop:numerical-bound}, \ref{prop:uniform-fourth-moment-u-ulmc} and
\ref{prop:finite-error-UBU} confirm Conditions
\ref{con:finite-error}, \ref{con:bound-solu}, and
\ref{con:bound-num} with
\begin{align}
\text{Gradient Lipschitz with } \boldsymbol{\eta}_\mathrm{E} \text{ case}:&
\qquad
r=r_1=r_2=r_3=1,
\qquad
\zeta_1=1; \\
\text{Hessian Lipschitz with } \boldsymbol{\eta}_\mathrm{U} \text{ case}:&
\qquad
r=r_1=r_2=r_3=2,
\qquad
\zeta_1=2.
\end{align}
Together with the \(\mathcal W_2\)-exponential ergodicity in
Lemma \ref{lem:erg-convex}, the desired result follows directly
from Theorem \ref{thm:rho-main-result}.
\end{proof}

As a direct consequence, we obtain the following corollary.

% \begin{prop}[\textbf{Mixing time of UBU-type ULMC under log-concavity}]
% \label{cor:mixing-time-convex-UBU}
% Let the conditions of Theorem
% \ref{thm:convex-main-result-UBU} hold.
% Then, to achieve a prescribed accuracy tolerance
% \(\epsilon>0\) in \(\mathcal W_2\)-distance, the required
% number of iterations of the U-ULMC scheme
% \eqref{eq:uni-process} with \(\tau_k\equiv\frac12\) is of order
% \(
% \widetilde{\mathcal O}
% (
% d^{\frac12}\epsilon^{-\frac12}
% ).
% \)
% \end{prop}

\begin{cor}[\textbf{Mixing time of UBU-type ULMC under log-concavity}]
\label{cor:mixing-time-convex-UBU}
Let the conditions of Theorem
\ref{thm:convex-main-result-UBU} hold.
To achieve a prescribed accuracy tolerance
\(\epsilon>0\) in \(\mathcal W_2\)-distance, the required
number of iterations of the U-ULMC scheme
\eqref{eq:uni-process} with \(\tau_k\equiv\frac12\) is of order
\[
\begin{cases}
\widetilde{\mathcal O}
\bigl(d^{1/2}\epsilon^{-1}\bigr),
& \text{under the gradient Lipschitz condition};\\[1mm]
\widetilde{\mathcal O}
\bigl(d^{1/2}\epsilon^{-1/2}\bigr),
&\text{ under the Hessian Lipschitz condition.}
\end{cases}
\]
\end{cor}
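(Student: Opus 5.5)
The corollary follows by feeding the two cases of Theorem~\ref{thm:convex-main-result-UBU} into Proposition~\ref{prop:rho-mixing-time}. Alternatively, one can solve the two-term bound directly. We take $\mathcal W_2$ as the metric $\textit{Dist}$ on $\mathcal P(\mathbb R^{2d})$. The proof of Theorem~\ref{thm:convex-main-result-UBU} already verified Conditions~\ref{con:finite-error}--\ref{con:bound-num} through Lemma~\ref{lem:ULD-moment bound}, Propositions~\ref{prop:numerical-bound}, \ref{prop:uniform-fourth-moment-u-ulmc} and \ref{prop:finite-error-UBU}, and Lemma~\ref{lem:erg-convex}. That verification gives $(r,\zeta_1)=(1,1)$ in the gradient Lipschitz case and $(r,\zeta_1)=(2,2)$ in the Hessian Lipschitz case, with $r_1=r_2=r_3=r$. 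So all hypotheses of Theorem~\ref{thm:rho-main-result}, and hence of Proposition~\ref{prop:rho-mixing-time}, are in place.

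In the gradient Lipschitz case we have $\mathcal W_2(\nu\bar q^{\mathrm U}_n,\pi)\le \bar{\mathcal K}^{sc}_{1,\mathrm U_1}d^{1/2}h+\bar{\mathcal K}^{sc}_{2,\mathrm U_1}d^{1/2}e^{-\bar\lambda_1'nh}$. We make each term at most $\epsilon/2$. The first requires $h\le \epsilon/(2\bar{\mathcal K}^{sc}_{1,\mathrm U_1}d^{1/2})$. The second requires $nh\ge (\bar\lambda_1')^{-1}\log\big(2\bar{\mathcal K}^{sc}_{2,\mathrm U_1}d^{1/2}/\epsilon\big)$. Taking $h$ equal to the minimum of this value and the stepsize cap \eqref{eq:final-condition-of-stepsize}, which is dimension-free, gives $n=\widetilde{\mathcal O}(d^{1/2}\epsilon^{-1})$. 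This agrees with $d^{r/(2\zeta_1)}\epsilon^{-1/\zeta_1}$ at $r=\zeta_1=1$.

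In the Hessian Lipschitz case the bound is $\bar{\mathcal K}^{sc}_{1,\mathrm U_2}dh^2+\bar{\mathcal K}^{sc}_{2,\mathrm U_2}de^{-\bar\lambda_1'nh}$. The same procedure gives $h\asymp \epsilon^{1/2}d^{-1/2}$ and $nh\asymp\log(d/\epsilon)$, so $n=\widetilde{\mathcal O}(d^{1/2}\epsilon^{-1/2})$. This agrees with $d^{r/(2\zeta_1)}\epsilon^{-1/\zeta_1}=d^{2/4}\epsilon^{-1/2}$.

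The only point needing care is that the stepsize caps \eqref{eq:final-condition-of-stepsize} and \eqref{eq:final-condition-of-stepsize-UBU-second-order} are dimension-independent constants. Taking a minimum with them therefore affects only the small-$d/\epsilon$ regime and not the stated order. Likewise, the logarithmic factors $\log(d/\epsilon)$ are absorbed into $\widetilde{\mathcal O}$.
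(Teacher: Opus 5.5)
Your proposal is correct and follows the paper's route: the corollary is obtained by feeding $(r,\zeta_1)=(1,1)$ and $(r,\zeta_1)=(2,2)$ from Theorem~\ref{thm:convex-main-result-UBU} into Proposition~\ref{prop:rho-mixing-time}, which yields the order $d^{r/(2\zeta_1)}\epsilon^{-1/\zeta_1}$. Your remark that the stepsize caps are dimension-free, so taking a minimum with them does not change the order, is a point the paper's proof of Proposition~\ref{prop:rho-mixing-time} leaves implicit.
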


\end{redtext}

\begin{redtext}

Finally, we present the main results for the Euler-type schemes in the strongly convex setting.

\begin{thm}[\textbf{Non-asymptotic error bound of Euler-type ULMC under log-concavity}]
\label{thm:convex-main-result-Euler}
Let Assumptions \ref{as:Lip}, \ref{as:U_0-bound},
\ref{as:strongly-convex} hold and suppose that Assumption \ref{as:coeff-approx-general} is satisfied with $\boldsymbol{\eta}=\boldsymbol{\eta}_\mathrm{E}$.
Assume that the friction coefficient $\gamma$ satisfies
\eqref{eq:condition-of-friction-coefficient} and that the uniform
stepsize $h$ satisfies \eqref{eq:final-condition-of-stepsize}.
Let
\(
\nu
:=
\mathcal L
\big(
(X_0^\top,V_0^\top)^\top
\big),
\)
and let $\bar q^{\mathrm E}_{n}$ denote the $n$-step transition kernel of the U-ULMC scheme \eqref{eq:uni-process} with $\tau_k\equiv0$.
Assume that the initial condition satisfies
\eqref{eq:condition-of-initial-value}.
Then, for every $n\in\mathbb N$, the law $\nu \bar q^{\mathrm E}_{n}$ of the $n$-th iterate satisfies
\begin{align}
\mathcal W_2
\big(
\nu\bar q^{\mathrm E}_{n},
\pi
\big)
\leq&
\bar{\mathcal K}_{1,\mathrm{E}}^{sc}
d^{\frac12}h
+
\bar{\mathcal K}_{2,\mathrm{E}}^{sc}
d^{\frac12}
e^{-\bar\lambda_1'nh},
\label{eq:convex-main-result-Euler}
\end{align}
where
\begin{align}
\bar{\mathcal K}_{1,\mathrm{E}}^{sc}
:={}
2
\mathcal C(\bar\Theta_1)
\left(
\mathcal C_{F,\rm E}\bar K_2\bar\sigma
+
\mathcal C_{F,\rm E}\bar K_2
+
\mathcal C_{F,\rm E}'
\right)^{\frac12},
\quad
\bar{\mathcal K}_{2,\mathrm{E}}^{sc}
:={}
2e
\left(
(\bar K_1+\bar K_2)
(\bar\sigma+1)
\right)^{\frac12}.
\label{eq:constants-main-result-Euler-sc}
\end{align}
Here $\bar\lambda_1',\bar\Theta_1$ are defined in
\eqref{eq:define-non-asym-constants-convex} and $\mathcal C_{F,E},\mathcal C'_{F,E}$ are given in \eqref{eq:define-C-F-E}.
\end{thm}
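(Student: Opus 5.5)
The proof will be a direct application of Theorem \ref{thm:rho-main-result}, in the same way as the proofs of Theorems \ref{thm:convex-main-result} and \ref{thm:convex-main-result-UBU}. We take $\textit{Dist}=\mathcal W_2$ on $\mathcal P(\mathbb R^{2d})$, with $\mathbb X=(X^\top,V^\top)^\top$. For $Q_n$ we use the $n$-step kernel $\bar q^{\mathrm E}_n$ of U-ULMC \eqref{eq:uni-process} with $\tau_k\equiv0$. In this case the predictor reduces to $\bar X_{k+\tau_k}=\bar X_k$, so the scheme is a genuine one-step Markov chain of the form \eqref{eq:approx_general_sde}, with $\xi_n$ the Gaussian increments. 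The remaining task is to verify Conditions \ref{con:finite-error}--\ref{con:bound-num} with $r=r_1=r_2=r_3=1$ and $\zeta_1=1$.

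\textbf{Dissipativity.} First we note that Assumption \ref{as:strongly-convex} together with Assumption \ref{as:U_0-bound} implies Assumption \ref{as:dissipativity}. Taking $y=0$ in \eqref{eq:m-convex} and applying Young's inequality to $\langle x,\nabla U(0)\rangle$ gives $\langle x,\nabla U(x)\rangle\ge m|x|^2-|x|L_2d^{1/2}\ge \tfrac m2|x|^2-\tfrac{L_2^2}{2m}d$. This yields $\mu=\tfrac m2$ and $\mu'=\tfrac{L_2^2}{2m}$.

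\textbf{Condition \ref{con:bound-solu}.} By Lemma \ref{lem:ULD-moment bound} with $p=1$, $\mathcal W_2(\nu p_t,\delta_0)\le(\mathbb E[|X_t|^2+|V_t|^2])^{1/2}\le(\bar K_1(\mathcal H_1(\nu)+d))^{1/2}$.

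\textbf{Condition \ref{con:bound-num}.} Assumption \ref{as:coeff-approx-general} holds with $\boldsymbol\eta_{\mathrm E}$, and $\boldsymbol\eta_{\mathrm E}\ge\boldsymbol\eta_{\mathrm M}$ componentwise. Since $h\le1$, the condition with $\boldsymbol\eta_{\mathrm M}$ also holds. Moreover, \eqref{eq:final-condition-of-stepsize} implies $h\le h_\star$. Proposition \ref{prop:numerical-bound} then gives both the $\mathcal W_2$ bound against $\delta_0$ and the bound $\mathcal H_1(\nu\bar q^{\mathrm E}_n)\le\bar K_2(\mathcal H_1(\nu)+d)$.

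\textbf{Condition \ref{con:finite-error}.} This follows from Proposition \ref{prop:finite-error-Euler}. The synchronous coupling it uses (same Brownian motion and same initial data) bounds $\mathcal W_2(\nu p_{nh},\nu\bar q^{\mathrm E}_n)$ by the $L^2$ error. This gives $C(T)=\mathcal C(T)$, $C_F=\mathcal C_{F,\mathrm E}$, $C_F'=\mathcal C'_{F,\mathrm E}$ and $\zeta_1=1$.

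\textbf{Condition \ref{con:exp-erg}.} This is Lemma \ref{lem:erg-convex}, which applies because $\gamma$ satisfies \eqref{eq:condition-of-friction-coefficient}. It gives $C_E=\mathcal C^{sc}_E$ and $\lambda=\bar\lambda_1$.

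\textbf{Applying the theorem.} The initial condition \eqref{eq:condition-of-initial-value} is exactly $\mathcal H_1(\nu)\le\bar\sigma d$, so $\sigma'=\bar\sigma$. For $h_0$ we take $L^{-1}$, which is admissible since $h_\star\le L^{-1}$. Substituting into the constants $\mathcal K_1,\mathcal K_2,\lambda',\Theta$ of Theorem \ref{thm:rho-main-result} gives the stated values of $\bar{\mathcal K}^{sc}_{1,\mathrm E}$, $\bar{\mathcal K}^{sc}_{2,\mathrm E}$, $\bar\lambda_1'$ and $\bar\Theta_1$. The bound $\bar{\mathcal K}^{sc}_{2,\mathrm E}$ is obtained by using $K_1,K_2\le\bar K_1+\bar K_2$.

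\textbf{Main obstacle.} The only delicate point is bookkeeping. Theorem \ref{thm:rho-main-result} is phrased for a generic state space $\mathbb R^d$ with dimension factor $d^r$, whereas here the state space is $\mathbb R^{2d}$ while the moment bounds carry the factor $d$, not $2d$. We have to check that the proof of Theorem \ref{thm:rho-main-result} uses only the displayed inequalities in the Conditions and never the ambient dimension, so it goes through verbatim. Once this is checked, all the substantive work is already contained in Propositions \ref{prop:numerical-bound} and \ref{prop:finite-error-Euler}.
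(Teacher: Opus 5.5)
Your proposal is correct and follows the paper's route: apply Theorem \ref{thm:rho-main-result} with $\textit{Dist}=\mathcal W_2$, get dissipativity from strong convexity, and verify Conditions \ref{con:finite-error}--\ref{con:bound-num} with $r=r_1=r_2=r_3=1$ and $\zeta_1=1$ via Lemma \ref{lem:ULD-moment bound}, Propositions \ref{prop:numerical-bound} and \ref{prop:finite-error-Euler}, and Lemma \ref{lem:erg-convex}. Your extra bookkeeping makes explicit what the paper leaves implicit. This covers the synchronous coupling, the $\mathbb R^{2d}$-versus-$d$ point, and the choice $h_0=L^{-1}$, which is harmless because $\mathcal C(\cdot)$ is nondecreasing and $\lambda'$ is decreasing in $h_0$.
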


\begin{proof}
\textbf{Proof.}
Under Assumption~\ref{as:strongly-convex}, Assumption
\ref{as:dissipativity} holds.
Lemma~\ref{lem:ULD-moment bound},
Proposition~\ref{prop:numerical-bound}, and
Proposition~\ref{prop:finite-error-Euler} verify
Conditions \ref{con:finite-error}, \ref{con:bound-solu}, and
\ref{con:bound-num} with
$
r=r_1=r_2=r_3=1,
$
and
$
\zeta_1=1.
$
Together with the $\mathcal W_2$-exponential ergodicity in
Lemma~\ref{lem:erg-convex}, all assumptions of
Theorem~\ref{thm:rho-main-result} are satisfied.
The desired result follows directly.
\end{proof}

As a direct consequence, we have the following corollary.

\begin{cor}[\textbf{Mixing time of Euler-type ULMC under log-concavity}]
\label{cor:mixing-time-convex-Euler}
Let the conditions of
Theorem~\ref{thm:convex-main-result-Euler} hold.
To achieve a prescribed accuracy tolerance
$\epsilon>0$ in $\mathcal W_2$-distance, the required number of
iterations of the Euler-type U-ULMC  with $\tau_k\equiv0$
is of order
$
\widetilde{\mathcal O}
(
d^{\frac12}\epsilon^{-1}
).
$
\end{cor}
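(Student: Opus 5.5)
This is a direct application of Proposition~\ref{prop:rho-mixing-time} to the setting of Theorem~\ref{thm:convex-main-result-Euler}. We take $\textit{Dist}=\mathcal W_2$ on $\mathcal P(\mathbb R^{2d})$, the exact semigroup $P_t=p_t$, the numerical kernels $Q_n=\bar q^{\mathrm E}_n$, and the invariant measure $\Pi=\pi$. The proof of Theorem~\ref{thm:convex-main-result-Euler} already verified Conditions~\ref{con:finite-error}--\ref{con:bound-num} with $r=r_1=r_2=r_3=1$ and $\zeta_1=1$. Concretely:
\begin{itemize}
\item Condition~\ref{con:finite-error} follows from Proposition~\ref{prop:finite-error-Euler}, since $\mathcal W_2$ is bounded by the synchronous-coupling $L^2$ error.
\item Condition~\ref{con:exp-erg} follows from Lemma~\ref{lem:erg-convex}, with $C_E=\mathcal C_E^{sc}$ and $\lambda=\bar\lambda_1$.
\item Condition~\ref{con:bound-solu} follows from Lemma~\ref{lem:ULD-moment bound}.
\item Condition~\ref{con:bound-num} follows from Proposition~\ref{prop:numerical-bound}.
\end{itemize}
The initial moment condition \eqref{eq:condition-of-initial-value} supplies $\mathcal H_1(\nu)\le\bar\sigma d$. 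The stepsize bound \eqref{eq:final-condition-of-stepsize} gives $h\le h_0:=L^{-1}$, which matches the choice $h_0=L^{-1}$ already built into $\bar\Theta_1$ and $\bar\lambda_1'$.

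Substituting $r=1$ and $\zeta_1=1$, Proposition~\ref{prop:rho-mixing-time} gives an iteration count of order $\widetilde{\mathcal O}(d^{\frac{r}{2\zeta_1}}\epsilon^{-\frac1{\zeta_1}})=\widetilde{\mathcal O}(d^{\frac12}\epsilon^{-1})$. For completeness, the explicit choice comes from \eqref{eq:convex-main-result-Euler}. Pick
\[
h\le \frac{\epsilon}{2\bar{\mathcal K}^{sc}_{1,\mathrm E}d^{1/2}},
\qquad
n\ge \frac{1}{\bar\lambda_1'h}\log\Big(\frac{2\bar{\mathcal K}^{sc}_{2,\mathrm E}d^{1/2}}{\epsilon}\Big).
\]
Since $\bar{\mathcal K}^{sc}_{1,\mathrm E}$, $\bar{\mathcal K}^{sc}_{2,\mathrm E}$ and $\bar\lambda_1'$ are dimension-free, this gives $n=\widetilde{\mathcal O}(d^{1/2}\epsilon^{-1})$.

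The only point needing care is a compatibility check. The tolerance-driven $h$ must also satisfy \eqref{eq:final-condition-of-stepsize}, so we take the minimum of the two constraints. This only matters when $\epsilon$ is not small, and it then affects only constants. We must also confirm that the constants do not secretly depend on $d$, which holds by the dimension-independence asserted in each cited result.
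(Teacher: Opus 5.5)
Your proposal is correct and matches the paper's argument. The paper obtains the corollary by feeding Conditions~\ref{con:finite-error}--\ref{con:bound-num}, verified in the proof of Theorem~\ref{thm:convex-main-result-Euler} with $r=r_1=r_2=r_3=1$ and $\zeta_1=1$, into Proposition~\ref{prop:rho-mixing-time}, which gives $\widetilde{\mathcal O}(d^{1/2}\epsilon^{-1})$; your explicit choice of $h$ and $n$ from \eqref{eq:convex-main-result-Euler}, capped by the stepsize restriction \eqref{eq:final-condition-of-stepsize}, simply unpacks that same computation slightly more carefully.
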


\end{redtext}

\subsection{Non-asymptotic error bounds beyond log-concavity}

We next turn to a non-log-concave setting, where the potential satisfies strong convexity at infinity. 
Under this condition, the ULD admits exponential ergodicity in \(\mathcal W_1\)-distance. 
%

%Second, beyond log-concavity, we obtain a non-asymptotic error bound in the \(\mathcal W_1\)-distance.

%Establishing the exponential ergodicity of the ULD beyond the log-concave setting is significantly more challenging, and the property is consequently guaranteed under a weaker metric. We formalize this property in Proposition \ref{prop:erg-non-convex}, the proof of which is detailed in \cite[Theorem 5]{schuh2024global}.

\begin{bluetext}

\begin{assumption}[\textbf{Strong convexity at infinity}]
\label{as:convex-at-infinity}
There exist dimension-independent constants \(a,R>0\) such that, for all \(x,y\in\mathbb R^d\) with \(|x-y|\geq R\),
\begin{equation}
\label{eq:convex-outside-ball}
\big\langle
\nabla U(x)-\nabla U(y),
x-y
\big\rangle
\geq
a|x-y|^2 .
\end{equation}
\end{assumption}

It is worth noting that Assumption \ref{as:convex-at-infinity} relaxes the strong convexity assumption, thereby accommodating a broad class of non-convex potentials such as double-well potentials \cite{pang2025projected}. 
This structural formulation is widely adopted in the literature \cite{schuh2024convergence,pang2025projected,mou2022improved}.

The following proposition provides the \(\mathcal W_1\)-exponential ergodicity of ULD under Assumption \ref{as:convex-at-infinity}.

\end{bluetext}

\begin{bluetext}

\begin{prop}[\textbf{Exponential ergodicity of ULD beyond log-concavity}]\label{prop:erg-non-convex}
    Let Assumptions \ref{as:Lip} and \ref{as:convex-at-infinity} hold. Furthermore, assume the friction coefficient satisfies the condition 
    \begin{equation}
    \label{eq:condition-of-friction-coefficient-non-convex}
    \gamma>
    \sqrt{2}(L+a)\sqrt{\tfrac{\alpha}{a}}.
    \end{equation}
    Then there exist constants
    \(
    \mathcal C_E^{sci}(\gamma,\alpha,L,a, R)>0\)
    and 
    \(\bar{\lambda}_2(\gamma,\alpha,L,a, R)>0
    \)
    such that, for any initial distribution $\nu := \mathcal{L}\big((X_0^\top, V_0^\top)^\top\big)$, the transition semigroup \(p_t\) of ULD \eqref{eq:ULD} and its invariant distribution \(\pi\) satisfy
    \begin{align*}
        \mathcal{W}_{1}(\nu p_t,\pi)\leq \mathcal C_E^{sci} e^{-\bar{\lambda}_2t}\mathcal{W}_{1}(\nu,\pi),\quad \forall \, t \geq 0.
    \end{align*}
\end{prop}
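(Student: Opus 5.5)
This result is essentially the reflection/synchronous coupling contraction of Eberle–Guillin–Zimmer type for kinetic Langevin dynamics, in the form established in \cite[Theorem 5]{schuh2024global}. I would therefore obtain it by invoking that theorem and checking that its hypotheses match ours after rescaling. The ULD \eqref{eq:ULD} has friction $\gamma$ and force scaling $\alpha$. A time change $t\mapsto \alpha^{-1/2}t$ combined with $v\mapsto \alpha^{-1/2}v$ turns it into the standard form $\mathrm dX=V\,\mathrm dt$, $\mathrm dV=-\tilde\gamma V\,\mathrm dt-\nabla U(X)\,\mathrm dt+\sqrt{2\tilde\gamma}\,\mathrm dW$ with $\tilde\gamma=\gamma/\sqrt{\alpha}$. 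Under this change, condition \eqref{eq:condition-of-friction-coefficient-non-convex} becomes $\tilde\gamma^2>2(L+a)^2/a$, which is the friction threshold required there for the potential class with $L$-Lipschitz gradient and $a$-convexity outside $R$ (Assumptions \ref{as:Lip}, \ref{as:convex-at-infinity}).

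The core of the argument is the coupling. In the coordinates $(x, q:=x+\tilde\gamma^{-1}v)$, I would use reflection coupling of the noise when the difference $Z=X-X'$, $Q=Z+\tilde\gamma^{-1}W$ lies in a bounded region, and synchronous coupling otherwise.

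Next I would take a semimetric $\rho((x,v),(x',v'))=f(\beta|Z|+|Q|)$. Here $f$ is a concave increasing function built as in \cite{schuh2024global}, constant beyond a radius tied to $R$. Itô's formula then gives $\frac{\mathrm d}{\mathrm dt}\mathbb E\rho\le -c\,\mathbb E\rho$ with $c=c(\tilde\gamma,L,a,R)>0$. Inside the ball, this decay comes from the concavity of $f$ under reflection. Outside, it comes from $\langle\nabla U(x)-\nabla U(x'),Z\rangle\ge a|Z|^2$ together with the friction inequality, which makes the synchronous part contract.

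Since $f$ is comparable to the identity, $\rho$ is equivalent to the Euclidean distance on $\mathbb R^{2d}$ up to dimension-independent constants. Transferring the contraction in $\mathcal W_\rho$ to $\mathcal W_1$ then produces the prefactor $\mathcal C_E^{sci}$, and undoing the time rescaling gives $\bar\lambda_2=c\sqrt{\alpha}$-type dependence. Finally, I would take the second marginal to be $\pi$, which is invariant for $p_t$.

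The main obstacle I expect is the bookkeeping of constants under the rescaling: confirming that the paper's friction threshold, with $\alpha$ present, exactly reproduces the hypothesis of the cited theorem, and that the resulting constants depend only on $(\gamma,\alpha,L,a,R)$ and not on $d$. That the constants are dimension-free follows because the metric is built from Euclidean norms of differences. If the cited threshold differs, I would redo the outside-ball synchronous estimate directly. This amounts to choosing $\beta$ so that the quadratic form in $(|Z|,|Q|)$ is negative definite, which is where the factor $\sqrt2(L+a)\sqrt{\alpha/a}$ should emerge.
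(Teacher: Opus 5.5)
You take the same route as the paper, namely applying \cite[Theorem 5]{schuh2024global}. However, the one step that actually makes up the paper's proof is the one you only assert. That theorem is not stated for ``$L$-smooth potentials that are $a$-convex outside a ball''. Its Assumption~2 asks for a splitting $-\nabla U(x)=-Kx+g(x)$ in which $K$ is positive definite with extreme eigenvalues $L_k,L_K$, $g$ is globally Lipschitz with constant $L_g$, and $\langle g(x)-g(y),x-y\rangle\le 0$ for $|x-y|\ge R$.

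The check is to take $K=aI_d$ and $g(x)=ax-\nabla U(x)$. Then $L_k=L_K=a$, and $L_g\le L+a$ by Assumption~\ref{as:Lip}. Moreover, $\langle g(x)-g(y),x-y\rangle=a|x-y|^2-\langle\nabla U(x)-\nabla U(y),x-y\rangle\le 0$ for $|x-y|\ge R$ by Assumption~\ref{as:convex-at-infinity}. This identification also explains why the threshold \eqref{eq:condition-of-friction-coefficient-non-convex} contains $L+a$ rather than $L$: $L+a$ is the Lipschitz constant of the non-convex part $g$, weighed against the convexity $a$ of the linear part. Without it, your claim that the rescaled threshold ``is the one required there'' is unsupported.

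Your time/velocity rescaling is harmless but not needed. The cited framework already carries a parameter in front of the force and in the noise, and that parameter plays the role of $\alpha$.

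Your self-contained coupling sketch, which you call the core and keep as a fallback, does not work as written. There are two problems.

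\begin{itemize}
\item You take $f$ \emph{constant} beyond a radius, but then use that $f$ is comparable to the identity. A bounded $f$ makes $\rho$ bounded. Then $\rho\ge c\,(|Z|+|Q|)$ fails at large separations, and the transfer from $\mathcal W_\rho$ to $\mathcal W_1$ breaks. What you need is $f$ affine far out, with $f'$ bounded below by a positive constant.
\item Even with that fix, $\beta|Z|+|Q|$ is not contracted by synchronous coupling in the far region. Take $Z=0$ with $|Q|$ large. There $\tfrac{\mathrm d}{\mathrm dt}Q=-\tilde\gamma^{-1}\bigl(\nabla U(X)-\nabla U(X')\bigr)=0$, while $|Z|$ grows at rate $\tilde\gamma|Q|$. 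So $\beta|Z|+|Q|$ increases at rate $\beta\tilde\gamma|Q|$. Convexity at infinity cannot help, because the position difference is small there.
\end{itemize}

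This is why the global-contractivity argument treats large separations with synchronous coupling in a twisted quadratic (Euclidean-type) norm of the position and velocity differences. That norm is glued to an Eberle--Guillin--Zimmer-type distance with reflection coupling at small separations. A single $f(\beta|Z|+|Q|)$ cannot do both jobs.
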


\begin{proof}
\textbf{Proof.}
According to \cite[Theorem 5]{schuh2024global}, it suffices to verify
%Assumption 2 there in
\cite[Assumption 2]{schuh2024global}, requiring the decomposition
\(-\nabla U(x) = -K x + g(x),\)
where $K$ is positive definite with smallest eigenvalue $L_k$ and largest eigenvalue \(L_K\), and $g$ is globally Lipschitz with constant \(L_g\) and satisfies
\(\langle g(x) - g(y), x - y \rangle \leq 0\)
for all \(|x - y| \geq R\).

To fulfill  it, by setting \(K = a I_d\) and defining \(g(x) = a x - \nabla U(x)\), we immediately obtain \(L_k = L_K = a\). This in conjunction with Assumption \ref{as:Lip} shows \(L_g \leq L + a\). Moreover, \eqref{eq:convex-outside-ball} implies, for all $x,y \in \R^d$ such that $|x-y|\ge R$,
\[
\langle g(x) - g(y), x - y \rangle
= a |x - y|^2 - \langle \nabla U(x) - \nabla U(y), x - y \rangle
\leq 0.
\]
Hence, Assumption 2 of \cite{schuh2024global} is verified. Finally, \eqref{eq:condition-of-friction-coefficient-non-convex} ensures that the friction condition required in \cite[Theorem 5]{schuh2024global} is satisfied, and the assertion follows.
\end{proof}
\end{bluetext}

% Similarly, we now validate all conditions required by Theorem \ref{thm:rho-main-result} in the non-convex setting. 
% The finite-time error bound condition (Condition \ref{con:finite-error}) is also guaranteed by Proposition \ref{prop:finite-error-rs}, and the exponential ergodicity condition (Condition \ref{con:exp-erg}) now follows from the $\mathcal W_1$-exponential ergodicity established in Proposition \ref{prop:erg-non-convex} under the strong convexity outside a ball (Assumption \ref{as:convex-at-infinity}). 
% Unlike the strongly convex setting, the uniform-in-time moment bounds for the exact solution (Lemma \ref{lem:ULD-moment bound}) and its approximation (Proposition \ref{prop:numerical-bound}) remain applicable under the dissipativity condition (Assumption \ref{as:dissipativity}), satisfying Conditions \ref{con:bound-solu} and \ref{con:bound-num}, respectively. 
% As all assumptions of Theorem \ref{thm:rho-main-result} are satisfied, we obtain the following non-asymptotic error bound and mixing time.

We now turn to the non-convex setting, beginning with the main
convergence results for the randomized schemes.

\begin{thm}[\textbf{Non-asymptotic error bound of randomized ULMC beyond log-concavity}] 
\label{thm:non-convex-main-result}
Let Assumptions \ref{as:Lip}, \ref{as:U_0-bound}, \ref{as:convex-at-infinity} hold and suppose that Assumption \ref{as:coeff-approx-general} is satisfied with $\boldsymbol{\eta}=\boldsymbol{\eta}_\mathrm{R}$. Assume that the friction coefficient \(\gamma\) satisfies \eqref{eq:condition-of-friction-coefficient-non-convex} and that the uniform stepsize \(h\) satisfies \eqref{eq:final-condition-of-stepsize}.
Let
\(
\nu:=\mathcal L\big((X_0^\top,V_0^\top)^\top\big)
\)
be the initial distribution and let \(\bar q^\mathrm{R}_n\) denote the \(n\)-step transition kernel of the U-ULMC scheme \eqref{eq:uni-process} with $(\tau_k)_{k\ge 0}\overset{\text{i.i.d.}}{\sim}\mathcal U(0,1)$.  
Assume that \((X_0,V_0)\) satisfies the moment condition \eqref{eq:condition-of-initial-value}.
Then, for any $n \in  \N$, the law $\nu \bar{q}^\mathrm{R}_n$ of the n-th iterate satisfies
\begin{equation}
        \mathcal{W}_{1}(\nu \bar q^\mathrm{R}_n,\pi)
        \leq
        \bar{\mathcal{K}}_1^{sci}\, 
        d^{\frac{1}{2}}\,
        h^{\frac{3}{2}}
        +
        \bar{\mathcal{K}}^{sci}_2\,
        d^{\frac{1}{2}}\,
        e^{-\bar \lambda'_2  n h},
\end{equation}
where
    \begin{align}\label{eq:define-non-asym-constants-nonconvex}
    \bar{\mathcal K}^{sci}_1
    :=  &
    2
    \mathcal C(\bar \Theta_2)
    \big(
    \mathcal C_{F,\mathrm{R}} \bar K_2 
    \bar{\sigma}
    +
    \mathcal C_{F,\mathrm{R}}\bar K_2+ \mathcal C'_{F,\mathrm{R}}
    \big)^{\frac{1}{2}}
    ,  
    & &
    \bar{\mathcal K}^{sci}_2
    :=  
    2e
    \Big(
        \big(
        \bar K_1
        +
        \bar K_2
        \big)
        \big(
        \bar{\sigma}
        +
        1
        \big)
    \Big)^{\frac{1}{2}}
    ,
    \\ 
    \bar \lambda'_2 
    :=  &
    \tfrac{\bar \lambda_2 }{\log \mathcal C^{sci}_E+1+ \bar \lambda_2 L^{-1}}
    ,
    & &
    \bar \Theta_2
      :=     
    \tfrac{\log \mathcal C^{sci}_E + 1}{\bar \lambda_2}
    +
    \tfrac1 L
    .
    \end{align}
\end{thm}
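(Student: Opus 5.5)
The plan is to apply the abstract framework, Theorem \ref{thm:rho-main-result}, with $\textit{Dist}=\mathcal W_1$ on $\mathcal P(\mathbb R^{2d})$, exactly as in the proof of Theorem \ref{thm:convex-main-result}. The only real change is that the contraction now comes from Proposition \ref{prop:erg-non-convex} instead of Lemma \ref{lem:erg-convex}.

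\textbf{Step 1: dissipativity.} Strong convexity is no longer available, so dissipativity (Assumption \ref{as:dissipativity}) must first be derived from Assumption \ref{as:convex-at-infinity} together with Assumptions \ref{as:Lip} and \ref{as:U_0-bound}. Take $y=0$ in \eqref{eq:convex-outside-ball}. For $|x|\ge R$ this gives
\[
\langle x,\nabla U(x)\rangle\ge a|x|^2+\langle x,\nabla U(0)\rangle\ge \tfrac a2|x|^2-\tfrac{L_2^2}{2a}d .
\]
For $|x|<R$, \eqref{eq:lip-2} gives $|\langle x,\nabla U(x)\rangle|\le LR^2+RL_2d^{1/2}$, which is absorbed into $\tfrac a2|x|^2-\mu' d$ once $\mu'$ is large. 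Hence Assumption \ref{as:dissipativity} holds with $\mu=a/2$ and some dimension-independent $\mu'$ depending on $a,R,L,L_2$. This is the only genuinely new point, and it requires the $d$-independence of $R$ and $a$.

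\textbf{Step 2: verify Conditions \ref{con:finite-error}, \ref{con:bound-solu}, \ref{con:bound-num}.} The $\mathcal W_1$ distance is dominated by the $L^2$ coupling error and by second moments. Precisely, $\mathcal W_1(\mu,\nu)\le\mathcal W_2(\mu,\nu)$, and $\mathcal W_1(\mu,\delta_0)\le\mathcal H_1(\mu)^{1/2}$. The three conditions then follow from the earlier results, all with $r=r_1=r_2=r_3=1$:
\begin{itemize}
\item Condition \ref{con:finite-error} follows from Proposition \ref{prop:finite-error-rs}, using the synchronous coupling with the same Brownian motion. This gives $\zeta_1=3/2$, $C(T)=\mathcal C(T)$, $C_F=\mathcal C_{F,\mathrm R}$ and $C_F'=\mathcal C_{F,\mathrm R}'$.
\item Condition \ref{con:bound-solu} follows from Lemma \ref{lem:ULD-moment bound} with $p=1$, giving $K_1=\bar K_1$.
\item Condition \ref{con:bound-num} follows from Proposition \ref{prop:numerical-bound}, giving $K_2=\bar K_2$.
\end{itemize}
The initial moment hypothesis \eqref{eq:condition-of-initial-value} is $\mathcal H_1(\nu)\le\bar\sigma d$, so $\sigma'=\bar\sigma$. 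The stepsize restriction \eqref{eq:final-condition-of-stepsize} implies $h\le h_0:=L^{-1}$. This choice produces the $\bar\lambda_2 L^{-1}$ and $1/L$ terms in $\bar\lambda_2'$ and $\bar\Theta_2$.

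\textbf{Step 3: verify Condition \ref{con:exp-erg}.} Under \eqref{eq:condition-of-friction-coefficient-non-convex}, Proposition \ref{prop:erg-non-convex} gives the condition with $C_E=\mathcal C_E^{sci}$ and $\lambda=\bar\lambda_2$.

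\textbf{Step 4: conclude.} Substitute these constants into Theorem \ref{thm:rho-main-result}. This yields exactly $\bar{\mathcal K}^{sci}_1$, $\bar{\mathcal K}^{sci}_2$, $\bar\lambda_2'$ and $\bar\Theta_2$, and the powers $d^{r/2}=d^{(r_1\vee r_2)/2}=d^{1/2}$. For $\bar{\mathcal K}^{sci}_2$, the factor $K_2(\sigma'+1)+K_1$ is bounded by $(\bar K_1+\bar K_2)(\bar\sigma+1)$.

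The main obstacle is bookkeeping rather than analysis: checking that every constant in Propositions \ref{prop:numerical-bound} and \ref{prop:finite-error-rs} depends only on the dimension-free $\mu,\mu'$ obtained in Step 1. Everything else reduces to the general theorem.
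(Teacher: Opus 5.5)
Your proposal is correct and follows essentially the same route as the paper. The paper also first derives Assumption \ref{as:dissipativity} from strong convexity at infinity with $\mu=\tfrac a2$ and a dimension-free $\mu'$, by the same case split ($|x|<R$ versus $|x|\ge R$) followed by Young's inequality, then verifies Conditions \ref{con:finite-error}--\ref{con:bound-num} with $r=r_1=r_2=r_3=1$ and $\zeta_1=\tfrac32$ via Lemma \ref{lem:ULD-moment bound} and Propositions \ref{prop:numerical-bound}, \ref{prop:finite-error-rs}, takes Condition \ref{con:exp-erg} from Proposition \ref{prop:erg-non-convex}, and concludes with Theorem \ref{thm:rho-main-result}.
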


\begin{proof}
\textbf{Proof.} 
\begin{bluetext}
We first claim that Assumption \ref{as:convex-at-infinity}, together with Assumptions \ref{as:Lip} and \ref{as:U_0-bound}, implies the dissipativity condition in Assumption \ref{as:dissipativity}.
%which is also stated in Proposition A.1 of the supplementary material to \cite{mou2022improved}.
%Here we make the constants explicit.
%
Indeed, by Assumptions \ref{as:Lip} and \ref{as:convex-at-infinity}, and by
considering separately $|x|<R$ and $|x|\geq R$, we obtain
\[
\langle \nabla U(x)-\nabla U(0),x\rangle
\geq a|x|^2-(L+a)R^2,
\qquad x\in\mathbb R^d.
\]
This together with Assumption \ref{as:U_0-bound} and the Young inequality yields
\begin{align*}
\langle x,\nabla U(x)\rangle
\geq a|x|^2-(L+a)R^2-L_2\sqrt d\,|x| \geq \tfrac a2|x|^2
-\left((L+a)R^2+\tfrac{L_2^2}{2a}\right)d,
\end{align*}
where we also used $d\geq1$.
\end{bluetext}
The dissipativity condition in Assumption \ref{as:dissipativity} thus follows, 
%We now proceed to verify other conditions required by Theorem \ref{thm:rho-main-result}, upon which our proof relies. 
which coupled with Assumptions \ref{as:Lip}, \ref{as:U_0-bound} and \ref{as:coeff-approx-general} helps us get uniform-in-time moment bounds for ULD \eqref{eq:ULD} and U-ULMC, as established in Lemma \ref{lem:ULD-moment bound} and Proposition \ref{prop:numerical-bound}, and the finite-time error estimate for U-ULMC, as shown in Proposition \ref{prop:finite-error-rs}. Therefore, Conditions \ref{con:finite-error}, \ref{con:bound-solu} and \ref{con:bound-num} are satisfied with
$
r=r_1=r_2=r_3=1,
$ 
and
$
\zeta_1=\frac32
$. 
Proposition \ref{prop:erg-non-convex} further guarantees the exponential ergodicity of ULD in \(\mathcal{W}_1\)-distance, thereby validating Condition \ref{con:exp-erg}. Finally, we apply Theorem \ref{thm:rho-main-result} to arrive at the desired assertion.
\end{proof}

As a direct consequence, we get the following corollary.

\begin{cor}[\textbf{Mixing time of randomized ULMC beyond log-concavity}] \label{prop:mixing_time-non-convex}
    Let all conditions of Theorem \ref{thm:non-convex-main-result} hold. 
    To achieve a prescribed accuracy tolerance \(\epsilon>0\) in \(\mathcal W_1\)-distance, the required number of iterations of U-ULMC \eqref{eq:uni-process} with $(\tau_k)_{k\ge 0} \overset{i.i.d.}{\sim} \mathcal{U}(0,1)$ is of order $\widetilde{\mathcal{O}}(d^{\frac{1}{3}}\epsilon^{-\frac{2}{3}})$.
\end{cor}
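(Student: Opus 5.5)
The corollary follows from Proposition \ref{prop:rho-mixing-time} applied in the setting of Theorem \ref{thm:non-convex-main-result}. That setting has $\textit{Dist}=\mathcal W_1$ on $\mathcal P(\mathbb R^{2d})$, $r=r_1=r_2=r_3=1$ and $\zeta_1=\tfrac32$. Substituting into the order $\widetilde{\mathcal O}\bigl(d^{\frac{r}{2\zeta_1}}\epsilon^{-\frac{1}{\zeta_1}}\bigr)$ gives $d^{\frac{1}{3}}\epsilon^{-\frac{2}{3}}$, which is the claim. I would nevertheless carry out the argument directly from the explicit bound of Theorem \ref{thm:non-convex-main-result}, so that the stepsize restriction \eqref{eq:final-condition-of-stepsize} is handled explicitly.

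First, I split the error budget: it suffices to choose $h$ and $n$ such that
\[
\bar{\mathcal K}_1^{sci} d^{\frac12}h^{\frac32}\le \tfrac{\epsilon}{2}
\quad\text{and}\quad
\bar{\mathcal K}_2^{sci} d^{\frac12}e^{-\bar\lambda_2' nh}\le\tfrac{\epsilon}{2}.
\]
The first inequality holds for
\[
h=\min\Bigl\{h_\star\wedge\tfrac{1}{2L_b},\ \bigl(\tfrac{\epsilon}{2\bar{\mathcal K}_1^{sci}}\bigr)^{\frac23}d^{-\frac13}\Bigr\},
\]
which also satisfies \eqref{eq:final-condition-of-stepsize}. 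The thresholds $h_\star$ and $L_b$ are dimension-independent, and the constants $\bar{\mathcal K}^{sci}_{1,2}$ and $\bar\lambda_2'$ do not depend on $d$, $h$ or $\epsilon$. The second inequality holds for
\[
n=\Bigl\lceil \tfrac{1}{\bar\lambda_2' h}\log\bigl(\tfrac{2\bar{\mathcal K}_2^{sci}d^{\frac12}}{\epsilon}\bigr)\Bigr\rceil.
\]

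Next, I bound $n$. In the regime where the second term of the minimum is active, which is the relevant one when $\epsilon$ is small or $d$ is large, $1/h$ is of order $d^{\frac13}\epsilon^{-\frac23}$. The logarithmic factor is $\log(d^{\frac12}/\epsilon)$, which $\widetilde{\mathcal O}$ absorbs. In the other regime, $h$ is a fixed constant and $n=\mathcal O(\log(d/\epsilon))$, which is dominated by the same expression. Hence $n=\widetilde{\mathcal O}(d^{\frac13}\epsilon^{-\frac23})$.

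The only step that needs care is checking that the hidden constants are dimension-independent. This holds because the initial moment condition \eqref{eq:condition-of-initial-value} is linear in $d$. Once that is confirmed, the remaining steps are routine.
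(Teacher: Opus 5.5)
Your proposal is correct and follows the paper's route. The corollary is obtained by feeding Theorem~\ref{thm:non-convex-main-result} (with $r=r_1=r_2=r_3=1$, $\zeta_1=\tfrac32$) into the $\epsilon/2$-splitting argument of Proposition~\ref{prop:rho-mixing-time}, which gives $\widetilde{\mathcal O}(d^{\frac13}\epsilon^{-\frac23})$. Your explicit cap $h\le h_\star\wedge\tfrac{1}{2L_b}$ and the two-regime bound on $n$ tidy up a point the paper's proof leaves implicit; they are not a different argument.
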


\begin{redtext}

We now present the results for UBU-type ULMC beyond strong convexity.

% For the UBU-type schemes, we have the following theorem.

\begin{thm}[\textbf{Non-asymptotic error bound of UBU-type ULMC beyond log-concavity}]
\label{thm:non-convex-main-result-UBU}
Let Assumptions \ref{as:Lip}, 
\ref{as:U_0-bound} and
\ref{as:convex-at-infinity} hold.
Assume that the friction coefficient \(\gamma\) satisfies
\eqref{eq:condition-of-friction-coefficient-non-convex}.
Let
\(
\nu
:=
\mathcal L
\big(
(X_0^\top,V_0^\top)^\top
\big)
\)
be the initial distribution, and let
\(\bar q^\mathrm{U}_n\) denote the \(n\)-step transition
kernel of U-ULMC \eqref{eq:uni-process} with
\(\tau_k\equiv\frac12\).
\begin{enumerate}
\item Suppose that Assumption \ref{as:coeff-approx-general} is satisfied with $\boldsymbol{\eta}=\boldsymbol{\eta}_\mathrm{E}$ and assume that \((X_0,V_0)\) satisfies the second-moment condition \eqref{eq:condition-of-initial-value}. If the uniform stepsize $h$ satisfies \eqref{eq:final-condition-of-stepsize},
then the law
\(\nu\bar q^\mathrm{U}_n\) of the \(n\)-th iterate
satisfies
\begin{equation}
\label{eq:non-convex-main-result-UBU-gl}
\mathcal W_1
\left(
\nu\bar q^\mathrm{U}_n,
\pi
\right)
\leq
\bar{\mathcal K}_{1,\mathrm{U}_1}^{sci}
d^{\frac{1}{2}} h
+
\bar{\mathcal K}_{2,\mathrm{U}_1}^{sci}
d^{\frac{1}{2}}
e^{-\bar\lambda_2'nh},
\end{equation}
where
\begin{align*}
\bar{\mathcal K}_{1,\mathrm{U}_1}^{sci}
:={}&
2
\mathcal C(\bar\Theta_2)
\left(
\mathcal C_{F,\mathrm{U}_1}\bar K_2\bar\sigma
+
\mathcal C_{F,\mathrm{U}_1}\bar K_2
+
\mathcal C_{F,\mathrm{U}_1}'
\right)^{\frac12},
&\quad&
\bar{\mathcal K}_{2,\mathrm{U}_1}^{sci}
:=
2e
\left(
(\bar K_2+\bar K_1)
(\bar\sigma +1)
\right)^{\frac12}.
\end{align*}
Here \(\bar\lambda_2', \bar\Theta_2\) are defined in
\eqref{eq:define-non-asym-constants-nonconvex} and 
\(\mathcal C_{F,\mathrm{U}_1}, \mathcal C_{F,\mathrm{U}_1}'\) are given in \eqref{eq:constants-c-f-U1}.
\item 
Suppose that
Assumption~\ref{as:coeff-approx-general} holds with
$\boldsymbol{\eta}=\boldsymbol{\eta}_{\mathrm U}$, Assumption~\ref{as:Hessian-lip} holds and the uniform stepsize $h$ satisfies \eqref{eq:final-condition-of-stepsize-UBU-second-order}. If \((X_0,V_0)\) satisfies the fourth-moment condition
\eqref{eq:condition-of-initial-value-UBU-hl}
then the law
\(\nu\bar q^\mathrm{U}_n\) of the \(n\)-th iterate
satisfies
\begin{equation}
\label{eq:non-convex-main-result-UBU-hl}
\mathcal W_1
\left(
\nu\bar q^\mathrm{U}_n,
\pi
\right)
\leq
\bar{\mathcal K}_{1,\mathrm{U}_2}^{sci}
d h^2
+
\bar{\mathcal K}_{2,\mathrm{U}_2}^{sci}
d
e^{-\bar\lambda_2'nh},
\end{equation}
where
\begin{align*}
\bar{\mathcal K}_{1,\mathrm{U}_2}^{sci}
:={}
2
\mathcal C(\bar\Theta_2)
\left(
\mathcal C_{F,\mathrm{U}_2}\bar K_2\bar\sigma' 
+
\mathcal C_{F,\mathrm{U}_2}\bar K_2
+
\mathcal C_{F,\mathrm{U}_2}'
\right)^{\frac12},
\quad
\bar{\mathcal K}_{2,\mathrm{U}_2}^{sci}
:=
2e
\left(
(\bar K_2+\bar K_1)
(\bar\sigma' +1)
\right)^{\frac12}.
\end{align*}
Here \(\mathcal C_{F,\mathrm{U}_2},\mathcal C_{F,\mathrm{U}_2}'\) are the constants defined in \eqref{eq:constants-c-f-U2}.
\end{enumerate}

\end{thm}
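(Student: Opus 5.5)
The proof will be a direct application of Theorem \ref{thm:rho-main-result} with $\textit{Dist}=\mathcal W_1$, following the template of Theorem \ref{thm:non-convex-main-result}. The assumptions are checked in the order below.

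\textbf{Dissipativity.} First I recover Assumption \ref{as:dissipativity} from Assumptions \ref{as:Lip}, \ref{as:U_0-bound} and \ref{as:convex-at-infinity}. This is exactly the argument given in the proof of Theorem \ref{thm:non-convex-main-result}, and it yields $\mu=\tfrac a2$ and $\mu'=(L+a)R^2+\tfrac{L_2^2}{2a}$. With dissipativity in hand, Lemma \ref{lem:ULD-moment bound} gives Condition \ref{con:bound-solu}, and Propositions \ref{prop:numerical-bound} and \ref{prop:uniform-fourth-moment-u-ulmc} give Condition \ref{con:bound-num}. The constants are $\bar K_1$ and $\bar K_2$, and the moment order is $r_1=r_2=1$ in part 1 and $r_1=r_2=2$ in part 2.

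\textbf{Distance versus moments.} Since all the conditions are phrased through $\mathcal H_r$ and the distance to $\delta_0$, I will use $\mathcal W_1\le\mathcal W_2$ throughout. The coupling inequality then gives $\mathcal W_1(\nu' Q_n,\delta_0)\le(\mathbb E|Z|^2)^{1/2}\le(\mathbb E|Z|^4)^{1/4}$. Combined with Jensen's inequality, this lets the fourth-moment bounds serve as the distance-to-$\delta_0$ bounds with $r_1=r_2=2$, which accounts for the $d$ prefactor in front of the exponential term.

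\textbf{Finite-time error.} Condition \ref{con:finite-error} comes from Proposition \ref{prop:finite-error-UBU}, because the synchronous coupling of ULD and the scheme started from the same initial data bounds $\mathcal W_1\le\mathcal W_2$ by the $L^2$ error.
\begin{itemize}
\item In part 1 (under $\boldsymbol\eta_{\mathrm E}$), the first item of Proposition \ref{prop:finite-error-UBU} gives $r=1$ and $\zeta_1=1$.
\item In part 2 (under $\boldsymbol\eta_{\mathrm U}$ and Assumption \ref{as:Hessian-lip}), the second item gives an error of order $\big(\mathcal C_{F,\mathrm U_2}\mathbb E[(|X_0|^2+|V_0|^2)^2]+\mathcal C'_{F,\mathrm U_2}d^2\big)^{1/2}h^2$. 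This is exactly Condition \ref{con:finite-error} with $r=2$ and $\zeta_1=2$.
\end{itemize}

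\textbf{Ergodicity and assembly.} Proposition \ref{prop:erg-non-convex} supplies Condition \ref{con:exp-erg} in $\mathcal W_1$, with $C_E=\mathcal C_E^{sci}$ and $\lambda=\bar\lambda_2$. The initial-moment hypothesis of Theorem \ref{thm:rho-main-result} holds with $r_3=1$ and $\sigma'=\bar\sigma$ in part 1, via \eqref{eq:condition-of-initial-value}. In part 2 it holds with $r_3=2$ and $\sigma'=\bar\sigma'$, via \eqref{eq:condition-of-initial-value-UBU-hl}. Taking $h_0=L^{-1}$, which dominates the stepsize restrictions \eqref{eq:final-condition-of-stepsize} and \eqref{eq:final-condition-of-stepsize-UBU-second-order} because $h_\star\le L^{-1}$, turns $\Theta$ and $\lambda'$ into $\bar\Theta_2$ and $\bar\lambda_2'$. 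Substituting $K_2=\bar K_2$, $K_1=\bar K_1$ and the exponents $r/r_2=r/r_3=1$ into $\mathcal K_1$ and $\mathcal K_2$ then produces $\bar{\mathcal K}^{sci}_{1,\mathrm U_i}$ and $\bar{\mathcal K}^{sci}_{2,\mathrm U_i}$. The terms $d^{r/2}h^{\zeta_1}$ become $d^{1/2}h$ in part 1 and $dh^2$ in part 2.

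\textbf{Main obstacle.} None of the steps is deep; the only care needed is bookkeeping. One point is that the $\mathcal K_2$ of the framework contains $K_2(\sigma'+1)+K_1$, and I will bound it by $(\bar K_1+\bar K_2)(\bar\sigma+1)$ as in the paper. The other is checking that the bound $\mathcal W_1(\pi,\delta_0)$ uses stationarity of $\pi$ together with Lemma \ref{lem:ULD-moment bound} at order $2$ in part 2.
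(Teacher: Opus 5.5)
Your proposal is correct and follows essentially the same route as the paper: you recover dissipativity as in the proof of Theorem \ref{thm:non-convex-main-result}, verify Conditions \ref{con:finite-error}--\ref{con:bound-num} via Lemma \ref{lem:ULD-moment bound}, Propositions \ref{prop:numerical-bound}, \ref{prop:uniform-fourth-moment-u-ulmc}, \ref{prop:finite-error-UBU} and \ref{prop:erg-non-convex}, and then invoke Theorem \ref{thm:rho-main-result}, using $r=r_1=r_2=r_3=1,\ \zeta_1=1$ in part 1 and $r=r_1=r_2=r_3=2,\ \zeta_1=2$ in part 2. Your extra bookkeeping is consistent with the paper's terser argument and slightly more explicit: you take $h_0=L^{-1}$ to obtain $\bar\Theta_2$ and $\bar\lambda_2'$, and you note that $(\mathbb E|Z|^4)^{1/4}\le(\bar K_i(\mathcal H_2(\nu')+d^2))^{1/2}$ because $\bar K_i d^2\ge 1$.
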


\begin{proof}
\textbf{Proof.}
The proof is analogous to that of Theorem
\ref{thm:non-convex-main-result}. As shown therein,
Assumptions \ref{as:Lip}, \ref{as:U_0-bound}, and
\ref{as:convex-at-infinity} imply Assumption
\ref{as:dissipativity}. Lemma \ref{lem:ULD-moment bound} and
Propositions \ref{prop:numerical-bound}, \ref{prop:uniform-fourth-moment-u-ulmc} and
\ref{prop:finite-error-UBU} then verify Conditions
\ref{con:finite-error}, \ref{con:bound-solu}, and
\ref{con:bound-num} with
\begin{align}
\text{Gradient Lipschitz case}:&
\quad
r=r_1=r_2=r_3=1,
\quad
\zeta_1=1; \\
\text{Hessian Lipschitz case}:&
\quad
r=r_1=r_2=r_3=2,
\quad
\zeta_1=2.
\end{align}

Combining these estimates with the
\(\mathcal W_1\)-exponential ergodicity in Proposition
\ref{prop:erg-non-convex} and applying Theorem
\ref{thm:rho-main-result} gives the desired assertion.
\end{proof}

As a direct consequence, we obtain the following corollary.

% \begin{prop}[\textbf{Mixing time of UBU-type ULMC beyond log-concavity}]
% \label{cor:mixing-time-non-convex-UBU}
% Let the conditions of Theorem
% \ref{thm:non-convex-main-result-UBU} hold.
% Then, to achieve a prescribed accuracy tolerance
% \(\epsilon>0\) in \(\mathcal W_1\)-distance, the required
% number of iterations of the U-ULMC scheme
% \eqref{eq:uni-process} with \(\tau_k\equiv\frac12\) is of order
% \(
% \widetilde{\mathcal O}
% (
% d^{\frac12}\epsilon^{-\frac12}
% ).
% \)
% \end{prop}

\begin{cor}[\textbf{Mixing time of UBU-type ULMC beyond log-concavity}]
\label{cor:mixing-time-non-convex-UBU}
Let the conditions of Theorem
\ref{thm:non-convex-main-result-UBU} hold.
To achieve a prescribed accuracy tolerance
\(\epsilon>0\) in \(\mathcal W_1\)-distance, the required
number of iterations of the U-ULMC scheme
\eqref{eq:uni-process} with \(\tau_k\equiv\frac12\) is of order
\[
\begin{cases}
\widetilde{\mathcal O}
\bigl(d^{1/2}\epsilon^{-1}\bigr),
& \text{under the gradient Lipschitz condition};\\[1mm]
\widetilde{\mathcal O}
\bigl(d^{1/2}\epsilon^{-1/2}\bigr),
&\text{ under the Hessian Lipschitz condition.}
\end{cases}
\]
\end{cor}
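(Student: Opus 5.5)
The corollary follows by feeding the bounds of Theorem~\ref{thm:non-convex-main-result-UBU} into the mixing-time argument of Proposition~\ref{prop:rho-mixing-time}. The only work is to identify the exponents $r$ and $\zeta_1$ in each regime and to solve two inequalities.

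\emph{Gradient Lipschitz regime.} This is part~1 of Theorem~\ref{thm:non-convex-main-result-UBU}. The proof of that theorem verified Conditions~\ref{con:finite-error}--\ref{con:bound-num} with $r=r_1=r_2=r_3=1$ and $\zeta_1=1$. The resulting bound reads $\mathcal W_1(\nu\bar q^{\mathrm U}_n,\pi)\le \bar{\mathcal K}_{1,\mathrm U_1}^{sci}d^{1/2}h+\bar{\mathcal K}_{2,\mathrm U_1}^{sci}d^{1/2}e^{-\bar\lambda_2' nh}$. I make each term at most $\epsilon/2$. The first requirement forces $h^{-1}\gtrsim d^{1/2}\epsilon^{-1}$. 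The second requires $nh\ge (\bar\lambda_2')^{-1}\log(2\bar{\mathcal K}_{2,\mathrm U_1}^{sci}d^{1/2}/\epsilon)$. Multiplying these gives $n=\widetilde{\mathcal O}(d^{1/2}\epsilon^{-1})$. This is Proposition~\ref{prop:rho-mixing-time} with $d^{r/(2\zeta_1)}\epsilon^{-1/\zeta_1}=d^{1/2}\epsilon^{-1}$.

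\emph{Hessian Lipschitz regime.} This is part~2 of the theorem, where $r=r_1=r_2=r_3=2$ and $\zeta_1=2$. The bound is $\bar{\mathcal K}_{1,\mathrm U_2}^{sci}dh^2+\bar{\mathcal K}_{2,\mathrm U_2}^{sci}de^{-\bar\lambda_2'nh}$. The same two requirements give $h^{-1}\gtrsim d^{1/2}\epsilon^{-1/2}$ and $nh\gtrsim\log(d/\epsilon)$. Hence $n=\widetilde{\mathcal O}(d^{1/2}\epsilon^{-1/2})$, which matches $d^{r/(2\zeta_1)}\epsilon^{-1/\zeta_1}=d^{2/4}\epsilon^{-1/2}$.

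\emph{Admissibility of the step size.} The only point needing a check is that the chosen $h$ respects the step-size restrictions \eqref{eq:final-condition-of-stepsize} and \eqref{eq:final-condition-of-stepsize-UBU-second-order}. Both are dimension-independent upper bounds on $h$. If $\epsilon$ is small, the prescribed $h$ already satisfies them; otherwise one takes the minimum, which changes only constants. The logarithmic factor from the exponential term is absorbed into $\widetilde{\mathcal O}$, since $\bar\lambda_2'$ is dimension-free.
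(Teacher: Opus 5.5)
Your proposal is correct and matches the paper's route. The corollary is presented as a direct consequence of Theorem~\ref{thm:non-convex-main-result-UBU} combined with Proposition~\ref{prop:rho-mixing-time}: with $r=\zeta_1=1$ in the gradient Lipschitz case and $r=\zeta_1=2$ in the Hessian Lipschitz case, the rate $d^{r/(2\zeta_1)}\epsilon^{-1/\zeta_1}$ gives the two stated orders. Your step-size admissibility remark is a check the paper leaves implicit. Strictly, capping $h$ at the dimension-free bound adds an additive $\mathcal O(\log(d/\epsilon))$ term to $n$ rather than changing a multiplicative constant, which is immaterial in the small-$\epsilon$ regime.
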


\end{redtext}

\begin{redtext}

Finally, we complete the analysis with the convergence results
for the Euler-type schemes.
%In what follows, we turn to the U-ULMC schemes with $\tau_k\equiv 0$.

\begin{thm}[\textbf{Non-asymptotic error bound of Euler-type ULMC beyond log-concavity}]
\label{thm:non-convex-main-result-Euler}
Let Assumptions \ref{as:Lip}, \ref{as:U_0-bound}, 
\ref{as:convex-at-infinity} hold and suppose that Assumption \ref{as:coeff-approx-general} is satisfied with $\boldsymbol{\eta}=\boldsymbol{\eta}_\mathrm{E}$.
Assume that the friction coefficient $\gamma$ satisfies
\eqref{eq:condition-of-friction-coefficient-non-convex} and that
the uniform stepsize $h$ satisfies
\eqref{eq:final-condition-of-stepsize}.
Let
\(
\nu
:=
\mathcal L
\big(
(X_0^\top,V_0^\top)^\top
\big),
\)
and let $\bar q^{\mathrm{E}}_n$ denote the $n$-step transition kernel of U-ULMC \eqref{eq:uni-process} with $\tau_k\equiv0$.
Assume that the initial condition satisfies
\eqref{eq:condition-of-initial-value}.
Then, for every $n\in\mathbb N$,
\begin{align}
\mathcal W_1
\big(
\nu\bar q^{\mathrm{E}}_n,
\pi
\big)
\leq&
\bar{\mathcal K}_{1,\mathrm{E}}^{sci}
d^{\frac12}h
+
\bar{\mathcal K}_{2,\mathrm{E}}^{sci}
d^{\frac12}
e^{-\bar\lambda_2'nh},
\label{eq:non-convex-main-result-Euler}
\end{align}
where
\begin{align}
\bar{\mathcal K}_{1,\mathrm{E}}^{sci}
:={}
2
\mathcal C(\bar\Theta_2)
\left(
\mathcal C_{F,\rm E}\bar K_2\bar\sigma
+
\mathcal C_{F,\rm E}\bar K_2
+
\mathcal C_{F,\rm E}'
\right)^{\frac12},
\qquad
\bar{\mathcal K}_{2,\mathrm{E}}^{sci}
:={}
2e
\left(
(\bar K_1+\bar K_2)
(\bar\sigma+1)
\right)^{\frac12}.
\label{eq:constants-main-result-Euler-sci}
\end{align}
Here $\bar\lambda_2',\bar\Theta_2$ are defined in
\eqref{eq:define-non-asym-constants-nonconvex} and $\mathcal C_{F,E},\mathcal C'_{F,E}$ are given in \eqref{eq:define-C-F-E}.
\end{thm}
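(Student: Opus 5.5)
The plan is to deduce the statement from the general framework, Theorem \ref{thm:rho-main-result}, with $\textit{Dist}=\mathcal W_1$ on $\mathcal P(\mathbb R^{2d})$, $\Pi=\pi$, $P_t=p_t$ and $Q_n=\bar q^{\mathrm E}_n$. This mirrors the proofs of Theorems \ref{thm:non-convex-main-result} and \ref{thm:non-convex-main-result-UBU}. The only changes are the choice $\tau_k\equiv0$ and the use of Proposition \ref{prop:finite-error-Euler} in place of Proposition \ref{prop:finite-error-rs}. First, I recall the argument from the proof of Theorem \ref{thm:non-convex-main-result}: Assumptions \ref{as:Lip}, \ref{as:U_0-bound} and \ref{as:convex-at-infinity} imply Assumption \ref{as:dissipativity} with $\mu=\tfrac a2$ and $\mu'=(L+a)R^2+\tfrac{L_2^2}{2a}$. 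Hence every result proved under Assumptions \ref{as:Lip}--\ref{as:dissipativity} is available here.

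Next I verify the four conditions. For Condition \ref{con:finite-error}, Proposition \ref{prop:finite-error-Euler} applies because $\boldsymbol\eta=\boldsymbol\eta_{\mathrm E}$ and $h$ satisfies \eqref{eq:final-condition-of-stepsize}. It gives an $L^2$ error bound for the synchronous coupling (same $W$, same initial data). Since $\mathcal W_1\le\mathcal W_2\le$ the $L^2$ norm of the coupled difference, the condition holds with $r=1$, $\zeta_1=1$, $C(T)=\mathcal C(T)$, $C_F=\mathcal C_{F,\mathrm E}$ and $C_F'=\mathcal C'_{F,\mathrm E}$. For Conditions \ref{con:bound-solu} and \ref{con:bound-num}, I use $\mathcal W_1(\mu,\delta_0)\le(\mathcal H_1(\mu))^{1/2}$. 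Lemma \ref{lem:ULD-moment bound} with $p=1$ then gives the bound for the exact solution. Proposition \ref{prop:numerical-bound} gives it for the scheme; it holds because $\boldsymbol\eta_{\mathrm M}\le\boldsymbol\eta_{\mathrm E}$ componentwise and $h\le h_\star$. In both cases $r_1=r_2=1$ and $K_1,K_2$ are bounded by $\bar K_1,\bar K_2$. Condition \ref{con:exp-erg} is Proposition \ref{prop:erg-non-convex}, which applies under \eqref{eq:condition-of-friction-coefficient-non-convex}, with $C_E=\mathcal C_E^{sci}$ and $\lambda=\bar\lambda_2$. Finally, the initial moment assumption \eqref{eq:condition-of-initial-value} is exactly $\mathcal H_{r_3}(\nu)\le\sigma' d^{r_3}$ with $r_3=1$ and $\sigma'=\bar\sigma$.

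Applying Theorem \ref{thm:rho-main-result} with $h_0=L^{-1}$ then yields the bound $\mathcal K_1 d^{1/2}h+\mathcal K_2 d^{1/2}e^{-\lambda' nh}$. The choice $h_0=L^{-1}$ is admissible because $h_\star\le L^{-1}$. With this choice $\Theta=\bar\Theta_2$ and $\lambda'=\bar\lambda_2'$, and the constants reduce to $\bar{\mathcal K}^{sci}_{1,\mathrm E}$ and $\bar{\mathcal K}^{sci}_{2,\mathrm E}$ once all exponents equal $1$.

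I expect no genuine obstacle, since the framework does the work. The one point needing care is matching notation. The framework's $\mathcal H_r$ and $d$ refer to the joint state in $\mathbb R^{2d}$, so $d$ there must be read as the original dimension up to a factor absorbed into the constants. I also need to check that the Euler case $\tau_k\equiv0$ is covered by Proposition \ref{prop:numerical-bound}, which is stated for general deterministic $\tau_k\in[0,1]$.
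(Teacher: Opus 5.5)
Your proposal is correct and matches the paper's proof: it derives Assumption~\ref{as:dissipativity} from Assumption~\ref{as:convex-at-infinity}, verifies the conditions of Theorem~\ref{thm:rho-main-result} with $r=r_1=r_2=r_3=1$ and $\zeta_1=1$ via Lemma~\ref{lem:ULD-moment bound} and Propositions~\ref{prop:numerical-bound}, \ref{prop:finite-error-Euler} and \ref{prop:erg-non-convex}, and takes $h_0=L^{-1}$ so that the framework's $\Theta$ and $\lambda'$ become $\bar\Theta_2$ and $\bar\lambda_2'$. Two small refinements: the $d$ in the framework's conditions is only a parameter in the inequalities, so you can use the original dimension directly and no factor needs absorbing; and the framework yields $\mathcal K_2=2e\big(\bar K_2(\bar\sigma+1)+\bar K_1\big)^{1/2}$, which is bounded above by the stated $\bar{\mathcal K}^{sci}_{2,\mathrm E}$ rather than equal to it.
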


\begin{proof}
\textbf{Proof.}
As shown in the proof of
Theorem~\ref{thm:non-convex-main-result},
Assumptions \ref{as:Lip}, \ref{as:U_0-bound}, and
\ref{as:convex-at-infinity} imply Assumption
\ref{as:dissipativity}.
Hence, Lemma~\ref{lem:ULD-moment bound},
Proposition~\ref{prop:numerical-bound}, and
Proposition~\ref{prop:finite-error-Euler} verify
Conditions \ref{con:finite-error}, \ref{con:bound-solu}, and
\ref{con:bound-num}  with
$
r=r_1=r_2=r_3=1,
$ 
and
$
\zeta_1=1
$.
Combining these estimates with the $\mathcal W_1$-exponential
ergodicity in Proposition~\ref{prop:erg-non-convex} and applying
Theorem~\ref{thm:rho-main-result} gives the desired assertion.
\end{proof}
As a direct consequence, we obtain the following corollary.

\begin{cor}[\textbf{Mixing time of Euler-type ULMC beyond log-concavity}]
\label{cor:mixing-time-non-convex-Euler}
Let the conditions of
Theorem~\ref{thm:non-convex-main-result-Euler} hold.
To achieve a prescribed accuracy tolerance
$\epsilon>0$ in $\mathcal W_1$-distance, the required number of
iterations of the U-ULMC scheme with $\tau_k\equiv0$
is of order
\(
\widetilde{\mathcal O}
(
d^{\frac12}\epsilon^{-1}
)
\).
\end{cor}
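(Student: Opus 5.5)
The corollary follows by specializing Proposition~\ref{prop:rho-mixing-time} to the parameters already identified in the proof of Theorem~\ref{thm:non-convex-main-result-Euler}. There, Conditions \ref{con:finite-error}--\ref{con:bound-num} were verified for U-ULMC with $\tau_k\equiv0$. The finite-time error comes from Proposition~\ref{prop:finite-error-Euler}, the moment bounds from Lemma~\ref{lem:ULD-moment bound} and Proposition~\ref{prop:numerical-bound}, and the $\mathcal W_1$-exponential ergodicity from Proposition~\ref{prop:erg-non-convex}. The resulting exponents are $r=r_1=r_2=r_3=1$ and $\zeta_1=1$, with $\textit{Dist}=\mathcal W_1$.

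Substituting these exponents into Proposition~\ref{prop:rho-mixing-time} gives an iteration count of order $\widetilde{\mathcal O}(d^{\frac{r}{2\zeta_1}}\epsilon^{-\frac{1}{\zeta_1}})=\widetilde{\mathcal O}(d^{\frac12}\epsilon^{-1})$.

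For transparency, one can also argue directly from \eqref{eq:non-convex-main-result-Euler}. Choose $h$ so that $\bar{\mathcal K}_{1,\mathrm E}^{sci}d^{\frac12}h\le\epsilon/2$, that is,
\[
h\asymp \epsilon\, d^{-\frac12}\big/\bar{\mathcal K}_{1,\mathrm E}^{sci},
\]
taking the minimum with the stepsize restriction \eqref{eq:final-condition-of-stepsize}. This does not affect the order for small $\epsilon$. Then take
\[
n\ge (\bar\lambda_2' h)^{-1}\log\big(2\bar{\mathcal K}_{2,\mathrm E}^{sci}d^{\frac12}/\epsilon\big),
\]
which makes the second term at most $\epsilon/2$.

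Since all the constants $\bar{\mathcal K}^{sci}_{i,\mathrm E}$ and $\bar\lambda_2'$ are dimension-independent, this gives $n=\widetilde{\mathcal O}(d^{\frac12}\epsilon^{-1})$. The only point needing care is that the stepsize cap $h_\star\wedge\frac{1}{2L_b}$ is dimension-free, so it only matters in the regime of large $\epsilon$.
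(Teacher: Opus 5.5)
Your proposal is correct and takes the same route as the paper, which obtains this corollary directly from Proposition~\ref{prop:rho-mixing-time} with $\textit{Dist}=\mathcal W_1$, $r=r_1=r_2=r_3=1$ and $\zeta_1=1$, as verified in the proof of Theorem~\ref{thm:non-convex-main-result-Euler}. Your explicit choice of $h$ and $n$ from \eqref{eq:non-convex-main-result-Euler} just unpacks that proposition, and you correctly note that the dimension-free stepsize cap only matters when $\epsilon$ is large.
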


\end{redtext}

\section{Proofs of Propositions \ref{prop:numerical-bound}, \ref{prop:uniform-fourth-moment-u-ulmc}: uniform moment bounds}
\label{sec:pf-scheme-bound}
\subsection{Proof of Proposition \ref{prop:numerical-bound}}
\label{sec:pf-scheme-2-nd-bound}

In this subsection, we aim to prove Proposition \ref{prop:numerical-bound} 
%\begin{redtext}
%and 
%\ref{prop:uniform-fourth-moment-u-ulmc}
%\end{redtext}
concerning uniform-in-time second-moment bounds for the U-ULMC scheme. 
%The main idea is to construct a suitable discrete Lyapunov function adapted to the structure of \eqref{eq:uni-process}, and then to derive an estimate for this Lyapunov function.
The outline of the proof consists of three main steps:
\begin{itemize}
    \item First, we establish two H\"older regularity estimates for U-ULMC: one for the gradient increment of the randomized intermediate stage and the other for the one-step position increment; see Lemmas \ref{lem:nabla-tau-delta} and \ref{lem:delta-x-k+1}.
    \item Second, to obtain the contractive iteration \eqref{eq:lyapunov-recursion} for a modified discrete Lyapunov function, we separately control its potential, position--velocity coupling, and velocity energy components; see Lemmas \ref{lem:potential-one-step}, \ref{lem:y-combination-bound}, and \ref{lem:velocity-one-step}.
    \item Finally, we invoke the iteration to obtain the desired uniform-in-time moment bound. 
%    (Proposition \ref{prop:numerical-bound}).
\end{itemize}

As the first step, the following lemma provides a H\"older regularity estimate for the gradient increment between the randomized intermediate stage and the current position.

\begin{lem}\label{lem:nabla-tau-delta}
Let Assumptions \ref{as:Lip}, \ref{as:U_0-bound} hold and suppose that Assumption \ref{as:coeff-approx-general} is satisfied with $\boldsymbol{\eta}=\boldsymbol{\eta}_{\mathrm M}$. Let $\{(\bar{X}_k,\bar{V}_k)\}_{k\geq0}$ be generated by the U-ULMC scheme \eqref{eq:uni-process}. Then, for any uniform stepsize $h\in(0,1\wedge\gamma^{-1}]$, it holds that 
\begin{equation}
\begin{aligned}
  \mathbb{E}
\Big[\big|
\nabla U(\bar{X}_{k+\tau_k})
-
\nabla U(\bar{X}_{k})
\big|^2\Big]  
\leq   
\Big( 
\H^{x}_1
\mathbb{E}
\Big[\big|
\bar{X}_k
\big|^2\Big]
+
\H^{v}_1
\mathbb{E}
\Big[\big|
\bar{V}_k
\big|^2\Big]
+
\H_1
d
\Big)h^2,
\end{aligned}
\end{equation}
where 
\begin{equation}
\begin{aligned}
\H^{x}_1
:=   &
6L^4
\gamma^{-2}
\alpha^2
\ell_1^2
(2 +  \kappa)^2,
& &
\H^{v}_1
:=   
3L^2(1+\kappa)^2,
\\
\H_1
:=   &
6L^2L_2^2
\gamma^{-2}
\alpha^2
\ell_1^2
(2 +  \kappa)^2
+
2L^2
\gamma\alpha\ell_2^2(1+\kappa)^2.
\end{aligned}
\end{equation}
\end{lem}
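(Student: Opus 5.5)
By Assumption \ref{as:Lip}, it suffices to bound $L^2\,\mathbb E[|\bar X_{k+\tau_k}-\bar X_k|^2]$. The first line of \eqref{eq:uni-process} splits this increment into three pieces:
\[
A_1:=\tau_k h\Phi_1(\tau_kh)\bar V_k,\qquad
A_2:=-\ell_1\alpha\gamma^{-1}\tau_kh\bigl(1-\Phi_2(\tau_kh)\bigr)\nabla U(\bar X_k),\qquad
A_3:=\ell_2\sqrt{2\gamma\alpha}\,\Delta W^{\Phi_3,1}_{k+\tau_k}.
\]
The elementary inequality $|A_1+A_2+A_3|^2\le 3(|A_1|^2+|A_2|^2+|A_3|^2)$ then reduces the claim to three separate second-moment estimates. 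This also explains the factor $3$ in $\H^v_1$. I plan to condition on $\tau_k$ throughout, use $\tau_k\in[0,1]$ and $h\le 1\wedge\gamma^{-1}$, and rely on the independence of $\tau_k$ and the Brownian increments on $[t_k,t_{k+1}]$ from $(\bar X_k,\bar V_k)$.

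\textbf{Velocity and drift terms.} The bound \eqref{eq:bound-Psi-Phi} gives $|\Phi_1|\le 1+\kappa$, so
\[
3L^2\,\mathbb E|A_1|^2\le 3L^2(1+\kappa)^2h^2\,\mathbb E|\bar V_k|^2,
\]
which is exactly the $\H^v_1$ contribution. For $A_2$, write $1-\Phi_2=(1-\phi)+(\phi-\Phi_2)$. Since $|1-\phi(r)|\le 1$ and, with $\eta_2=0$, $|\phi-\Phi_2|\le\kappa$, we get $|1-\Phi_2|\le 1+\kappa\le 2+\kappa$, a slightly lossy bound that matches the stated constant. Combining this with \eqref{eq:lip-2} and $(a+b)^2\le 2a^2+2b^2$ yields
\[
|\nabla U(\bar X_k)|^2\le 2L^2|\bar X_k|^2+2L_2^2d .
\]
Hence
\[
3L^2\,\mathbb E|A_2|^2\le 6L^2\gamma^{-2}\alpha^2\ell_1^2(2+\kappa)^2h^2\bigl(L^2\,\mathbb E|\bar X_k|^2+L_2^2d\bigr),
\]
which produces $\H^x_1$ and the first part of $\H_1$.

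\textbf{Noise term.} Conditional on $\tau_k$, the It\^o isometry gives
\[
\mathbb E|\Delta W^{\Phi_3,1}_{k+\tau_k}|^2=d\int_0^{\tau_kh}u^2\Phi_3(u)^2\,\mathrm du\le d(1+\kappa)^2\tfrac{h^3}{3}.
\]
Multiplying by $3L^2\cdot 2\gamma\alpha\ell_2^2$ gives $2L^2\gamma\alpha\ell_2^2(1+\kappa)^2h^3d\le(\cdots)h^2 d$ because $h\le 1$, which is the last part of $\H_1$.

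The main obstacle is bookkeeping rather than analysis. I need to make sure that conditioning on $\tau_k$ is legitimate, since $\tau_k$ enters the upper limit of the stochastic integral, and this rests on the independence of $\mathcal F^\tau$ and $\mathcal F^W$. I also need the constants to match exactly, including the harmless $(2+\kappa)$ and the wasted factor $h$ in the noise term.
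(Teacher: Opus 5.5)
Your proposal is correct and follows the paper's proof essentially step for step. The shared steps are: Lipschitz reduction to $L^2\,\mathbb{E}[|\bar X_{k+\tau_k}-\bar X_k|^2]$, the three-term elementary inequality, the coefficient bounds from \eqref{eq:bound-Psi-Phi}, the growth bound \eqref{eq:lip-2}, and It\^o's isometry under $\mathbb{E}_\tau$ followed by $h^3\le h^2$. The only cosmetic difference is that you bound $|1-\Phi_2|$ via $(1-\phi)+|\phi-\Phi_2|\le 1+\kappa$ before relaxing to $2+\kappa$, whereas the paper uses $1+|\Phi_2|\le 2+\kappa$ directly.
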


\begin{proof}
\textbf{Proof.}
By Assumption \ref{as:Lip}, the Lipschitz continuity of $\nabla U$ immediately gives
\begin{equation}
\label{eq:grad-lip-intermediate}
\mathbb{E}
\big[\big|
\nabla U
(\bar{X}_{k+\tau_k})
-
\nabla U  
(\bar{X}_{k})
\big|^2\big] 
\leq
L^2\mathbb{E}
\big[\big|
\bar{X}_{k+\tau_k}
-
\bar{X}_k
\big|^2\big] .
\end{equation}
Before proceeding further,
we record some elementary consequences of Assumption \ref{as:coeff-approx-general}. Since  $0\leq\tau_k\leq1$,
we have $\tau_kh\leq h \leq \gamma^{-1}$. Hence, the uniform bound \eqref{eq:bound-Psi-Phi} ensures
\[
|\Phi_1(\tau_kh)|
\leq
1+\kappa,
\qquad
|\Phi_3(r)|\leq
1+\kappa,
\quad  
r \in [0,h].
\]
Moreover,
\[|1-\Phi_2(\tau_kh)|
\leq
1+|\Phi_2(\tau_kh)|
\leq
2+\kappa.\]
We now estimate the increment $\E [|\bar{X}_{k+\tau_k}-\bar{X}_k|^2]$. Recalling the definition of the intermediate stage in \eqref{eq:uni-process} and using the elementary inequality
\begin{equation}
\label{eq:elementary_inequality}
\Big(
\sum_{i=1}^{k}|u_i|
\Big)^2 
\leq 
k
\sum_{i=1}^{k}|u_i|^2, 
\quad
u_i \in \R, \; 
\forall k \in \mathbb{N},
\end{equation}
we obtain
\begin{equation}
\label{eq:increment-split}
\begin{aligned}
&  \mathbb{E}
\Big[\big|
\bar{X}_{k+\tau_k}
-
\bar{X}_k
\big|^2\Big] 
\\
\leq     &
3 
h^2
\mathbb{E}
\Big[\big| 
\Phi_1(\tau_kh)
\bar{V}_k
\big|^2\Big]
+
3
\gamma^{-2}
\alpha^2
\ell_1^2 h^2
\mathbb{E}
\Big[\big|
\big(1-\Phi_2(\tau_kh)\big)
\nabla U(\bar{X}_k)
\big|^2\Big]   
+ 
6\gamma
\alpha
\ell_2^2
\mathbb{E}
\Big[\big|
\Delta W_{k+\tau_k}^{\Phi_3,1}
\big|^2\Big]
\\  
\leq    &
3 
(1 +  \kappa)^2
h^2
\mathbb{E}
\Big[\big|
\bar{V}_k
\big|^2\Big] 
+
3 
\gamma^{-2}
\alpha^2
\ell_1^2
(2 +  \kappa)^2
h^2
\mathbb{E}
\Big[\big|
\nabla U(\bar{X}_k)
\big|^2\Big] 
+ 
6\gamma
\alpha
\ell_2^2
\mathbb{E}
\Big[\big|
\Delta W_{k+\tau_k}^{\Phi_3,1}
\big|^2\Big].
\end{aligned}
\end{equation}
It remains to bound the gradient term and the stochastic integral. First, by using \eqref{eq:elementary_inequality}, one can derive from  \eqref{eq:lip-2}  that 
\begin{equation}\label{eq:gradient-growth-square}
|\nabla U(\bar{X}_k)|^2
\leq
2L^2
|\bar{X}_k|^2
+
2L_2^2d.
\end{equation}
Secondly, the It\^o isometry gives 
\begin{equation}
\label{eq:stochastic-term-bound}
\begin{aligned}
\mathbb{E}
\Big[\big|
\Delta W_{k+\tau_k}^{\Phi_3,1}
\big|^2\Big]  
=    
d
\mathbb{E}_\tau
\bigg[
\int_0^{\tau_kh}
(\tau_kh-s)^2|\Phi_3(\tau_kh-s)|^2
\mathrm{d}s
\bigg]
\leq   
\tfrac{1}{3}
(1+\kappa)^2 d
h^3.
\end{aligned}
\end{equation}
Therefore we arrive at
\begin{equation}
\label{eq:intermediate-increment-final}
\begin{aligned}
\mathbb{E}
\Big[\big|
\bar{X}_{k+\tau_k}
-
\bar{X}_k
\big|^2\Big] 
\leq     &
3 
(1 +  \kappa)^2
h^2
\mathbb{E}
\Big[\big|
\bar{V}_k
\big|^2\Big] 
+
6L^2
\gamma^{-2}
\alpha^2
\ell_1^2
(2 +  \kappa)^2
h^2
\mathbb{E}
\Big[\big|
\bar{X}_k
\big|^2\Big] 
\\    &
+
\big(
6L_2^2
\gamma^{-2}
\alpha^2
\ell_1^2
(2 +  \kappa)^2
+
2\gamma\alpha\ell_2^2(1+\kappa)^2
\big)dh^2.
\end{aligned}
\end{equation}
Plugging \eqref{eq:intermediate-increment-final} into \eqref{eq:grad-lip-intermediate} gives the desired estimate.  
This completes the proof.
\end{proof}

Equipped with this gradient-increment estimate, we next derive a H\"older regularity estimate for the position increment.

\begin{lem}\label{lem:delta-x-k+1}
Let Assumptions \ref{as:Lip}, \ref{as:U_0-bound} hold and suppose that Assumption \ref{as:coeff-approx-general} is satisfied with $\boldsymbol{\eta}=\boldsymbol{\eta}_{\mathrm M}$. Let $\{(\bar{X}_k,\bar{V}_k)\}_{k\geq0}$ be generated by U-ULMC \eqref{eq:uni-process}. Then, for any uniform stepsize $h\in(0,
1  
\wedge 
L^{-1}
\wedge
\gamma^{-1}]$, it holds that
\begin{equation}\label{eq:X-increment-bound}
\mathbb{E}
\Big[
\big|
\bar{X}_{k+1}
-
\bar{X}_{k}
\big|^2
\Big]
\leq
\Big(
\H^{x}_2
\mathbb{E}
\Big[
\big|
\bar{X}_k
\big|^2
\Big]
+
\H^{v}_2
\mathbb{E}
\Big[
\big|
\bar{V}_k
\big|^2
\Big]
+
\H_2
d
\Big)h^2,
\end{equation}
where
\begin{equation}
\begin{aligned}
\H^{x}_2
:=   & 
6
\big(
2L^2 +  \H^{x}_1
\big)
\alpha^2
(1+\kappa)^2,
&&
\H^{v}_2
:=  
3 (2 \H^{v}_1 
\alpha^2+
1)
(1+\kappa)^2,
\\ 
\H_2
:=   & 
\big(
12 L_2^2 \alpha^2 +  6 \H_1 \alpha^2
+
2\gamma  \alpha
\big)
(1+\kappa)^2
.
\end{aligned}
\end{equation}
\end{lem}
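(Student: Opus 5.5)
The argument mirrors the proof of Lemma \ref{lem:nabla-tau-delta}. From the second line of \eqref{eq:uni-process},
\[
\bar X_{k+1}-\bar X_k
=
h\Phi_1(h)\bar V_k
-\alpha h(h-\tau_kh)\Phi_2(h-\tau_kh)\nabla U(\bar X_{k+\tau_k})
+\sqrt{2\gamma\alpha}\,\Delta W^{\Phi_3,1}_{k+1}.
\]
To pass from the intermediate stage back to $\bar X_k$, I will split the gradient as
\[
\nabla U(\bar X_{k+\tau_k})=\bigl(\nabla U(\bar X_{k+\tau_k})-\nabla U(\bar X_k)\bigr)+\nabla U(\bar X_k).
\]

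\textbf{Step 1: squared increment.} I apply \eqref{eq:elementary_inequality} to the three terms $h\Phi_1\bar V_k$, the drift term and the noise term. This gives
\[
\E|\bar X_{k+1}-\bar X_k|^2
\le 3(1+\kappa)^2h^2\E|\bar V_k|^2
+3\alpha^2h^4(1+\kappa)^2\,\E|\nabla U(\bar X_{k+\tau_k})|^2
+6\gamma\alpha\,\E|\Delta W^{\Phi_3,1}_{k+1}|^2 .
\]
Here I use \eqref{eq:bound-Psi-Phi}, which is legitimate since $h-\tau_kh\in[0,h]$ and $h\le\gamma^{-1}$, together with $(h-\tau_kh)\le h$.

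\textbf{Step 2: the gradient term.} I write
\[
\E|\nabla U(\bar X_{k+\tau_k})|^2\le 2\,\E|\nabla U(\bar X_{k+\tau_k})-\nabla U(\bar X_k)|^2+2\,\E|\nabla U(\bar X_k)|^2 .
\]
The first piece is bounded by Lemma \ref{lem:nabla-tau-delta}, the second by \eqref{eq:gradient-growth-square}. This yields
\[
\E|\nabla U(\bar X_{k+\tau_k})|^2\le 2(\H_1^x+2L^2)\E|\bar X_k|^2+2\H_1^v\E|\bar V_k|^2+(2\H_1+4L_2^2)d ,
\]
where I have used $h^2\le 1$ to drop the $h^2$ factor coming from Lemma \ref{lem:nabla-tau-delta}. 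Multiplying by $3\alpha^2(1+\kappa)^2h^4$ and using $h^4\le h^2$ then produces the coefficients appearing in the statement:
\[
6(2L^2+\H_1^x)\alpha^2(1+\kappa)^2, \qquad 6\H_1^v\alpha^2(1+\kappa)^2, \qquad (12L_2^2+6\H_1)\alpha^2(1+\kappa)^2 .
\]

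\textbf{Step 3: the noise term.} By the It\^o isometry, exactly as in \eqref{eq:stochastic-term-bound},
\[
\E|\Delta W^{\Phi_3,1}_{k+1}|^2\le \tfrac13(1+\kappa)^2dh^3 ,
\]
so the noise term contributes at most $2\gamma\alpha(1+\kappa)^2dh^3\le 2\gamma\alpha(1+\kappa)^2dh^2$.

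\textbf{Collecting terms.} Adding the contributions gives
\[
\H_2^v=3(2\H_1^v\alpha^2+1)(1+\kappa)^2,
\]
and the stated $\H_2^x$ and $\H_2$. There is no real obstacle. The only points requiring care are that the $\tau_k$-dependence is absorbed by the uniform bounds on $\Phi_i$ over $[0,h]$, and that the powers of $h$ are reduced to $h^2$ using $h\le 1$. The assumption $h\le L^{-1}$ is not actually needed for the bound with these constants.
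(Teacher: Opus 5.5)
Your proposal is correct and follows the paper's proof essentially step for step. Both use the same three-term split via \eqref{eq:elementary_inequality}, the same decomposition of $\nabla U(\bar X_{k+\tau_k})$ bounded through Lemma~\ref{lem:nabla-tau-delta} and \eqref{eq:gradient-growth-square}, and the same It\^o-isometry bound on the noise, with powers of $h$ reduced to $h^2$ via $h\le 1$; your remark that $h\le L^{-1}$ is never used is also accurate.
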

\begin{proof}
\textbf{Proof.}
Recalling the position update in \eqref{eq:uni-process} and using the elementary inequality \eqref{eq:elementary_inequality} give
\begin{equation}
\label{eq:X-increment-split-clear}
\begin{aligned}
\mathbb{E}
\Big[
\big|
\bar{X}_{k+1}
-
\bar{X}_{k}
\big|^2
\Big]
\leq  &
3h^2
\mathbb{E}
\Big[
\big|
\Phi_1(h)\bar{V}_k
\big|^2
\Big]
+
3\alpha^2h^2
\mathbb{E}
\Big[
\big|
(h-\tau_kh)\Phi_2(h-\tau_kh)
\nabla U(\bar{X}_{k+\tau_k})
\big|^2
\Big]
\\  &
+
6\gamma\alpha
\mathbb{E}
\Big[
\big|
\Delta W_{k+1}^{\Phi_3,1}
\big|^2
\Big].
\end{aligned}
\end{equation}
Next, we will estimate the above terms one by one. 
Since $h\in(0,1\wedge\gamma^{-1}]$ and $\tau_k\in[0,1]$, using the bound \eqref{eq:bound-Psi-Phi} gives
\begin{equation}
\label{eq:estim_Phi}
|\Phi_1(h)|
\leq
1+\kappa,
\qquad
|\Phi_2
(h-\tau_kh)|
\leq
1+\kappa,
\qquad
|\Phi_3
(r)|
\leq
1+\kappa,
\quad 
r \in  (0,h].
\end{equation}
As a result,
\begin{equation}\label{eq:X-velocity-part-clear}
3h^2
\mathbb{E}
\Big[
\big|
\Phi_1(h)\bar{V}_k
\big|^2
\Big]
\leq
3(1+\kappa)^2h^2
\mathbb{E}
\Big[
\big|
\bar{V}_k
\big|^2
\Big].
\end{equation}
We next estimate the gradient contribution. 
By \eqref{eq:elementary_inequality} and  \eqref{eq:estim_Phi}, we have
\begin{equation}
\label{eq:X-gradient-part-clear-1}
\begin{aligned}
3\alpha^2h^2
\mathbb{E}
\Big[
\big|
(h-\tau_kh)
\Phi_2
(h-\tau_kh)
\nabla U(\bar{X}_{k+\tau_k})
\big|^2
\Big]
\leq    &
3\alpha^2(1+\kappa)^2h^4
\mathbb{E}
\Big[
\big|
\nabla 
U(\bar{X}_{k+\tau_k})
\big|^2
\Big],
\end{aligned}
\end{equation}
where
\[
\nabla U(\bar{X}_{k+\tau_k})
=
\nabla U(\bar{X}_k)
+
\big(
\nabla U(\bar{X}_{k+\tau_k})
-
\nabla U(\bar{X}_k)
\big).
\]
Invoking  Lemma \ref{lem:nabla-tau-delta}, together with \eqref{eq:gradient-growth-square} and the Lipschitz condition \eqref{eq:lip}, yields
\begin{equation}
\label{eq:grad-Xtau-split-clear}
\begin{aligned}
\mathbb{E}
\Big[
\big|
\nabla U(\bar{X}_{k+\tau_k})
\big|^2
\Big]
\leq   &
2
\mathbb{E}
\Big[
\big|
\nabla U(\bar{X}_k)
\big|^2
\Big]
+
2\mathbb{E}
\Big[
\big|
\nabla U(\bar{X}_{k+\tau_k})
-
\nabla U(\bar{X}_k)
\big|^2
\Big]
\\   
\leq   &
4 L^2
\mathbb{E}
\Big[
\big|
\bar{X}_k
\big|^2
\Big]
+
2
\Big( 
\H^{x}_1
\mathbb{E}
\Big[\big|
\bar{X}_k
\big|^2\Big]
+
\H^{v}_1
\mathbb{E}
\Big[\big|
\bar{V}_k
\big|^2\Big]
+
\H_1
d
\Big)h^2
+
4 L_2^2  d
\\ 
\leq   &
(4 L^2
+
2\H^{x}_1)
\mathbb{E}
\Big[\big|
\bar{X}_k
\big|^2\Big]
+
2 
\H^{v}_1
\mathbb{E}
\Big[\big|
\bar{V}_k
\big|^2\Big]
+
( 
2\H_1  +  
4 L_2^2 
)
d,
\end{aligned}
\end{equation}
where $h \leq  1$ has been used in the last step.
Substituting \eqref{eq:grad-Xtau-split-clear} into \eqref{eq:X-gradient-part-clear-1} and again noting $h\leq 1$, we get
\begin{equation}
\label{eq:X-gradient-part-clear-2}
\begin{aligned}
&3
\alpha^2
h^2
\mathbb{E}
\Big[
\big|
(h-\tau_kh)\Phi_2(h-\tau_kh)
\nabla U(\bar{X}_{k+\tau_k})
\big|^2
\Big]
\\  
\leq   &
\Big(
6
\big(
2L^2 +  \H^{x}_1
\big)
\alpha^2
(1+\kappa)^2
\mathbb{E}
\Big[
\big|
\bar{X}_k
\big|^2
\Big]
+
6 \H^{v}_1 
\alpha^2
(1+\kappa)^2
\mathbb{E}
\Big[
\big|
\bar{V}_k
\big|^2
\Big]
+
6\big(
2L_2^2+  \H_1
\big)
\alpha^2
(1+\kappa)^2d
\Big)
h^2
.
\end{aligned}
\end{equation}
Finally, by the It\^o isometry, one can see
\begin{equation}
\label{eq:X-noise-part-clear}
\begin{aligned}
\mathbb{E}
\Big[
\big|
\Delta W_{k+1}^{\Phi_3,1}
\big|^2
\Big]
=
d
\int_{t_k}^{t_{k+1}}
(t_{k+1}-s)^2
|\Phi_3(t_{k+1}-s)|^2
\mathrm{d}s
\leq
\tfrac{1}{3}
(1+\kappa)^2 dh^2 .
\end{aligned}
\end{equation}
Combining \eqref{eq:X-increment-split-clear}, \eqref{eq:X-velocity-part-clear}, \eqref{eq:X-gradient-part-clear-2} and \eqref{eq:X-noise-part-clear}, we obtain \eqref{eq:X-increment-bound},
as required.
\end{proof}

With the preceding H\"older regularity estimates at hand, we now proceed to prove the contractive iteration \eqref{eq:lyapunov-recursion} for the U-ULMC scheme, which is a key step toward the uniform-in-time moment bounds.
%%
%%
%Inspired by the Lyapunov structure introduced in \cite{mattingly2002ergodicity,wu2001large, eberle2019couplings}, 

% This recursive inequality describes the expected change of the discrete Lyapunov function $\bar{\mathcal V}$ along the numerical dynamics and is the key step toward the uniform-in-time moment bounds.

In view of the definition \eqref{eq:Lyapunov-modified}, we decompose the analysis according to the three components of $\bar{\mathcal V}$: the potential energy term, the coupled position--velocity term, and the velocity energy term. 
More precisely, we estimate
\[
\alpha U(\bar X_{k+1}),
\qquad
\tfrac{\gamma^2}{4}
\big|
\bar X_{k+1}+\gamma^{-1}\bar V_{k+1}
\big|^2,
\qquad
\tfrac14|\bar V_{k+1}|^2
\]
separately. 
The next three lemmas provide the required estimates for these three components.

\begin{lem}
\label{lem:potential-one-step}
Let Assumptions \ref{as:Lip}, \ref{as:U_0-bound}  hold and suppose that Assumption \ref{as:coeff-approx-general} is satisfied with $\boldsymbol{\eta}=\boldsymbol{\eta}_{\mathrm M}$. Let $\{(\bar{X}_k,\bar{V}_k)\}_{k\geq0}$ be generated by the U-ULMC scheme \eqref{eq:uni-process}. Then, for any uniform stepsize $h\in(0,1\wedge L^{-1}\wedge \gamma^{-1}]$, it holds that
\begin{equation}\label{eq:potential-one-step}
\begin{aligned}
\alpha \mathbb{E}
\Big[
U(\bar{X}_{k+1})
\Big]
\leq   &
\alpha \mathbb{E}
\Big[
U(\bar{X}_k)
\Big]
+
\alpha h\Phi_1(h)
\mathbb{E}
\Big[
\big\langle
\nabla U(\bar{X}_k),
\bar{V}_k
\big\rangle
\Big]
\\     &
+
\Big(
\M^x_1
\mathbb{E}
\Big[
\big|
\bar{X}_k
\big|^2
\Big]
+
\M^v_1
\mathbb{E}
\Big[
\big|
\bar{V}_k
\big|^2
\Big]
\Big)h^2
+
\M_1dh,
\end{aligned}
\end{equation}
where
\begin{equation}
\begin{aligned}
\M^x_1
:=
&
\big(
3L^2+\H^{x}_1
\big)
\alpha^2(1+\kappa)
+
\tfrac{\alpha L}{2}\H^{x}_2,
&& 
\M^v_1
:=
\H^{v}_1
\alpha^2
(1+\kappa)
+
\tfrac{\alpha L}{2}\H^{v}_2,
\\
\M_1
:=  &
\big(
3L_2^2+\H_1
\big)
\alpha^2(1+\kappa)
+
\tfrac{\alpha L}{2}\H_2.
\end{aligned}
\end{equation}
\end{lem}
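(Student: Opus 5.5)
The plan is to use the descent lemma implied by Assumption \ref{as:Lip}. Writing $\Delta_k:=\bar X_{k+1}-\bar X_k$, we have
\[
U(\bar X_{k+1})\le U(\bar X_k)+\langle\nabla U(\bar X_k),\Delta_k\rangle+\tfrac L2|\Delta_k|^2 .
\]
The quadratic remainder $\tfrac{\alpha L}{2}\mathbb E[|\Delta_k|^2]$ is controlled directly by Lemma \ref{lem:delta-x-k+1}. This produces the contributions $\tfrac{\alpha L}{2}\H^x_2$, $\tfrac{\alpha L}{2}\H^v_2$ and $\tfrac{\alpha L}{2}\H_2$ in $\M^x_1$, $\M^v_1$ and $\M_1$, each multiplied by $h^2$. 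Since $h\le1$, we may write $dh^2\le dh$.

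Next we expand the linear term using the position update in \eqref{eq:uni-process}:
\[
\langle\nabla U(\bar X_k),\Delta_k\rangle=h\Phi_1(h)\langle\nabla U(\bar X_k),\bar V_k\rangle-\alpha h(h-\tau_kh)\Phi_2(h-\tau_kh)\langle\nabla U(\bar X_k),\nabla U(\bar X_{k+\tau_k})\rangle+\sqrt{2\gamma\alpha}\,\langle\nabla U(\bar X_k),\Delta W^{\Phi_3,1}_{k+1}\rangle .
\]
The first term is kept exactly as it appears in \eqref{eq:potential-one-step}.

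The stochastic term has zero expectation. Indeed, $\bar X_k$ is $\mathcal F_{t_k}$-measurable, $\Delta W^{\Phi_3,1}_{k+1}$ is an Itô integral over $[t_k,t_{k+1}]$ with deterministic integrand, and its coefficient function $\Phi_3$ does not involve $\tau_k$. It is therefore independent of $\mathcal F_{t_k}$ and has mean zero. Note that we only pair it with $\nabla U(\bar X_k)$, never with the intermediate stage, which would be correlated with the noise.

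The gradient cross term is the only delicate step, because $\Phi_2$ may be negative in principle, so we cannot simply drop it as nonpositive. We bound it in absolute value by
\[
\alpha h^2(1+\kappa)\,|\nabla U(\bar X_k)|\,|\nabla U(\bar X_{k+\tau_k})|,
\]
using \eqref{eq:bound-Psi-Phi}. Then we split $\nabla U(\bar X_{k+\tau_k})=\nabla U(\bar X_k)+(\nabla U(\bar X_{k+\tau_k})-\nabla U(\bar X_k))$ and apply Young's inequality, which gives
\[
|\nabla U(\bar X_k)||\nabla U(\bar X_{k+\tau_k})|\le\tfrac32|\nabla U(\bar X_k)|^2+\tfrac12|\nabla U(\bar X_{k+\tau_k})-\nabla U(\bar X_k)|^2 .
\]
We bound the first piece by \eqref{eq:gradient-growth-square}, yielding $3L^2|\bar X_k|^2+3L_2^2d$. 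We bound the second piece by Lemma \ref{lem:nabla-tau-delta} and use $h^2\le1$ to absorb its extra $h^2$ factor, so the result is at most half of $\H^x_1|\bar X_k|^2+\H^v_1|\bar V_k|^2+\H_1 d$. Multiplying by $\alpha$ (from the potential's coefficient) and by $\alpha h^2(1+\kappa)$ produces exactly the terms $(3L^2+\H^x_1)\alpha^2(1+\kappa)$, $\H^v_1\alpha^2(1+\kappa)$ and $(3L_2^2+\H_1)\alpha^2(1+\kappa)$, with the stated constants holding comfortably.

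Finally we take expectations, multiply the descent inequality by $\alpha$, and collect terms. Again $dh^2\le dh$, which yields \eqref{eq:potential-one-step}.
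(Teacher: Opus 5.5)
Your proposal is correct and follows essentially the same route as the paper. Both arguments use the descent inequality from Assumption~\ref{as:Lip}, expand $\langle\nabla U(\bar X_k),\bar X_{k+1}-\bar X_k\rangle$ through the position update, kill the noise term via $\mathcal F_{t_k}$-measurability of $\nabla U(\bar X_k)$, and control the quadratic remainder with Lemma~\ref{lem:delta-x-k+1}.

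The only difference is cosmetic and lies in the cross term. The paper uses $|a||b|\le\frac12(|a|^2+|b|^2)$ together with \eqref{eq:grad-Xtau-split-clear}. Your splitting $|a||b|\le\frac32|a|^2+\frac12|b-a|^2$ instead gives $\tfrac12\H^x_1,\tfrac12\H^v_1,\tfrac12\H_1$ in place of $\H^x_1,\H^v_1,\H_1$, so the stated constants hold a fortiori.
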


\begin{proof}
\textbf{Proof.}
Thanks to the Lipschitz condition \eqref{eq:lip}, we have
\begin{equation}\label{eq:u_x_ineq}
        U(\X_{k+1})
        \leq
        U(\X_{k})
        +
        \langle \nabla U(\X_k), \delta \X_{k+1} \rangle
        +
        \tfrac{L}{2}|\delta \X_{k+1}|^2,
\end{equation}
where, for notational simplicity, we denote 
\begin{equation}
        \delta \bar{X}_{k+1} 
        :=
        \bar{X}_{k+1}-\bar{X}_{k}.
\end{equation}
Multiplying both sides by $\alpha$ and then taking expectations, we arrive at
\begin{equation}
\label{eq:potential-smooth-step}
\begin{aligned}
\alpha \mathbb{E}
\Big[
U(\bar{X}_{k+1})
\Big]
\leq&
\alpha \mathbb{E}
\Big[
U(\bar{X}_k)
\Big]
+
\alpha
\mathbb{E}
\Big[
\big\langle
\nabla U(\bar{X}_k),
\delta \bar{X}_{k+1}
\big\rangle
\Big]
+
\tfrac{\alpha L}{2}
\mathbb{E}
\Big[
\big|
\delta \bar{X}_{k+1}
\big|^2
\Big].
\end{aligned}
\end{equation}
Recalling the position update in \eqref{eq:uni-process}, we infer
\begin{equation}
\label{eq:delta-x-k+1-potential}
\begin{aligned}
\delta \bar{X}_{k+1}
=
h\Phi_1(h)\bar{V}_k
-
\alpha h(h-\tau_kh)\Phi_2(h-\tau_kh)
\nabla U(\bar{X}_{k+\tau_k})
+
\sqrt{2\gamma\alpha}
\Delta W_{k+1}^{\Phi_3,1}
\end{aligned}
\end{equation}
%%
%%
%Substituting \eqref{eq:delta-x-k+1-potential} into the inner product term in \eqref{eq:potential-smooth-step} gives
and thus
\begin{equation}
\label{eq:potential-inner-product-decomposition}
\begin{aligned}
\alpha
\mathbb{E}
\Big[
\big\langle
\nabla U(\bar{X}_k),
\delta \bar{X}_{k+1}
\big\rangle
\Big]
=   &
\alpha h\Phi_1(h)
\mathbb{E}
\Big[
\big\langle
\nabla U(\bar{X}_k),
\bar{V}_k
\big\rangle
\Big]
+
\alpha\sqrt{2\gamma\alpha}
\mathbb{E}
\Big[
\big\langle
\nabla U(\bar{X}_k),
\Delta W_{k+1}^{\Phi_3,1}
\big\rangle
\Big]
\\    &
-
\alpha^2h
\mathbb{E}
\Big[
(h-\tau_kh)\Phi_2(h-\tau_kh)
\big\langle
\nabla U(\bar{X}_k),
\nabla U(\bar{X}_{k+\tau_k})
\big\rangle
\Big]
.
\end{aligned}
\end{equation}
Since $\nabla U(\bar{X}_k)$ is $\mathcal{F}_{t_k}$-measurable and $\Delta W_{k+1}^{\Phi_3,1}$ has mean zero conditioned on $\mathcal{F}_{t_k}$, the second term vanishes.
We now estimate the last term on the right-hand side of \eqref{eq:potential-inner-product-decomposition}, 
by using \eqref{eq:estim_Phi}, \eqref{eq:gradient-growth-square}, \eqref{eq:grad-Xtau-split-clear} and Young's inequality:
\begin{equation}
\label{eq:potential-gradient-cross-bound}
\begin{aligned}
&-
\alpha^2h
\mathbb{E}
\Big[
(h-\tau_kh)\Phi_2(h-\tau_kh)
\big\langle
\nabla U(\bar{X}_k),
\nabla U(\bar{X}_{k+\tau_k})
\big\rangle
\Big]
\\
\leq&
\alpha^2(1+\kappa)h^2
\mathbb{E}
\Big[
\big|
\nabla U(\bar{X}_k)
\big|
\big|
\nabla U(\bar{X}_{k+\tau_k})
\big|
\Big]
\\
\leq&
\tfrac{\alpha^2(1+\kappa)}{2}h^2
\mathbb{E}
\Big[
\big|
\nabla U(\bar{X}_k)
\big|^2
+
\big|
\nabla U(\bar{X}_{k+\tau_k})
\big|^2
\Big]
\\
\leq    &
\Big(
(3L^2+\H_1^x)
\alpha^2(1+\kappa)
\mathbb{E}
\Big[
\big|
\bar{X}_k
\big|^2
\Big]
+
\H_1^v
\alpha^2(1+\kappa)
\mathbb{E}
\Big[
\big|
\bar{V}_k
\big|^2
\Big]
+
(3L_2^2+\H_1)
\alpha^2(1+\kappa)
d
\Big)
h^2.
\end{aligned}
\end{equation}
Inserting \eqref{eq:potential-gradient-cross-bound} into \eqref{eq:potential-inner-product-decomposition} shows
\begin{equation}
\label{eq:potential-gradient-cross-final}
\begin{aligned}
\alpha
\mathbb{E}
\Big[
\big\langle
\nabla U(\bar{X}_k),
\delta \bar{X}_{k+1}
\big\rangle
\Big]
\leq    & 
\alpha h\Phi_1(h)
\mathbb{E}
\Big[
\big\langle
\nabla U(\bar{X}_k),
\bar{V}_k
\big\rangle
\Big]
+
\Big(
(3L^2+\H_1^x)
\alpha^2(1+\kappa)
\mathbb{E}
\Big[
\big|
\bar{X}_k
\big|^2
\Big]
\\   &
+
\H_1^v
\alpha^2(1+\kappa)
\mathbb{E}
\Big[
\big|
\bar{V}_k
\big|^2
\Big]
+
(3L_2^2+\H_1)
\alpha^2(1+\kappa)
d
\Big)
h^2.
\end{aligned}
\end{equation}
In addition, using Lemma \ref{lem:delta-x-k+1} gives
\begin{equation}
\label{eq:potential-smooth-remainder-bound}
\begin{aligned}
\tfrac{\alpha L}{2}
\mathbb{E}
\Big[
\big|
\delta \bar{X}_{k+1}
\big|^2
\Big]
\leq
\tfrac{\alpha L}{2}
\Big(
\H_2^x
\mathbb{E}
\Big[
\big|
\bar{X}_k
\big|^2
\Big]
+
\H_2^v
\mathbb{E}
\Big[
\big|
\bar{V}_k
\big|^2
\Big]
+
\H_2d
\Big)h^2.
\end{aligned}
\end{equation}
Substituting  \eqref{eq:potential-gradient-cross-final} and \eqref{eq:potential-smooth-remainder-bound} into \eqref{eq:potential-smooth-step} yields the desired estimate. We thus finish the proof.
%%
%%%
%%
\end{proof}
%%
%%
%%%
%%%
%%%

\begin{lem}
\label{lem:y-combination-bound}
Let Assumptions \ref{as:Lip}, \ref{as:U_0-bound} hold and suppose that Assumption \ref{as:coeff-approx-general} is satisfied with $\boldsymbol{\eta}=\boldsymbol{\eta}_{\mathrm M}$. Let $\{(\bar{X}_k,\bar{V}_k)\}_{k\geq0}$ be generated by the U-ULMC scheme \eqref{eq:uni-process}. Then, for any uniform stepsize $h\in(0,1\wedge\gamma^{-1}\wedge L^{-1}]$, it holds that
\begin{equation}
\label{eq:y-combination-bound}
\begin{aligned}
\tfrac{\gamma^2}{4}
\mathbb{E}
\Big[
\big|
\bar{X}_{k+1}
+
\gamma^{-1}
\bar{V}_{k+1}
\big|^2
\Big]
\leq&
\tfrac{\gamma^2}{4}
\mathbb{E}
\Big[
\big|
\bar{X}_{k}
+
\gamma^{-1}
\bar{V}_{k}
\big|^2
\Big]
-
\tfrac{\gamma}{2}
\alpha 
h
\mathbb{E}
\Big[
\Big\langle
\bar{X}_k 
+
\gamma^{- 1}
\bar{V}_{k}
,
\nabla U(\bar{X}_k)
\Big\rangle
\Big]
\\  
&+
\Big(
\M^x_2
\mathbb{E}
\Big[
\big|
\bar{X}_k
\big|^2
\Big]
+
\M^v_2
\mathbb{E}
\Big[
\big|
\bar{V}_k
\big|^2
\Big]
\Big)h^2
+
\M_2
dh,
\end{aligned}
\end{equation}
where 
\begin{equation}
\begin{aligned}
\M^x_2
:=  &
\big(
\H^x_1+2L^2
\big)
(\gamma+1)^2
\kappa^2
\big(
2\alpha^2
+(1+\gamma^{-1})^2
\kappa^2
\big)
+
\big(
\H^x_1+2L^2
\big)
\big(
2\alpha^2
+(1+\gamma^{-1})^2
\kappa^2
\big)
\\
&+
\big(
\H^x_1+2L^2
\big)
(\gamma+1)^2
\kappa^2
+
\H^x_1
+
\tfrac{3}{8}\gamma^2\alpha^2
+
\tfrac{1}{4}
\big(
\gamma^2+\gamma
\big)
\kappa,
\\
\M^v_2
:=  &
\H^v_1
(\gamma+1)^2\kappa^2
\big(
2\alpha^2
+
(1+\gamma^{-1})^2
\kappa^2
\big)
+
\H^v_1
\big(
2\alpha^2
+
(1+\gamma^{-1})^2
\kappa^2
\big)
+
\H^v_1(\gamma+1)^2
\kappa^2
\\
&
+
\H^v_1
+
\tfrac{1}{4}(\gamma+1)^2\kappa^2
+
\tfrac{1}{4}(\gamma^2+3\gamma+2)\kappa
+
\tfrac{1}{8}(3+2\gamma^2)\alpha^2,
\\
\M_2
:=  &
\big(
\H_1
+
2L_2^2
\big)
(\gamma+1)^2\kappa^2
\big(
2\alpha^2
+
(1+\gamma^{-1})^2
\kappa^2
\big)
+
\big(
\H_1
+
2L_2^2
\big)
\big(
2\alpha^2
+
(1+\gamma^{-1})^2
\kappa^2
\big)
\\
&+
\big(
\H_1
+
2L_2^2
\big)
(\gamma+1)^2
\kappa^2
+
3
\gamma\alpha
\big(
1
+
(\gamma+1)^2
\kappa^2
\big)
+
\H_1.
\\
\end{aligned}
\end{equation}
\end{lem}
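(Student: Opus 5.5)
We set $Y_k:=\bar X_k+\gamma^{-1}\bar V_k$ and write the one-step increment $Y_{k+1}-Y_k$ directly from \eqref{eq:uni-process}. The plan is to compare it with the increment of the exact exponential integrator: when all coefficients equal $\phi,\psi$, the identity $h\phi(h)+\gamma^{-1}\psi(h)=\gamma^{-1}$ kills the velocity contribution. Similarly, $h(h-s)\phi(h-s)+\gamma^{-1}h\psi(h-s)=\gamma^{-1}h$ makes the gradient coefficient exactly $-\alpha\gamma^{-1}h$. Hence we decompose
\begin{align*}
Y_{k+1}-Y_k
= -\alpha\gamma^{-1}h\,\nabla U(\bar X_{k+\tau_k})
+ R_k^{v}\,\bar V_k + R_k^{g}\,\nabla U(\bar X_{k+\tau_k}) + \sqrt{2\gamma\alpha}\,\big(\Delta W_{k+1}^{\Phi_3,1}+\gamma^{-1}\Delta W_{k+1}^{\Psi_3,2}\big),
\end{align*}
where $R_k^v:=h(\Phi_1(h)-\phi(h))+\gamma^{-1}(\Psi_1(h)-\psi(h))$ and $R_k^g$ collects the coefficient defects of $\Phi_2,\Psi_2$. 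Under $\boldsymbol\eta_{\mathrm M}=(1,0,0,2,1,0)$ we have $|R_k^v|\le(1+\gamma^{-1})\kappa h^2$ and $|R_k^g|\le \alpha(1+\gamma^{-1})\kappa h$. The latter is of order $h$ rather than $h^2$, which is why it must be handled more carefully; this matches the $\kappa$-dependent terms in $\M_2^x,\M_2^v$.

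Next we expand $\tfrac{\gamma^2}{4}|Y_{k+1}|^2=\tfrac{\gamma^2}{4}|Y_k|^2+\tfrac{\gamma^2}{2}\langle Y_k,Y_{k+1}-Y_k\rangle+\tfrac{\gamma^2}{4}|Y_{k+1}-Y_k|^2$ and take expectations.
\begin{itemize}
\item The noise cross term vanishes by $\mathcal F_{t_k}$-measurability of $Y_k$ and the independence of $\tau_k$.
\item The main cross term $-\tfrac{\gamma}{2}\alpha h\langle Y_k,\nabla U(\bar X_{k+\tau_k})\rangle$ is split as $-\tfrac{\gamma}{2}\alpha h\langle Y_k,\nabla U(\bar X_k)\rangle$, which is the displayed drift term, plus $-\tfrac{\gamma}{2}\alpha h\langle Y_k,\nabla U(\bar X_{k+\tau_k})-\nabla U(\bar X_k)\rangle$. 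For the latter we use Young's inequality, $\le \tfrac{\gamma^2\alpha^2}{8}h^2|Y_k|^2+\tfrac12|\nabla U(\bar X_{k+\tau_k})-\nabla U(\bar X_k)|^2$, then Lemma \ref{lem:nabla-tau-delta} (which already carries the factor $h^2$) and $|Y_k|^2\le 2|\bar X_k|^2+2\gamma^{-2}|\bar V_k|^2$. This produces the $\H_1^x,\H_1^v,\H_1$ and $\tfrac38\gamma^2\alpha^2$-type contributions.
\item The defect cross terms $\tfrac{\gamma^2}{2}\langle Y_k,R_k^v\bar V_k+R_k^g\nabla U(\bar X_{k+\tau_k})\rangle$ are bounded by Cauchy--Schwarz. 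The $R_k^v$ part is already $O(h^2)$. The $R_k^g$ part is only $O(h)$, and this is the main obstacle: a linear-in-$h$ term without a sign cannot be absorbed into $h^2$ terms.
\end{itemize}

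To handle the $R_k^g$ term, we revisit the specific coefficients. The gradient defect is $h(h-s)(\Phi_2-\phi)(h-s)$ in $X$, of size $\kappa h^2$ because of the extra factor $(h-s)$, plus $\gamma^{-1}h(\Psi_2-\psi)(h-s)$ with $|\Psi_2-\psi|(r)\le\kappa r$ by $\eta_5=1$. So $|R_k^g|\le\alpha(1+\gamma^{-1})\kappa h^2$ after all, and the cross term is genuinely $O(h^2)$: we bound it via Young by $\tfrac{\kappa}{4}(\cdot)h^2(|Y_k|^2+|\bar V_k|^2+|\nabla U(\bar X_{k+\tau_k})|^2)$ and use \eqref{eq:grad-Xtau-split-clear}. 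This gives the $\tfrac14(\gamma^2+\gamma)\kappa$ and $\tfrac14(\gamma^2+3\gamma+2)\kappa$ terms.

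Finally, the quadratic term $\tfrac{\gamma^2}{4}\E|Y_{k+1}-Y_k|^2$ is bounded via \eqref{eq:elementary_inequality} into its four pieces.
\begin{itemize}
\item The gradient pieces are $O(h^2)\E|\nabla U(\bar X_{k+\tau_k})|^2$, controlled by \eqref{eq:grad-Xtau-split-clear} and \eqref{eq:gradient-growth-square}.
\item The velocity defect piece is $O(h^4)$.
\item The noise piece, by It\^o isometry with $|\Phi_3|,|\Psi_3|\le1+\kappa$, is $\le C\gamma\alpha(1+(\gamma+1)^2\kappa^2)dh$. It is only linear in $h$, which is exactly why the statement has the $\M_2 dh$ term.
\end{itemize}
Collecting coefficients and using $h\le 1\wedge\gamma^{-1}\wedge L^{-1}$ to reduce higher powers of $h$ gives \eqref{eq:y-combination-bound}; the explicit constants follow from bookkeeping, which I would not try to reproduce exactly.
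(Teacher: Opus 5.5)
Your proposal is correct and is essentially the paper's proof: your $R_k^v$, $R_k^g$ and noise term are exactly $I_1(h)$, $-\alpha h\,I_2(h-\tau_k h)$ and $\sqrt{2\gamma\alpha}\,I_3(h)$ from \eqref{eq:coeff-decomp}. Beyond that, the main cross term is split at $\nabla U(\bar X_k)$ and handled by Lemma~\ref{lem:nabla-tau-delta} as in $\mathcal I_{7,1}$, and your corrected bound $|R_k^g|\le\alpha(1+\gamma^{-1})\kappa h^2$ is just the paper's $|I_2|\le(1+\gamma^{-1})\kappa h$ multiplied by $\alpha h$, so the ``$O(h)$ obstacle'' you first raised never arises.

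The only caveat is bookkeeping. The displayed $\M_2^x,\M_2^v,\M_2$ come from the paper's specific choices: Young splittings $\tfrac{\gamma^2\alpha^2}{16}h^2|Y_k|^2+|\nabla U(\bar X_{k+\tau_k})-\nabla U(\bar X_k)|^2$ and $\tfrac{\gamma^2\alpha^2}{8}h^2|Y_k|^2+\tfrac{\gamma^2}{2}|I_2\nabla U(\bar X_{k+\tau_k})|^2$; a full expansion of the square with the noise--gradient cross term $\mathcal I_9$ bounded by $\mathcal I_2+\mathcal I_3$; the exact cancellation $(h-s)\phi(h-s)+\gamma^{-1}\psi(h-s)=\gamma^{-1}$ inside the noise; and the $\kappa$-linear terms $\tfrac14(\gamma^2+\gamma)\kappa$, $\tfrac14(\gamma^2+3\gamma+2)\kappa$ coming from the velocity-defect cross term $\mathcal I_4$, not the gradient defect. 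Your four-term splitting and crude bound $|\Phi_3|,|\Psi_3|\le1+\kappa$ therefore yield an inequality of the same form, but with constants that differ from, and are not in general dominated by, the displayed ones.
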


\begin{proof}
\textbf{Proof.}
Recalling the updates of $\bar{X}_{k+1}$ and $\bar{V}_{k+1}$ in \eqref{eq:uni-process}, we have
%%%%
\begin{equation}
\label{eq:y-update-direct}
\begin{aligned}
\bar{X}_{k+1}
+
\gamma^{-1}\bar{V}_{k+1}
=&
\bar{X}_k
+
\gamma^{-1}\bar{V}_k
+
\Big(
h\Phi_1(h)
+
\gamma^{-1}(\Psi_1(h)-1)
\Big)\bar{V}_k
\\
&-
\alpha h
\Big(
(h-\tau_kh)\Phi_2(h-\tau_kh)
+
\gamma^{-1}\Psi_2(h-\tau_kh)
\Big)
\nabla U(\bar{X}_{k+\tau_k})
\\
&+
\sqrt{2\gamma\alpha}
\Big(
\Delta W_{k+1}^{\Phi_3,1}
+
\gamma^{-1}\Delta W_{k+1}^{\Psi_3,2}
\Big).
\end{aligned}
\end{equation}
%%
%%
%Starting from \eqref{eq:y-update-direct}, we separate the exact exponential coefficients from the perturbed coefficient functions $\Phi_i$ and $\Psi_i$. 
%%
%%
By the definitions of $\phi$ and $\psi$, we have
\begin{equation}
\label{eq:cancellation}
\begin{aligned}
r\phi(r)
+
\gamma^{-1}(\psi(r)-1)
& =
0,
\quad  
r\phi(r)+\gamma^{-1}\psi(r)
 =
\gamma^{-1},
\quad 
\forall \, r \ge 0
\end{aligned}
\end{equation}
and thus
\begin{equation}
\label{eq:coeff-decomp}
\begin{aligned}
h\Phi_1(h)
+
\gamma^{-1}(\Psi_1(h)-1)
=   & 
h\big(\Phi_1(h)-\phi(h)\big)
+
\gamma^{-1}\big(\Psi_1(h)-\psi(h)\big)
=:  I_{1} (h),
\\
(h-\tau_kh)\Phi_2(h-\tau_kh)
+
\gamma^{-1}\Psi_2(h-\tau_kh)
=  &
\gamma^{-1}
+
(h-\tau_kh)
\big(\Phi_2(h-\tau_kh)
-
\phi(h-\tau_kh)
\big)
\\  &
+
\gamma^{-1}\big(\Psi_2(h-\tau_kh)-\psi(h-\tau_kh)\big)
\\ 
=: & 
\gamma^{-1}
+
I_{2} (h-\tau_kh),
\\
\Delta W^{\Phi_3,1}_{k+1}
+
\tfrac{1}{\gamma}\Delta W^{\Psi_3,2}_{k+1}
=  &
\int_0^h\tfrac{1}{\gamma}\dd W_{t_k+s}
+
\Delta W^{\Phi_3-\phi,1}_{k+1}
+
\tfrac{1}{\gamma}
\Delta W^{\Psi_3-\psi,2}_{k+1} 
=:
I_{3} (h).
\end{aligned}
\end{equation}
Plugging \eqref{eq:coeff-decomp} into \eqref{eq:y-update-direct} gives 
\begin{equation}
\label{eq:second-term-decomposition}
\begin{aligned}
\bar{X}_{k+1}
+
\gamma^{-1}\bar{V}_{k+1}
=&
\bar{X}_k
+
\gamma^{-1}\bar{V}_k
+
I_{1} (h)
\bar{V}_k
-
\alpha h
\big(
\gamma^{-1}
+
I_{2} (h-\tau_kh)
\big)
\nabla U(\bar{X}_{k+\tau_k})
+
\sqrt{2\gamma\alpha}
I_{3} (h).
\end{aligned}
\end{equation}
Taking square on both sides of \eqref{eq:second-term-decomposition}, multiplying both sides by $\frac{\gamma^2}{4}$ and taking expectations show
\begin{equation}
\label{eq:second-term-square}
\begin{aligned}
&
\tfrac{\gamma^2}{4}\mathbb{E}
\Big[
\big|
\bar{X}_{k+1}
+
\gamma^{-1}\bar{V}_{k+1}
\big|^2
\Big]
\\ 
=&
\tfrac{\gamma^2}{4}
\mathbb{E}
\Big[
\big|
\bar{X}_k
+
\gamma^{-1}\bar{V}_k
\big|^2
\Big]
+
\underbrace{
\tfrac{\gamma^2}{4}
\mathbb{E}
\Big[
\big|
I_1(h)\bar{V}_k
\big|^2
\Big]
}_{=:\mathcal{I}_1}
+
\underbrace{
\tfrac{\gamma^2}{4}
\alpha^2h^2
\mathbb{E}
\Big[
\big|
\big(
\gamma^{-1}
+
I_{2} (h-\tau_kh)
\big)
\nabla U(\bar{X}_{k+\tau_k})
\big|^2
\Big]
}_{=:\mathcal{I}_2}
\\   &
+
\underbrace{
\tfrac{\gamma^3}{2}
\alpha
\mathbb{E}
\Big[
\big|
I_3(h)
\big|^2
\Big]
}_{=:\mathcal{I}_3}
+
\underbrace{
\tfrac{\gamma^2}{2}
\mathbb{E}
\Big[
I_1(h)
\big\langle
\bar{X}_k
+
\gamma^{-1}\bar{V}_k,
\bar{V}_k
\big\rangle
\Big]
}_{=:\mathcal{I}_4}
+
\underbrace{
\tfrac{\gamma^2}{2}
\mathbb{E}
\Big[
\Big\langle
\bar{X}_k
+
\gamma^{-1}\bar{V}_k,
\sqrt{2\gamma\alpha}
I_3(h)
\Big\rangle
\Big]
}_{=:\mathcal{I}_5}
\\
&
+
\underbrace{
\tfrac{\gamma^2}{2}
\mathbb{E}
\Big[
I_1(h)
\big\langle
\bar{V}_k,
\sqrt{2\gamma\alpha}
I_3(h)
\big\rangle
\Big]
}_{=:\mathcal{I}_6}
\underbrace{
-
\tfrac{\gamma^2}{2}
\alpha  h
\mathbb{E}
\Big[
\Big\langle
\bar{X}_k
+
\gamma^{-1}\bar{V}_k,
\big(
\gamma^{-1}
+
I_{2} (h-\tau_kh)
\big)
\nabla U(\bar{X}_{k+\tau_k})
\Big\rangle
\Big]
}_{=:\mathcal{I}_7}
\\   &
\underbrace{
-
\tfrac{\gamma^2}{2}
\alpha  h
\mathbb{E}
\Big[
I_1(h)
\Big\langle
\bar{V}_k,
\big(
\gamma^{-1}
+
I_{2} (h-\tau_kh)
\big)
\nabla U(\bar{X}_{k+\tau_k})
\Big\rangle
\Big]
}_{=:\mathcal{I}_8}
\\ &
\underbrace{
-
\tfrac{\gamma^2}{2}
\alpha h
\mathbb{E}
\Big[
\big\langle
\big(
\gamma^{-1}
+
I_{2} (h-\tau_kh)
\big)
\nabla U(\bar{X}_{k+\tau_k}),
\sqrt{2\gamma\alpha}
I_3(h)
\big\rangle
\Big]
}_{=:\mathcal{I}_9}
.
\end{aligned}
\end{equation}
We next estimate terms $\mathcal{I}_i, i=1,\cdots,9$, on the right-hand side of \eqref{eq:second-term-square} one by one. 
First, it follows from  Assumption \ref{as:coeff-approx-general} that
\begin{equation}
\label{eq:I1-I2-basic-bounds}
\begin{aligned}
|I_1(h)|
\leq
(1+\gamma^{-1})
\kappa
h^{2},
\quad 
|I_2(h-\tau_kh)|
\leq 
(1+\gamma^{-1})
\kappa
h.
\end{aligned}
\end{equation}
Moreover, by \eqref{eq:elementary_inequality} and the It\^o isometry, we get  
\begin{equation}
\label{eq:I3-basic-bounds}
\E 
\big[
I_3(h)
\big|
\mathcal{F}_{t_k}
\big]
= 
0,
\quad 
\E
\Big[
\big|
I_3(h)
\big|^2
\Big]
\leq 
3(\gamma^{-2}+(1+\gamma^{-2})
\kappa^2
)d 
h .
\end{equation}
The first term is directly bounded by \eqref{eq:I1-I2-basic-bounds} as follows:
\begin{equation}
\mathcal{I}_1
\leq
\tfrac{1}{4}
(\gamma+1)^2
\kappa^2
h^4
\mathbb{E}
\Big[
\big|
\bar{V}_k
\big|^2
\Big].
\end{equation}
Combining \eqref{eq:I1-I2-basic-bounds} with \eqref{eq:grad-Xtau-split-clear} and using \eqref{eq:elementary_inequality} give
\begin{equation}
\begin{aligned}
\mathcal{I}_2
\leq &
\tfrac{1}{2}
\big(
1 
+
(\gamma+1)^2
\kappa^2
\big)
\alpha^2
h^2
\mathbb{E}
\Big[
\big|
\nabla U(\bar{X}_{k+\tau_k})
\big|^2
\Big]
\\  
\leq &
\big(
1 
+
(\gamma+1)^2
\kappa^2
\big)
\alpha^2
\Big(
\big(
\H_1^x
+
2L^2
\big)
\E
\Big[
\big|
\X_k
\big|^2
\Big]
+
\H_1^v
\E
\Big[
\big|
\V_k
\big|^2
\Big]
+
\big(
\H_1
+
2L_2^2
\big)
d 
\Big)h^2.
\end{aligned}
\end{equation}
By using \eqref{eq:I3-basic-bounds}, we deduce 
\begin{equation}
\mathcal{I}_3
\leq 
\tfrac{3\gamma}{2}
\alpha
\big(
1 
+
(\gamma+1)^2
\kappa^2
\big)
d  h  .
\end{equation}
Next, we estimate the term $\mathcal I_4$ by applying \eqref{eq:I1-I2-basic-bounds} and the Cauchy–Schwarz inequality:
\begin{equation}
\begin{aligned}
\mathcal I_4
\leq &
\tfrac{1}{2}
(\gamma^2+\gamma)
\kappa
h^{2}
\E
\Big[
\Big|\big\langle
\X_k
,
\V_k
\big\rangle\Big|
\Big]
+
\tfrac{1}{2}
(\gamma+1)
\kappa
h^{2}
\E
\Big[
\big|
\V_k
\big|^2
\Big]
\\
\leq &
\tfrac{1}{4}
(\gamma^2+\gamma)
\kappa
h^{2}
\E
\Big[
\big|
\X_k
\big|^2
\Big]
+
\tfrac{1}{4}
(\gamma^2+3\gamma + 2)
\kappa
h^{2}
\E
\Big[
\big|
\V_k
\big|^2
\Big].
\end{aligned}
\end{equation}
For the terms $\mathcal I_5$ and $\mathcal I_6$, we rely on the martingale property of the stochastic integral. Since $\bar X_k$ and $\bar V_k$ are $\mathcal F_{t_k}$-measurable and $I_1(h)$ is deterministic, taking conditional expectations with respect to $\mathcal F_{t_k}$ and using \eqref{eq:I3-basic-bounds} give
\begin{equation}
\begin{aligned}
\mathcal I_5  
=   &
\tfrac{\gamma^2}{2}
\mathbb{E}
\Big[
\E
\Big[
\Big\langle
\bar{X}_k
+
\gamma^{-1}\bar{V}_k,
\sqrt{2\gamma\alpha}
I_3(h)
\Big\rangle
\Big| 
\mathcal F_{t_k}
\Big]
\Big]
=
\tfrac{\gamma^2}{2}
\mathbb{E}
\Big[
\Big\langle
\bar{X}_k
+
\gamma^{-1}\bar{V}_k,
\sqrt{2\gamma\alpha}
\E
\big[
I_3(h)
\big| 
\mathcal F_{t_k}
\big]
\Big\rangle
\Big]
=
0,
\\ 
\mathcal I_6  
=   &
\tfrac{\gamma^2}{2}
I_1(h)
\mathbb{E}
\Big[
\E
\Big[
\big\langle
\bar{V}_k,
\sqrt{2\gamma\alpha}
I_3(h)
\big\rangle
\Big| 
\mathcal F_{t_k}
\Big]
\Big]
=
\tfrac{\gamma^2}{2}
I_1(h)
\mathbb{E}
\Big[
\Big\langle
\bar{V}_k,
\sqrt{2\gamma\alpha}
\E
\big[
I_3(h)
\big| 
\mathcal F_{t_k}
\big]
\Big\rangle
\Big]
=0.
\end{aligned}
\end{equation}
To estimate the term $\mathcal I_7$, we first note that
\begin{equation}
\label{eq:mathcal-I-7}
\begin{aligned}
\mathcal I_7 
=   &
-
\tfrac{\gamma}{2}
\alpha  h
\mathbb{E}
\Big[
\Big\langle
\bar{X}_k
+
\gamma^{-1}\bar{V}_k,
\nabla U(\bar{X}_{k+\tau_k})
\Big\rangle
\Big]
-
\tfrac{\gamma^2}{2}
\alpha  h
\mathbb{E}
\Big[
\Big\langle
\bar{X}_k
+
\gamma^{-1}\bar{V}_k,
I_{2} (h-\tau_kh)
\nabla U(\bar{X}_{k+\tau_k})
\Big\rangle
\Big]
\\ 
=   &
-
\tfrac{\gamma}{2}
\alpha  h
\mathbb{E}
\Big[
\Big\langle
\bar{X}_k
+
\gamma^{-1}\bar{V}_k,
\nabla U(\bar{X}_{k})
\Big\rangle
\Big]
\underbrace{
-
\tfrac{\gamma}{2}
\alpha  h
\mathbb{E}
\Big[
\Big\langle
\bar{X}_k
+
\gamma^{-1}\bar{V}_k,
\big(
\nabla U(\bar{X}_{k+\tau_k})
-
\nabla U(\bar{X}_{k})
\big)
\Big\rangle
\Big]
}_{=:\mathcal  I_{7,1}}
\\    &
\underbrace{
-
\tfrac{\gamma^2}{2}
\alpha  h
\mathbb{E}
\Big[
\Big\langle
\bar{X}_k
+
\gamma^{-1}\bar{V}_k,
I_{2} (h-\tau_kh)
\nabla U(\bar{X}_{k+\tau_k})
\Big\rangle
\Big]
}_{=:\mathcal  I_{7,2}}.
\end{aligned}
\end{equation}
%%
%%
%In what follows, the two terms  $\mathcal  I_{7,1}$ and $\mathcal  I_{7,2}$ are estimated separately.
%
In view of Lemma \ref{lem:nabla-tau-delta} as well as the Cauchy-Schwarz inequality, we estimate $\mathcal  I_{7,1}$ as follows:
\begin{equation}
\label{eq:mathcal-I-71}
\begin{aligned}
\mathcal  I_{7,1}
\leq     &
\tfrac{\gamma^2}{16}
 \alpha^2 
h^2
\E
\Big[
\big|
\X_k+\gamma^{-1}\V_k
\big|^2
\Big]
+
\E
\Big[
\big|
\nabla U (\X_{k+\tau_k})
-
\nabla U (\X_k)
\big|^2
\Big]
\\   
\leq    &
\Big(
\big(
\tfrac{\gamma^2 }{8}
\alpha^2
+
\H^x_1
\big)
\E
\Big[
\big|
\X_k
\big|^2
\Big]
+
\big(
\tfrac{1}{8}
\alpha^2
+
\H^v_1
\big)
\E
\Big[
\big|
\V_k
\big|^2
\Big]
+
\H_1
d 
\Big)
h^2.
\end{aligned}
\end{equation}
Also, by applying the Cauchy-Schwarz inequality, one can utilize \eqref{eq:grad-Xtau-split-clear}, \eqref{eq:I1-I2-basic-bounds} to obtain 
\begin{equation}
\label{eq:mathcal-I-72}
\begin{aligned}
\mathcal  I_{7,2}
\leq     &
\tfrac{\gamma^2}{8}
\alpha^2  h^2
\E
\Big[
\big|
\X_k+\gamma^{-1}\V_k
\big|^2
\Big]
+
\tfrac{\gamma^2}{2}
\mathbb{E}
\Big[
\big|
I_{2} (h-\tau_kh)
\nabla U(\bar{X}_{k+\tau_k})
\big|^2
\Big]
\\  
\leq    &
\tfrac{\gamma^2}{4}
\alpha^2  h^2 
\E
\Big[
\big|
\X_k
\big|^2
\Big]
+
\tfrac{1}{4}
\alpha^2  h^2 
\E
\Big[
\big|
\V_k
\big|^2
\Big]
+
\tfrac{1}{2}
(\gamma +  1)^2 \kappa^2  h^2
\mathbb{E}
\Big[
\big|
\nabla U(\bar{X}_{k+\tau_k})
\big|^2
\Big]
\\   
\leq    &
\Big(
\big(
\tfrac{\gamma^2}{4}
\alpha^2
+
(\gamma +  1)^2 \kappa^2 
(2 L^2
+
\H^{x}_1)
\big)
\mathbb{E}
\Big[\big|
\bar{X}_k
\big|^2\Big]
+
\big(
\tfrac{1}{4}
\alpha^2
+
(\gamma +  1)^2 \kappa^2 
\H^{v}_1
\big)
\mathbb{E}
\Big[\big|
\bar{V}_k
\big|^2\Big]
\\   &
+
(\gamma +  1)^2 \kappa^2
\big( 
\H_1  +  
2 L_2^2 
\big)
d
\Big)  
h^2.
\end{aligned}
\end{equation}
Inserting \eqref{eq:mathcal-I-71} and \eqref{eq:mathcal-I-72} into 
\eqref{eq:mathcal-I-7}, we conclude
\begin{equation}
\begin{aligned}
\mathcal  I_{7}
\leq    &
-
\tfrac{\gamma}{2}
\alpha  h
\mathbb{E}
\Big[
\Big\langle
\bar{X}_k
+
\gamma^{-1}\bar{V}_k,
\nabla U(\bar{X}_{k})
\Big\rangle
\Big]
+
\Big(
\big(
\tfrac{3\gamma^2}{8}
\alpha^2
+
\H^{x}_1
+
(\gamma +  1)^2 \kappa^2 
(2 L^2
+
\H^{x}_1)
\big)
\mathbb{E}
\Big[\big|
\bar{X}_k
\big|^2\Big]
\\   &
+
\big(
\tfrac{3}{8}
\alpha^2
+
\H^{v}_1
+
(\gamma +  1)^2 \kappa^2 
\H^{v}_1
\big)
\mathbb{E}
\Big[\big|
\bar{V}_k
\big|^2\Big]
+
\big(
\H_1
+
(\gamma +  1)^2 \kappa^2
\big( 
\H_1  +  
2 L_2^2 
\big)
\big)
d
\Big)  
h^2.
\end{aligned}
\end{equation}
In a similar way, one can use the Young inequality and \eqref{eq:I1-I2-basic-bounds} to estimate $\mathcal I_8$ as follows:
\begin{equation}
\begin{aligned}
\mathcal  I_8 
\leq   &
\tfrac{\gamma^2}{4}
\alpha^2  h^2
\mathbb{E}
\Big[\big|
\bar{V}_k
\big|^2\Big]
+
\tfrac{\gamma^2}{4}
|  I_1(h)  |^2
\mathbb{E}
\Big[
\big|
\big(
\gamma^{-1}
+
I_{2} (h-\tau_kh)
\big)
\nabla U(\bar{X}_{k+\tau_k})
\big|^2
\Big]
\\  
\leq   &
\tfrac{\gamma^2}{4}
\alpha^2  h^2
\mathbb{E}
\Big[\big|
\bar{V}_k
\big|^2\Big]
+
\tfrac{1}{2}
(1  +  \gamma^{-1})^2
\kappa^2 
\big(
1
+
( \gamma +1 )^2 
\kappa^2
\big)
h^4
\mathbb{E}
\Big[
\big|
\nabla U(\bar{X}_{k+\tau_k})
\big|^2 
\Big]
\\  
\leq   &
\Big(
(1  +  \gamma^{-1})^2
\kappa^2 
\big(
1
+
( \gamma +1 )^2 
\kappa^2
\big)
(2 L^2
+
\H^{x}_1)
\mathbb{E}
\Big[\big|
\bar{X}_k
\big|^2\Big]
\\   &  
+
\big(
\tfrac{\gamma^2}{4}
\alpha^2
+
(1  +  \gamma^{-1})^2
\kappa^2 
\big(
1
+
( \gamma +1 )^2 
\kappa^2
\big)
\H^{v}_1
\big)
\mathbb{E}
\Big[\big|
\bar{V}_k
\big|^2\Big]
\\   &
+
(1  +  \gamma^{-1})^2
\kappa^2 
\big(
1
+
( \gamma +1 )^2 
\kappa^2
\big)
\big( 
\H_1  +  
2 L_2^2 
\big)
d
\Big) 
h^2.
\end{aligned}
\end{equation}
Armed with estimates for $\mathcal I_2$ and $\mathcal I_3$, one can directly obtain the estimate for $\mathcal I_9$ by the Cauchy--Schwarz inequality.
Plugging all these estimates into \eqref{eq:second-term-square} yields the desired bound \eqref{eq:y-combination-bound}.
\end{proof}

\begin{lem}
\label{lem:velocity-one-step}
Let Assumptions \ref{as:Lip}, \ref{as:U_0-bound} hold and suppose that Assumption \ref{as:coeff-approx-general} is satisfied with $\boldsymbol{\eta}=\boldsymbol{\eta}_{\mathrm M}$. Let $\{(\bar{X}_k,\bar{V}_k)\}_{k\geq0}$ be generated by the U-ULMC scheme \eqref{eq:uni-process}. Then, for any uniform stepsize $h\in(0,1\wedge L^{-1}\wedge\gamma^{-1}]$, it holds that
\begin{equation}
\label{eq:velocity-one-step}
\begin{aligned}
\tfrac{1}{4}
\mathbb{E}
\Big[
\big|
\bar{V}_{k+1}
\big|^2
\Big]
\leq&
\tfrac{1}{4}
\mathbb{E}
\Big[
\big|
\bar{V}_{k}
\big|^2
\Big]
-
\tfrac{\gamma}{2}h
\mathbb{E}
\Big[
\big|
\bar{V}_{k}
\big|^2
\Big]
-
\tfrac{\alpha}{2}h
\mathbb{E}
\Big[
\big\langle
\bar{V}_k,
\nabla U(\bar{X}_k)
\big\rangle
\Big]
\\
&+
\Big(
\M^x_3
\mathbb{E}
\Big[
\big|
\bar{X}_k
\big|^2
\Big]
+
\M^v_3
\mathbb{E}
\Big[
\big|
\bar{V}_k
\big|^2
\Big]
\Big)h^2
+
\M_3dh,
\end{aligned}
\end{equation}
where
\begin{equation}
\label{eq:M3-constants}
\begin{aligned}
\M^x_3
:=&
\tfrac{1}{2}
(3\alpha^2+\alpha\gamma)L^2
+
\tfrac{9}{2}
\alpha^2(\gamma+\kappa)^2
\big(
2L^2+\H_1^x
\big)
+
\tfrac{9}{4}
\alpha^2\H_1^x,
\\
\M^v_3
:=&
\tfrac{1}{4}
(\gamma^2+\alpha\gamma)
+
\tfrac{1}{2}
+
\tfrac{9}{4}
\big(
\tfrac{\gamma^2}{2}+\kappa
\big)^2
+
\tfrac{9}{2}
\alpha^2(\gamma+\kappa)^2
\H_1^v
+
\tfrac{9}{4}
\alpha^2\H_1^v,
\\
\M_3
:=&
\tfrac{1}{2}
(3\alpha^2+\alpha\gamma)
L_2^2
+
\tfrac{9}{2}
\alpha^2(\gamma+\kappa)^2
\big(
2L_2^2+\H_1
\big)
+
\tfrac{9}{4}
\alpha^2\H_1
+
\gamma\alpha(1+\kappa)^2.
\end{aligned}
\end{equation}
\end{lem}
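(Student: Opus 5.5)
We expand the square of the velocity update in \eqref{eq:uni-process} directly, in the same spirit as the proof of Lemma \ref{lem:y-combination-bound}. First we write $\bar V_{k+1}=\bar V_k+A_k+B_k+\sqrt{2\gamma\alpha}\,\Delta W_{k+1}^{\Psi_3,2}$. Here $A_k:=(\Psi_1(h)-1)\bar V_k$ is the linear damping increment and $B_k:=-\alpha h\Psi_2(h-\tau_kh)\nabla U(\bar X_{k+\tau_k})$ is the force increment. We then split each coefficient into its exact exponential part and a perturbation controlled by Assumption \ref{as:coeff-approx-general} with $\boldsymbol\eta_{\mathrm M}$. For $\Psi_1$ we use $\Psi_1(h)-1=(\psi(h)-1)+(\Psi_1(h)-\psi(h))$, where $\psi(h)-1=-\gamma h+\rho(h)$ with $0\le\rho(h)\le\gamma^2h^2/2$ and $|\Psi_1(h)-\psi(h)|\le\kappa h^2$. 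Hence $\Psi_1(h)-1=-\gamma h+e_1$ with $|e_1|\le(\tfrac{\gamma^2}{2}+\kappa)h^2$. For $\Psi_2$ we write $\Psi_2(h-\tau_kh)=1+e_2$. Since $|\psi(r)-1|\le\gamma r$ and $|\Psi_2-\psi|\le\kappa r$, we get $|e_2|\le(\gamma+\kappa)h$.

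Squaring, multiplying by $\tfrac14$ and taking expectations produces three kinds of terms. The first kind are the leading cross terms. The term $\tfrac12\E\langle\bar V_k,-\gamma h\bar V_k\rangle$ gives $-\tfrac{\gamma}{2}h\E|\bar V_k|^2$. The term $\tfrac12\E\langle\bar V_k,-\alpha h\nabla U(\bar X_{k+\tau_k})\rangle$ we split as $-\tfrac{\alpha}{2}h\E\langle\bar V_k,\nabla U(\bar X_k)\rangle$ plus a remainder involving $\nabla U(\bar X_{k+\tau_k})-\nabla U(\bar X_k)$. We bound that remainder by Young's inequality as $\tfrac{\alpha}{2}h|\bar V_k||\delta\nabla U|\le \tfrac14 h^2\E|\bar V_k|^2+\tfrac{\alpha^2}{4}\E|\delta\nabla U|^2$, and then apply Lemma \ref{lem:nabla-tau-delta}, which supplies the factor $h^2$. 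The second kind are the perturbation cross terms. These are $\tfrac12\E\langle\bar V_k,e_1\bar V_k\rangle=\mathcal O(h^2)\E|\bar V_k|^2$ and $\tfrac12\alpha h\E\langle\bar V_k,e_2\nabla U(\bar X_{k+\tau_k})\rangle$. We handle the latter with Young's inequality together with \eqref{eq:grad-Xtau-split-clear}, so that $h\cdot h$ gives $h^2$. The third kind are the cross terms with the noise. They vanish after conditioning on $\mathcal F_{t_k}$, because $\bar V_k$, $\tau_k$ and $\bar X_{k+\tau_k}$ do not interact with $\Delta W^{\Psi_3,2}_{k+1}$ in a way that breaks the martingale property.

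That last point is the delicate one. When $\ell_2=1$, $\bar X_{k+\tau_k}$ depends on the Brownian increment over $[t_k,t_k+\tau_kh]$, so $\E\langle B_k,\Delta W^{\Psi_3,2}_{k+1}\rangle\neq0$ in general. We therefore do not use a martingale argument for this pair. Instead we bound it by Cauchy--Schwarz as $\tfrac12\sqrt{2\gamma\alpha}\,\alpha h(1+\kappa)\,(\E|\nabla U(\bar X_{k+\tau_k})|^2)^{1/2}(\E|\Delta W|^2)^{1/2}$, and since $\E|\Delta W|^2\le(1+\kappa)^2dh$, Young's inequality splits this into $\mathcal O(h^2)\E|\nabla U(\bar X_{k+\tau_k})|^2+\mathcal O(dh)$. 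The cross terms pairing the noise with $\bar V_k$ and with $A_k$ do vanish, since both are $\mathcal F_{t_k}$-measurable up to deterministic factors.

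The remaining squared terms are routine. We have $\tfrac14|A_k|^2\le\tfrac14(\gamma h+|e_1|)^2|\bar V_k|^2$, and we use $h\le1\wedge\gamma^{-1}$ to absorb the $e_1$ contribution into a constant times $h^2$. For $\tfrac14|B_k|^2\le\tfrac{\alpha^2}{4}(1+\kappa)^2h^2\E|\nabla U(\bar X_{k+\tau_k})|^2$ we again use \eqref{eq:grad-Xtau-split-clear}. The noise square satisfies $\tfrac14\cdot2\gamma\alpha\,\E|\Delta W^{\Psi_3,2}_{k+1}|^2\le\tfrac12\gamma\alpha(1+\kappa)^2dh$. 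To bound $\E|\nabla U(\bar X_{k+\tau_k})|^2$ in terms of $\E|\bar X_k|^2$, $\E|\bar V_k|^2$ and $d$, we use \eqref{eq:grad-Xtau-split-clear}, Lemma \ref{lem:nabla-tau-delta} and \eqref{eq:gradient-growth-square}, with inequality \eqref{eq:elementary_inequality} applied over three pieces (which is where factors like $\tfrac92$ and $\tfrac94$ enter). Collecting everything, every term other than the two leading drift terms is either $\mathcal O(h^2)(\E|\bar X_k|^2+\E|\bar V_k|^2)$ or $\mathcal O(dh)$, where we use $h\le1$ to downgrade any $dh^2$ to $dh$. This gives \eqref{eq:velocity-one-step} with constants of the form \eqref{eq:M3-constants}; we do not aim to match them exactly. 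The main obstacle is keeping the non-vanishing noise and force cross term, present when $\ell_2=1$, at order $\mathcal O(h^2)+\mathcal O(dh)$ rather than $\mathcal O(h^{3/2})$. The Cauchy--Schwarz split with weight $h$ described above handles it.
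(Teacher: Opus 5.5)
Your argument is correct and is essentially the paper's proof. Both write $\Psi_1(h)=1-\gamma h+\mathcal O(h^2)$ and $\Psi_2=1+\mathcal O(h)$, extract the drift terms $-\tfrac{\gamma}{2}h\,\mathbb E|\bar V_k|^2-\tfrac{\alpha}{2}h\,\mathbb E\langle\bar V_k,\nabla U(\bar X_k)\rangle$, discard the noise cross terms against $\mathcal F_{t_k}$-measurable quantities, treat the force--noise cross term (genuinely nonzero when $\ell_2=1$) by Young's inequality, and bound everything else via Lemma \ref{lem:nabla-tau-delta} and \eqref{eq:grad-Xtau-split-clear}.

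The difference is only bookkeeping. The paper groups the update into the Euler part $(1-\gamma h)\bar V_k-\alpha h\nabla U(\bar X_k)$, an $\mathcal O(h^2)$ remainder and the noise. It applies Young with weights $h^{\pm2}$ to the Euler--remainder cross term and plain Young to the remainder--noise cross term, and this grouping is what produces the specific constants in \eqref{eq:M3-constants}. Your full expansion gives the same inequality with different, equally dimension-free constants; this matters only because $h_\star$ in \eqref{eq:h-star-def} is defined through them. Your expansion should also list the omitted cross term $\tfrac12\mathbb E\langle A_k,B_k\rangle$, which is trivially $\mathcal O(h^2)\big(\mathbb E|\bar V_k|^2+\mathbb E|\nabla U(\bar X_{k+\tau_k})|^2\big)$.
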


\begin{proof}
\textbf{Proof.}
Recalling the velocity update in \eqref{eq:uni-process}, we have
\begin{equation}
\label{eq:velocity-update}
\bar{V}_{k+1}
=
\Psi_1(h)\bar{V}_k
-
\alpha h\Psi_2(h-\tau_kh)
\nabla U(\bar{X}_{k+\tau_k})
+
\sqrt{2\gamma\alpha}
\Delta W_{k+1}^{\Psi_3,2}.
\end{equation}
We first rewrite this update by separating the leading part from the coefficient errors. Since
\(
\psi(h)=e^{-\gamma h},
\)
we have, for $h\leq\gamma^{-1}$,
\[
|\psi(h)-(1-\gamma h)|
\leq
\tfrac{\gamma^2}{2}h^2.
\]
Together with Assumption \ref{as:coeff-approx-general}, this gives
\begin{equation}
\label{eq:Psi1-main-bound}
\big|
\Psi_1(h)-(1-\gamma h)
\big|
\leq
\big(
\tfrac{\gamma^2}{2}
+
\kappa
\big)h^2.
\end{equation}
Similarly, since $|\psi(r)-1|\leq\gamma r$ for $r\geq0$, Assumption \ref{as:coeff-approx-general} implies that, for $0\leq r\leq h$,
\begin{equation}
\label{eq:Psi2-main-bound}
\big|
\Psi_2(r)-1
\big|
\leq
(\gamma+\kappa)h.
\end{equation}
Therefore, \eqref{eq:velocity-update} can be rewritten as
\begin{equation}
\label{eq:velocity-decomposition-new}
\begin{aligned}
\bar{V}_{k+1}
=&
(1-\gamma h)\bar{V}_k
-
\alpha h\nabla U(\bar{X}_k)
+
J_1(h)\bar{V}_k
-
\alpha h
J_2(h-\tau_kh)
\nabla U(\bar{X}_{k+\tau_k})
\\    &
-
\alpha h
\Big(
\nabla U(\bar{X}_{k+\tau_k})
-
\nabla U(\bar{X}_k)
\Big)
+
\sqrt{2\gamma\alpha}
J_3(h),
\end{aligned}
\end{equation}
where we denote
\begin{equation}
\label{eq:velocity-J-def}
\begin{aligned}
J_1(h)
:=&
\Psi_1(h)-(1-\gamma h),
\\
J_2(h-\tau_kh)
:=&
\Psi_2(h-\tau_kh)-1,
\\
J_3(h)
:=&
\Delta W_{k+1}^{\Psi_3,2}.
\end{aligned}
\end{equation}
%%%
%%%
%%
By \eqref{eq:Psi1-main-bound} and \eqref{eq:Psi2-main-bound}, we have
\begin{equation}
\label{eq:velocity-I-basic-bound}
|J_1(h)|
\leq
\big(
\tfrac{\gamma^2}{2}
+
\kappa
\big)h^2,
\qquad
|J_2(h-\tau_kh)|
\leq
(\gamma+\kappa)h.
\end{equation}
Moreover, by It\^o's isometry and \eqref{eq:bound-Psi-Phi},
\begin{equation}
\label{eq:velocity-I3-bound}
\mathbb{E}
\big[
J_3(h)
\mid
\mathcal F_{t_k}
\big]
=0,
\qquad
\mathbb{E}
\Big[
\big|
J_3(h)
\big|^2
\Big]
\leq
(1+\kappa)^2dh.
\end{equation}
Taking square on both sides of \eqref{eq:velocity-decomposition-new}, taking expectations and using Young's inequality yield
\begin{equation}
\label{eq:velocity-square-start}
\begin{aligned}
&
\tfrac{1}{4}
\mathbb{E}
\Big[
\big|
\bar{V}_{k+1}
\big|^2
\Big]
\\ 
\leq&
\underbrace{
\tfrac{1}{4}
\mathbb{E}
\Big[
\big|
(1-\gamma h)\bar{V}_k
-
\alpha h\nabla U(\bar{X}_k)
\big|^2
\Big]
}_{=:\mathcal J_1}
+
\underbrace{
\gamma  \alpha
\mathbb{E}
\Big[
\big|
J_3(h)
\big|^2
\Big]
}_{=:\mathcal J_2}
+
\underbrace{
\tfrac{1}{4}h^2
\mathbb{E}
\Big[
\big|
(1-\gamma h)\bar{V}_k
-
\alpha h\nabla U(\bar{X}_k)
\big|^2
\Big]
}_{=:\mathcal J_3}
\\
&+
\underbrace{
\tfrac{1}{4}h^{-2}
\mathbb{E}
\Big[
\big|
J_1(h)\bar{V}_k
-
\alpha hJ_2(h-\tau_kh)
\nabla U(\bar{X}_{k+\tau_k})
-
\alpha h
\big(
\nabla U(\bar{X}_{k+\tau_k})
-
\nabla U(\bar{X}_k)
\big)
\big|^2
\Big]
}_{=:\mathcal J_4}
\\
&+
\underbrace{
\tfrac{1}{2}
\mathbb{E}
\Big[
\big|
J_1(h)\bar{V}_k
-
\alpha hJ_2(h-\tau_kh)
\nabla U(\bar{X}_{k+\tau_k})
-
\alpha h
\big(
\nabla U(\bar{X}_{k+\tau_k})
-
\nabla U(\bar{X}_k)
\big)
\big|^2
\Big]
}_{=:\mathcal J_5}
,
\end{aligned}
\end{equation}
where some cross terms containing $J_3(h)$ vanish after taking conditional expectation, since $J_3(h)$ has zero conditional mean given $\mathcal F_{t_k}$ and the other part in the inner product is $\mathcal F_{t_k}$-measurable.
We now estimate the terms $\mathcal J_i,i=1,\cdots,5$, on the right-hand side of \eqref{eq:velocity-square-start}. 
%First, expanding the leading part gives
Noting
\begin{equation}
\label{eq:velocity-leading-expand}
\begin{aligned}
&
\big|
(1-\gamma h)\bar{V}_k
-
\alpha h\nabla U(\bar{X}_k)
\big|^2
\\
=&
(1-\gamma h)^2
|\bar{V}_k|^2
-
2\alpha h(1-\gamma h)
\big\langle
\bar{V}_k,
\nabla U(\bar{X}_k)
\big\rangle
+
\alpha^2h^2
|\nabla U(\bar{X}_k)|^2
\end{aligned}
\end{equation}
and $(1-\gamma h)^2=1-2\gamma h+\gamma^2h^2$, we employ Young's inequality to infer
\begin{equation}
\label{eq:velocity-leading-final}
\begin{aligned}
\mathcal J_1
\leq&
\tfrac{1}{4}
\mathbb{E}
\Big[
|\bar{V}_k|^2
\Big]
-
\tfrac{\gamma}{2}h
\mathbb{E}
\Big[
|\bar{V}_k|^2
\Big]
-
\tfrac{\alpha}{2}h
\mathbb{E}
\Big[
\big\langle
\bar{V}_k,
\nabla U(\bar{X}_k)
\big\rangle
\Big]
\\
&+
\big(
\tfrac{\gamma^2}{4}
+
\tfrac{\gamma\alpha}{4}
\big)h^2
\mathbb{E}
\Big[
|\bar{V}_k|^2
\Big]
+
\big(
\tfrac{\alpha^2}{4}
+
\tfrac{\gamma\alpha}{4}
\big)h^2
\mathbb{E}
\Big[
|\nabla U(\bar{X}_k)|^2
\Big]
\\
\leq&
\tfrac{1}{4}
\mathbb{E}
\Big[
|\bar{V}_k|^2
\Big]
-
\tfrac{\gamma}{2}h
\mathbb{E}
\Big[
|\bar{V}_k|^2
\Big]
-
\tfrac{\alpha}{2}h
\mathbb{E}
\Big[
\big\langle
\bar{V}_k,
\nabla U(\bar{X}_k)
\big\rangle
\Big]
\\
&+
\big(
\tfrac{\gamma^2}{4}
+
\tfrac{\gamma\alpha}{4}
\big)h^2
\mathbb{E}
\Big[
|\bar{V}_k|^2
\Big]
+
\tfrac{\alpha^2+\gamma\alpha}{2}L^2h^2
\mathbb{E}
\Big[
|\bar{X}_k|^2
\Big]
+
\tfrac{\alpha^2+\alpha\gamma}{2}L_2^2dh^2,
\end{aligned}
\end{equation}
where \eqref{eq:gradient-growth-square} was used for the last step.
For the second term $\mathcal J_2$ in \eqref{eq:velocity-square-start}, we rely on \eqref{eq:velocity-I3-bound} to get
\begin{equation}
\label{eq:velocity-noise-final}
\mathcal J_2
\leq
\gamma\alpha(1+\kappa)^2dh.
\end{equation}
In view of \eqref{eq:gradient-growth-square} and the assumption $h\leq1$, we deduce
\begin{equation}
\label{eq:velocity-h2-leading-estimate}
\begin{aligned}
\mathcal J_3
\leq
\tfrac{1}{2}h^2
\mathbb{E}
\Big[
|\bar{V}_k|^2
\Big]
+
\alpha^2L^2h^2
\mathbb{E}
\Big[
|\bar{X}_k|^2
\Big]
+
\alpha^2L_2^2dh.
\end{aligned}
\end{equation}
It remains to estimate $\mathcal J_4, \mathcal J_5$ in \eqref{eq:velocity-square-start}. Invoking \eqref{eq:elementary_inequality}, 
\eqref{eq:grad-Xtau-split-clear}, \eqref{eq:velocity-I-basic-bound} and Lemma \ref{lem:nabla-tau-delta} yields
\begin{equation}
\label{eq:velocity-remainder-estimate}
\begin{aligned}
&\mathbb{E}
\Big[
\big|
J_1(h)\bar{V}_k
-
\alpha hJ_2(h-\tau_kh)
\nabla U(\bar{X}_{k+\tau_k})
-
\alpha h
\big(
\nabla U(\bar{X}_{k+\tau_k})
-
\nabla U(\bar{X}_k)
\big)
\big|^2
\Big]
\\
\leq&
3
\big(
\tfrac{\gamma^2}{2}
+
\kappa
\big)^2
h^4
\mathbb{E}
\Big[
|\bar{V}_k|^2
\Big]
+
3\alpha^2(\gamma+\kappa)^2h^4
\mathbb{E}
\Big[
|\nabla U(\bar{X}_{k+\tau_k})|^2
\Big]
\\  &
+
3\alpha^2h^2
\mathbb{E}
\Big[
\big|
\nabla U(\bar{X}_{k+\tau_k})
-
\nabla U(\bar{X}_k)
\big|^2
\Big]
\\
\leq&
\Big(
3\alpha^2(\gamma+\kappa)^2
\big(
4L^2+2\H_1^x
\big)
+
3\alpha^2\H_1^x
\Big)
h^4
\mathbb{E}
\Big[
|\bar{X}_k|^2
\Big]
\\
&+
\Big(
3
\big(
\tfrac{\gamma^2}{2}
+
\kappa
\big)^2
+
6\alpha^2(\gamma+\kappa)^2\H_1^v
+
3\alpha^2\H_1^v
\Big)
h^4
\mathbb{E}
\Big[
|\bar{V}_k|^2
\Big]
\\
&+
\Big(
3\alpha^2(\gamma+\kappa)^2
\big(
4L_2^2+2\H_1
\big)
+
3\alpha^2\H_1
\Big)dh^4,
\end{aligned}
\end{equation}
and therefore 
\begin{equation}
\label{eq:velocity-remainder-final}
\begin{aligned}
\mathcal J_4  + \mathcal J_5
\leq&
\Big(
\tfrac{9}{2}
\alpha^2(\gamma+\kappa)^2
\big(
2L^2+\H_1^x
\big)
+
\tfrac{9}{4}
\alpha^2\H_1^x
\Big)h^2
\mathbb{E}
\Big[
|\bar{X}_k|^2
\Big]
\\
&+
\Big(
\tfrac{9}{4}
\big(
\tfrac{\gamma^2}{2}
+
\kappa
\big)^2
+
\tfrac{9}{2}
\alpha^2(\gamma+\kappa)^2\H_1^v
+
\tfrac{9}{4}
\alpha^2\H_1^v
\Big)
h^2
\mathbb{E}
\Big[
|\bar{V}_k|^2
\Big]
\\
&+
\Big(
\tfrac{9}{2}
\alpha^2(\gamma+\kappa)^2
\big(
2L_2^2+\H_1
\big)
+
\tfrac{9}{4}
\alpha^2\H_1
\Big)
dh^2.
\end{aligned}
\end{equation}
Finally, substituting \eqref{eq:velocity-leading-final}-\eqref{eq:velocity-h2-leading-estimate} and \eqref{eq:velocity-remainder-final} into \eqref{eq:velocity-square-start} and noting $h\leq1$, we arrive at \eqref{eq:velocity-one-step}. This completes the proof.
\end{proof}

We are now ready to prove Proposition \ref{prop:numerical-bound}. 
For notational simplicity, we denote
% \begin{equation}
% \label{eq:M-total-constants}
% \begin{aligned}
% \M^x
% :=  &
% \M_1^x+\M_2^x+\M_3^x
% +
% \alpha
% \big(
% \tfrac{\gamma}{2}+\kappa
% \big)L^2,
% \\
% \M^v
% :=   &
% \M_1^v+\M_2^v+\M_3^v
% +
% \tfrac{\alpha}{2}
% \big(
% \tfrac{\gamma}{2}+\kappa
% \big),
% \\
% \M
% :=   &
% \M_1+\M_2+\M_3
% +
% \alpha
% \big(
% \tfrac{\gamma}{2}+\kappa
% \big)L_2^2
% +
% \tfrac{\alpha\gamma\mu'}{2}.
% \end{aligned}
% \end{equation}
% %%
% Specifically, we have
%%
\begin{align}
\label{eq:M-total-constants}
\M^{x}
:=  &
\M_1^x+\M_2^x+\M_3^x
+
\alpha
\big(
\tfrac{\gamma}{2}+\kappa
\big)L^2
\nonumber \\
=&
3\alpha^2(1+\kappa)L^2
+
6\gamma^{-2}\alpha^4\ell_1^2(2+\kappa)^2(1+\kappa)L^4
+
18\gamma^{-2}\alpha^5
\ell_1^2(1+\kappa)^2(2+\kappa)^2L^5
\nonumber \\
&+
\big(
6\gamma^{-2}\alpha^2\ell_1^2(2+\kappa)^2L^4
+2L^2
\big)
\Big(
\big(
(\gamma+1)^2
\kappa^2+1
\big)
\big(
2\alpha^2
+(1+\gamma^{-1})^2
\kappa^2
\big)
+
(\gamma+1)^2
\kappa^2
\Big)
\nonumber \\
&
+
6\gamma^{-2}\alpha^2\ell_1^2(2+\kappa)^2L^4
+
\tfrac{3}{8}\gamma^2\alpha^2
+
\tfrac{1}{4}
\big(
\gamma^2+\gamma
\big)
\kappa
+
\tfrac{1}{2}
(3\alpha^2+\gamma\alpha)L^2
+
\tfrac{27}{2}
\gamma^{-2}\alpha^4\ell_1^2(2+\kappa)^2L^4
\nonumber \\
&+
9
\alpha^2(\gamma+\kappa)^2
\big(
L^2+3\gamma^{-2}\alpha^2\ell_1^2(2+\kappa)^2L^4
\big)
+
6\alpha^3 (1+\kappa)^2 L^3
+
\alpha
\big(
\tfrac{\gamma}{2}+\kappa
\big)L^2,
\nonumber \\
\M^{v}
:=   &
\M_1^v+\M_2^v+\M_3^v
+
\tfrac{\alpha}{2}
\big(
\tfrac{\gamma}{2}+\kappa
\big),
\nonumber \\
=&
3\alpha^2(1+\kappa)^3 L^2
+
9\alpha^3 (1+\kappa)^4 L^3
+
\tfrac{3}{2}\alpha (1+\kappa)^2 L
+
3(1+\kappa)^2 
\big(
(\gamma+1)^2
\kappa^2
+
1
\big)L^2
\nonumber \\
&
+
3(1+\kappa)^2
\big(
(\gamma+1)^2\kappa^2
+
1
\big)
\big(
2\alpha^2
+
(1+\gamma^{-1})^2
\kappa^2
\big) 
L^2
+
\tfrac{1}{4}(\gamma+1)^2\kappa^2
\nonumber \\
&+
\tfrac{1}{4}(\gamma^2+3\gamma+2)\kappa
+
\tfrac{1}{8}(3+2\gamma^2)\alpha^2
+
\tfrac{1}{4}
(\gamma^2+\alpha\gamma)
+
\tfrac{1}{2}
+
\tfrac{9}{4}
\big(
\tfrac{\gamma^2}{2}+\kappa
\big)^2
\nonumber \\
&
+
\tfrac{27}{2}
\alpha^2(\gamma+\kappa)^2
(1+\kappa)^2 L^2
+
\tfrac{27}{4}
\alpha^2(1+\kappa)^2L^2
+
\tfrac{\alpha}{2}
\big(
\tfrac{\gamma}{2}+\kappa
\big),
\nonumber \\
\M
:=   &
\M_1+\M_2+\M_3
+
\alpha
\big(
\tfrac{\gamma}{2}+\kappa
\big)L_2^2
+
\tfrac{\alpha\gamma\mu'}{2}
\nonumber \\
=&
3\alpha^2(1+\kappa)L_2^2
+
6\gamma^{-2} \alpha^4 \ell_1^2 (2 + \kappa)^2(1+\kappa)L^2L_2^2 
+
2\gamma\alpha^3\ell_2^2(1+\kappa)^3L^2 
+
6\alpha^3 (1+\kappa)^2L L_2^2
\nonumber \\
&+
18\gamma^{-2}\alpha^5\ell_1^2(2+\kappa)^2 (1+\kappa)^2 L^3L_2^2
+
6\gamma\alpha^4 \ell_2^2(1+\kappa)^4 L^3 
+ 
\gamma  \alpha^2(1+\kappa)^2 L 
\nonumber \\
&+
\big(
6\gamma^{-2} \alpha^2 \ell_1^2 (2 +  \kappa)^2 L^2 L_2^2 
+ 
2\gamma\alpha\ell_2^2(1+\kappa)^2 L^2 
+
2L_2^2
\big)
\big(
(\gamma+1)^2\kappa^2+1
\big)
\big(
2\alpha^2
+
1
\big)
\nonumber \\
&+
\big(6\gamma^{-2} \alpha^2 \ell_1^2 (2 +  \kappa)^2 L^2L_2^2 
+ 
2\gamma\alpha\ell_2^2(1+\kappa)^2 L^2 
+
2L_2^2
\big)
\big(
(\gamma+1)^2
\kappa^4
+\kappa^2
\big)
(1+\gamma^{-1})^2
\nonumber \\
&
+
\big(27\gamma^{-2} \alpha^4 \ell_1^2 (2 +  \kappa)^2 L^2L_2^2  + 9 \gamma\alpha^3\ell_2^2(1+\kappa)^2 L^2
+
9\alpha^2L_2^2
\big)
(\gamma+\kappa)^2
\nonumber \\
&+
3
\gamma\alpha
\big(
1
+
(\gamma+1)^2
\kappa^2
\big)
+
\tfrac{1}{2}
(3\alpha^2+\gamma\alpha)
L_2^2
+
\tfrac{27}{2}\gamma^{-2}\alpha^4 \ell_1^2(2+\kappa)^2L^2L_2^2
\nonumber \\
&+
\tfrac{9}{2}\gamma \alpha^3\ell_2^2(1+\kappa)^2L^2
+
\gamma\alpha(1+\kappa)^2
+
\alpha
\big(
\tfrac{\gamma}{2}+\kappa
\big)L_2^2
+
\tfrac{1}{2}\gamma\alpha\mu'.
\end{align}
Moreover, we impose the following stepsize restriction:
\begin{equation}
%\label{eq:stepsize-final-bound}
\label{eq:h-star-def}
0<h\leq h_\star :=
1
\wedge \gamma^{-1}
\wedge L^{-1}
\wedge
\tfrac{\alpha \gamma  \mu}{4\M^x}
\wedge
\tfrac{\gamma}{4\M^v}
\wedge
\tfrac{2C_2}{(\alpha\mu \wedge 1) \gamma}.
\end{equation}
% Here, if $\M^x=0$ or $\M^v=0$, the corresponding fraction is omitted.

\begin{proof}
\textbf{Proof of Proposition \ref{prop:numerical-bound}.}
By the definition of the modified Lyapunov function \eqref{eq:Lyapunov-modified}, we have
\begin{equation}
\begin{aligned}
\mathbb E
\Big[
\bar{\mathcal V}(\bar X_{k+1},\bar V_{k+1})
\Big]
=
&
\alpha\mathbb E
\Big[
U(\bar X_{k+1})
\Big]
+
\tfrac{\gamma^2}{4}
\mathbb E
\Big[
\big|
\bar X_{k+1}
+
\gamma^{-1}\bar V_{k+1}
\big|^2
\Big]
+
\tfrac14
\mathbb E
\Big[
|\bar V_{k+1}|^2
\Big].
\end{aligned}
\end{equation}
Applying Lemmas \ref{lem:potential-one-step}, \ref{lem:y-combination-bound} and \ref{lem:velocity-one-step}, we obtain
\begin{equation}
\label{eq:lyapunov-sum-before-cross}
\begin{aligned}
\mathbb E
\Big[
\bar{\mathcal V}(\bar X_{k+1},\bar V_{k+1})
\Big]
\leq&
\mathbb E
\Big[
\bar{\mathcal V}(\bar X_k,\bar V_k)
\Big]
-
\tfrac{\alpha\gamma}{2}h
\mathbb E
\Big[
\big\langle
\bar X_k,
\nabla U(\bar X_k)
\big\rangle
\Big]
-
\tfrac{\gamma}{2}h
\mathbb E
\Big[
|\bar V_k|^2
\Big]
\\
&+
\alpha h
\big(
\Phi_1(h)-1
\big)
\mathbb E
\Big[
\big\langle
\nabla U(\bar X_k),
\bar V_k
\big\rangle
\Big]
+
\Big(
(\M_1^x+\M_2^x+\M_3^x)
\mathbb E
\Big[
|\bar X_k|^2
\Big]
\\
&
+
(\M_1^v+\M_2^v+\M_3^v)
\mathbb E
\Big[
|\bar V_k|^2
\Big]
\Big)h^2
+
(\M_1+\M_2+\M_3)dh,
\end{aligned}
\end{equation}
where the constants \(\mathcal{M}^x_i,\mathcal{M}^v_i, \mathcal{M}_i\) for \(i\in\{1,2,3\}\) are given in Lemmas \ref{lem:potential-one-step}, \ref{lem:y-combination-bound} and \ref{lem:velocity-one-step}.
Here, we also employ the identity derived below.
\begin{align}
&\alpha h\Phi_1(h)
\big\langle
\bar V_k,
\nabla U(\bar X_k)
\big\rangle
-\tfrac{\alpha\gamma}{2}h
\big\langle
\bar X_k+\gamma^{-1}\bar V_k,
\nabla U(\bar X_k)
\big\rangle
-
\tfrac{\alpha}{2}h
\big\langle
\bar V_k,\nabla U(\bar X_k)
\big\rangle
\nonumber\\
=&
-\tfrac{\alpha\gamma}{2}h
\big\langle
\bar X_k,
\nabla U(\bar X_k)
\big\rangle
+
\alpha h
\big(
\Phi_1(h)-1
\big)
\big\langle
\bar V_k,\nabla U(\bar X_k)
\big\rangle .
\end{align}

It remains to control this residual term.  Recalling the definition of $\phi(h)$, one can deduce that
\[
|\phi(h)-1|
\leq
\tfrac{\gamma}{2}h,
\qquad
h\in(0,\gamma^{-1}].
\]
By Assumption \ref{as:coeff-approx-general}, 
\begin{equation}
\label{eq:Phi1-minus-one-bound}
|\Phi_1(h)-1|
\leq
|\Phi_1(h)-\phi(h)|
+
|\phi(h)-1|
\leq
\big(
\tfrac{\gamma}{2}+\kappa
\big)h .
\end{equation}
Utilizing \eqref{eq:Phi1-minus-one-bound}, Young's inequality and \eqref{eq:gradient-growth-square} yields
\begin{equation}
\label{eq:cross-residual-bound}
\begin{aligned}
&
\alpha h
\big|
\Phi_1(h)-1
\big|
\mathbb E
\Big[
\big|
\big\langle
\bar V_k,
\nabla U(\bar X_k)
\big\rangle
\big|
\Big]
\\  
\leq&
\alpha
\big(
\tfrac{\gamma}{2}+\kappa
\big)h^2
\mathbb E
\Big[
|\bar V_k|
|\nabla U(\bar X_k)|
\Big]
\\
\leq&
\tfrac{\alpha}{2}
\big(
\tfrac{\gamma}{2}+\kappa
\big)h^2
\mathbb E
\Big[
|\bar V_k|^2
\Big]
+
\tfrac{\alpha}{2}
\big(
\tfrac{\gamma}{2}+\kappa
\big)h^2
\mathbb E
\Big[
|\nabla U(\bar X_k)|^2
\Big]
\\
\leq&
\alpha
\big(
\tfrac{\gamma}{2}+\kappa
\big)L^2h^2
\mathbb E
\Big[
|\bar X_k|^2
\Big]
+
\tfrac{\alpha}{2}
\big(
\tfrac{\gamma}{2}+\kappa
\big)h^2
\mathbb E
\Big[
|\bar V_k|^2
\Big]
+
\alpha
\big(
\tfrac{\gamma}{2}+\kappa
\big)L_2^2dh^2 .
\end{aligned}
\end{equation}
Moreover, Assumption \ref{as:dissipativity} implies
\begin{equation}
\label{eq:dissipation-use}
-\tfrac{\alpha\gamma}{2}h
\mathbb E
\Big[
\big\langle
\bar X_k,
\nabla U(\bar X_k)
\big\rangle
\Big]
\leq
-\tfrac{\alpha\gamma\mu}{2}h
\mathbb E
\Big[
|\bar X_k|^2
\Big]
+
\tfrac{\alpha\gamma\mu'}{2}dh .
\end{equation}
Substituting \eqref{eq:cross-residual-bound} and \eqref{eq:dissipation-use} into \eqref{eq:lyapunov-sum-before-cross}, and using $h\leq1$, we arrive at
\begin{equation}
\label{eq:lyapunov-before-absorb}
\begin{aligned}
\mathbb E
\Big[
\bar{\mathcal V}(\bar X_{k+1},\bar V_{k+1})
\Big]
\leq&
\mathbb E
\Big[
\bar{\mathcal V}(\bar X_k,\bar V_k)
\Big]
-
\tfrac{\alpha\gamma\mu}{2} h
\mathbb E
\Big[
|\bar X_k|^2
\Big]
-
\tfrac{\gamma}{2} h
\mathbb E
\Big[
|\bar V_k|^2
\Big]
\\
&+
\M^x h^2
\mathbb E
\Big[
|\bar X_k|^2
\Big]
+
\M^v h^2
\mathbb E
\Big[
|\bar V_k|^2
\Big]
+
\M dh .
\end{aligned}
\end{equation}
By the definition of $h_\star$ in \eqref{eq:h-star-def},
\[
\M^x h^2
\leq
\tfrac{\alpha\gamma\mu}{4}h,
\qquad
\M^v h^2
\leq
\tfrac{\gamma}{4}h,
\quad
\forall h\leq h_\star.
\]
Hence,
\begin{equation}
\label{eq:lyapunov-drift-moment}
\begin{aligned}
\mathbb E
\Big[
\bar{\mathcal V}(\bar X_{k+1},\bar V_{k+1})
\Big]
\leq&
\mathbb E
\Big[
\bar{\mathcal V}(\bar X_k,\bar V_k)
\Big]
-
\tfrac{\alpha\gamma\mu}{4}h
\mathbb E
\Big[
|\bar X_k|^2
\Big]
-
\tfrac{\gamma}{4}h
\mathbb E
\Big[
|\bar V_k|^2
\Big]
+
\M dh
\\
\leq&
\mathbb E
\Big[
\bar{\mathcal V}(\bar X_k,\bar V_k)
\Big]
-
\tfrac{(\alpha\mu \wedge 1) \gamma }{4}h
\mathbb E
\Big[
|\bar X_k|^2+|\bar V_k|^2
\Big]
+
\M dh.
\end{aligned}
\end{equation}
%where $\vartheta_1:=\alpha\mu \wedge 1$.
%%
%%
Next, by the upper bound in \eqref{eq:ineq_lya_m},
\[
\bar{\mathcal V}(x,v)
\leq
C_2
\big(
|x|^2+|v|^2+d
\big),
\]
we have
\[
|x|^2+|v|^2
\geq
(C_2)^{-1}\bar{\mathcal V}(x,v)-d.
\]
Therefore,
\begin{equation}
\label{eq:lyapunov-recursion}
\begin{aligned}
\mathbb E
\Big[
\bar{\mathcal V}(\bar X_{k+1},\bar V_{k+1})
\Big]
\leq&
\big(
1-
\tfrac{(\alpha\mu \wedge 1) \gamma}{4C_2}h
\big)
\mathbb E
\Big[
\bar{\mathcal V}(\bar X_k,\bar V_k)
\Big]
+
\left(
\M
+
\tfrac{(\alpha\mu \wedge 1) \gamma}{4}
\right)dh.
\end{aligned}
\end{equation}
The stepsize restriction
\(
h\leq
\tfrac{2C_2}{(\alpha\mu \wedge 1) \gamma}
\)
ensures
\(
0
<
\tfrac{(\alpha\mu \wedge 1) \gamma}{4C_2}h
\leq
\tfrac12
\)
and thus iterating \eqref{eq:lyapunov-recursion} gives
\begin{equation}
\label{eq:lyapunov-iteration}
\begin{aligned}
\mathbb E
\Big[
\bar{\mathcal V}(\bar X_k,\bar V_k)
\Big]
&\leq
\left(
1-
\tfrac{(\alpha\mu \wedge 1) \gamma}{4C_2}h
\right)^k
\mathbb E
\Big[
\bar{\mathcal V}(\bar X_0,\bar V_0)
\Big]
+
\left(
\M
+
\tfrac{(\alpha\mu \wedge 1) \gamma}{4}
\right)dh
\sum_{j=0}^{k-1}
\left(
1-
\tfrac{(\alpha\mu \wedge 1) \gamma}{4C_2}h
\right)^j
\\
&\leq
\exp
\left(
-\tfrac{(\alpha\mu \wedge 1) \gamma}{4C_2}kh
\right)
\mathbb E
\Big[
\bar{\mathcal V}(\bar X_0,\bar V_0)
\Big]
+
\tfrac{4C_2}{(\alpha\mu \wedge 1) \gamma}
\left(
\M
+
\tfrac{(\alpha\mu \wedge 1) \gamma}{4}
\right)d .
\end{aligned}
\end{equation}
Finally, using the lower and upper bounds in \eqref{eq:ineq_lya_m}, we obtain
\begin{equation}
\label{eq:uniform-moment-bound-final}
\begin{aligned}
\mathbb E
\Big[
|\bar X_k|^2+|\bar V_k|^2
\Big]
&\leq
C_1^{-1}
\mathbb E
\Big[
\bar{\mathcal V}(\bar X_k,\bar V_k)
\Big]
\\
&\leq
\tfrac{C_2}{C_1}
\exp
\left(
-\tfrac{(\alpha\mu \wedge 1) \gamma}{4C_2}kh
\right)
\mathbb E
\Big[
|\bar X_0|^2+|\bar V_0|^2+d
\Big]
+
\tfrac{4C_2}{C_1 (\alpha\mu \wedge 1) \gamma}
\left(
\M
+
\tfrac{(\alpha\mu \wedge 1) \gamma}{4}
\right)d .
\end{aligned}
\end{equation}
In particular, there exists a dimension-independent constant $\bar{K}_2(1)>0$ such that, for all $k\in\mathbb N_0$,
\begin{equation}
\label{eq:uniform-moment-bound-compact}
\mathbb E
\Big[
|\bar X_k|^2
\Big]
+
\mathbb E
\Big[
|\bar V_k|^2
\Big]
\leq
\bar{K}_2(1)
\left(
\mathbb E
\Big[
|\bar X_0|^2
\Big]
+
\mathbb E
\Big[
|\bar V_0|^2
\Big]
+
d
\right).
\end{equation}
This proves the desired uniform-in-time moment bound.
\end{proof}

\subsection{Proof of Proposition \ref{prop:uniform-fourth-moment-u-ulmc}}
\label{sec:pf-scheme-4-th-bound}

This subsection establishes the uniform-in-time fourth-moment estimates for the U-ULMC scheme stated in Proposition \ref{prop:uniform-fourth-moment-u-ulmc}.

\begin{redtext}

\begin{proof}
\textbf{Proof of Proposition \ref{prop:uniform-fourth-moment-u-ulmc}.}
The proof is based on the same discrete Lyapunov function as in Proposition~\ref{prop:numerical-bound}. 
We first derive conditional second- and fourth-moment bounds for the one-step increments of the numerical scheme, which are then used to control the conditional second moment of the Lyapunov increment. 
Combining this bound with the conditional Lyapunov drift estimate yields a contractive recursion for the squared Lyapunov function, from which the uniform fourth-moment bound follows by iteration.

For simplicity, set
\[
\bar{\mathcal V}_k
:=
\bar{\mathcal V}(\bar X_k,\bar V_k),
\quad
\bar Y_k
:=
|\bar X_k|^2+|\bar V_k|^2,
\quad 
\delta 
\bar{X}_{k+1} 
:=
\bar{X}_{k+1}-\bar{X}_{k},
\quad 
\delta 
\bar{V}_{k+1} 
:=
\bar{V}_{k+1}-\bar{V}_{k}.
\]
Since $\tau_k$ and the Brownian increment over
$[t_k,t_{k+1}]$ are independent of $\mathcal F_{t_k}$, the
argument leading to \eqref{eq:lyapunov-recursion} also yields the
following conditional Lyapunov drift estimate:
\begin{equation}
\label{eq:conditional-lyapunov-drift-fourth}
\mathbb E
\left[
\bar{\mathcal V}_{k+1}
\,\middle|\,
\mathcal F_{t_k}
\right]
\leq
\left(
1-
c_1 h
\right)
\bar{\mathcal V}_k
+
c_2 dh,
\end{equation}
where  
\begin{equation}
c_1
:=
\tfrac{(\alpha\mu\wedge1)\gamma}{4C_2},
\quad 
c_2
:=
\M+
\tfrac{(\alpha\mu\wedge1)\gamma}{4}.
\end{equation}
Equivalently,
\begin{equation}
\label{eq:conditional-lyapunov-increment-fourth}
\mathbb E
\left[
\bar{\mathcal V}_{k+1}-\bar{\mathcal V}_k
\,\middle|\,
\mathcal F_{t_k}
\right]
\leq
-
c_1
h\bar{\mathcal V}_k
+
c_2 dh.
\end{equation}

Moreover, the two-sided Lyapunov estimate
\eqref{eq:ineq_lya_m} gives
\begin{equation}
\label{eq:Lyapunov-energy-equivalence-fourth}
C_1 \bar Y_k
\leq
\bar{\mathcal V}_k
\leq
C_2(\bar Y_k+d).
\end{equation}
Consequently,
\[
\bar Y_k^2
\leq
C_1^{-2}\bar{\mathcal V}_k^2.
\]
Therefore, it suffices to derive a uniform-in-time bound for
$\mathbb E[\bar{\mathcal V}_k^2]$.

We first collect the second-moment increment estimates. %The proofs of Lemma \ref{lem:nabla-tau-delta} and Lemma \ref{lem:delta-x-k+1} are one-step calculations with an arbitrary current state and therefore also hold conditionally on \(\mathcal F_{t_k}\). In particular,
Applying the argument leading to \eqref{eq:grad-Xtau-split-clear} conditionally on \(\mathcal F_{t_k}\),
% By taking conditional expectations with  \(\mathcal F_{t_k}\) on both sides of \eqref{eq:grad-Xtau-split-clear},
we have
\begin{align}
\mathbb E
\left[
\left|
\nabla U(\bar X_{k+\tau_k})
\right|^2
\,\middle|\,
\mathcal F_{t_k}
\right]
\leq
(4L^2+2\H_1^x)|\bar X_k|^2
+
2\H_1^v|\bar V_k|^2
+
(2\H_1+4L_2^2)d,
\label{eq:conditional-gradient-second-fourth}
\end{align}
Recalling the velocity update in \eqref{eq:uni-process} and noting
\(|\Psi_1(h)-1|\leq(\gamma+\kappa)h\) and 
\(|\Psi_2|\le (1+\kappa)
\)
give
\begin{align}
\mathbb E
\left[
\left|
\delta \bar V_{k+1}
\right|^2
\,\middle|\,
\mathcal F_{t_k}
\right]
\leq&
3(\gamma+\kappa)^2h^2|\bar V_k|^2
+
3\alpha^2(1+\kappa)^2h^2
\mathbb E
\left[
\left|
\nabla U(\bar X_{k+\tau_k})
\right|^2
\,\middle|\,
\mathcal F_{t_k}
\right]
+
6\gamma\alpha(1+\kappa)^2dh
\nonumber
\\ 
\leq  &
3\alpha^2(1+\kappa)^2
(4L^2+2\mathcal H_1^x)
h^2|\bar X_k|^2
+
3\Big(
(\gamma+\kappa)^2
+
2\alpha^2(1+\kappa)^2\mathcal H_1^v
\Big)
h^2|\bar V_k|^2
\nonumber\\
&+
3\alpha^2(1+\kappa)^2
(2\mathcal H_1+4L_2^2)
d h^2
+
6\gamma\alpha(1+\kappa)^2dh.
\end{align}
Combining this with Lemma \ref{lem:delta-x-k+1}, one can see
\begin{equation}
\label{eq:full-increment-second-fourth}
\begin{aligned}
\mathbb E
\left[
\left|
\delta  \bar X_{k+1}
\right|^2
+
\left|
\delta  \bar V_{k+1}
\right|^2
\,\middle|\,
\mathcal F_{t_k}
\right]\leq
\left(
\H_3^{x,v} 
\bar Y_k
h^2
+
\H_3 
dh
\right),
\end{aligned}
\end{equation}
where 
\begin{equation}
\label{eq:con-mathcal-H-3}
\begin{aligned}
\H_3^{x,v} 
:=&
\big(
\H_2^x
+
3\alpha^2(1+\kappa)^2
(4L^2+2\H_1^x)
\big)
\vee
\big(
\H_2^v
+
3(\gamma+\kappa)^2
+
6\alpha^2(1+\kappa)^2\H_1^v
\big),
\\
\H_3:=
&
\big(
\H_2
+
3\alpha^2(1+\kappa)^2
(2\H_1+4L_2^2)
+
6\gamma\alpha(1+\kappa)^2
\big).
\end{aligned}
\end{equation}

We next establish the fourth-moment estimates. From
the first equation of \eqref{eq:uni-process}, one can get
\begin{align}
\mathbb E
\left[
\left|
\bar X_{k+\tau_k}-\bar X_k
\right|^4
\,\middle|\,
\mathcal F_{t_k}
\right]
\leq&
27(1+\kappa)^4h^4|\bar V_k|^4
+
27\gamma^{-4}\alpha^4\ell_1^4
(2+\kappa)^4h^4|\nabla U(\bar X_k)|^4
\nonumber\\
&+
36\gamma^2\alpha^2\ell_2^4
(1+\kappa)^4d^2h^6.
\end{align}
Since
\[
|\nabla U(x)|^4
\leq
8L^4|x|^4+8L_2^4d^2,
\]
it follows that
\begin{equation}
\label{eq:predictor-fourth-moment}
\begin{aligned}
\mathbb E
\left[
\left|
\bar X_{k+\tau_k}-\bar X_k
\right|^4
\,\middle|\,
\mathcal F_{t_k}
\right]\leq
\left(
\mathcal H_4^x|\bar X_k|^4
+
\mathcal H_4^v|\bar V_k|^4
+
\mathcal H_4 d^2
\right)h^4,
\end{aligned}
\end{equation}
where
\begin{equation}
\label{eq:E1-E3-fourth-moment}
\begin{aligned}
\mathcal H_4^x
&:=
216\gamma^{-4}\alpha^4\ell_1^4
(2+\kappa)^4L^4, 
&& 
\mathcal H_4^v
:=
27(1+\kappa)^4,\\
\mathcal H_4
&:=
216\gamma^{-4}\alpha^4\ell_1^4
(2+\kappa)^4L_2^4
+
36\gamma^2\alpha^2\ell_2^4(1+\kappa)^4.
\end{aligned}
\end{equation}

By the Lipschitz continuity of \(\nabla U\) and
\eqref{eq:predictor-fourth-moment},
\begin{equation}
\label{eq:gradient-predictor-fourth-moment}
\begin{aligned}
\mathbb E
\left[
\left|
\nabla U(\bar X_{k+\tau_k})
\right|^4
\,\middle|\,
\mathcal F_{t_k}
\right]
\leq&
64L^4
\left(
\mathbb E
\left[
\left|
\bar X_{k+\tau_k}-\bar X_k
\right|^4
\,\middle|\,
\mathcal F_{t_k}
\right]
+
\mathbb E
\left[
\left|
\bar X_k
\right|^4
\,\middle|\,
\mathcal F_{t_k}
\right]
\right)
+
8L_2^4 d^2
\\
\leq &
\big(
64L^4
+ 
64L^4 \mathcal H_4^x 
\big)
|\bar X_k|^4
+
64L^4 \mathcal H_4^v  |\bar V_k|^4
+
\big(
8L_2^4
+
64L^4\mathcal H_4 
\big) 
d^2.
\end{aligned}
\end{equation}
For the fourth-moment increment, the last two equations of
\eqref{eq:uni-process} and
\eqref{eq:gradient-predictor-fourth-moment} ensure
\begin{align}
\mathbb E
\left[
\left|
\delta
\bar X_{k+1}
\right|^4
\,\middle|\,
\mathcal F_{t_k}
\right]
\leq&
27\alpha^4(1+\kappa)^4
\big(
64L^4
+ 
64L^4 \mathcal H_4^x 
\big)
h^4
|\bar X_k|^4
\nonumber\\
&
+
27\big(
(1+\kappa)^4
+
\alpha^4(1+\kappa)^4
64L^4 \mathcal H_4^v
\big)
h^4|\bar V_k|^4
\nonumber\\
&
+
\big(27\alpha^4(1+\kappa)^4
\big(
8L_2^4
+
64L^4\mathcal H_4 
\big) 
+
36\gamma^2\alpha^2(1+\kappa)^4\big)
d^2h^4,
\label{eq:position-increment-fourth}\\
\mathbb E
\left[
\left|
\delta
\bar V_{k+1}
\right|^4
\,\middle|\,
\mathcal F_{t_k}
\right]
\leq&
27\alpha^4(1+\kappa)^4
\big(
64L^4
+ 
64L^4 \mathcal H_4^x 
\big)
h^4 |\bar X_k|^4
\nonumber\\
&
+
27
\big(
(\gamma+\kappa)^4
+
\alpha^4(1+\kappa)^4
64L^4 \mathcal H_4^v 
\big)
h^4|\bar V_k|^4
\nonumber\\
&
+
\Big(
27\alpha^4(1+\kappa)^4
\big(
8L_2^4
+
64L^4\mathcal H_4 
\big) 
+
324\gamma^2\alpha^2(1+\kappa)^4
\Big)
d^2h^2.
\label{eq:velocity-increment-fourth}
\end{align}
Thus,
\begin{align}
\mathbb E
\left[
\left|
\delta
\bar X_{k+1}
\right|^4
+
\left|
\delta
\bar V_{k+1}
\right|^4
\,\middle|\,
\mathcal F_{t_k}
\right]
\leq 
\left(
\H_5^{x,v} 
(\bar Y_k)^2
h^4
+
\H_5 
d^2 h^2
\right),
\end{align}
where  
\begin{align}
\mathcal H_5^{x,v}
:=&
\left(
54\alpha^4(1+\kappa)^4L^4
\left(64 + 64 \mathcal H_4^x\right)
\right)
\vee
27\left(
(1+\kappa)^4
+
(\gamma+\kappa)^4
+
128\alpha^4(1+\kappa)^4L^4\mathcal H_4^v
\right),
\\[1mm]
\mathcal H_5
:=&
54\alpha^4(1+\kappa)^4
\left(
8L_2^4+64L^4\mathcal H_4
\right)
+
360\gamma^2\alpha^2(1+\kappa)^4.
\end{align}

By the definition \eqref{eq:Lyapunov-modified}, 
\begin{align}
\bar{\mathcal V}_{k+1}-\bar{\mathcal V}_k
={}&
\alpha
\left(
U(\bar X_{k+1})-U(\bar X_k)
\right)
+
\tfrac12
\left\langle
\gamma\bar X_k+\bar V_k,\,
\gamma\delta\bar X_{k+1}
+
\delta\bar V_{k+1}
\right\rangle
\nonumber\\
&+
\tfrac14
\left|
\gamma\delta\bar X_{k+1}
+
\delta\bar V_{k+1}
\right|^2
+
\tfrac12
\left\langle
\bar V_k,\delta\bar V_{k+1}
\right\rangle
+
\tfrac14
\left|
\delta\bar V_{k+1}
\right|^2
.
\label{eq:lyapunov-increment-expansion-fourth}
\end{align}
%%%
%%
%%
Since $U$ is $L$-smooth and
$|\nabla U(x)|\leq L|x|+L_2\sqrt d$, we have
\begin{align}
\left|
U(\bar X_{k+1})-U(\bar X_k)
\right|
\leq{}&
\left(
L|\bar X_k|+L_2\sqrt d
\right)
\left|
\delta\bar X_{k+1}
\right|
+
\tfrac L2
\left|
\delta\bar X_{k+1}
\right|^2.
\label{eq:potential-increment-pointwise-fourth}
\end{align}
Moreover, by the Cauchy--Schwarz inequality,
\begin{align}
\left|
\gamma\bar X_k+\bar V_k
\right|
&\leq
\sqrt{\gamma^2+1}\,
(\bar Y_k)^{1/2},
\nonumber\\
\left|
\gamma\delta\bar X_{k+1}
+
\delta\bar V_{k+1}
\right|
&\leq
\sqrt{\gamma^2+1}
\left(
|\delta\bar X_{k+1}|^2
+
|\delta\bar V_{k+1}|^2
\right)^{1/2}.
\label{eq:lyapunov-combination-bound-fourth}
\end{align}
The preceding estimates thus ensure
\begin{align}
\label{eq:lyapunov-increment-pointwise-fourth}
\left|
\bar{\mathcal V}_{k+1}-\bar{\mathcal V}_k
\right|
\leq{}&
\left(
\alpha L
+
\tfrac{\gamma^2+2}{2}
\right)
(\bar Y_k)^{1/2}
\left(
|\delta\bar X_{k+1}|^2
+
|\delta\bar V_{k+1}|^2
\right)^{1/2}
\\
&+
\alpha L_2\sqrt d
\left(
|\delta\bar X_{k+1}|^2
+
|\delta\bar V_{k+1}|^2
\right)^{1/2}
+
\left(
\tfrac{\alpha L}{2}
+
\tfrac{\gamma^2+2}{4}
\right)
\left(
|\delta\bar X_{k+1}|^2
+
|\delta\bar V_{k+1}|^2
\right).
\nonumber
\end{align}
Squaring \eqref{eq:lyapunov-increment-pointwise-fourth} and using \eqref{eq:elementary_inequality}, we obtain
\begin{align}
\left|
\bar{\mathcal V}_{k+1}-\bar{\mathcal V}_k
\right|^2
\leq{}&
3
\Big(
\big(
\alpha L
+
\tfrac{\gamma^2+2}{2}
\big)^2
\vee
\alpha^2L_2^2
\Big)
(\bar Y_k+d)
\left(
|\delta\bar X_{k+1}|^2
+
|\delta\bar V_{k+1}|^2
\right)
\nonumber\\
&+
\tfrac34
\left(
\alpha L
+
\tfrac{\gamma^2+2}{2}
\right)^2
\left(
|\delta\bar X_{k+1}|^2
+
|\delta\bar V_{k+1}|^2
\right)^2.
\label{eq:lyapunov-increment-square-fourth}
\end{align}

Using
\eqref{eq:full-increment-second-fourth} and
\[
\left(
|\delta\bar X_{k+1}|^2
+
|\delta\bar V_{k+1}|^2
\right)^2
\leq
2\left(
|\delta\bar X_{k+1}|^4
+
|\delta\bar V_{k+1}|^4
\right),
\]
together with the lower bound in \eqref{eq:ineq_lya_m}, we arrive at
\begin{equation}
\label{eq:lyapunov-increment-second-fourth}
\begin{aligned}
&\mathbb E
\left[
\left|
\bar{\mathcal V}_{k+1}-\bar{\mathcal V}_k
\right|^2
\,\middle|\,
\mathcal F_{t_k}
\right]
\leq
\H h^2
\left(
\bar{\mathcal V}_k+d
\right)^2
+
\H dh
\left(
\bar{\mathcal V}_k+d
\right),
\end{aligned}
\end{equation}
where
\begin{align}
\H
:={}&
(1\vee C_1^{-1})^2
\Big(
3
\big(
\H_3^{x,v}+\H_3
\big)
\big(
(
\alpha L
+
\tfrac{\gamma^2+2}{2}
)^2
\vee
\alpha^2L_2^2
\big)
+
\tfrac32
\left(
\H_5^{x,v}+\H_5
\right)
(
\alpha L
+
\tfrac{\gamma^2+2}{2}
)^2
\Big).
\label{eq:H-fourth-moment}
\end{align}
Expanding $\bar{\mathcal V}_{k+1}^2$ and using
\eqref{eq:conditional-lyapunov-increment-fourth} and
\eqref{eq:lyapunov-increment-second-fourth}, we obtain
\begin{align}
\mathbb E
\left[
\bar{\mathcal V}_{k+1}^2
\,\middle|\,
\mathcal F_{t_k}
\right]&
=
\bar{\mathcal V}_k^2
+
2\bar{\mathcal V}_k
\mathbb E
\left[
\bar{\mathcal V}_{k+1}
-
\bar{\mathcal V}_k
\,\middle|\,
\mathcal F_{t_k}
\right]
+
\mathbb E
\left[
\left|
\bar{\mathcal V}_{k+1}
-
\bar{\mathcal V}_k
\right|^2
\,\middle|\,
\mathcal F_{t_k}
\right]
\nonumber\\
\leq{}&
\left(
1-2c_1h+\H  h^2
\right)
\bar{\mathcal V}_k^2
+
\left(
2c_2+3\H
\right)
dh\bar{\mathcal V}_k
+
2\H d^2h.
\label{eq:lyapunov-square-before-absorption-fourth}
\end{align}
By Young's inequality,
\begin{equation}
\label{eq:young-absorption-fourth}
\left(
2c_2+3\H 
\right)d\bar{\mathcal V}_k
\leq
\tfrac{c_1}{2}\bar{\mathcal V}_k^2
+
\tfrac{
\left(
2c_2+3\H 
\right)^2
}{2c_1}d^2.
\end{equation}
Then, for $h\leq h_{\star\star}=
h_\star
\wedge
\tfrac{c_1}{2\H}$,
\[
\H h^2
\leq
\tfrac{c_1}{2}h.
\]
Combining this inequality with
\eqref{eq:lyapunov-square-before-absorption-fourth} and
\eqref{eq:young-absorption-fourth}, we obtain
\begin{equation}
\label{eq:lyapunov-square-recursion-fourth}
\mathbb E
\left[
\bar{\mathcal V}_{k+1}^2
\,\middle|\,
\mathcal F_{t_k}
\right]
\leq
\left(
1-c_1h
\right)
\bar{\mathcal V}_k^2
+
\big(
2\H
+
\tfrac{
\left(
2c_2+3\H
\right)^2
}{2c_1}
\big)
d^2h.
\end{equation}
Taking expectations in
\eqref{eq:lyapunov-square-recursion-fourth} and by iteration we arrive at
\begin{align}
\mathbb E
\left[
\bar{\mathcal V}_k^2
\right]
&\leq
(1-c_1h)^k
\mathbb E
\left[
\bar{\mathcal V}_0^2
\right]
+
\big(
2\H
+
\tfrac{
\left(
2c_2+3\H
\right)^2
}{2c_1}
\big)
d^2h
\sum_{j=0}^{k-1}(1-c_1h)^j
\nonumber\\
&\leq
\exp(-c_1kh)
\mathbb E
\left[
\bar{\mathcal V}_0^2
\right]
+
\big(
\tfrac{2\H}{c_1}
+
\tfrac{
\left(
2c_2+3\H
\right)^2
}{2(c_1)^2}
\big)
d^2.
\label{eq:uniform-squared-Lyapunov-fourth}
\end{align}

Finally, \eqref{eq:Lyapunov-energy-equivalence-fourth} implies
\[
C_1^2\bar Y_k^2
\leq
\bar{\mathcal V}_k^2
\]
and
\[
\bar{\mathcal V}_0^2
\leq
2C_2^2
\left(
\bar Y_0^2+d^2
\right).
\]
Therefore,
\[
\mathbb E
\left[
\bar Y_k^2
\right]
\leq
\bar K_2(2)
\left(
\mathbb E[\bar Y_0^2]+d^2
\right),
\]
where
\begin{equation}
\label{eq:K2-fourth-moment}
\bar K_2(2)
:=
\tfrac{2C_2^2}{C_1^2}
+
\tfrac{2\H}{c_1C_1^2}
+
\tfrac{\left(
2c_2+3\H
\right)^2}{2c_1^2C_1^2}.
\end{equation}
Equivalently,
\[
\mathbb E
\left[
\left(
|\bar X_k|^2+|\bar V_k|^2
\right)^2
\right]
\leq
\bar K_2(2)
\left(
\mathbb E
\left[
\left(
|\bar X_0|^2+|\bar V_0|^2
\right)^2
\right]
+d^2
\right).
\]
This completes the proof.
\end{proof}

\end{redtext}

\section{Proofs of Propositions \ref{prop:finite-error-rs}--\ref{prop:finite-error-UBU}: finite-time error analysis}
\label{sec:pf-finite-error}
In this section, we prove the finite-time error estimates, as presented in Propositions \ref{prop:finite-error-rs}, \ref{prop:finite-error-Euler} and \ref{prop:finite-error-UBU}, for the U-ULMC scheme. 
The outline of the proof is as follows:
\begin{itemize}
    \item First, we establish two mean-square estimates: one for the increment of the exact position process and its associated gradient, and the other for the error between the exact position process and its approximation at the randomized intermediate stage; see Lemmas \ref{lem:xt-2p}, \ref{lem:delta_tau_xy}.
    \item Second, we derive one-step (local) weak and strong error estimates for the discrepancy between the exact solution of ULD \eqref{eq:ULD} and U-ULMC \eqref{eq:uni-process}; see Lemmas \ref{lem:one-step-weak-strong-error}, \ref{lem:one-step-weak-strong-error-Euler}, and \ref{lem:one-step-weak-strong-error-UBU}.
    \item Finally, following the classical local-to-global argument and keeping track of all constants explicitly, we obtain the finite-time error estimate (Propositions \ref{prop:finite-error-rs}, \ref{prop:finite-error-Euler}, and \ref{prop:finite-error-UBU}).
\end{itemize}

For the first step, we establish H\"older regularity estimates for the exact position process and its associated gradient increment.

\begin{lem}\label{lem:xt-2p}
Let Assumptions \ref{as:Lip}, \ref{as:U_0-bound} and \ref{as:dissipativity} hold.
Then, for any stepsize $h\in(0,1\wedge L^{-1}]$, any $\theta\in(0,h]$ and any $t\geq s$, it holds that
\begin{equation}
\begin{aligned}
\mathbb{E}
\Big[
\big|
X(s,x,v;t+\theta) -
X(s,x,v;t)
\big|^{2}
\Big] 
\leq   &
\Big(
\mathscr H^{x,v}_1
\big(
|x |^2 
+
|v |^2
\big)
+
\mathscr H_1
d
\Big)
\theta^2,
\\
\mathbb{E}
\Big[
\big|
\nabla U 
( X(s,x,v;t+\theta))
-
\nabla U
( X(s,x,v;t) )
\big|^{2}
\Big] 
\leq   & 
L^2
\Big(
\mathscr H^{x,v}_1
\big(
|x |^2 
+
|v |^2
\big)
+
\mathscr H_1
d
\Big)
\theta^2,
\end{aligned}
\end{equation}
where
\begin{equation}
\begin{aligned}
\mathscr H^{x,v}_1
:=  
(3+6\alpha^2)\bar K_1
,
\quad
\mathscr H_1
:=  
(3+6\alpha^2)\bar K_1
+
6\alpha^2L_2^2
+
2\gamma\alpha.
\end{aligned}
\end{equation}
\end{lem}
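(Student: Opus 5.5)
The plan is to write the increment of the exact position directly from the first equation of ULD \eqref{eq:ULD}, so that
\[
X(s,x,v;t+\theta)-X(s,x,v;t)=\int_t^{t+\theta}V(s,x,v;u)\,\mathrm du ,
\]
and then to expand the velocity by integrating its equation from $t$ to $u$:
\[
V_u=V_t-\gamma\int_t^uV_r\,\mathrm dr-\alpha\int_t^u\nabla U(X_r)\,\mathrm dr+\sqrt{2\gamma\alpha}\,(W_u-W_t).
\]
This expresses the increment as $\theta V_t$ plus a double drift integral plus $\sqrt{2\gamma\alpha}\int_t^{t+\theta}(W_u-W_t)\,\mathrm du$. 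In the spirit of the constants $(3+6\alpha^2)\bar K_1$, I would instead split into three pieces via \eqref{eq:elementary_inequality}. One piece is the velocity contribution, which I would bound directly by Jensen: $\mathbb E|\int_t^{t+\theta}V_u\,\mathrm du|^2\le\theta\int_t^{t+\theta}\mathbb E|V_u|^2\,\mathrm du\le\theta^2\sup_u\mathbb E|V_u|^2$. The second piece is the drift, where $\nabla U$ enters. The third is the noise, whose second moment is $\mathbb E|\int_t^{t+\theta}(W_u-W_t)\,\mathrm du|^2=d\theta^3/3$; this contributes the $2\gamma\alpha d\theta^2$ term, using $\theta\le1$.

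The uniform moment control comes from Lemma \ref{lem:ULD-moment bound} with $p=1$, applied with deterministic initial data $(x,v)$ at time $s$. By time-homogeneity this gives $\sup_{u\ge s}\mathbb E[|X_u|^2+|V_u|^2]\le\bar K_1(|x|^2+|v|^2+d)$. For the gradient I would use \eqref{eq:gradient-growth-square}, namely $|\nabla U(X_r)|^2\le2L^2|X_r|^2+2L_2^2d$. With $h\le L^{-1}$ the factor $L^2\theta^2\le1$ can be absorbed, which should produce the $6\alpha^2\bar K_1$ and $6\alpha^2L_2^2$ contributions. The friction term $\gamma\int V$ is absorbed into the velocity piece, or bounded with $\theta\le h$. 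Collecting all coefficients of $|x|^2+|v|^2$ and of $d$ gives $\mathscr H^{x,v}_1$ and $\mathscr H_1$. The dependence on $\gamma$ needs care: if a $\gamma^2$ factor appears, I would exploit $h\le1$ or restructure the split, and I would verify whether the stated constants really omit $\gamma$ or rely on the variation-of-constants form \eqref{eq:intro-ex-ULD} with $|\phi|,|\psi|\le1$.

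The second inequality follows immediately from Assumption \ref{as:Lip}: $|\nabla U(X_{t+\theta})-\nabla U(X_t)|^2\le L^2|X_{t+\theta}-X_t|^2$, combined with the first bound.

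The main obstacle is matching the exact constants, especially the friction contribution. I would first try the variation-of-constants representation \eqref{eq:intro-ex-ULD} over $[t,t+\theta]$. It gives the increment as $\theta\phi(\theta)V_t-\alpha\int(\cdot)\phi\,\nabla U+\sqrt{2\gamma\alpha}\,\Delta W^{\phi,1}$, with $|\phi|\le1$, which removes $\gamma$ from the drift coefficients. The three-term inequality then gives the factor $3$ on $\theta^2\mathbb E|V_t|^2$. The drift term is bounded by $3\alpha^2\theta^2\sup\mathbb E|\nabla U|^2\le6\alpha^2\theta^2(L^2\bar K_1(\cdot)+L_2^2d)$, using $L\theta\le1$ from $h\le L^{-1}$. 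The noise term is bounded by $6\gamma\alpha\cdot d\theta^3/3=2\gamma\alpha d\theta^3$. Together these reproduce the stated constants.
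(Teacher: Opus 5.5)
Your final route (the variation-of-constants representation over $[t,t+\theta]$ with $|\phi|\le1$, the three-term split, Lemma~\ref{lem:ULD-moment bound} started from $(x,v)$ at time $s$, the growth bound \eqref{eq:gradient-growth-square}, $L\theta\le1$, and It\^o's isometry) is exactly the paper's proof and reproduces $\mathscr H^{x,v}_1$ and $\mathscr H_1$. One bookkeeping fix is needed: record the drift term as $3\alpha^2\theta\int_0^\theta(\theta-r)^2\,\mathbb E|\nabla U(X_{t+r})|^2\,\mathrm dr\le\alpha^2\theta^4\sup_{r}\mathbb E|\nabla U(X_{t+r})|^2$, not $3\alpha^2\theta^2\sup\mathbb E|\nabla U|^2$, because the extra factor $\theta^2$ is what absorbs $L^2$ through $L\theta\le1$.

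Incidentally, the Jensen estimate in your first paragraph is the whole increment, not one piece of a split. So $\mathbb E|X_{t+\theta}-X_t|^2\le\theta\int_t^{t+\theta}\mathbb E|V_u|^2\,\mathrm du\le\bar K_1(|x|^2+|v|^2+d)\,\theta^2$ already proves the first inequality outright, since $\bar K_1\le\mathscr H^{x,v}_1$ and $\bar K_1\le\mathscr H_1$, and it does not even need $h\le1\wedge L^{-1}$.
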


\begin{proof}
\textbf{Proof.}
For simplicity, we write
\[
X_r:=X(s,x,v;r),
\qquad
V_r:=V(s,x,v;r),
\qquad r\geq s.
\]
By the variation-of-constants formula for \eqref{eq:ULD}, for any $\theta\in(0,h]$ and $t\geq s$, we have
\begin{equation}
\label{eq:X-increment-exact-holder}
\begin{aligned}
X_{t+\theta}
-
X_t
=
\theta\phi(\theta)V_t
-
\alpha
\int_0^\theta
(\theta-r)\phi(\theta-r)
\nabla U(X_{t+r})\,\dd r
+
\sqrt{2\gamma\alpha}
\int_0^\theta
(\theta-r)\phi(\theta-r)\,\dd W_{t+r}.
\end{aligned}
\end{equation}
Since $|\phi(r)|\leq1$ for any $r\geq0$, taking squares on both sides of \eqref{eq:X-increment-exact-holder} and applying the elementary inequality \eqref{eq:elementary_inequality}
give
\begin{equation}
\label{eq:X-increment-split-exact}
\begin{aligned}
\mathbb E
\Big[
|X_{t+\theta}-X_t|^2
\Big]
\leq&
3\theta^2
\mathbb E
\Big[
|V_t|^2
\Big]
+
3\alpha^2
\mathbb E
\Big[
\Big|
\int_0^\theta
(\theta-r)\phi(\theta-r)
\nabla U(X_{t+r})\,\dd r
\Big|^2
\Big]
\\
&+
6\gamma\alpha
\mathbb E
\Big[
\Big|
\int_0^\theta
(\theta-r)\phi(\theta-r)\,\dd W_{t+r}
\Big|^2
\Big].
\end{aligned}
\end{equation}
%%
%%
%We first control the velocity contribution. 
Invoking Lemma \ref{lem:ULD-moment bound} with deterministic initial condition $(x,v)$ yields
\begin{equation}
\label{eq:ULD-moment-used-holder}
\mathbb E
\Big[
|X_r|^2
\Big]
+
\mathbb E
\Big[
|V_r|^2
\Big]
\leq
\bar K_1
\big(
|x|^2+|v|^2+d
\big),
\qquad
\forall r\geq s
\end{equation}
and thus
\begin{equation}
\label{eq:velocity-contribution-holder}
3\theta^2
\mathbb E
\Big[
|V_t|^2
\Big]
\leq
3\bar K_1
\big(
|x|^2+|v|^2+d
\big)\theta^2
.
\end{equation}
Next, employing the Cauchy--Schwarz inequality, we derive
\begin{equation}
\label{eq:drift-contribution-holder-1}
\begin{aligned}
&
\mathbb E
\Big[
\Big|
\int_0^\theta
(\theta-r)\phi(\theta-r)
\nabla U(X_{t+r})\,\dd r
\Big|^2
\Big]
\leq
\theta
\int_0^\theta
(\theta-r)^2
\mathbb E
\Big[
|\nabla U(X_{t+r})|^2
\Big]\dd r .
\end{aligned}
\end{equation}
In view of Assumptions \ref{as:Lip} and \ref{as:U_0-bound}, the gradient satisfies the growth estimate
\begin{equation}
\label{eq:grad-growth-holder-exact}
|\nabla U(y)|^2
\leq
2L^2|y|^2
+
2L_2^2d,
\qquad
\forall y\in\mathbb R^d .
\end{equation}
Combining \eqref{eq:drift-contribution-holder-1}, \eqref{eq:grad-growth-holder-exact}, and \eqref{eq:ULD-moment-used-holder}, we obtain
\begin{equation}
\label{eq:drift-contribution-holder-2}
\begin{aligned}
&
3\alpha^2
\mathbb E
\Big[
\Big|
\int_0^\theta
(\theta-r)\phi(\theta-r)
\nabla U(X_{t+r})\,\dd r
\Big|^2
\Big]
\\
\leq&
6\alpha^2\theta
\int_0^\theta
(\theta-r)^2
\Big(
L^2\bar K_1
\big(
|x|^2+|v|^2+d
\big)
+
L_2^2d
\Big)\dd r
\\
\leq&
6\alpha^2\theta^4
\Big(
L^2\bar K_1
\big(
|x|^2+|v|^2+d
\big)
+
L_2^2d
\Big).
\end{aligned}
\end{equation}
Since $\theta\leq h\leq 1\wedge L^{-1}$, we have $L^2\theta^2\leq1$ and $\theta^2\leq1$ and thus \eqref{eq:drift-contribution-holder-2} implies
\begin{equation}
\label{eq:drift-contribution-holder-final}
\begin{aligned}
3\alpha^2
\mathbb E
\Big[
\Big|
\int_0^\theta
(\theta-r)\phi(\theta-r)
\nabla U(X_{t+r})\,\dd r
\Big|^2
\Big]
\leq
6\alpha^2\bar K_1
\big(
|x|^2+|v|^2
\big)\theta^2
+
\big(
6\alpha^2 \bar K_1
+
6\alpha^2L_2^2
\big)d\theta^2 .
\end{aligned}
\end{equation}

Finally, applying It\^o's isometry to the stochastic integral gives
\begin{equation}
\label{eq:noise-contribution-holder}
\begin{aligned}
6\gamma\alpha
\mathbb E
\Big[
\Big|
\int_0^\theta
(\theta-r)\phi(\theta-r)\,\dd W_{t+r}
\Big|^2
\Big]
=&
6\gamma\alpha d
\int_0^\theta
(\theta-r)^2|\phi(\theta-r)|^2\,\dd r
\\
\leq&
2\gamma\alpha d\theta^3
\leq
2\gamma\alpha d\theta^2,
\end{aligned}
\end{equation}
where $\theta\leq1$ has been used in the last step.
Inserting \eqref{eq:velocity-contribution-holder}, \eqref{eq:drift-contribution-holder-final} and \eqref{eq:noise-contribution-holder} into \eqref{eq:X-increment-split-exact} gives
\begin{equation}
\begin{aligned}
\mathbb E
\Big[
|X_{t+\theta}-X_t|^2
\Big]
\leq&
\Big(
(3+6\alpha^2)\bar K_1
\big(
|x|^2+|v|^2
\big)
+
\big(
(3+6\alpha^2)\bar K_1
+
6\alpha^2L_2^2
+
2\gamma\alpha
\big) d
\Big)\theta^2.
\end{aligned}
\end{equation}
This proves the first estimate.
It remains to establish the gradient-increment estimate. Thanks to the Lipschitz continuity of $\nabla U$, we immediately get
\begin{equation}
\begin{aligned}
\mathbb E
\Big[
\big|
\nabla U(X_{t+\theta})
-
\nabla U(X_t)
\big|^2
\Big]
\leq&
L^2
\mathbb E
\Big[
|X_{t+\theta}-X_t|^2
\Big]
\leq
L^2 \Big(
\mathscr H^{x,v}_1
\big(
|x |^2 
+
|v |^2
\big)
+
\mathscr H_1
d
\Big)
\theta^2,
\end{aligned}
\end{equation}
as required.
\end{proof}

We next recall the one-step representation of ULD and U-ULMC, which will be used in the subsequent local error analysis.
For any $t\geq0$ and $(x^\top,v^\top)^\top\in\mathbb R^{2d}$, the one-step representation of ULD \eqref{eq:ULD} starting from $(x^\top,v^\top)^\top$ at time $t$ is given by
\begin{equation}
\label{eq:exact-ULD-one-step}
\begin{aligned}
X(t,x,v;t+h)
=&
x
+
h\phi(h)v
-
\alpha
\int_0^{h}
(h-s)\phi(h-s)
\nabla U(X(t,x,v;t+s))\,\mathrm ds
\\
&+
\sqrt{2\gamma\alpha}
\int_0^{h}
(h-s)\phi(h-s)\,\mathrm dW_{t+s},
\\
V(t,x,v;t+h)
=&
\psi(h)v
-
\alpha
\int_0^{h}
\psi(h-s)
\nabla U(X(t,x,v;t+s))\,\mathrm ds
\\
&+
\sqrt{2\gamma\alpha}
\int_0^{h}
\psi(h-s)\,\mathrm dW_{t+s},
\end{aligned}
\end{equation}
Similarly, let $\tau\sim \mathrm{U}(0,1)$ be a $\mathcal{F}^\tau$-measurable random variable, independent of the Brownian motion. 
For any $t\geq0$, the one-step representation of U-ULMC \eqref{eq:uni-process} starting from $(x^\top,v^\top)^\top$ at time $t$ is written as
\begin{equation}\label{eq:unified-approx-ULD-one-step}
\begin{aligned}
\X(t,x,v;t+\tau h)
=&
x
+
\tau h\Phi_1(\tau h)
v
-\ell_1
\alpha
\gamma^{-1}
\tau h
(1-\Phi_2(\tau h))\nabla U(x)
\\
&+
\ell_2
\sqrt{2\gamma \alpha}
\int^{\tau h}_0
(\tau h-s)
\Phi_3(\tau h-s)
\,\dd W_{t+s},
\\
\X(t,x,v;t+h)
=&
x
+
h\Phi_1(h) v
-
\alpha h(h-\tau h)
\Phi_2(h-\tau h)
\nabla U(\X(t,x,v;t+\tau h))
\\
&
+
\sqrt{2\gamma \alpha}
\int^{h}_0 (h-s)
\Phi_3(h-s)\,\dd W_{t+s},
\\
\V(t,x,v;t+h)
=&
\Psi_1(h) v
-
\alpha h \Psi_2(h-\tau h)
\nabla U(\X(t,x,v;t+\tau h))
\\
&+
\sqrt{2\gamma \alpha}
\int^{h}_0
\Psi_3(h-s)
\,\dd W_{t+s}.
\end{aligned}
\end{equation}

We are now ready to establish the second mean-square estimate.

\begin{lem}\label{lem:delta_tau_xy}
Let Assumptions \ref{as:Lip}, \ref{as:U_0-bound}, \ref{as:dissipativity} hold and suppose that Assumption \ref{as:coeff-approx-general} is satisfied with $\boldsymbol{\eta}=\boldsymbol{\eta}_\mathrm{R}$. 
Let $0\leq \tau \leq 1$ be independent of the Brownian motion. 
For given $t\geq0$ and $(x^\top,v^\top)^\top \in\mathbb R^{2d}$, let
\(
X(t,x,v;t+\tau h)
\)
denote the exact position component of the ULD \eqref{eq:ULD} at time $t+\tau h$ and  let
\(
\bar X(t,x,v;t+\tau h)
\)
denote the randomized intermediate position generated by the U-ULMC one-step update \eqref{eq:unified-approx-ULD-one-step}. 
Then, for any stepsize
\(
0<h\leq 1\wedge L^{-1} \wedge \gamma^{-1},
\)
it holds that
\begin{equation}
\label{eq:delta-tau-x-bound}
\mathbb{E}
\Big[
\big|
\bar{X}(t,x,v;t+\tau h)
-
X(t,x,v;t+\tau h)
\big|^2
\Big]
\leq
\Big(
\mathcal C_1
\big(
|x|^2+|v|^2
\big)
+
\mathcal C_1'd
\Big)h^3,
\end{equation}
where
\begin{equation}
\label{eq:C1-C1-prime-def}
\begin{aligned}
\mathcal C_1
:=&
4\kappa^2
+
8\alpha^2L^2
\big(
\tfrac{1-\ell_1}{2}
+
\ell_1\gamma^{-1}\kappa
\big)^2
+
\tfrac{2}{15}
\alpha^2L^2
\mathscr H^{x,v}_1,
\\
\mathcal C_1'
:=&
8\alpha^2L_2^2
\big(
\tfrac{1-\ell_1}{2}
+
\ell_1\gamma^{-1}\kappa
\big)^2
+
\tfrac{2\alpha^2L^2}{15}
\mathscr H_1
+
8\gamma\alpha
\big(
\tfrac{(1-\ell_2)^2
+
\ell_2^2\kappa^2}{3}
\big).
\end{aligned}
\end{equation}
Here $\mathscr H^{x,v}_1$ and $\mathscr H_1$ are the constants defined in Lemma \ref{lem:xt-2p}.
\end{lem}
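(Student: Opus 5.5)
I will write both intermediate positions in variation-of-constants form over the random subinterval $[t,t+\tau h]$ and subtract them. Conditionally on $\tau$, the exact position satisfies
\[
X(t,x,v;t+\tau h)=x+\tau h\phi(\tau h)v-\alpha\int_0^{\tau h}(\tau h-s)\phi(\tau h-s)\nabla U(X_{t+s})\,\dd s+\sqrt{2\gamma\alpha}\int_0^{\tau h}(\tau h-s)\phi(\tau h-s)\,\dd W_{t+s}.
\]
The predictor is given by the first line of \eqref{eq:unified-approx-ULD-one-step}. The difference $\bar X-X$ therefore splits into four pieces:
\begin{itemize}
\item a velocity piece $\tau h(\Phi_1(\tau h)-\phi(\tau h))v$;
\item a drift piece, namely the exact drift integral minus $\ell_1\alpha\gamma^{-1}\tau h(1-\Phi_2(\tau h))\nabla U(x)$;
\item a noise piece, the stochastic integral with kernel $(\tau h-s)\big(\ell_2\Phi_3-\phi\big)(\tau h-s)$;
\item a remainder from freezing $\nabla U(X_{t+s})$ at $\nabla U(x)$.
\end{itemize}
I will apply \eqref{eq:elementary_inequality} with four terms, which matches the factors $4$ and $8=4\cdot2$ in $\mathcal C_1,\mathcal C_1'$. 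Each piece is estimated conditionally on $\tau$ and then averaged over $\tau\in[0,1]$, using $\tau\le1$.

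For the velocity piece, Assumption \ref{as:coeff-approx-general} with $\eta_1=\tfrac32$ gives $|\tau h(\Phi_1-\phi)(\tau h)|\le\kappa h^{5/2}$. This yields $4\kappa^2h^5|v|^2\le4\kappa^2h^3|v|^2$.

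For the noise piece, Itô's isometry gives $d\int_0^{\tau h}r^2|\ell_2\Phi_3(r)-\phi(r)|^2\,\dd r$. When $\ell_2=0$ the integrand is $|\phi|^2\le1$; when $\ell_2=1$ it is at most $\kappa^2 r^{2\eta_3}\le\kappa^2$. Both cases give at most $\tfrac13((1-\ell_2)^2+\ell_2^2\kappa^2)dh^3$, and multiplying by $4\cdot2\gamma\alpha$ gives the last term of $\mathcal C_1'$.

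For the drift piece, I use the identity $\int_0^{\theta}(\theta-s)\phi(\theta-s)\,\dd s=\gamma^{-1}\theta(1-\phi(\theta))$, obtained by integrating \eqref{eq:cancellation}. With the gradient frozen at $x$, the exact drift becomes $-\alpha\gamma^{-1}\tau h(1-\phi(\tau h))\nabla U(x)$.
\begin{itemize}
\item If $\ell_1=1$, the mismatch is $\alpha\gamma^{-1}\tau h(\Phi_2-\phi)(\tau h)\nabla U(x)$. Its size is $\le\alpha\gamma^{-1}\kappa h^{3/2}|\nabla U(x)|$ by $\eta_2=\tfrac12$.
\item If $\ell_1=0$, the whole frozen drift remains. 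Its size is $\le\alpha\tfrac{h^2}{2}|\nabla U(x)|$ because $(\tau h-s)\phi\le\tau h-s$.
\end{itemize}
In both cases the bound is $\alpha h^{3/2}\big(\tfrac{1-\ell_1}{2}+\ell_1\gamma^{-1}\kappa\big)|\nabla U(x)|$, using $h\le1$. The bound \eqref{eq:gradient-growth-square} turns $|\nabla U(x)|^2$ into $2L^2|x|^2+2L_2^2d$, giving the factors $8\alpha^2L^2(\cdot)^2$ and $8\alpha^2L_2^2(\cdot)^2$.

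The remainder is $\alpha\int_0^{\tau h}(\tau h-s)\phi(\tau h-s)\big(\nabla U(X_{t+s})-\nabla U(x)\big)\,\dd s$. I apply Cauchy--Schwarz with weight $(\tau h-s)^2$ and then the gradient-increment bound of Lemma \ref{lem:xt-2p} with $\theta=s$ and starting point $X_t=x$. This gives $\alpha^2\tau h\int_0^{\tau h}(\tau h-s)^2L^2(\mathscr H^{x,v}_1(|x|^2+|v|^2)+\mathscr H_1d)s^2\,\dd s$. The inner integral equals $(\tau h)^5/30$, so the whole expression is $O(h^6)\le O(h^3)$. Including the factor $4$ produces $\tfrac{2}{15}\alpha^2L^2$.

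I expect the main obstacle to be the drift piece. The case $\ell_1=1$ relies on the exact quadrature identity so that only the consistency error $\Phi_2-\phi$ survives. The case $\ell_1=0$ must be seen to lose only $h^2$, which is comfortably better than $h^{3/2}$. The remaining work is careful bookkeeping of conditioning on $\tau$, which is harmless because $\tau$ is independent of $W$.
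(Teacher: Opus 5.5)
Your proposal is correct and follows essentially the same route as the paper. You use the same four-term splitting (velocity mismatch, frozen-gradient drift mismatch, gradient-freezing remainder via Cauchy--Schwarz and Lemma \ref{lem:xt-2p}, and the It\^o-isometry noise term), the same factor-$4$ elementary inequality, and arrive at identical term-by-term constants. The only cosmetic difference is in the case $\ell_1=0$: you bound the frozen drift through the quadrature identity $\int_0^\theta u\phi(u)\,\dd u=\gamma^{-1}\theta(1-\phi(\theta))$ together with $\phi\le1$, whereas the paper uses the equivalent estimate $|1-\phi(a)|\le\frac{\gamma}{2}a$.
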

\begin{proof}
\textbf{Proof.}
By the one-step representation of the exact ULD \eqref{eq:exact-ULD-one-step}, we have
\begin{equation}
\label{eq:exact-random-intermediate-x}
\begin{aligned}
X(t,x,v;t+\tau h)
=   &
x
+
\tau h\phi(\tau h)v
-
\alpha
\int_0^{\tau h}
(\tau h-r)\phi(\tau h-r)
\nabla U(X(t,x,v;t+r))\,\dd r
\\   &
+
\sqrt{2\gamma\alpha}
\int_0^{\tau h}
(\tau h-r)\phi(\tau h-r)\,\dd W_{t+r}.
\end{aligned}
\end{equation}
Subtracting \eqref{eq:exact-random-intermediate-x} from the first term of \eqref{eq:unified-approx-ULD-one-step} yields
\begin{equation}
\label{eq:delta-tau-x-decomposition}
\begin{aligned}
&
\bar X(t,x,v;t+\tau h)
-
X(t,x,v;t+\tau h)
\\
=&
\tau h
\big(
\Phi_1(\tau h)-\phi(\tau h)
\big)v
-
\alpha
\gamma^{-1}
\tau h
\Big(
\ell_1
\big(
1-\Phi_2(\tau h)
\big)
-
\big(
1-\phi(\tau h)
\big)
\Big)
\nabla U(x)
\\
&+
\alpha
\int_0^{\tau h}
(\tau h-r)\phi(\tau h-r)
\big(
\nabla U(X(t,x,v;t+r))-\nabla U(x)
\big)\,\dd r
\\
&+
\sqrt{2\gamma\alpha}
\int_0^{\tau h}
(\tau h-r)
\big(
\ell_2\Phi_3(\tau h-r)-\phi(\tau h-r)
\big)\,\dd W_{t+r}.
\end{aligned}
\end{equation}
Taking squares, taking expectations and applying the elementary inequality \eqref{eq:elementary_inequality}, we arrive at
\begin{align}
\label{eq:delta-tau-x-four-terms}
&
\mathbb E
\Big[
\big|
\bar X(t,x,v;t+\tau h)
-
X(t,x,v;t+\tau h)
\big|^2
\Big]
\nonumber\\
\leq&
4
\underbrace{
\mathbb E
\Big[
\big|
\tau h
\big(
\Phi_1(\tau h)-\phi(\tau h)
\big)v
\big|^2
\Big]
}_{=:\Xi_1}
+
4
\underbrace{
\mathbb E
\Big[
\Big|
\alpha
\gamma^{-1}
\tau h
\Big(
\ell_1
\big(
1-\Phi_2(\tau h)
\big)
-
\big(
1-\phi(\tau h)
\big)
\Big)
\nabla U(x)
\Big|^2
\Big]
}_{=:\Xi_2}
\nonumber\\
&+
4
\underbrace{
\mathbb E
\Big[
\Big|
\alpha
\int_0^{\tau h}
(\tau h-r)\phi(\tau h-r)
\big(
\nabla U(X(t,x,v;t+r))-\nabla U(x)
\big)\,\dd r
\Big|^2
\Big]
}_{=:\Xi_3}
\nonumber\\
&+
4
\underbrace{
\mathbb E
\Big[
\Big|
\sqrt{2\gamma\alpha}
\int_0^{\tau h}
(\tau h-r)
\big(
\ell_2\Phi_3(\tau h-r)-\phi(\tau h-r)
\big)\,\dd W_{t+r}
\Big|^2
\Big]
}_{=:\Xi_4}.
\end{align}
We now estimate these four terms separately. 
For the first term, Assumption \ref{as:coeff-approx-general} gives
\[
\big|
\Phi_1(\tau h)-\phi(\tau h)
\big|
\leq
\kappa(\tau h)^{\frac{3}{2}},
\]
which together with the facts \(0\leq\tau\leq1\) and \(h\leq1\) implies
\begin{equation}
\label{eq:Xi1-estimate}
\begin{aligned}
\Xi_1
&\leq
\kappa^2 |v|^2
\mathbb E
\big[
(\tau h)^5
\big]
\leq
\kappa^2 |v|^2 h^5
\leq
\kappa^2 |v|^2 h^3 .
\end{aligned}
\end{equation}

To treat \(\Xi_2\) properly, 
we use the identity
\[
1-\phi(a)
=
1-\tfrac{1-e^{-\gamma a}}{\gamma a},
\qquad a>0,
\]
together with the bound
\[
|1-\phi(a)|
\leq
\tfrac{\gamma }{2}a,
\qquad a\geq0,
\]
and Assumption \ref{as:coeff-approx-general} to derive
\begin{equation}
\label{eq:Xi2-coefficient-bound}
\begin{aligned}
&
\gamma^{-1}
\tau h
\Big|
\ell_1
\big(
1-\Phi_2(\tau h)
\big)
-
\big(
1-\phi(\tau h)
\big)
\Big|
\\
\leq&
\gamma^{-1}
\tau h
\Big(
(1-\ell_1)
\big|
1-\phi(\tau h)
\big|
+
\ell_1
\big|
\Phi_2(\tau h)-\phi(\tau h)
\big|
\Big)
\\
\leq&
\gamma^{-1}
\tau h
\left(
(1-\ell_1)
\tfrac{\gamma\tau h}{2}
+
\ell_1\kappa(\tau h)^{\frac{1}{2}}
\right)
\\
\leq&
\left(
\tfrac{1-\ell_1}{2}
+
\ell_1\gamma^{-1}\kappa
\right)
(\tau h)^{\frac{3}{2}}.
\end{aligned}
\end{equation}
Here we used \(\tau h\leq1\) in the last step. 
Moreover, in view of Assumptions \ref{as:Lip} and \ref{as:U_0-bound}, we have
\[
|\nabla U(x)|^2
\leq
2L^2|x|^2
+
2L_2^2d.
\]
Combining this gradient growth estimate with \eqref{eq:Xi2-coefficient-bound}, we obtain
\begin{equation}
\label{eq:Xi2-estimate}
\begin{aligned}
\Xi_2
\leq&
2\alpha^2L^2
\left(
\tfrac{1-\ell_1}{2}
+
\ell_1\gamma^{-1}\kappa
\right)^2
|x|^2h^3
+
2\alpha^2L_2^2
\left(
\tfrac{1-\ell_1}{2}
+
\ell_1\gamma^{-1}\kappa
\right)^2
dh^3 .
\end{aligned}
\end{equation}

For \(\Xi_3\), applying the Cauchy--Schwarz inequality and using \(|\phi(r)|\leq1\) help us get
\begin{equation}
\label{eq:Xi3-cauchy}
\begin{aligned}
\Xi_3
\leq&
\alpha^2
\mathbb E
\Bigg[
\tau h
\int_0^{\tau h}
(\tau h-r)^2
\big|
\nabla U(X(t,x,v;t+r))-\nabla U(x)
\big|^2
\,\dd r
\Bigg].
\end{aligned}
\end{equation}
Invoking Lemma \ref{lem:xt-2p}, for every \(r\in[0, h]\), we deduce
\[
\mathbb E_{W}
\Big[
\big|
\nabla U(X(t,x,v;t+r))-\nabla U(x)
\big|^2
\Big]
\leq
\Big(
L^2\mathscr H^{x,v}_1
\big(
|x|^2+|v|^2
\big)
+
L^2\mathscr H_1d
\Big)r^2.
\]
Hence, conditioning on \(\tau\) and then using the above estimate yields
\begin{equation}
\label{eq:Xi3-estimate}
\begin{aligned}
\Xi_3
\leq&
\alpha^2
\Big(
L^2\mathscr H^{x,v}_1
\big(
|x|^2+|v|^2
\big)
+
L^2\mathscr H_1d
\Big)
\mathbb E_\tau
\left[
\tau h
\int_0^{\tau h}
(\tau h-r)^2 r^2\,\dd r
\right]
\\
=&
\tfrac{\alpha^2}{30}
\Big(
L^2\mathscr H^{x,v}_1
\big(
|x|^2+|v|^2
\big)
+
L^2\mathscr H_1d
\Big)
\mathbb E_\tau
\big[
(\tau h)^6
\big]
\\
\leq&
\tfrac{\alpha^2}{30}
\Big(
L^2\mathscr H^{x,v}_1
\big(
|x|^2+|v|^2
\big)
+
L^2\mathscr H_1d
\Big)
h^3.
\end{aligned}
\end{equation}
In the last step, we used \(0\leq\tau\leq1\) and \(h\leq1\).
It remains to estimate the stochastic term \(\Xi_4\). 
By It\^o's isometry, we have
\begin{equation}
\label{eq:Xi4-ito}
\begin{aligned}
\Xi_4
=&
2\gamma\alpha d\,
\mathbb E
\left[
\int_0^{\tau h}
(\tau h-r)^2
\big|
\ell_2\Phi_3(\tau h-r)-\phi(\tau h-r)
\big|^2
\,\dd r
\right].
\end{aligned}
\end{equation}
Noting \(|\phi(r)|\leq1\) and thanks to Assumption \ref{as:coeff-approx-general}, we obtain
\[
\big|
\ell_2\Phi_3(r)-\phi(r)
\big|^2
\leq
(1-\ell_2)^2
+
\ell_2^2\kappa^2,
\qquad
0\leq r\leq  h
\]
and thus
\begin{equation}
\label{eq:Xi4-estimate}
\begin{aligned}
\Xi_4
\leq&
2\gamma\alpha d
\mathbb E
\left[
\int_0^{\tau h}
(\tau h-r)^2
\Big(
(1-\ell_2)^2
+
\ell_2^2\kappa^2
\Big)\dd r
\right]
\\
=&
2\gamma\alpha d
\mathbb E
\left[
\tfrac{(1-\ell_2)^2}{3}
(\tau h)^3
+
\tfrac{\ell_2^2\kappa^2}{3}
(\tau h)^3
\right]
\\
\leq&
2\gamma\alpha d
\left(
\tfrac{(1-\ell_2)^2
+
\ell_2^2\kappa^2}{3}
\right)h^3.
\end{aligned}
\end{equation}
Substituting \eqref{eq:Xi1-estimate}, \eqref{eq:Xi2-estimate}, \eqref{eq:Xi3-estimate}, and \eqref{eq:Xi4-estimate} into \eqref{eq:delta-tau-x-four-terms} results in
\begin{equation}
\begin{aligned}
&
\mathbb E
\Big[
\big|
\bar X(t,x,v;t+\tau h)
-
X(t,x,v;t+\tau h)
\big|^2
\Big]
\\
\leq&
\Big(
4\kappa^2
+
8\alpha^2L^2
\left(
\tfrac{1-\ell_1}{2}
+
\ell_1\gamma^{-1}\kappa
\right)^2
+
\tfrac{2}{15}
\alpha^2L^2
\mathscr H^{x,v}_1
\Big)
\big(
|x|^2+|v|^2
\big)h^3
\\
&+
\Big(
8\alpha^2L_2^2
\left(
\tfrac{1-\ell_1}{2}
+
\ell_1\gamma^{-1}\kappa
\right)^2
+
\tfrac{2\alpha^2L^2}{15}
\mathscr H_1
+
8\gamma\alpha
\left(
\tfrac{(1-\ell_2)^2
+
\ell_2^2\kappa^2}{3}
\right)
\Big)
dh^3.
\end{aligned}
\end{equation}
This together with the definitions of \(\mathcal C_1\) and \(\mathcal C_1'\) in \eqref{eq:C1-C1-prime-def} gives
\[
\mathbb{E}
\Big[
\big|
\bar{X}(t,x,v;t+\tau h)
-
X(t,x,v;t+\tau h)
\big|^2
\Big]
\leq
\Big(
\mathcal C_1
\big(
|x|^2+|v|^2
\big)
+
\mathcal C_1'd
\Big)h^3.
\]
This completes the proof.
\end{proof}

\begin{lem}[\textbf{One-step error estimates for randomized ULMC}]
\label{lem:one-step-weak-strong-error}
Let Assumptions \ref{as:Lip}, \ref{as:U_0-bound}, \ref{as:dissipativity} hold and suppose that Assumption \ref{as:coeff-approx-general} is satisfied with $\boldsymbol{\eta}=\boldsymbol{\eta}_\mathrm{R}$. 
Let $\tau\sim {\mathcal U}(0,1)$ be independent of the Brownian motion. 
For given $t\geq0$ and $(x^\top,v^\top)^\top\in\mathbb R^{2d}$, let
\(
(X(t,x,v;t+h),V(t,x,v;t+h))
\)
and 
\(
(\bar X(t,x,v;t+h),\bar V(t,x,v;t+h))
\)
be the one-step solution of ULD \eqref{eq:exact-ULD-one-step} and one-step U-ULMC approximation \eqref{eq:unified-approx-ULD-one-step}, respectively.
Assume that the stepsize satisfies
\[
0<h\leq 1\wedge L^{-1}\wedge \gamma^{-1}.
\]
Then the following one-step weak and strong error estimates hold:
\begin{equation}
\label{eq:one-step-weak-error}
\left|
\mathbb E
\left[
\left(
\begin{aligned}
X(t,x,v;t+h)
-
\bar X(t,x,v;t+h)
\\
V(t,x,v;t+h)
-
\bar V(t,x,v;t+h)
\end{aligned}
\right)
\right]
\right|
\leq
\Big(
\mathcal T_{\mathrm{R}}
\big(
|x|^2+|v|^2
\big)
+
\mathcal T_{\mathrm{R}}'d
\Big)^{\frac{1}{2}}
h^{\frac{5}{2}},
\end{equation}
\begin{equation}
\label{eq:one-step-strong-error}
\mathbb E
\left[
\left|
\left(
\begin{aligned}
X(t,x,v;t+h)-\bar X(t,x,v;t+h)
\\
V(t,x,v;t+h)-\bar V(t,x,v;t+h)
\end{aligned}
\right)
\right|^2
\right]
\leq
\Big(
\mathcal S_{\mathrm{R}}
\big(
|x|^2+|v|^2
\big)
+
\mathcal S_{\mathrm{R}}'d
\Big)h^4.
\end{equation}
Here the dimension-independent constants are given by
\begin{equation}
\label{eq:weak-strong-local-constants}
\begin{aligned}
\mathcal T_{\mathrm{R}}
:=&
3
\Big(
2\kappa^2
+
4\alpha^2\kappa^2L^2\bar K_1
+
2\alpha^2(1+\kappa)^2L^2\mathcal C_1
\Big),
\\
\mathcal T_{\mathrm{R}}'
:=&
3
\Big(
2\alpha^2\kappa^2
\big(
2L^2\bar K_1+2L_2^2
\big)
+
2\alpha^2(1+\kappa)^2L^2\mathcal C_1'
\Big),
\\
\mathcal S_{\mathrm{R}}
:=&
5
\Big(
2\kappa^2
+
4\alpha^2\kappa^2L^2\bar K_1
+
\tfrac{8}{3}\alpha^2
\big(
4L^2
+
4L^2\mathscr H^{x,v}_1
+
4\gamma^2L^2
\big)
+
2\alpha^2(1+\kappa)^2L^2\mathcal C_1
\Big),
\\
\mathcal S_{\mathrm{R}}'
:=&
5
\Big(
2\alpha^2\kappa^2
\big(
2L^2\bar K_1+2L_2^2
\big)
+
\tfrac{8}{3}\alpha^2
\big(
4L_2^2
+
4L^2\mathscr H_1
+
4\gamma^2L_2^2
\big)
+
2\alpha^2(1+\kappa)^2L^2\mathcal C_1'
+
\tfrac{4}{5}\gamma\alpha\kappa^2
\Big),
\end{aligned}
\end{equation}
where constants $\mathcal C_1$ and $\mathcal C_1'$ are defined in Lemma \ref{lem:delta_tau_xy}.
\end{lem}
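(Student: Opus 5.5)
I will subtract the one-step representation \eqref{eq:unified-approx-ULD-one-step} from \eqref{eq:exact-ULD-one-step} and split each component of the error into four pieces. The first is the coefficient mismatch on $v$, namely $h(\phi(h)-\Phi_1(h))v$ and $(\psi(h)-\Psi_1(h))v$. The second is the noise mismatch, a stochastic integral with integrand $(h-s)(\phi-\Phi_3)(h-s)$ or $(\psi-\Psi_3)(h-s)$. The third is a coefficient mismatch on the gradient at the intermediate point, e.g. $\alpha h(h-\tau h)(\phi-\Phi_2)(h-\tau h)\nabla U(\bar X_{t+\tau h})$ and $\alpha h(\psi-\Psi_2)(h-\tau h)\nabla U(\bar X_{t+\tau h})$. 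The fourth is the quadrature error
\[
\alpha\Big(h\, g(h-\tau h)\nabla U(X(t,x,v;t+\tau h))-\int_0^h g(h-s)\nabla U(X(t,x,v;t+s))\,\mathrm ds\Big),
\]
with $g(r)=r\phi(r)$ or $\psi(r)$, plus the term $\alpha h g(h-\tau h)\big(\nabla U(\bar X_{t+\tau h})-\nabla U(X_{t+\tau h})\big)$. The terms are controlled as follows.

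\emph{Mismatch terms.} Under Assumption~\ref{as:coeff-approx-general} with $\boldsymbol\eta_{\mathrm R}$, the $v$-terms are $O(\kappa h^{5/2}|v|)$. The gradient coefficient terms are $O(\kappa h^{5/2}|\nabla U|)$, and I bound $\mathbb E|\nabla U(\bar X_{t+\tau h})|^2$ via \eqref{eq:grad-growth-holder-exact}. The noise mismatch has mean zero. Its second moment, by It\^o isometry with $|\psi-\Psi_3|\le \kappa r^2$, is at most $\kappa^2 d h^5$. These contribute to both the weak and strong bounds, entering the weak bound only through the deterministic parts. The term $\alpha h g\,(\nabla U(\bar X_{t+\tau h})-\nabla U(X_{t+\tau h}))$ is bounded in $L^2$ by $\alpha(1+\kappa)L h\cdot(\mathcal C_1(|x|^2+|v|^2)+\mathcal C_1'd)^{1/2}h^{3/2}$ using Lemma~\ref{lem:delta_tau_xy}. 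For the weak estimate I simply use Jensen, $|\mathbb E Z|\le(\mathbb E|Z|^2)^{1/2}$, which explains the $\mathcal C_1$ terms in $\mathcal T_{\mathrm R}$. The $\kappa^2L^2\bar K_1$ terms arise from bounding $\nabla U$ along the exact path through Lemma~\ref{lem:ULD-moment bound}.

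\emph{Quadrature error.} This term is the heart of the argument. Since $\tau\sim\mathcal U(0,1)$ is independent of $W$, we have $\mathbb E_\tau[h\,g(h-\tau h)F(t+\tau h)]=\int_0^h g(h-s)F(t+s)\,\mathrm ds$. Hence the quadrature error has \emph{zero mean}, so it is absent from the weak estimate, which is why order $h^{5/2}$ survives there. For the strong estimate I would not use unbiasedness. Instead I would write the quadrature error as $\alpha\int_0^h g(h-s)\big(\nabla U(X_{t+\tau h})-\nabla U(X_{t+s})\big)\mathrm ds$ plus $\alpha h\big(g(h-\tau h)-g\ \text{averaged}\big)\nabla U(X_{t+\tau h})$. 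The first piece is bounded via Lemma~\ref{lem:xt-2p} by $O(h^2)\cdot(\ldots)^{1/2}$ in $L^2$, giving $h^4$ after squaring; this produces the $L^2\mathscr H_1^{x,v}$ terms. The second piece uses $|g(r)-g(r')|\le \gamma|r-r'|$, or $\le|r-r'|$ for $r\phi(r)$, together with the growth bound on $\nabla U$; this explains the $4L^2,4\gamma^2L^2$ terms in $\mathcal S_{\mathrm R}$. Squaring a five-term sum with \eqref{eq:elementary_inequality} then yields the factor $5$, and a three-term deterministic sum yields the factor $3$ in $\mathcal T_{\mathrm R}$.

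The main obstacle is the weak estimate. It requires verifying carefully that, after conditioning on $\mathcal F^W$, the $\tau$-average of the quadrature term exactly reproduces the exact integral, including for the $X$-component with weight $h(h-\tau h)\phi(h-\tau h)$ versus $(h-s)\phi(h-s)$. It also requires tracking that the remaining nonzero-mean pieces (the coefficient mismatches and the intermediate-stage error from Lemma~\ref{lem:delta_tau_xy}) are all of order $h^{5/2}$ in $L^2$. Everything else reduces to the cited moment and H\"older bounds with $h\le1\wedge L^{-1}\wedge\gamma^{-1}$ used to collapse powers of $h$.
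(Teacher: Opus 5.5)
Your proof is correct and follows the same skeleton as the paper's. Under $\boldsymbol{\eta}_{\mathrm R}$ the coefficient and noise mismatches are $O(h^{5/2})$ in $L^2$, and the intermediate-stage error is handled by Lemma~\ref{lem:delta_tau_xy}. The randomized quadrature error drops out of the weak estimate because $\mathbb E_\tau[hF_{t+\tau h}]=\int_0^hF_{t+r}\,\mathrm dr$. The step you call the main obstacle is immediate, since $h\,g(h-\tau h)=h(h-\tau h)\phi(h-\tau h)$ for $g(r)=r\phi(r)$. In the strong estimate the quadrature error gets a crude $O(h^4)$ mean-square bound.

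You differ from the paper in two bookkeeping choices. These matter only because the lemma asserts explicit constants.

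First, the paper telescopes the gradient term the other way. It puts the coefficient mismatch on the exact $\nabla U(X_{t+\tau h})$, whose second moment Lemma~\ref{lem:ULD-moment bound} controls; this produces $4\alpha^2\kappa^2L^2\bar K_1$. It puts the numerical weights $\Phi_2,\Psi_2$, bounded by $1+\kappa$, on $\nabla U(X_{t+\tau h})-\nabla U(\bar X_{t+\tau h})$; this produces the factor $(1+\kappa)^2$ in front of $\mathcal C_1,\mathcal C_1'$. In your arrangement the mismatch multiplies $\nabla U(\bar X_{t+\tau h})$. You therefore need a moment bound for the predictor, for instance \eqref{eq:grad-Xtau-split-clear} with $(\bar X_k,\bar V_k)=(x,v)$, rather than $\bar K_1$. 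Also, the exact weights in front of your gradient difference satisfy $|g|\le 1$, so no $(1+\kappa)$ arises. Your stated attribution of the $\kappa^2L^2\bar K_1$ and $(1+\kappa)$ terms thus does not match your own decomposition.

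Second, for the strong quadrature bound the paper anchors at the deterministic left endpoint, writing $\mathcal R_3=\int_0^h(F_{t+r}-F_t)\,\mathrm dr-h(F_{t+\tau h}-F_t)$. There the part coming from the Lipschitz continuity of $r\mapsto r\phi(r)$ and of $\psi$ multiplies $\nabla U(x)$, giving $4L^2$ and $4\gamma^2L^2$ with no moment constant. The factor $\tfrac83=\tfrac23+2$ comes from the two pieces. Your anchoring at $t+\tau h$ is equally valid, but that part then multiplies $\nabla U(X_{t+\tau h})$ and carries an extra factor $\bar K_1$.

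So your argument proves both estimates with the correct orders $h^{5/2}$ and $h^4$ and the same dependence on $|x|^2+|v|^2$ and $d$, but with different dimension-independent constants. To obtain $\mathcal T_{\mathrm R},\mathcal T_{\mathrm R}',\mathcal S_{\mathrm R},\mathcal S_{\mathrm R}'$ exactly as stated, adopt the paper's two choices.
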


\begin{proof}
\textbf{Proof.}
For simplicity, write
\[
X_{t+r}:=X(t,x,v;t+r),
\qquad
\bar X_{t+\tau h}:=\bar X(t,x,v;t+\tau h),
\qquad 0\leq r\leq h .
\]
By the one-step representations of ULD and U-ULMC, we decompose the one-step error as
\begin{align}
\label{eq:one-step-error-vector-decomposition}
&
\left(
\begin{aligned}
X(t,x,v;t+h)-\bar X(t,x,v;t+h)
\\
V(t,x,v;t+h)-\bar V(t,x,v;t+h)
\end{aligned}
\right)
\nonumber\\
=&
\underbrace{
\left(
\begin{aligned}
h\big(\phi(h)-\Phi_1(h)\big)v
\\
\big(\psi(h)-\Psi_1(h)\big)v
\end{aligned}
\right)
}_{=:\mathcal R_1}
-
\alpha h
\underbrace{
\left(
\begin{aligned}
 (h-\tau h)
\big(
\phi(h-\tau h)
&-
\Phi_2(h-\tau h)
\big)
\nabla U(X_{t+ \tau h})
\\
 \big(
\psi(h-\tau h)
& -
\Psi_2(h-\tau h)
\big)
\nabla U(X_{t+\tau h})
\end{aligned}
\right)
}_{=:\mathcal R_2}
\nonumber\\
&-
\alpha
\underbrace{
\left(
\begin{aligned}
\int_0^h
(h-r)\phi(h-r)\nabla U(X_{t+r})\,\dd r
& -
h(h-\tau h)\phi(h-\tau h)\nabla U(X_{t+\tau h})
\\
\int_0^h
\psi(h-r)\nabla U(X_{t+r})\,\dd r
& -
h\psi(h-\tau h)\nabla U(X_{t+ \tau h})
\end{aligned}
\right)
}_{=:\mathcal R_3}
\nonumber\\
&-
\alpha h
\underbrace{
\left(
\begin{aligned}
(h-\tau h)\Phi_2(h-\tau h)
\big(
\nabla U(X_{t+ \tau h})
-
\nabla U(\bar X_{t+ \tau h})
\big)
\\
\Psi_2(h-\tau h)
\big(
\nabla U(X_{t+ \tau h})
-
\nabla U(\bar X_{t+ \tau h})
\big)
\end{aligned}
\right)
}_{=:\mathcal R_4}
\nonumber\\
&+
\sqrt{2\gamma\alpha}
\underbrace{
\left(
\begin{aligned}
\displaystyle
\int_0^h
(h-r)
\big(
\phi(h-r)-\Phi_3(h-r)
\big)\,\dd W_{t+r}
\\
\displaystyle
\int_0^h
\big(
\psi(h-r)-\Psi_3(h-r)
\big)\,\dd W_{t+r}
\end{aligned}
\right)
}_{=:\mathcal R_5}.
\end{align}
%%
%%
%
%We first prove the strong error estimate. 
Taking squares and taking expectations we obtain
\begin{equation}
\label{eq:one-step-strong-split-noI}
\begin{aligned}
&
\mathbb E
\left[
\left|
\left(
\begin{aligned}
X(t,x,v;t+h)-\bar X(t,x,v;t+h)
\\
V(t,x,v;t+h)-\bar V(t,x,v;t+h)
\end{aligned}
\right)
\right|^2
\right]
\\
\leq&
5\mathbb E
\Big[\big|
\mathcal R_1
\big|^2\Big]
+
5\alpha^2h^2\mathbb E
\Big[\big|
\mathcal R_2
\big|^2\Big]
+
5\alpha^2 
\mathbb E
\Big[\big|
\mathcal R_3
\big|^2\Big]
+
5\alpha^2h^2
\mathbb E
\Big[\big|
\mathcal R_4
\big|^2\Big]
+
10\gamma\alpha
\mathbb E
\Big[\big|
\mathcal R_5
\big|^2\Big].
\end{aligned}
\end{equation}

We estimate these terms separately. 
For the first term, Assumption \ref{as:coeff-approx-general} ensures
\[
|\phi(h)-\Phi_1(h)|
\leq
\kappa h^{\frac{3}{2}},
\qquad
|\psi(h)-\Psi_1(h)|
\leq
\kappa h^{\frac{5}{2}}.
\]
Since \(h\leq1\), we arrive at
\begin{equation}
\label{eq:R1-one-step-estimate}
\begin{aligned}
\mathbb E
\Big[\big|
\mathcal R_1
\big|^2\Big]
\leq
\kappa^2h^5|v|^2
+
\kappa^2h^5|v|^2
\leq
2\kappa^2|v|^2h^5
.
\end{aligned}
\end{equation}
Next, we control the term involving the coefficient approximation in the force evaluation. 
In view of Assumption \ref{as:coeff-approx-general}, one can see that
\[
\big|
(h-\tau h)
\big(
\phi(h-\tau h)-\Phi_2(h-\tau h)
\big)
\big|
\leq
\kappa(h-\tau h)^{\frac{3}{2}},
\]
and
\[
\big|
\psi(h-\tau h)-\Psi_2(h-\tau h)
\big|
\leq
\kappa(h-\tau h)^{\frac{3}{2}}.
\]
Hence,
\begin{equation}
\label{eq:R2-one-step-pre}
\begin{aligned}
\mathbb E
\Big[\big|
\mathcal R_2
\big|^2\Big]
\leq
2\kappa^2
\mathbb E
\Big[
(h-\tau h)^3
\big|
\nabla U(X_{t+\tau h})
\big|^2
\Big].
\end{aligned}
\end{equation}
Using Assumptions \ref{as:Lip}, \ref{as:U_0-bound} and Lemma \ref{lem:ULD-moment bound}, we have, for any \(0\leq r\leq h\),
\begin{equation}
\label{eq:grad-moment-expanded-noG}
\begin{aligned}
\mathbb E_W
\Big[
\big|
\nabla U(X_{t+r})
\big|^2
\Big]
&\leq
2L^2
\mathbb E_W
\Big[
\big|X_{t+r}
\big|^2
\Big]
+
2L_2^2d
\\
&\leq
2L^2\bar K_1
\big(
|x|^2+|v|^2+d
\big)
+
2L_2^2d
\\
&=
2L^2\bar K_1
\big(
|x|^2+|v|^2
\big)
+
\big(
2L^2\bar K_1+2L_2^2
\big)d.
\end{aligned}
\end{equation}
Therefore,
\begin{equation}
\label{eq:R2-one-step-estimate}
\begin{aligned}
\alpha^2h^2
\mathbb E
\Big[\big|
\mathcal R_2
\big|^2\Big]
\leq&
2\alpha^2\kappa^2
\Big(
2L^2\bar K_1
\big(
|x|^2+|v|^2
\big)
+
\big(
2L^2\bar K_1+2L_2^2
\big)d
\Big)
h^5.
\end{aligned}
\end{equation}
We now turn to the randomized quadrature term $\mathcal R_3$. 
For notational simplicity, we write
\begin{equation}
\label{eq:def-F}
F_{t+r}
:=
\left(
\begin{aligned}
(h-r)\phi(h-r)\nabla U(X_{t+r})
\\
\psi(h-r)\nabla U(X_{t+r})
\end{aligned}
\right),
\qquad 0\leq r\leq h .
\end{equation}
Then
%%%%%%%%%%%%F_t%%%%%%%%%%%%%%%%%%%%%%%%
\iffalse
%
\[
F_t
=
\left(
\begin{aligned}
h\phi(h)\nabla U(x)
\\
\psi(h)\nabla U(x)
\end{aligned}
\right),
\]
and
\fi
%
%%%%%%%%%%%%%%%%%%%%%%%%%%%%%%%%%%%%%%%%%
\[
\mathcal R_3
=
\int_0^h F_{t+r}\,\dd r
-
hF_{t+\tau h}
=
\int_0^h
\big(
F_{t+r}-F_t
\big)\,\dd r
-
h
\big(
F_{t+\tau h}-F_t
\big).
\]
By the elementary inequality and the Cauchy--Schwarz inequality, we further deduce
\begin{equation}
\label{eq:R3-split-Ft}
\begin{aligned}
\mathbb E
\Big[\big|
\mathcal R_3
\big|^2\Big]
\leq&
2
\mathbb E
\bigg[
\bigg|
\int_0^h
\big(
F_{t+r}-F_t
\big)\,\dd r
\bigg|^2
\bigg]
+
2h^2
\mathbb E
\Big[
\big|
F_{t+\tau h}-F_t
\big|^2
\Big]
\\
\leq&
2h
\int_0^h
\mathbb E
\Big[
\big|
F_{t+r}-F_t
\big|^2
\Big]\dd r
+
2h^2
\mathbb E
\Big[
\big|
F_{t+\tau h}-F_t
\big|^2
\Big].
\end{aligned}
\end{equation}
For $0\leq r\leq h$, we estimate $F_{t+r}-F_t$ componentwise. 
For the first component, using $|\phi|\leq1$ and
\[
\big|
(h-r)\phi(h-r)-h\phi(h)
\big|
\leq r,
\]
we derive
\begin{equation}
\label{eq:Ftr-Ft-first}
\begin{aligned}
&
\mathbb E
\Big[
\big|
(h-r)\phi(h-r)\nabla U(X_{t+r})
-
h\phi(h)\nabla U(x)
\big|^2
\Big]
\\
\leq&
2h^2
\mathbb E
\Big[
\big|
\nabla U(X_{t+r})-\nabla U(x)
\big|^2
\Big]
+
2r^2|\nabla U(x)|^2
\\
\leq&
2h^2
\Big(
L^2\mathscr H^{x,v}_1
\big(
|x|^2+|v|^2
\big)
+
L^2\mathscr H_1d
\Big)r^2
+
2r^2
\big(
2L^2|x|^2+2L_2^2d
\big)
\\
\leq&
\Big(
4L^2
+
2L^2\mathscr H^{x,v}_1
\Big)
\big(
|x|^2+|v|^2
\big)r^2
+
\Big(
4L_2^2
+
2L^2\mathscr H_1
\Big)dr^2 .
\end{aligned}
\end{equation}
For the second component, since $|\psi|\leq1$ and
\[
|\psi(h-r)-\psi(h)|
\leq
\gamma r,
\]
we similarly obtain
\begin{equation}
\label{eq:Ftr-Ft-second}
\begin{aligned}
&
\mathbb E
\Big[
\big|
\psi(h-r)\nabla U(X_{t+r})
-
\psi(h)\nabla U(x)
\big|^2
\Big]
\\
\leq&
2
\mathbb E
\Big[
\big|
\nabla U(X_{t+r})-\nabla U(x)
\big|^2
\Big]
+
2\gamma^2r^2|\nabla U(x)|^2
\\
\leq&
2
\Big(
L^2\mathscr H^{x,v}_1
\big(
|x|^2+|v|^2
\big)
+
L^2\mathscr H_1d
\Big)r^2
+
2\gamma^2r^2
\big(
2L^2|x|^2+2L_2^2d
\big)
\\
\leq&
\Big(
2L^2\mathscr H^{x,v}_1
+
4\gamma^2L^2
\Big)
\big(
|x|^2+|v|^2
\big)r^2
+
\Big(
2L^2\mathscr H_1
+
4\gamma^2L_2^2
\Big)dr^2 .
\end{aligned}
\end{equation}
This, as well as \eqref{eq:Ftr-Ft-first}, helps us arrive at
\begin{equation}
\label{eq:Ftr-Ft-bound}
\begin{aligned}
\mathbb E
\Big[
\big|
F_{t+r}-F_t
\big|^2
\Big]
\leq&
\Big(
4L^2
+
4L^2\mathscr H^{x,v}_1
+
4\gamma^2L^2
\Big)
\big(
|x|^2+|v|^2
\big)r^2
+
\Big(
4L_2^2
+
4L^2\mathscr H_1
+
4\gamma^2L_2^2
\Big)dr^2 .
\end{aligned}
\end{equation}
Substituting \eqref{eq:Ftr-Ft-bound} into \eqref{eq:R3-split-Ft}, and noting $\tau h\leq h$, we obtain
\begin{equation}
\label{eq:R3-one-step-refined-final}
\begin{aligned}
\mathbb E
\Big[\big|
\mathcal R_3
\big|^2\Big]
\leq&
2h
\int_0^h
\bigg(
\Big(
4L^2
+
4L^2\mathscr H^{x,v}_1
+
4\gamma^2L^2
\Big)
\big(
|x|^2+|v|^2
\big)r^2
+
\Big(
4L_2^2
+
4L^2\mathscr H_1
+
4\gamma^2L_2^2
\Big)dr^2
\bigg)\dd r
\\   & 
+
2h^2
\bigg(
\Big(
4L^2
+
4L^2\mathscr H^{x,v}_1
+
4\gamma^2L^2
\Big)
\big(
|x|^2+|v|^2
\big)h^2
+
\Big(
4L_2^2
+
4L^2\mathscr H_1
+
4\gamma^2L_2^2
\Big)dh^2
\bigg)
\\
\leq&
\tfrac{8}{3}
\Big(
4L^2
+
4L^2\mathscr H^{x,v}_1
+
4\gamma^2L^2
\Big)
\big(
|x|^2+|v|^2
\big)h^4
+
\tfrac{8}{3}
\Big(
4L_2^2
+
4L^2\mathscr H_1
+
4\gamma^2L_2^2
\Big)dh^4.
\end{aligned}
\end{equation}

For \(\mathcal R_4\), note first that Assumption \ref{as:coeff-approx-general} implies
\[
|\Phi_2(h-\tau h)|
\leq
1+\kappa,
\qquad
|\Psi_2(h-\tau h)|
\leq
1+\kappa.
\]
Together with \(h\leq1\), this gives
\[
\left|
\left(
\begin{aligned}
(h-\tau h)\Phi_2(h-\tau h)
\\
\Psi_2(h-\tau h)
\end{aligned}
\right)
\right|^2
\leq
2(1+\kappa)^2.
\]
Applying the Lipschitz continuity of \(\nabla U\) and then invoking Lemma \ref{lem:delta_tau_xy}, we obtain
\begin{equation}
\label{eq:R4-one-step-estimate}
\begin{aligned}
\mathbb E
\Big[\big|
\mathcal R_4
\big|^2\Big]
&\leq
2(1+\kappa)^2L^2
\mathbb E
\Big[
|X_{t+\tau h}-\bar X_{t+\tau h}|^2
\Big]
\\
&\leq
2(1+\kappa)^2L^2
\Big(
\mathcal C_1
\big(
|x|^2+|v|^2
\big)
+
\mathcal C_1'd
\Big)h^3.
\end{aligned}
\end{equation}
Thus, using \(h\leq1\),
\begin{equation}
\label{eq:alphaR4-one-step-estimate}
\begin{aligned}
\alpha^2h^2
\mathbb E
\Big[\big|
\mathcal R_4
\big|^2\Big]
\leq
2\alpha^2(1+\kappa)^2L^2
\Big(
\mathcal C_1
\big(
|x|^2+|v|^2
\big)
+
\mathcal C_1'd
\Big)h^5.
\end{aligned}
\end{equation}

Finally, It\^o's isometry, in combination with Assumption \ref{as:coeff-approx-general}, yields
\begin{equation}
\label{eq:R5-one-step-estimate}
\begin{aligned}
\mathbb E[|\mathcal R_5|^2]
=&
d
\int_0^h
(h-r)^2
\big|
\phi(h-r)-\Phi_3(h-r)
\big|^2\,\dd r
+
d
\int_0^h
\big|
\psi(h-r)-\Psi_3(h-r)
\big|^2\,\dd r
\\
\leq&
\kappa^2d
\int_0^h
(h-r)^4\,\dd r
+
\kappa^2d
\int_0^h
(h-r)^4\,\dd r
\\
=&
\tfrac{2}{5}\kappa^2dh^5 .
\end{aligned}
\end{equation}

Substituting \eqref{eq:R1-one-step-estimate}, \eqref{eq:R2-one-step-estimate}, \eqref{eq:R3-one-step-refined-final}, \eqref{eq:alphaR4-one-step-estimate} and \eqref{eq:R5-one-step-estimate} into \eqref{eq:one-step-strong-split-noI}, one can use \(h\leq1\) to deduce
\begin{equation}
\label{eq:one-step-strong-final}
\begin{aligned}
&
\mathbb E
\left[
\left|
\left(
\begin{aligned}
X(t,x,v;t+h)-\bar X(t,x,v;t+h)
\\
V(t,x,v;t+h)-\bar V(t,x,v;t+h)
\end{aligned}
\right)
\right|^2
\right]
\leq
\Big(
\mathcal S_{\mathrm{R}}
\big(
|x|^2+|v|^2
\big)
+
\mathcal S_{\mathrm{R}}'d
\Big)h^4,
\end{aligned}
\end{equation}
where
\begin{equation}
\label{eq:one-step-strong-constants-corrected}
\begin{aligned}
\mathcal S_{\mathrm{R}}
:=&
5
\Big(
2\kappa^2
+
4\alpha^2\kappa^2L^2\bar K_1
+
\tfrac{8}{3}\alpha^2
\big(
4L^2
+
4L^2\mathscr H^{x,v}_1
+
4\gamma^2L^2
\big)
+
2\alpha^2(1+\kappa)^2L^2\mathcal C_1
\Big),
\\
\mathcal S_{\mathrm{R}}'
:=&
5
\Big(
2\alpha^2\kappa^2
\big(
2L^2\bar K_1+2L_2^2
\big)
+
\tfrac{8}{3}\alpha^2
\big(
4L_2^2
+
4L^2\mathscr H_1
+
4\gamma^2L_2^2
\big)
+
2\alpha^2(1+\kappa)^2L^2\mathcal C_1'
+
\tfrac{4}{5}\gamma\alpha\kappa^2
\Big).
\end{aligned}
\end{equation}

It remains to show the one-step weak error. 
Clearly, expectations of the stochastic integrals in \eqref{eq:one-step-error-vector-decomposition} vanish:
\[
\mathbb E[\mathcal R_5]=0.
\]
%Moreover, the randomized quadrature term also vanishes in expectation. Indeed, conditioning on the exact trajectory and using 
Recalling that \(\tau\sim{\mathcal U}(0,1)\) is independent of $\mathcal{F}^W$ and 
\[
\mathbb E_{\tau}
\big[
hF_{t+\tau h}
\big]
=
\int_0^hF_{t+r}\,\dd r,
\]
we have
\[
\mathbb E[\mathcal R_3]=\E_W[\E_\tau[\mathcal{R}_3]]=0.
\]
Consequently,
\begin{equation}
\label{eq:one-step-weak-split-noJ}
\begin{aligned}
&
\left|
\mathbb E
\left[
\left(
\begin{aligned}
X(t,x,v;t+h)-\bar X(t,x,v;t+h)
\\
V(t,x,v;t+h)-\bar V(t,x,v;t+h)
\end{aligned}
\right)
\right]
\right|^2
\leq
3
\big|
\mathbb E[\mathcal R_1]
\big|^2
+
3\alpha^2h^2
\big|
\mathbb E[\mathcal R_2]
\big|^2
+
3\alpha^2h^2
\big|
\mathbb E[\mathcal R_4]
\big|^2 .
\end{aligned}
\end{equation}
By Jensen's inequality and using the previous mean-square estimates for \(\mathcal R_1,\mathcal R_2,\mathcal R_4\), we obtain
\[
\big|
\mathbb E
\big[ \mathcal R_1 \big]
\big|^2
\leq
\mathbb E
\Big[\big
|\mathcal R_1
\big|^2\Big]
\leq
2\kappa^2|v|^2h^5,
\]
\[
\alpha^2h^2
\big|
\mathbb E
\big[\mathcal R_2\big]
\big|^2
\leq
\alpha^2h^2
\mathbb E
\Big[\big|
\mathcal R_2
\big|^2\Big]
\leq
2\alpha^2\kappa^2
\Big[
2L^2\bar K_1
\big(
|x|^2+|v|^2
\big)
+
\big(
2L^2\bar K_1+2L_2^2
\big)d
\Big]h^5,
\]
and
\[
\alpha^2h^2
\big|
\mathbb E
\big[\mathcal R_4\big]
\big|^2
\leq
\alpha^2h^2
\mathbb E
\Big[\big
|\mathcal R_4
\big|^2\Big]
\leq
2\alpha^2(1+\kappa)^2L^2
\Big(
\mathcal C_1
\big(
|x|^2+|v|^2
\big)
+
\mathcal C_1'd
\Big)h^5.
\]
Plugging these three estimates into \eqref{eq:one-step-weak-split-noJ} gives
\begin{equation}
\label{eq:one-step-weak-square-final}
\begin{aligned}
\left|
\mathbb E
\left[
\left(
\begin{aligned}
X(t,x,v;t+h)-\bar X(t,x,v;t+h)
\\
V(t,x,v;t+h)-\bar V(t,x,v;t+h)
\end{aligned}
\right)
\right]
\right|^2
\leq
\Big(
\mathcal T_{\mathrm{R}}
\big(
|x|^2+|v|^2
\big)
+
\mathcal T_{\mathrm{R}}'d
\Big)h^5,
\end{aligned}
\end{equation}
where
\begin{equation}
\label{eq:weak-local-constants-final}
\begin{aligned}
\mathcal T_{\mathrm{R}}
:=&
3
\Big(
2\kappa^2
+
4\alpha^2\kappa^2L^2\bar K_1
+
2\alpha^2(1+\kappa)^2L^2\mathcal C_1
\Big),
\\
\mathcal T_{\mathrm{R}}'
:=&
3
\Big(
2\alpha^2\kappa^2
\big(
2L^2\bar K_1+2L_2^2
\big)
+
2\alpha^2(1+\kappa)^2L^2\mathcal C_1'
\Big).
\end{aligned}
\end{equation}
Taking square roots of \eqref{eq:one-step-weak-square-final} gives the desired estimate.
\end{proof}

\begin{redtext}

\begin{lem}[\textbf{One-step error estimates for Euler-type ULMC schemes}]
\label{lem:one-step-weak-strong-error-Euler}
Let Assumptions \ref{as:Lip}, \ref{as:U_0-bound},
\ref{as:dissipativity} hold and suppose that Assumption  \ref{as:coeff-approx-general} is satisfied with
\(
\boldsymbol{\eta}=\boldsymbol{\eta}_{\mathrm E}
=(1,0,0,2,1,1)
\).
Given $t\geq0$ and
$(x^\top,v^\top)^\top\in\mathbb R^{2d}$, let
$(X(t,x,v;t+h),V(t,x,v;t+h))$ be the one-step solution of
ULD \eqref{eq:exact-ULD-one-step}, and let
$(\bar X^{\mathrm E}(t,x,v;t+h),
\bar V^{\mathrm E}(t,x,v;t+h))$ be the corresponding one-step
U-ULMC approximation \eqref{eq:unified-approx-ULD-one-step} with
$\tau\equiv0$.
Assume that
$0<h\leq1\wedge L^{-1}\wedge\gamma^{-1}$.
Then
\begin{align}
&
\left|
\E
\left[
\left(
\begin{aligned}
X(t,x,v;t+h)-\bar X^{\mathrm E}(t,x,v;t+h)
\\
V(t,x,v;t+h)-\bar V^{\mathrm E}(t,x,v;t+h)
\end{aligned}
\right)
\right]
\right|
\leq
\Big(
\mathcal T_{\mathrm E}
\big(
|x|^2+|v|^2
\big)
+
\mathcal T_{\mathrm E}'d
\Big)^{\frac12}
h^2,
\label{eq:one-step-weak-error-Euler}
\\
&
\E
\left[
\left|
\left(
\begin{aligned}
X(t,x,v;t+h)-\bar X^{\mathrm E}(t,x,v;t+h)
\\
V(t,x,v;t+h)-\bar V^{\mathrm E}(t,x,v;t+h)
\end{aligned}
\right)
\right|^2
\right]
\leq
\Big(
\mathcal S_{\mathrm E}
\big(
|x|^2+|v|^2
\big)
+
\mathcal S_{\mathrm E}'d
\Big)
h^3,
\label{eq:one-step-strong-error-Euler}
\end{align}
where
\begin{align}
\mathcal T_{\mathrm E}
:={}&
6\kappa^2
+
12\alpha^2\kappa^2L^2
+
4\alpha^2L^2
\big(
1+\mathscr H_1^{x,v}+\gamma^2
\big),
\nonumber\\
\mathcal T_{\mathrm E}'
:={}&
12\alpha^2\kappa^2L_2^2
+
4\alpha^2
\Big(
(1+\gamma^2)L_2^2
+
L^2\mathscr H_1
\Big),
\nonumber\\
\mathcal S_{\mathrm E}
:={}&
8\kappa^2
+
16\alpha^2\kappa^2L^2
+
\tfrac{16}{3}\alpha^2L^2
\big(
1+\mathscr H_1^{x,v}+\gamma^2
\big),
\nonumber\\
\mathcal S_{\mathrm E}'
:={}&
16\alpha^2\kappa^2L_2^2
+
\tfrac{16}{3}\alpha^2
\Big(
(1+\gamma^2)L_2^2
+
L^2\mathscr H_1
\Big)
+
\tfrac{16}{3}\gamma\alpha\kappa^2.
\label{eq:Euler-local-error-constants}
\end{align}
Here the constants $\mathscr H_1^{x,v}$ and $\mathscr H_1$ are
defined in Lemma~\ref{lem:xt-2p}.
\end{lem}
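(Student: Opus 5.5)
I will mirror the proof of Lemma~\ref{lem:one-step-weak-strong-error}, now with $\tau\equiv0$, so that $\bar X^{\mathrm E}(t,x,v;t+\tau h)=x$ and the gradient is frozen at the left endpoint. The first step is to write the one-step error as the analogue of \eqref{eq:one-step-error-vector-decomposition}, subtracting \eqref{eq:unified-approx-ULD-one-step} (with $\tau=0$) from \eqref{eq:exact-ULD-one-step}. This gives four pieces. The first is $\mathcal R_1=\big(h(\phi(h)-\Phi_1(h))v,\ (\psi(h)-\Psi_1(h))v\big)$. The second is the coefficient-mismatch term $\mathcal R_2$, which multiplies $\nabla U(x)$ with coefficients $h(\phi(h)-\Phi_2(h))$ and $\psi(h)-\Psi_2(h)$. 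The third is the left-endpoint quadrature error
\[
\mathcal R_3=\int_0^h\big(F_{t+r}-F_t\big)\,\dd r,
\]
with $F$ as in \eqref{eq:def-F}; here $F_t=(h\phi(h)\nabla U(x),\psi(h)\nabla U(x))$. The fourth is the noise mismatch $\mathcal R_5$. There is no analogue of $\mathcal R_4$, because the predictor coincides exactly with the starting point $x$.

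For the strong estimate \eqref{eq:one-step-strong-error-Euler}, I will use the elementary inequality \eqref{eq:elementary_inequality} with four terms and bound each piece.

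\textbf{$\mathcal R_1$:} With $\boldsymbol{\eta}_{\mathrm E}$ we have $|\phi-\Phi_1|\le\kappa h$ and $|\psi-\Psi_1|\le\kappa h^2$. Hence $\mathbb E|\mathcal R_1|^2\le 2\kappa^2|v|^2h^4\le2\kappa^2|v|^2h^3$.

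\textbf{$\mathcal R_2$:} Here $\eta_2=0$ and $\eta_5=1$ give coefficients of size $\kappa h$ and $\kappa h$. Combined with \eqref{eq:gradient-growth-square}, this bounds $\alpha^2\mathbb E|\mathcal R_2|^2$ by $\alpha^2\kappa^2 h^2(2L^2|x|^2+2L_2^2d)$ up to a factor $2$. No ULD moment bound is needed because the argument is deterministic. This is why no $\bar K_1$ appears in the constants.

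\textbf{$\mathcal R_3$:} I will use Cauchy--Schwarz together with \eqref{eq:Ftr-Ft-bound}. The bound $\mathbb E|F_{t+r}-F_t|^2\lesssim L^2(1+\mathscr H^{x,v}_1+\gamma^2)(|x|^2+|v|^2)r^2+\dots$ comes from Lemma~\ref{lem:xt-2p}. This yields $\mathbb E|\mathcal R_3|^2\le h\int_0^h(\cdots)r^2\,\dd r=\mathcal O(h^4)$, which is then absorbed into $h^3$ using $h\le1$.

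\textbf{$\mathcal R_5$:} I will apply It\^o's isometry with $\eta_3=0$ and $\eta_6=1$. This gives $d\kappa^2\big(\int_0^h(h-r)^2\,\dd r+\int_0^h (h-r)^2\,\dd r\big)=\tfrac23\kappa^2dh^3$. This piece is the reason the local strong order is only $h^{3/2}$, i.e.\ the mean-square error is $\mathcal O(h^3)$.

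For the weak estimate \eqref{eq:one-step-weak-error-Euler}, the key observation is that $\mathbb E[\mathcal R_5]=0$. The quadrature term no longer has zero mean, because $\tau$ is no longer random. So I will keep it and bound $|\mathbb E\mathcal R_3|^2\le\mathbb E|\mathcal R_3|^2=\mathcal O(h^4)$. By Jensen, $|\mathbb E\mathcal R_1|^2=\mathcal O(h^4)$ and $\alpha^2|\mathbb E\mathcal R_2|^2=\mathcal O(h^4)$ as well. Combining the three terms with the factor $3$ gives $|\mathbb E[\cdot]|^2\le(\mathcal T_{\mathrm E}(|x|^2+|v|^2)+\mathcal T_{\mathrm E}'d)h^4$, and taking the square root produces the $h^2$ rate.

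The main point needing care is exactly this weak bound. The noise mismatch cannot be allowed to enter the weak bound, since it is only $\mathcal O(h^{3/2})$ and would destroy the $h^2$ rate; this works only because it is a mean-zero It\^o integral. Everything else is bookkeeping of constants with $h\le 1\wedge L^{-1}\wedge\gamma^{-1}$, and I expect the constants to match those stated up to the numerical factors chosen in the splitting.
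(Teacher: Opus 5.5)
Your proposal is correct and follows the paper's proof essentially verbatim. It uses the same decomposition with $\mathcal R_4^{\mathrm E}=0$, the same bounds on $\mathcal R_1^{\mathrm E}$, $\mathcal R_2^{\mathrm E}$, $\mathcal R_3^{\mathrm E}$ (via Cauchy--Schwarz and \eqref{eq:Ftr-Ft-bound}) and on the centered $\mathcal R_5^{\mathrm E}$, and the same splitting constants $3$ (weak) and $4$ (strong), which reproduce $\mathcal T_{\mathrm E},\mathcal T_{\mathrm E}',\mathcal S_{\mathrm E},\mathcal S_{\mathrm E}'$ exactly. The one slip is in bookkeeping: $\mathcal R_2$ enters the error with prefactor $\alpha h$, as in \eqref{eq:one-step-error-vector-decomposition}, so the quantity you need is $\alpha^2h^2\,\mathbb E|\mathcal R_2|^2\le 4\alpha^2\kappa^2(L^2|x|^2+L_2^2d)h^4$, whereas $\alpha^2|\mathbb E\mathcal R_2|^2$ itself is only $\mathcal O(h^2)$ by your own estimate.
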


\begin{proof}
\textbf{Proof.}
%Set $\tau\equiv0$ in the decomposition \eqref{eq:one-step-error-vector-decomposition} and denote  \(\mathcal R^{\mathrm E}_i\) the counterpart of \(\mathcal R_i\) in this case for \(i\in\{1,2,3,4,5\}\). 
%Setting $\tau\equiv0$ in\eqref{eq:one-step-error-vector-decomposition}, let$\mathcal R_i^{\mathrm E}$ denote the corresponding counterpart of$\mathcal R_i$ for each $i\in\{1,2,3,4,5\}$. Thus we have the following representation: 
%Specializing the one-step error decomposition\eqref{eq:one-step-error-vector-decomposition} to $\tau\equiv0$,we denote by $\mathcal R_i^{\mathrm E}$ the corresponding residual term for $i\in\{1,2,3,4,5\}$. In this case, both the exact and numerical intermediate positions coincide with $x$, and hence $\mathcal R_4^{\mathrm E}=0$. Consequently, the one-step error reduces to
Note first that, by setting $\tau=0$, the unified one-step U-ULMC approximation \eqref{eq:unified-approx-ULD-one-step} reduces to the one-step Euler-type scheme.
For each $i\in\{1,2,3,4,5\}$, let $\mathcal R_i^{\mathrm E}$ denote
the specialization of $\mathcal R_i$ in
\eqref{eq:one-step-error-vector-decomposition} to this case.
Since both the exact and numerical intermediate positions reduce
to $x$ when $\tau=0$, we have $\mathcal R_4^{\mathrm E}=0$.
Hence the error decomposition becomes
\begin{align}
&
\left(
\begin{aligned}
X(t,x,v;t+h)-\bar X^{\mathrm E}(t,x,v;t+h)
\\
V(t,x,v;t+h)-\bar V^{\mathrm E}(t,x,v;t+h)
\end{aligned}
\right)
=
\mathcal R_1^{\mathrm E}
-\alpha h\mathcal R_2^{\mathrm E}
-\alpha\mathcal R_3^{\mathrm E}
+\sqrt{2\gamma\alpha}\mathcal R_5^{\mathrm E}.
\label{eq:Euler-error-decomposition}
\end{align}
% where
% \begin{align}
% \mathcal R_1^{\mathrm E}
% :={}&
% \left(
% \begin{aligned}
% h\big(\phi(h)-\Phi_1(h)\big)v
% \\
% \big(\psi(h)-\Psi_1(h)\big)v
% \end{aligned}
% \right),
% \nonumber\\
% \mathcal R_2^{\mathrm E}
% :={}&
% \left(
% \begin{aligned}
% h\big(\phi(h)-\Phi_2(h)\big)\nabla U(x)
% \\
% \big(\psi(h)-\Psi_2(h)\big)\nabla U(x)
% \end{aligned}
% \right),
% \nonumber\\
% \mathcal R_3^{\mathrm E}
% :={}&
% \int_0^h F_{t+r}\,\dd r-hF_t,
% \nonumber\\
% \mathcal R_5^{\mathrm E}
% :={}&
% \left(
% \begin{aligned}
% \displaystyle
% \int_0^h
% (h-r)\big(\phi(h-r)-\Phi_3(h-r)\big)\,\dd W_{t+r}
% \\
% \displaystyle
% \int_0^h
% \big(\psi(h-r)-\Psi_3(h-r)\big)\,\dd W_{t+r}
% \end{aligned}
% \right),
% \label{eq:Euler-error-terms}
% \end{align}
% with
% \[
% F_{t+r}
% :=
% \left(
% \begin{aligned}
% (h-r)\phi(h-r)\nabla U(X(t,x,v;t+r))
% \\
% \psi(h-r)\nabla U(X(t,x,v;t+r))
% \end{aligned}
% \right).
% \]
%
Next, we estimate each $\mathcal R_i^{\mathrm E}$, $i\in\{1,2,3,5\}$ separately.
Invoking Assumption~\ref{as:coeff-approx-general} with
$\boldsymbol{\eta}=\boldsymbol{\eta}_{\mathrm E}$, together with
the linear growth property \eqref{eq:lip-2}, gives
\begin{align}
\E\big[|\mathcal R_1^{\mathrm E}|^2\big]
&\leq
2\kappa^2|v|^2h^4,
\label{eq:Euler-R1-bound}
\\
\alpha^2h^2
\E\big[|\mathcal R_2^{\mathrm E}|^2\big]
&\leq
4\alpha^2\kappa^2
\big(
L^2|x|^2+L_2^2d
\big)h^4.
\label{eq:Euler-R2-bound}
\end{align}
For $\mathcal R_3^{\mathrm E}$, the Cauchy--Schwarz inequality,
followed by \eqref{eq:Ftr-Ft-bound}, gives
\begin{align}
\alpha^2
\E\big[|\mathcal R_3^{\mathrm E}|^2\big]
&\leq
\alpha^2h
\int_0^h
\E\big[|F_{t+r}-F_t|^2\big]\,\dd r
\nonumber\\
&\leq
\tfrac{4}{3}\alpha^2L^2
\big(
1+\mathscr H_1^{x,v}+\gamma^2
\big)
\big(
|x|^2+|v|^2
\big)h^4
+
\tfrac{4}{3}\alpha^2
\Big(
(1+\gamma^2)L_2^2
+
L^2\mathscr H_1
\Big)dh^4.
\label{eq:Euler-R3-bound}
\end{align}
The stochastic term $\mathcal R_5^{\mathrm E}$ is centered.
It\^o's isometry and
Assumption~\ref{as:coeff-approx-general} show that
\begin{align}
\E\big[|\mathcal R_5^{\mathrm E}|^2\big]
&\leq
\kappa^2d\int_0^h(h-r)^2\,\dd r
+
\kappa^2d\int_0^h(h-r)^2\,\dd r
=
\tfrac{2}{3}\kappa^2dh^3,
\qquad
\E[\mathcal R_5^{\mathrm E}]=0.
\label{eq:Euler-R5-bound}
\end{align}
Taking expectations in
\eqref{eq:Euler-error-decomposition} and using
$\E[\mathcal R_5^{\mathrm E}]=0$ eliminate the stochastic term.
Applying the elementary inequality
\eqref{eq:elementary_inequality} together with Jensen's inequality,
and invoking
\eqref{eq:Euler-R1-bound}--\eqref{eq:Euler-R3-bound}, yield
\begin{align}
\left|
\E
\left[
\left(
\begin{aligned}
X(t,x,v;t+h)-\bar X^{\mathrm E}(t,x,v;t+h)
\\
V(t,x,v;t+h)-\bar V^{\mathrm E}(t,x,v;t+h)
\end{aligned}
\right)
\right]
\right|^2
\leq  &
3\E\big[|\mathcal R_1^{\mathrm E}|^2\big]
+
3\alpha^2h^2\E\big[|\mathcal R_2^{\mathrm E}|^2\big]
+
3\alpha^2\E\big[|\mathcal R_3^{\mathrm E}|^2\big]
\nonumber\\
\leq & 
\Big(
\mathcal T_{\mathrm E}
\big(
|x|^2+|v|^2
\big)
+
\mathcal T_{\mathrm E}'d
\Big)h^4.
\label{eq:Euler-weak-square-bound}
\end{align}
For the mean-square one-step error, combining
\eqref{eq:Euler-R1-bound}--\eqref{eq:Euler-R5-bound} with
\eqref{eq:elementary_inequality} produces
\begin{align}
&
\E
\left[
\left|
\left(
\begin{aligned}
X(t,x,v;t+h)-\bar X^{\mathrm E}(t,x,v;t+h)
\\
V(t,x,v;t+h)-\bar V^{\mathrm E}(t,x,v;t+h)
\end{aligned}
\right)
\right|^2
\right]
\nonumber\\
&\leq
4\E\big[|\mathcal R_1^{\mathrm E}|^2\big]
+
4\alpha^2h^2\E\big[|\mathcal R_2^{\mathrm E}|^2\big]
+
4\alpha^2\E\big[|\mathcal R_3^{\mathrm E}|^2\big]
+
8\gamma\alpha\E\big[|\mathcal R_5^{\mathrm E}|^2\big]
\nonumber\\
&\leq
\Big(
\mathcal S_{\mathrm E}
\big(
|x|^2+|v|^2
\big)
+
\mathcal S_{\mathrm E}'d
\Big)h^3,
\end{align}
where $h\leq1$ has been used in the last step. This completes the proof.
\end{proof}

We next present the local error estimates for the UBU-type ULMC
scheme in the following lemma.

\end{redtext}

\begin{redtext}

% This assumption implies that
% \[
% |\nabla^3 U(x)[v,v']| \le L_H |v| |v'|,
% \]

\begin{lem}[\textbf{One-step error estimates for UBU-type ULMC schemes}]
\label{lem:one-step-weak-strong-error-UBU}
Let Assumptions \ref{as:Lip}, \ref{as:U_0-bound}, \ref{as:dissipativity} hold. 
For given $t\geq0$ and $(x^\top,v^\top)^\top\in\mathbb R^{2d}$, let
\(
(X(t,x,v;t+h),V(t,x,v;t+h))
\)
and 
\(
(\bar X(t,x,v;t+h),\bar V(t,x,v;t+h))
\)
be the one-step solution of ULD \eqref{eq:exact-ULD-one-step} and one-step U-ULMC approximation \eqref{eq:unified-approx-ULD-one-step} with $\tau_k \equiv \frac{1}{2}$, respectively.
Assume that the stepsize satisfies
\[
0<h\leq 1\wedge L^{-1}\wedge \gamma^{-1}.
\]
\begin{enumerate}
\item Suppose that Assumption \ref{as:coeff-approx-general}  is satisfied with $\boldsymbol{\eta} = \boldsymbol{\eta}_\mathrm{E}$. Then the following one-step weak and strong error estimates hold:
\begin{equation}
\label{eq:one-step-weak-error-UBU-gl}
\left|
\mathbb E
\left[
\left(
\begin{aligned}
X(t,x,v;t+h)
-
\bar X^{\mathrm U}(t,x,v;t+h)
\\
V(t,x,v;t+h)
-
\bar V^{\mathrm U}(t,x,v;t+h)
\end{aligned}
\right)
\right]
\right|
\leq
\Big(
\mathcal T_{\mathrm{U}_1}
\big(
|x|^2+|v|^2
\big)
+
\mathcal T_{\mathrm{U}_1}'d
\Big)^{\frac{1}{2}}
h^{2},
\end{equation}
\begin{equation}
\label{eq:one-step-strong-error-UBU-gl}
\mathbb E
\left[
\left|
\left(
\begin{aligned}
X(t,x,v;t+h)-\bar X^{\mathrm U}(t,x,v;t+h)
\\
V(t,x,v;t+h)-\bar V^{\mathrm U}(t,x,v;t+h)
\end{aligned}
\right)
\right|^2
\right]
\leq
\Big(
\mathcal S_{\mathrm{U}_1}
\big(
|x|^2+|v|^2
\big)
+
\mathcal S_{\mathrm{U}_1}'d
\Big)h^{3}.
\end{equation}
\item 
Suppose Assumption \ref{as:coeff-approx-general} is satisfied with $\boldsymbol{\eta}=\boldsymbol{\eta}_\mathrm{U}$ and Assumption \ref{as:Hessian-lip} holds. Then we have
\begin{equation}
\label{eq:one-step-weak-error-UBU-hl}
\left|
\mathbb E
\left[
\left(
\begin{aligned}
X(t,x,v;t+h)
-
\bar X^{\mathrm U}(t,x,v;t+h)
\\
V(t,x,v;t+h)
-
\bar V^{\mathrm U}(t,x,v;t+h)
\end{aligned}
\right)
\right]
\right|
\leq
\Big(
\mathcal T_{\mathrm{U}_2}
\big(
|x|^2+|v|^2
\big)^2
+
\mathcal T_{\mathrm{U}_2}'d^2
\Big)^{\frac{1}{2}}
h^{3},
\end{equation}
\begin{equation}
\label{eq:one-step-strong-error-UBU-hl}
\mathbb E
\left[
\left|
\left(
\begin{aligned}
X(t,x,v;t+h)-\bar X^{\mathrm U}(t,x,v;t+h)
\\
V(t,x,v;t+h)-\bar V^{\mathrm U}(t,x,v;t+h)
\end{aligned}
\right)
\right|^2
\right]
\leq
\Big(
\mathcal S_{\mathrm{U}_2}
\big(
|x|^2+|v|^2
\big)^2
+
\mathcal S_{\mathrm{U}_2}'d^2
\Big)h^{5}.
\end{equation}
\end{enumerate}

Here $\mathcal T_{\mathrm{U}_i},\mathcal T_{\mathrm{U}_i}',\mathcal S_{\mathrm{U}_i},\mathcal S_{\mathrm{U}_i}'>0 $ for $ i\in\{1,2\}$
are dimension-independent constants depending only on
$L$, $L_1$, $L_2$, $L_H$, $\gamma$, $\alpha$, $\kappa$, $\mu$, $\mu'$.
% where $\mathcal C_1$ and $\mathcal C_1'$ are the constants defined in Lemma \ref{lem:delta_tau_xy}.
\end{lem}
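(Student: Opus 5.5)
We reuse the error decomposition \eqref{eq:one-step-error-vector-decomposition} with $\tau\equiv\frac12$, writing the one-step error as $\mathcal R_1-\alpha h\mathcal R_2-\alpha\mathcal R_3-\alpha h\mathcal R_4+\sqrt{2\gamma\alpha}\mathcal R_5$. The terms are then estimated in the same order as in Lemma~\ref{lem:one-step-weak-strong-error}. The only structural change is in the quadrature term: $\mathcal R_3=\int_0^hF_{t+r}\,\dd r-hF_{t+h/2}$ is now a deterministic midpoint-rule error, so $\E[\mathcal R_3]$ no longer vanishes and must be estimated directly.

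\textbf{Part 1} ($\boldsymbol\eta_{\mathrm E}$, gradient Lipschitz). With $\boldsymbol\eta_{\mathrm E}=(1,0,0,2,1,1)$ we get
\[
\E|\mathcal R_1|^2\lesssim\kappa^2|v|^2h^4,\qquad \alpha^2h^2\E|\mathcal R_2|^2\lesssim h^4\big(|x|^2+|v|^2+d\big),
\]
the latter using \eqref{eq:grad-moment-expanded-noG}. For the noise term, $\E|\mathcal R_5|^2\lesssim\kappa^2dh^3$ and $\E\mathcal R_5=0$. For $\mathcal R_3$ we write $\mathcal R_3=\int_0^h(F_{t+r}-F_t)\,\dd r-h(F_{t+h/2}-F_t)$ and apply \eqref{eq:Ftr-Ft-bound}, which gives $\E|\mathcal R_3|^2\lesssim h^4(|x|^2+|v|^2+d)$. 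For $\mathcal R_4$, we first redo Lemma~\ref{lem:delta_tau_xy} with $\tau=\frac12$ under $\boldsymbol\eta_{\mathrm E}$. The predictor error is then $O(h^{2})$ in mean square when $\ell_2=0$ (the dropped noise contributes $h^3$), and it is $O(h^2)$ in every case because $\eta_3\ge0$ and $\eta_1=1$. Since $\mathcal R_4$ carries the prefactor $\alpha h$, its contribution is $\lesssim h^4$. Summing, the strong bound is $O(h^3)$, dominated by $\mathcal R_5$. For the weak bound we drop $\mathcal R_5$ and apply Jensen to the remaining terms, which gives the square of the weak error $\lesssim h^4$.

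\textbf{Part 2} ($\boldsymbol\eta_{\mathrm U}$ together with Assumption~\ref{as:Hessian-lip}). The coefficient terms are now higher order: $\E|\mathcal R_1|^2\lesssim h^6|v|^2$, $\alpha^2h^2\E|\mathcal R_2|^2\lesssim h^6$, and $\E|\mathcal R_5|^2\lesssim\kappa^2dh^5$. For $\mathcal R_4$ the predictor error must now be $O(h^3)$ in mean square. The Itô contributions are $(1-\ell_2)^2\,O(h^3)$ and $\kappa^2\,O(h^5)$, the drift mismatch is $\int_0^{h/2}(\ldots)(\nabla U(X_{t+r})-\nabla U(x))\,\dd r=O(h^2)$ in $L^2$, and $\Phi_1$ contributes $O(h^3)$. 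This gives $O(h^3)$, and after the factor $\alpha^2h^2$ the contribution of $\mathcal R_4$ to the strong error is $O(h^5)$.

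For the weak error we need $|\E\mathcal R_4|=O(h^2)$, i.e.\ $h\,|\E\mathcal R_4|=O(h^3)$. We linearize: $\nabla U(X_{t+h/2})-\nabla U(\bar X_{t+h/2})=\nabla^2U(\bar X_{t+h/2})\,\Delta+O(L_H|\Delta|^2)$ with $\Delta:=X_{t+h/2}-\bar X_{t+h/2}$. The quadratic remainder is $O(h^3)$ in $L^1$. For the linear part, the mean-zero Gaussian piece of $\Delta$ (the noise the cheap predictor omits) is uncorrelated with $\nabla^2U(x)$. The Hessian at $\bar X$ differs from $\nabla^2U(x)$ by $O(L_H|\bar X-x|)=O(h)$, so the cross term is $O(h\cdot h^{3/2})$. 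The deterministic part of $\Delta$ is $O(h^2)$. Together these give $|\E\mathcal R_4|=O(h^2)$.

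\textbf{Main obstacle: the midpoint quadrature term $\mathcal R_3$ under Hessian Lipschitz.} In mean square, $\E|\mathcal R_3|^2\lesssim h^6$ is not enough because $\sqrt{\cdot}$ of it gives only $h^3$ at the level of single paths. The plan is to expand $g(r):=\nabla U(X_{t+r})$ by Itô's formula, $\dd g=\nabla^2U(X)V\,\dd r$, and then expand $\nabla^2U(X_{t+r})V_{t+r}$ about $r=h/2$. Integrating against the midpoint rule, the first-order (linear in $r-h/2$) term cancels by symmetry, as do the smooth weights $(h-r)\phi(h-r)$ and $\psi(h-r)$. The remainders come from the Hessian-Lipschitz modulus, $\|\nabla^2U(X_{t+r})-\nabla^2U(X_{t+h/2})\|\le L_H|X_{t+r}-X_{t+h/2}|$, and from $V_{t+r}-V_{t+h/2}$. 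The latter contains a Brownian part whose contribution $\int(\cdot)\,\nabla^2U\,(W_{t+r}-W_{t+h/2})$ is of size $h^{5/2}$ in $L^2$ but has zero mean to leading order.

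Hence $|\E\mathcal R_3|\lesssim h^3\,\E\big[|X|\,|V|+|V|^2+\ldots\big]$, which is quadratic in the state and is why the bound involves $(|x|^2+|v|^2)^2+d^2$ inside the square root. The pathwise size is $\E|\mathcal R_3|^2\lesssim h^5\cdot(\text{fourth moments})$. Here we use the fourth-moment version of Lemma~\ref{lem:ULD-moment bound} with $p=2$, together with a fourth-moment analogue of Lemma~\ref{lem:xt-2p}, whose proof repeats the same variation-of-constants split with BDG replacing the Itô isometry. Collecting terms and using $h\le1$ gives the weak $O(h^3)$ and strong $O(h^5)$ estimates with explicit constants.
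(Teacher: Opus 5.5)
Your proposal is correct and follows essentially the paper's proof. It uses the same five-term decomposition at $\tau=\frac12$, the crude bound via \eqref{eq:Ftr-Ft-bound} together with an $O(h^2)$ predictor error in Part~1, and in Part~2 a symmetric expansion of the midpoint-rule error about $h/2$ with Hessian-Lipschitz remainders, a Brownian term that is mean-zero up to an $L_H$-controlled correction, and a linearization of $\mathcal R_4$; linearizing at $\bar X_{t+h/2}$ rather than at $x$, as you do, is an inessential variation. Two small remarks: $\E|\mathcal R_3|^2\lesssim h^6$ would in fact suffice by Jensen, and the real reason to bound $\E\mathcal R_3$ directly is that only $\E|\mathcal R_3|^2\lesssim h^5$ holds; and your fourth-moment analogue of Lemma~\ref{lem:xt-2p} (with Lemma~\ref{lem:ULD-moment bound} for $p=2$) is what actually justifies the mean-square bounds on Hessian-difference terms such as $\mathcal R^{\mathrm U}_{3,2,3}$, where the paper cites only the second-moment version.
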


\begin{proof}
\textbf{Proof.}
The UBU-type scheme corresponds to the choice
$\tau=\frac12$ in the unified one-step U-ULMC approximation
\eqref{eq:unified-approx-ULD-one-step}. Denoting by
$\mathcal R_i^{\mathrm U}$ the resulting form of $\mathcal R_i$
in \eqref{eq:one-step-error-vector-decomposition}, for
$i\in\{1,2,3,4,5\}$, we obtain the following one-step error
decomposition:
\begin{align}
&
\left(
\begin{aligned}
X(t,x,v;t+h)-\bar X^{\mathrm U}(t,x,v;t+h)
\\
V(t,x,v;t+h)-\bar V^{\mathrm U}(t,x,v;t+h)
\end{aligned}
\right)
=
\mathcal R_1^{\mathrm U}
-\alpha h\mathcal R_2^{\mathrm U}
-\alpha\mathcal R_3^{\mathrm U}
-\alpha h\mathcal R_4^{\mathrm U}
+\sqrt{2\gamma\alpha}\mathcal R_5^{\mathrm U}.
\label{eq:UBU-error-decomposition}
\end{align}
We next establish separately local error estimates for the UBU
under two sets of assumptions.
%Similarly to the decomposition in \eqref{eq:one-step-error-vector-decomposition}, the one-step error can be decomposed into five terms. We denote by \(\mathcal R^{\mathrm{U}}_i\) the counterpart of \(\mathcal R_i\) in the case \(\tau \equiv \frac12\) for \(i\in\{1,2,3,4,5\}\).

\textbf{Case I: Assumption \ref{as:coeff-approx-general} holds with $\boldsymbol{\eta} = \boldsymbol{\eta}_\mathrm{E}$.}

Under the gradient Lipschitz condition, the terms $\mathcal R_1^\mathrm{U}$, $\mathcal R_2^\mathrm{U}$, and $\mathcal R_5^\mathrm{U}$ admit bounds with the same dependence on the stepsize and dimension as in Lemma \ref{lem:one-step-weak-strong-error-Euler}, following identical arguments.
For the midpoint quadrature term, the cancellation
$\mathbb E[\mathcal R_3]=0$ used in the randomized case is no longer
available. Nevertheless, using \eqref{eq:Ftr-Ft-bound} and
$\tau\equiv\frac12$, we obtain
\begin{align}
\bigl|\mathbb E[\mathcal R^{\mathrm{U}}_3]\bigr|^2
\leq&
\mathbb E\bigl[|\mathcal R^{\mathrm{U}}_3|^2\bigr]
\leq
\tfrac{14}{3}L^2
\bigl(
1+\mathscr H_1^{x,v}+\gamma^2
\bigr)
\bigl(
|x|^2+|v|^2
\bigr)h^4
\nonumber\\
&+
\tfrac{14}{3}
\Big(
(1+\gamma^2)L_2^2
+
L^2\mathscr H_1
\Big)dh^4.
\end{align}
Moreover, under the choice \(\boldsymbol{\eta}=\boldsymbol{\eta}_{\mathrm E}\), the stepsize dependence in the proof of Lemma \ref{lem:delta_tau_xy} becomes weaker, with the leading term \(\Xi_2\) being of order \(\mathcal O(h^2)\).
% Since the same estimates lead to the same dependence on the dimension and the same resulting constants, we obtain the following estimate:
By the same argument as in Lemma \ref{lem:delta_tau_xy}, we obtain the following estimate:
\begin{equation}
\label{eq:delta-tau-x-bound-eta-E}
\mathbb{E}
\Big[
\big|
\bar{X}(t,x,v;t+\tau h)
-
X(t,x,v;t+\tau h)
\big|^2
\Big]
\leq
\Big(
\mathcal C_1
\big(
|x|^2+|v|^2
\big)
+
\mathcal C_1'd
\Big)h^2,
\end{equation}
Therefore, we have
\begin{align}
\big|
\E
\left[
\mathcal R_4^\mathrm{U}
\right]
\big|
\leq
\big(
\E
\left[
|\mathcal R_4^\mathrm{U}|^2
\right]
\big)^{\frac{1}{2}}
\leq
\sqrt{2}(1+\kappa)
L
\bigl(
\mathcal C_1+\mathcal C_1'
\bigr)^{\frac{1}{2}}
(|x|^2+|v|^2+d)^\frac{1}{2}h.
\end{align}

Consequently, the one-step weak error is of order $\mathcal O (h^2)$, whereas
the one-step root-mean-square error is of order $\mathcal O ( h^{\frac{3}{2}} )$. Thus,
we obtain \eqref{eq:one-step-weak-error-UBU-gl} and
\eqref{eq:one-step-strong-error-UBU-gl}.

\textbf{Case II: Assumption \ref{as:coeff-approx-general} holds with $\boldsymbol{\eta} = \boldsymbol{\eta}_\mathrm{U}$ and Assumption \ref{as:Hessian-lip} holds.}

Similarly, the estimates of \(\mathcal R^{\mathrm{U}}_1\), \(\mathcal R^{\mathrm{U}}_2\), and
\(\mathcal R^{\mathrm{U}}_5\) follow directly from the same arguments as in Lemma \ref{lem:one-step-weak-strong-error}. By employing Assumption \ref{as:coeff-approx-general} with $\boldsymbol{\eta} = \boldsymbol{\eta}_\mathrm{U}$, one can deduce that 
\begin{align}
|\E[\mathcal R^{\mathrm{U}}_1]|^2
&\leq
\E[|\mathcal R^{\mathrm{U}}_1|^2]
\leq
2\kappa^2|v|^2h^6,
\\
\alpha^2h^2|\E[\mathcal R^{\mathrm{U}}_2]|^2
&\leq
\alpha^2h^2\E[|\mathcal R^{\mathrm{U}}_2|^2]
\leq
\tfrac{\alpha^2\kappa^2}{4}
\bigl(
L^2\bar K_1+L_2^2
\bigr)
\bigl(
|x|^2+|v|^2+d
\bigr)h^6,
\\
\E[|\mathcal R^{\mathrm{U}}_5|^2]
&\leq
\tfrac{2}{5}\kappa^2 dh^5,
\qquad
\E[\mathcal R^{\mathrm{U}}_5]=0.
\label{eq:R125-UBU-estimates}
\end{align}

It remains to estimate \(\mathcal R^{\mathrm{U}}_3\) and \(\mathcal R^{\mathrm{U}}_4\).
Recalling the definition \eqref{eq:def-F} of \(F\), we first have
\begin{align}
\mathcal R^{\mathrm{U}}_3
=  &
\int_0^hF_{t+r}\,\dd r
-
hF_{t+\frac h2}
=
\int_{-\frac h2}^{\frac h2}
\left(
F_{t+\frac h2+r}
-
F_{t+\frac h2}
\right)
\,\dd r
\\   
=   &
\left(
\begin{aligned}
\int_{-\frac h2}^{\frac h2}
(\tfrac{h}{2}-r)
\phi(\tfrac{h}{2}-r)
\nabla U(X_{t+\frac{h}{2}+r})
-
\tfrac{h}{2}\phi(\tfrac{h}{2})
\nabla U(X_{t+\frac{h}{2}})
\,\dd r
\\
\int_{-\frac h2}^{\frac h2}
\psi(\tfrac{h}{2}-r)\nabla U(X_{t+\frac{h}{2}+r})
-
\psi(\tfrac{h}{2})
\nabla U(X_{t+\frac{h}{2}})
\,\dd r
\end{aligned}
\right)
=:
\left(
\begin{aligned}
\mathcal{R}^{\mathrm{U}}_{3,1}\\
\mathcal{R}^{\mathrm{U}}_{3,2}   
\end{aligned}
\right).
\label{eq:R3-midpoint-representation}
\end{align}

For the first component  \(\mathcal R^{\mathrm{U}}_{3,1}\), we have
\begin{align}
\mathcal R^{\mathrm{U}}_{3,1}
=&
\int^{\frac{h}{2}}_{-\frac{h}{2}}
\Big[
(\tfrac{h}{2}-r)\phi(\tfrac{h}{2}-r)
\nabla U(X_{t+\frac{h}{2}+r})
-
\tfrac{h}{2}\phi(\tfrac{h}{2})
\nabla U(X_{t+\frac{h}{2}})
\Big]
\,\dd r
\nonumber\\
=&
\int^{\frac{h}{2}}_{0}
(\tfrac{h}{2}-r)\phi(\tfrac{h}{2}-r)
\nabla U(X_{t+\frac{h}{2}+r})
+
(\tfrac{h}{2}+r)\phi(\tfrac{h}{2}+r)
\nabla U(X_{t+\frac{h}{2}-r})
-
h
\phi(\tfrac{h}{2})
\nabla U(X_{t+\frac{h}{2}})
\,\dd r
\nonumber \\
=&
\int^{\frac{h}{2}}_0
\int^{r}_0
\Big(
-\psi(\tfrac{h}{2}-s)\nabla U (X_{t+\frac{h}{2}+s})
+
(\tfrac{h}{2}-s)\phi(\tfrac{h}{2}-s)
\nabla^2 U(X_{t+\frac{h}{2}+s})V_{t+\frac{h}{2}+s}
\Big)
\, \dd s
\, \dd r
\nonumber \\
&+
\int^{\frac{h}{2}}_0
\int^{r}_0
\Big(
\psi(\tfrac{h}{2}+s)\nabla U (X_{t+\frac{h}{2}-s})
-
(\tfrac{h}{2}+s)\phi(\tfrac{h}{2}+s)
\nabla^2 U(X_{t+\frac{h}{2}-s})V_{t+\frac{h}{2}-s}
\Big)
\, \dd s
\, \dd r
\nonumber \\
={}&
\underbrace{
\int_0^{\frac h2}\int_0^r
\Big(
(\tfrac h2-s)\phi(\tfrac h2-s)
-
(\tfrac h2+s)\phi(\tfrac h2+s)
\Big)
\nabla^2U(X_{t+\frac h2+s})
V_{t+\frac h2+s}
\,\dd s\,\dd r
}_{=:\mathcal R^{\mathrm{U}}_{3,1,1}}
\nonumber\\
&+
\underbrace{
\int_0^{\frac h2}\int_0^r
(\tfrac h2+s)\phi(\tfrac h2+s)
\nabla^2U(X_{t+\frac h2+s})
\Big(
V_{t+\frac h2+s}
-
V_{t+\frac h2-s}
\Big)
\,\dd s\,\dd r
}_{=:\mathcal R^{\mathrm{U}}_{3,1,2}}
\nonumber\\
&+
\underbrace{
\int_0^{\frac h2}\int_0^r
(\tfrac h2+s)\phi(\tfrac h2+s)
\Big(
\nabla^2U(X_{t+\frac h2+s})
-
\nabla^2U(X_{t+\frac h2-s})
\Big)
V_{t+\frac h2-s}
\,\dd s\,\dd r
}_{=:\mathcal R^{\mathrm{U}}_{3,1,3}}
\nonumber\\
&-
\underbrace{
\int_0^{\frac h2}\int_0^r
\Big(
\psi(\tfrac h2-s)
-
\psi(\tfrac h2+s)
\Big)
\nabla U(X_{t+\frac h2+s})
\,\dd s\,\dd r
}_{=:\mathcal R^{\mathrm{U}}_{3,1,4}}
\nonumber\\
&+
\underbrace{
\int_0^{\frac h2}\int_0^r
\psi(\tfrac h2+s)
\Big(
\nabla U(X_{t+\frac h2-s})
-
\nabla U(X_{t+\frac h2+s})
\Big)
\,\dd s\,\dd r
}_{=:\mathcal R^{\mathrm{U}}_{3,1,5}}.
\label{eq:R3-first-component-decomposition}
\end{align}
Similarly, the second component $\mathcal R^{\mathrm{U}}_{3,2}$ can be written as
\begin{align}
\mathcal R^{\mathrm{U}}_{3,2}
={}&
\underbrace{
\int_0^{\frac h2}\int_0^r
\Big(
\psi(\tfrac h2-s)
-
\psi(\tfrac h2+s)
\Big)
\nabla^2U(X_{t+\frac h2+s})
V_{t+\frac h2+s}
\,\dd s\,\dd r
}_{=:\mathcal R^{\mathrm{U}}_{3,2,1}}
\nonumber\\
&+
\underbrace{
\int_0^{\frac h2}\int_0^r
\psi(\tfrac h2+s)
\nabla^2U(X_{t+\frac h2+s})
\Big(
V_{t+\frac h2+s}
-
V_{t+\frac h2-s}
\Big)
\,\dd s\,\dd r
}_{=:\mathcal R^{\mathrm{U}}_{3,2,2}}
\nonumber\\
&+
\underbrace{
\int_0^{\frac h2}\int_0^r
\psi(\tfrac h2+s)
\Big(
\nabla^2U(X_{t+\frac h2+s})
-
\nabla^2U(X_{t+\frac h2-s})
\Big)
V_{t+\frac h2-s}
\,\dd s\,\dd r
}_{=:\mathcal R^{\mathrm{U}}_{3,2,3}}
\nonumber\\
&+
\underbrace{
\gamma
\int_0^{\frac h2}\int_0^r
\Big(
\psi(\tfrac h2-s)
-
\psi(\tfrac h2+s)
\Big)
\nabla U(X_{t+\frac h2+s})
\,\dd s\,\dd r
}_{=:\mathcal R^{\mathrm{U}}_{3,2,4}}
\nonumber\\
&+
\underbrace{
\gamma
\int_0^{\frac h2}\int_0^r
\psi(\tfrac h2+s)
\Big(
\nabla U(X_{t+\frac h2+s})
-
\nabla U(X_{t+\frac h2-s})
\Big)
\,\dd s\,\dd r
}_{=:\mathcal R^{\mathrm{U}}_{3,2,5}}.
\label{eq:R3-second-component-decomposition}
\end{align}
%%%%
We next estimate the terms
$\mathcal R_{3,1,j}^{\mathrm U}$ and
$\mathcal R_{3,2,j}^{\mathrm U}$,
$j\in\{1,2,3,4,5\}$, separately. We begin by estimating
$\mathcal R_{3,1,1}^{\mathrm U}$ and
$\mathcal R_{3,2,1}^{\mathrm U}$. Before proceeding further, by the definitions \eqref{eq:define-psi-phi} of \(\phi\)  and \(\psi\), one can see that
\begin{align}
&
\left|
\left(\tfrac h2-s\right)
\phi\left(\tfrac h2-s\right)
-
\left(\tfrac h2+s\right)
\phi\left(\tfrac h2+s\right)
\right|
\leq 2s,
\qquad 
&&0<
\left(\tfrac h2+s\right)
\phi\left(\tfrac h2+s\right)
\leq h,
\label{eq:UBU-kernel2-elementary-bounds}
\\
&
\left|
\psi\left(\tfrac h2-s\right)
-
\psi\left(\tfrac h2+s\right)
\right|
\leq2\gamma s,
\qquad
&& 
0<
\psi\left(\tfrac h2+s\right)
\leq 1.
\label{eq:UBU-kernel1-elementary-bounds}
\end{align}
Moreover, we have 
\begin{equation}
\label{eq:multi-integral}
\int_0^{\frac h2}\int_0^r s\,\dd s\,\dd r
=
\tfrac{h^3}{48},
\qquad
\int_0^{\frac h2}\int_0^r s^{\frac12}\,\dd s\,\dd r
=
\tfrac{h^{\frac52}}{15\sqrt2}.
\end{equation}
Since \(U\) is twice continuously differentiable, Assumption
\ref{as:Lip} implies
\begin{align}
\|\nabla^2U(x)\|
\leq
L,
\qquad
x\in\mathbb R^d.
\label{eq:Hessian-bound-UBU}
\end{align}
Collecting the above estimates \eqref{eq:UBU-kernel2-elementary-bounds}--\eqref{eq:Hessian-bound-UBU}, together with Lemma \ref{lem:ULD-moment bound},  we get 
\begin{align}
\left|
\E\left[
\mathcal R^{\mathrm U}_{3,1,1}
\right]
\right|
\vee
\left|
\E\left[
\mathcal R^{\mathrm U}_{3,2,1}
\right]
\right|
\leq   &
\Big(
\tfrac{L^2(1\vee\gamma^2)\bar K_1}{576}
\big(
|x|^2+|v|^2
\big)
+
\tfrac{L^2(1\vee\gamma^2)\bar K_1}{576}d
\Big)^{\frac12}
h^3,
\label{eq:R-U-311-321-weak-bounds}
\\
\E\left[
\left|
\mathcal R^{\mathrm U}_{3,1,1}
\right|^2
\right]
\vee
\E\left[
\left|
\mathcal R^{\mathrm U}_{3,2,1}
\right|^2
\right]
\leq  &
\Big(
\tfrac{L^2(1\vee\gamma^2)\bar K_1}{576}
\big(
|x|^2+|v|^2
\big)
+
\tfrac{L^2(1\vee\gamma^2)\bar K_1}{576}d
\Big)h^6.
\label{eq:R-U-311-321-strong-bounds}
\end{align}
We next estimate
$\mathcal R^{\mathrm U}_{3,1,2}$ and
$\mathcal R^{\mathrm U}_{3,2,2}$. Applying the
Cauchy--Schwarz inequality and It\^o's isometry to the symmetric
velocity increment gives
\begin{align}
\E
\left[
\left|
V_{t+\frac h2+s}
-
V_{t+\frac h2-s}
\right|^2
\right]
\leq   &
\Big(
\big(
24\gamma^2\bar K_1
+
24\alpha^2L^2\bar K_1
\big)
\big(
|x|^2+|v|^2
\big)
\nonumber
\\    &
+
\big(
24\gamma^2\bar K_1
+
24\alpha^2L^2\bar K_1
+
24\alpha^2L_2^2
+
12\gamma\alpha
\big)d
\Big)s.
\label{eq:velocity-symmetric-increment-L2}
\end{align}
This together with estimates \eqref{eq:UBU-kernel2-elementary-bounds}--\eqref{eq:Hessian-bound-UBU} implies 
\begin{align}
\E
\left[
\left|
\mathcal R^{\mathrm U}_{3,1,2}
\right|^2
\right]
\vee
\E
\left[
\left|
\mathcal R^{\mathrm U}_{3,2,2}
\right|^2
\right]
\leq   &
\Big(
\tfrac{L^2}{450}
\big(
24\gamma^2\bar K_1
+
24\alpha^2L^2\bar K_1
\big)
\big(
|x|^2+|v|^2
\big)
\nonumber
\\   &
+
\tfrac{L^2}{450}
\big(
24\gamma^2\bar K_1
+
24\alpha^2L^2\bar K_1
+
24\alpha^2L_2^2
+
12\gamma\alpha
\big)d
\Big)h^5.
\label{eq:R-U-312-322-strong-bounds}
\end{align}
It remains to provide the bound of \(|\E[\mathcal R^{\mathrm{U}}_{3,i,2}]|\), \(i=1,2\).
We note first that
\begin{align}
&\E
\left[
\nabla^2U(X_{t+\frac h2-s})
\int_{t+\frac h2-s}^{t+\frac h2+s}
\dd W_u
\right]
=
0.
\label{eq:R3-Brownian-zero}
\end{align}
which further implies
\begin{align}
&
\left|
\E
\left[
\nabla^2U(X_{t+\frac h2+s})
\int_{t+\frac h2-s}^{t+\frac h2+s}
\dd W_u
\right]
\right|
\nonumber
\\ 
=  &
\left|
\E
\left[
\Big(
\nabla^2U(X_{t+\frac h2+s})
-
\nabla^2U(X_{t+\frac h2-s})
\Big)
\int_{t+\frac h2-s}^{t+\frac h2+s}
\dd W_u
\right]
\right|
\end{align}
This in conjunction with the Hessian Lipschitz condition and Lemma \ref{lem:xt-2p} shows 
\begin{align}
\left|
\E
\left[
\nabla^2U(X_{t+\frac h2+s})
\int_{t+\frac h2-s}^{t+\frac h2+s}
\dd W_u
\right]
\right|
\leq&
L_H
\E
\left[
\left|
X_{t+\frac h2+s}
-
X_{t+\frac h2-s}
\right|^2
\right]^{\frac12}
\E
\left[
\left|
\int_{t+\frac h2-s}^{t+\frac h2+s}
\dd W_u
\right|^2
\right]^{\frac12}
\nonumber\\
\leq&
2\sqrt{2}\,L_H 
\Big(d
\bigl(
\mathscr H_1^{x,v}
(
|x|^2+|v|^2
)
+
\mathscr H_1
d
\bigr)
\Big)^{\frac{1}{2}}s^{\frac{3}{2}},
\label{eq:R3-Brownian-estimate}
\end{align}
With this at hand, we recall \eqref{eq:Hessian-bound-UBU} and the linear growth of $\nabla U$ to attain
\begin{align}
& \left|
\E
\left[
\mathcal R^{\mathrm U}_{3,1,2}
\right]
\right|
\vee
\left|
\E
\left[
\mathcal R^{\mathrm U}_{3,2,2}
\right]
\right|
\nonumber
\\
\leq   &
\int_0^{\frac h2}\int_0^r
\left|
\E 
\left[
\nabla^2U(X_{t+\frac h2+s})
\Big(
V_{t+\frac h2+s}
-
V_{t+\frac h2-s}
\Big)
\right]
\right|
\,\dd s\,\dd r
\nonumber
\\ 
\leq   &
\gamma
\int_0^{\frac h2}\int_0^r
\left|
\E 
\bigg[
\nabla^2U(X_{t+\frac h2+s})
\int_{\frac{h}{2}-s}^{\frac{h}{2}+s}
V_{t+u}\,\dd u
\bigg]
\right|
\dd s \, \dd r
\nonumber\\
&+
\alpha
\int_0^{\frac h2}\int_0^r
\left|
\E 
\bigg[
\nabla^2U(X_{t+\frac h2+s})
\int_{\frac{h}{2}-s}^{\frac{h}{2}+s}
\nabla U(X_{t+u})\,\dd u
\bigg]
\right|
\dd s \, \dd r
\nonumber
\\   &
+
\sqrt{2\gamma\alpha}
\int_0^{\frac h2}\int_0^r
\left|
\E 
\bigg[
\nabla^2U(X_{t+\frac h2+s})
\int_{t+\frac{h}{2}-s}^{t+\frac{h}{2}+s} \, \dd W_u
\bigg]
\right|
\,\dd s\,\dd r
\nonumber
\\ 
\leq  &
\Big(
\big(
\tfrac{L^2}{288}
(
\gamma^2\bar K_1
+
2\alpha^2L^2\bar K_1
)
+
\tfrac{2}{1225}
\gamma\alpha L_H^2
\mathscr H_1^{x,v}
\big)
\big(
|x|^2+|v|^2
\big)^2
\nonumber
\\  &
+
\big(
\tfrac{L^2}{288}
(
3\gamma^2\bar K_1
+
6\alpha^2L^2\bar K_1
+
4\alpha^2L_2^2
)
+
\tfrac{2}{1225}
\gamma\alpha L_H^2
\left(
\mathscr H_1^{x,v}+2\mathscr H_1
\right)
\big)
d^2
\Big)^{\frac12}
h^3.
\label{eq:R-U-312-322-weak-bounds}
\end{align}
We now consider
$\mathcal R_{3,1,3}^{\mathrm U}$ and
$\mathcal R_{3,2,3}^{\mathrm U}$. By the Hessian Lipschitz condition and Lemmas \ref{lem:ULD-moment bound}, \ref{lem:xt-2p}, one can derive from estimates \eqref{eq:UBU-kernel2-elementary-bounds}--\eqref{eq:multi-integral} that  
\begin{align}
\left|
\E\left[
\mathcal R^{\mathrm U}_{3,1,3}
\right]
\right|
\vee
\left|
\E\left[
\mathcal R^{\mathrm U}_{3,2,3}
\right]
\right|
\leq  &
\Big(
\tfrac{L_H^2\bar K_1
(
3 \mathscr H_1^{x,v}+\mathscr H_1
)}{1152}
\big(
|x|^2+|v|^2
\big)^2
+
\tfrac{L_H^2\bar K_1
(
\mathscr H_1^{x,v}+3\mathscr H_1
)}{1152}
d^2
\Big)^{\frac12}
h^3,
\\ 
\E
\left[
\left|
\mathcal R^{\mathrm U}_{3,1,3}
\right|^2
\right]
\vee
\E
\left[
\left|
\mathcal R^{\mathrm U}_{3,2,3}
\right|^2
\right]
\leq &
\Big(
\tfrac{L_H^2\bar K_1
(
3 \mathscr H_1^{x,v}+\mathscr H_1
)}{1152}
\big(
|x|^2+|v|^2
\big)^2
+
\tfrac{L_H^2\bar K_1
(
\mathscr H_1^{x,v}+3\mathscr H_1
)}{1152}
d^2
\Big)h^6.
\label{eq:R-U-313-323-weak-bounds}
\end{align}
Again, by estimates \eqref{eq:UBU-kernel2-elementary-bounds}--\eqref{eq:multi-integral}, together with the growth bound for $\nabla U$, we proceed with the estimates of
$\mathcal R_{3,1,4}^{\mathrm U}$ and
$\mathcal R_{3,2,4}^{\mathrm U}$ as follows:
\begin{align}
\left|
\E\left[
\mathcal R^{\mathrm U}_{3,1,4}
\right]
\right|
\vee
\left|
\E\left[
\mathcal R^{\mathrm U}_{3,2,4}
\right]
\right|
\leq  &
\Big(
\tfrac{\gamma^2(1\vee\gamma^2)L^2\bar K_1}{288}
\left(
|x|^2+|v|^2
\right)
+
\tfrac{\gamma^2(1\vee\gamma^2)
\left(
L^2\bar K_1+L_2^2
\right)}{288}d
\Big)^{\frac12}
h^3,
\label{eq:R-U-314-324-weak-bounds}
\\  
\E
\left[
\left|
\mathcal R^{\mathrm U}_{3,1,4}
\right|^2
\right]
\vee
\E
\left[
\left|
\mathcal R^{\mathrm U}_{3,2,4}
\right|^2
\right]
\leq &
\Big(
\tfrac{\gamma^2(1\vee\gamma^2)L^2\bar K_1}{288}
\left(
|x|^2+|v|^2
\right)
+
\tfrac{\gamma^2(1\vee\gamma^2)
\left(
L^2\bar K_1+L_2^2
\right)}{288}d
\Big)
h^6.
\end{align}
Finally, we estimate
$\mathcal R_{3,1,5}^{\mathrm U}$ and
$\mathcal R_{3,2,5}^{\mathrm U}$. An application of Lemma \ref{lem:xt-2p} gives 
\begin{align}
\left|
\E\left[
\mathcal R^{\mathrm U}_{3,1,5}
\right]
\right|
\vee
\left|
\E\left[
\mathcal R^{\mathrm U}_{3,2,5}
\right]
\right|
\leq   &
\Big(
\tfrac{L^2(1\vee\gamma^2)\mathscr H_1^{x,v} }{576}
\left(
|x|^2+|v|^2
\right)
+
\tfrac{L^2(1\vee\gamma^2)\mathscr H_1}{576}d
\Big)^{\frac12}
h^3,
\\  
\E
\left[
\left|
\mathcal R^{\mathrm U}_{3,1,5}
\right|^2
\right]
\vee
\E
\left[
\left|
\mathcal R^{\mathrm U}_{3,2,5}
\right|^2
\right]
\leq   &
\Big(
\tfrac{L^2(1\vee\gamma^2)\mathscr H_1^{x,v} }{576}
\left(
|x|^2+|v|^2
\right)
+
\tfrac{L^2(1\vee\gamma^2)\mathscr H_1}{576}d
\Big)
h^6.
\label{eq:R-U-315-325-weak-bounds}
\end{align}
Gathering the above bounds, we conclude that
\begin{align}\label{eq:R3-weak-strong-error-ubu}
|\E[\mathcal R^{\mathrm{U}}_3]|
\leq  &
\Big(
\mathcal C^{U}_{3,1}
\big(
|x|^2+|v|^2
\big)^2
+
\mathcal C^{U}_{3,2}
d^2
\Big)^{\frac12}
h^3
,
\\ 
\E[|\mathcal R^{\mathrm{U}}_3|^2]
\leq   &
\Big(
\mathcal C^{U}_{3,3}
\big(
|x|^2+|v|^2
\big)^2
+
\mathcal C^{U}_{3,4}
d^2
\Big)
h^5
,
\end{align}
where  
\begin{align}
\label{eq:C3-UBU-constants}
\mathcal C^{U}_{3,1}
:=&
10\Bigg(
\tfrac{L^2}{288}
(
\gamma^2\bar K_1
+
2\alpha^2L^2\bar K_1
)
+
\tfrac{2}{1225}
\gamma\alpha L_H^2
\mathscr H_1^{x,v}
+
\tfrac{L_H^2\bar K_1
\big(
3\mathscr H_1^{x,v}+\mathscr H_1
\big)}{1152}
+
\tfrac{
L^2(1+\gamma^2)
\big(
\bar K_1(1+2\gamma^2)
+
\mathscr H_1^{x,v}
\big)
}{1152}
\Bigg),
\nonumber\\
\mathcal C^{U}_{3,2}
:=&
10\Bigg(
\tfrac{L^2}{288}
(
3\gamma^2\bar K_1
+
6\alpha^2L^2\bar K_1
+
4\alpha^2L_2^2
)
+
\tfrac{2}{1225}
\gamma\alpha L_H^2
(
\mathscr H_1^{x,v}
+
2\mathscr H_1
)
+
\tfrac{L_H^2\bar K_1
(
\mathscr H_1^{x,v}
+
3\mathscr H_1
)}{1152}
\nonumber \\
&\qquad +
\tfrac{L^2(1+\gamma^2)\bar K_1}{384}
+
\tfrac{
\gamma^2(1+\gamma^2)
(
3L^2\bar K_1+2L_2^2
)
}{576}
+
\tfrac{
L^2(1+\gamma^2)
\big(
\mathscr H_1^{x,v}
+
2\mathscr H_1
\big)
}{1152}
\Bigg),
\nonumber\\
\mathcal C^{U}_{3,3}
:=&
10\Bigg(
\tfrac{L_H^2\bar K_1
\big(
3\mathscr H_1^{x,v}
+
\mathscr H_1
\big)}{1152}
+
\tfrac{2L^2\bar K_1}{75}
\big(
\gamma^2+\alpha^2L^2
\big)
+
\tfrac{
L^2(1+\gamma^2)
\big(
\bar K_1(1+2\gamma^2)
+
\mathscr H_1^{x,v}
\big)
}{1152}
\Bigg),
\nonumber\\
\mathcal C^{U}_{3,4}
:=&
10\Bigg(
\tfrac{L_H^2\bar K_1
\big(
\mathscr H_1^{x,v}
+
3\mathscr H_1
\big)}{1152}
+
\tfrac{L^2(1+\gamma^2)\bar K_1}{384}
+
\tfrac{2L^2\bar K_1}{25}
\big(
\gamma^2+\alpha^2L^2
\big)
+
\tfrac{4\alpha^2L^2L_2^2}{75}
+
\tfrac{2\gamma\alpha L^2}{75}
\nonumber\\
&\qquad
+
\tfrac{
\gamma^2(1+\gamma^2)
\big(
3L^2\bar K_1
+
2L_2^2
\big)
}{576}
+
\tfrac{
L^2(1+\gamma^2)
\big(
\mathscr H_1^{x,v}
+
2\mathscr H_1
\big)
}{1152}
\Bigg).
\end{align}

Next, we consider \(\mathcal R^{\mathrm{U}}_4\), which can be written as
\begin{align}
\mathcal R^{\mathrm{U}}_4
=
\left(
\begin{aligned}
&
\tfrac h2
\Phi_2(\tfrac h2)
\Big(
\nabla U(X_{t+\frac h2})
-
\nabla U(\bar X_{t+\frac h2})
\Big)
\\
&
\Psi_2(\tfrac h2)
\Big(
\nabla U(X_{t+\frac h2})
-
\nabla U(\bar X_{t+\frac h2})
\Big)
\end{aligned}
\right).
\label{eq:R4-UBU}
\end{align}

We rely on Lemma \ref{lem:delta_tau_xy} with \(\tau\equiv1/2\) to derive
\begin{align}
\E
\left[
\left|
\nabla U(X_{t+\frac h2})
-
\nabla U(\bar X_{t+\frac h2})
\right|^2
\right]
\leq
L^2
\Big(
\mathcal C_1
\big(
|x|^2+|v|^2
\big)
+
\mathcal C_1'd
\Big)h^3.
\end{align}
Moreover, \eqref{eq:delta-tau-x-decomposition}, together with the
martingale property of Brownian motion and Assumption
\ref{as:coeff-approx-general} with $\boldsymbol{\eta}_\mathrm{U}$, implies
\begin{align}
\left|
\E\left[
X_{t+\frac h2}
-
\bar X_{t+\frac h2}
\right]
\right|
\leq&
\bigg(
\tfrac{\kappa}{8}
+
\tfrac{\alpha}{4}
\left(
\tfrac{1-\ell_1}{2}
+
\ell_1\gamma^{-1}\kappa
\right)
(L+L_2)
+
\tfrac{\alpha L\sqrt{\mathscr H_1}}{48}
\bigg)
\bigl(
|x|^2+|v|^2+d
\bigr)h^2.
\end{align}
The first-order Taylor expansion gives
\begin{align}
\nabla U(X_{t+\frac{h}{2}})
=&
\nabla U(x)
+
\nabla^2 U(x)
\big(X_{t+\frac{h}{2}}-x\big)
\nonumber\\
&\qquad+
\int_0^1
\left(
\nabla^2
U\bigl(x+\theta(X_{t+\frac{h}{2}}-x)\bigr)
-
\nabla^2U(x)
\right)
(X_{t+\frac{h}{2}}-x)\,\mathrm d\theta,
\nonumber\\
\nabla U(\bar X_{t+\frac{h}{2}})
=&
\nabla U(x)
+
\nabla^2 U(x)
\big(\bar X_{t+\frac{h}{2}}-x\big)
\nonumber\\
&\qquad+
\int_0^1
\left(
\nabla^2U
\bigl(x+\theta(\bar X_{t+\frac{h}{2}}-x)\bigr)
-
\nabla^2U(x)
\right)
(\bar X_{t+\frac{h}{2}}-x)\,\mathrm d\theta
.
\end{align}
Therefore, Lemmas \ref{lem:xt-2p}, \ref{lem:delta_tau_xy} and Assumptions
\ref{as:Lip}, \ref{as:Hessian-lip} imply
\begin{align}
&\Big|\E
\left[
\nabla U(X_{t+\frac h2})
-
\nabla U(\bar X_{t+\frac h2})
\right]
\Big|
\nonumber\\
\leq&
L
\left|\E\left[
X_{t+\frac h2}
-
\bar X_{t+\frac h2}
\right]
\right|
+
L_H
\left(
\E|X_{t+h/2}-x|^2
+
\E|\bar X_{t+h/2}-x|^2
\right)
\nonumber\\
\leq&
L
\Big(
\tfrac{\kappa}{8}
+
\tfrac{\alpha}{4}
\left(
\tfrac{1-\ell_1}{2}
+
\ell_1\gamma^{-1}\kappa
\right)
(L+L_2)
+
\tfrac{\alpha L\sqrt{\mathscr H_1}}{48}
\Big)
\bigl(
|x|^2+|v|^2+d
\bigr)h^2
\nonumber\\
&+
L_H
\big(
\tfrac34
\left(
\mathscr H^{x,v}_1+\mathscr H_1
\right)
+
2
\left(
\mathcal C_1+\mathcal C_1'
\right)
\big)
\bigl(
|x|^2+|v|^2+d
\bigr)h^2,
\end{align}
Consequently,
\begin{equation}\label{eq:R4-weak-strong-error-ubu}
\begin{aligned}
|\E[\mathcal R^{\mathrm{U}}_4]|
\leq&
\mathcal C^{\mathrm{U}}_{4,1}\bigl(
|x|^2+|v|^2+d
\bigr)h^2,
\\
\E[|\mathcal R^{\mathrm{U}}_4|^2]
\leq&
\mathcal C^{\mathrm{U}}_{4,2}
\bigl(
|x|^2+|v|^2+d
\bigr)h^3,
\end{aligned}
\end{equation}
where
\begin{align}
\mathcal C^{\mathrm{U}}_{4,1}
:=&
\sqrt{2}(1+\kappa)
%\nonumber\\
%& 
\bigg(
L
\Big(
\tfrac{\kappa}{8}
+
\tfrac{\alpha}{4}
\left(
\tfrac{1-\ell_1}{2}
+
\ell_1\gamma^{-1}\kappa
\right)
(L+L_2)
+
\tfrac{\alpha L\sqrt{\mathscr H_1}}{48}
\Big)
+
L_H
\big(
\tfrac34
\left(
\mathscr H^{x,v}_1+\mathscr H_1
\right)
+
2
\left(
\mathcal C_1+\mathcal C_1'
\right)
\big)
\bigg),
\nonumber\\
\mathcal C^{\mathrm{U}}_{4,2}
:=&
2(1+\kappa)^2
L^2
\bigl(
\mathcal C_1+\mathcal C_1'
\bigr).
\nonumber
\end{align}

Combining \eqref{eq:one-step-error-vector-decomposition},
\eqref{eq:R125-UBU-estimates},
\eqref{eq:R3-weak-strong-error-ubu} and
\eqref{eq:R4-weak-strong-error-ubu}, we arrive at the desired result.
\end{proof}

\end{redtext}

\begin{proof}
\textbf{Proof of Proposition \ref{prop:finite-error-rs}.}
The proof is based on Theorem 3.3 of \cite{yang2025non}. 
%It remains to verify the assumptions under which that theorem applies.
%
We first observe that the drift coefficient of the full ULD system is given by
\[
b(x,v)
:=
\begin{pmatrix}
v\\
-\gamma v-\alpha\nabla U(x)
\end{pmatrix},
\qquad (x,v)\in\mathbb R^{2d}.
\]
For any \((x_1^\top,v_1^\top)^\top,(x_2^\top,v_2^\top)^\top\in\mathbb R^{2d}\), Assumption \ref{as:Lip} yields
\[
\begin{aligned}
|b(x_1,v_1)-b(x_2,v_2)|^2
&=
|v_1-v_2|^2
+
\big|
-\gamma(v_1-v_2)
-\alpha
\big(
\nabla U(x_1)-\nabla U(x_2)
\big)
\big|^2
\\
&\leq
(1+2\gamma^2)|v_1-v_2|^2
+
2\alpha^2L^2|x_1-x_2|^2
\\
&\leq
\big(
1+2\gamma^2+2\alpha^2L^2
\big)
\big(
|x_1-x_2|^2+|v_1-v_2|^2
\big),
\end{aligned}
\]
suggesting that \(b\) is globally Lipschitz on \(\mathbb R^{2d}\) with Lipschitz constant
\begin{equation}\label{eq:def-L-b}
L_b
:=
\sqrt{1+2\gamma^2+2\alpha^2L^2}.
\end{equation}
Therefore, in the notation of Theorem 3.3 in \cite{yang2025non}, one can take
\begin{equation}
\label{eq:constants-Lb-Theorem-3-3}
L^*=L_b,
\qquad
L_f^*=\tfrac{1}{3}L_b,
\qquad
r_0=0.
\end{equation}
We note that Condition (A1) in \cite{yang2025non} is used in the proof of Theorem 3.3 only to obtain the moment estimate for the exact solution given in \cite[Lemma 3.1]{yang2025non}. In the present setting, the required moment estimate is established directly in Lemma \ref{lem:ULD-moment bound}.
Indeed, by Lemma \ref{lem:ULD-moment bound} and Proposition \ref{prop:numerical-bound}, the moment conditions in Theorem 3.3 of \cite{yang2025non} are satisfied with
\begin{equation}
\label{eq:moment-constants-Theorem-3-3}
C_0^*=\bar K_1,
\qquad
\hat C_0^*=\bar K_1 d,
\qquad
C_1^*=\bar K_2,
\qquad
\hat C_1^*=\bar K_2 d,
\qquad
h_0
=
h_\star
\wedge
\tfrac{1}{2L_b}.
\end{equation}
Hence, the proof of \cite[Theorem 3.3]{yang2025non} remains applicable without imposing Condition (A1). Moreover, Lemma \ref{lem:one-step-weak-strong-error} verifies the one-step error conditions in Theorem 3.3 of \cite{yang2025non} with
\begin{equation}
\label{eq:one-step-constants-Theorem-3-3}
\hat K_1^*
=
\mathcal T_{\mathrm{R}}' d,
\qquad
K_1^*
=
\mathcal T_{\mathrm{R}},
\qquad
\hat K_2^*
=
\mathcal S_{\mathrm{R}}' d,
\qquad
K_2^*
=
\mathcal S_{\mathrm{R}},
\qquad
r=1.
\end{equation}

Consequently, following the argument of \cite[Theorem 3.3]{yang2025non} with the constants identified above, we obtain that, for any fixed time \(T=n_1h\), the finite-time mean-square error of U-ULMC satisfies
\begin{equation}
\label{eq:finite-time-mean-square-error-U-ULMC}
\sup_{n\in [n_1]}
\Bigg(
\mathbb E
\Big[
|X_{nh}-\bar X_{n}|^2
+
|V_{nh}-\bar V_{n}|^2
\Big]
\Bigg)
^{\frac{1}{2}}
\leq
\mathcal C(T)
\Big(
\mathcal C_{F,\mathrm{R}}
\E
\big[
|X_0|^2+|V_0|^2
\big]
+
\mathcal C_{F,\mathrm{R}}'d
\Big)^{\frac{1}{2}}
h^{\frac{3}{2}},
\end{equation}
where 
\begin{equation}
\label{eq:constants-mathcalC(T)-and}
\begin{aligned}
\mathcal C (T)
:=  &
\exp\left(
\tfrac12
\big(
1+12L_b
\big)T
\right)
=
\exp\left(
\tfrac12
\big(
1+12\sqrt{1+2\gamma^2+2\alpha^2L^2}
\big)T
\right)
,
\\
\mathcal C_{F,\mathrm{R}}
:=  &
\bar K_2
\big(
\mathcal T_{\mathrm{R}}
+
(3+2\bar K_1)\mathcal S_{\mathrm{R}}
\big)
,
\quad 
\mathcal C_{F,\mathrm{R}}'
:=  
\mathcal T_{\mathrm{R}}'
+
(3+2\bar K_1)\mathcal S_{\mathrm{R}}'
+
\bar K_2\mathcal T_{\mathrm{R}}
+
(3+2\bar K_1)\bar K_2\mathcal S_{\mathrm{R}}
.
\end{aligned}
\end{equation}
Thus, we finish this proof.
\end{proof}

\begin{redtext}

\begin{redtext}
    
\begin{proof}
\textbf{Proof of Proposition \ref{prop:finite-error-Euler}.}
As shown in the proof of Proposition~\ref{prop:finite-error-rs},
the drift coefficient of ULD is globally Lipschitz
with constant $L_b$. Hence, in the notation of
\cite[Theorem 3.3]{yang2025non} we may take
\begin{align}
L^*=L_b,
\qquad
L_f^*=\tfrac{1}{3}L_b,
\qquad
r_0=0.
\label{eq:Euler-Yang-drift-constants}
\end{align}
Moreover, Lemma~\ref{lem:ULD-moment bound} and
Proposition~\ref{prop:numerical-bound} give the required exact and
numerical moment estimates with
\begin{align}
C_0^*=\bar K_1,
\qquad
\hat C_0^*=\bar K_1d,
\qquad
C_1^*=\bar K_2,
\qquad
\hat C_1^*=\bar K_2d.
\label{eq:Euler-Yang-moment-constants}
\end{align}

Lemma~\ref{lem:one-step-weak-strong-error-Euler} verifies the
one-step error conditions in \cite[Theorem 3.3]{yang2025non} with
\begin{align}
p_1=2,
\qquad
p_2=\tfrac{3}{2},
\qquad
r=1.
\end{align}
The corresponding local-error constants are
\begin{align}
K_1^*=\mathcal T_{\mathrm E},
\qquad
\hat K_1^*=\mathcal T_{\mathrm E}'d,
\qquad
K_2^*=\mathcal S_{\mathrm E},
\qquad
\hat K_2^*=\mathcal S_{\mathrm E}'d.
\label{eq:Euler-Yang-local-constants}
\end{align}

Substituting the constants above into the same estimate used in the
proof of Proposition~\ref{prop:finite-error-rs} gives
\eqref{eq:finite-time-error-Euler}, with
$\mathcal C_{F,\mathrm E}$ and
$\mathcal C_{F,\mathrm E}'$ defined by
\begin{align}\label{eq:define-C-F-E}
\mathcal C_{F,\mathrm E}
:={}&
\bar K_2
\left(
\mathcal T_{\mathrm E}
+
(3+2\bar K_1)\mathcal S_{\mathrm E}
\right),
\nonumber\\
\mathcal C_{F,\mathrm E}'
:={}&
\mathcal T_{\mathrm E}'
+
(3+2\bar K_1)\mathcal S_{\mathrm E}'
+
\bar K_2\mathcal T_{\mathrm E}
+
(3+2\bar K_1)\bar K_2\mathcal S_{\mathrm E},
\end{align}
where
\(\mathcal T_{\mathrm E},\mathcal T_{\mathrm E}',
\mathcal S_{\mathrm E}\), and \(\mathcal S_{\mathrm E}'\)
are defined by \eqref{eq:Euler-local-error-constants}.
\end{proof}

\end{redtext}

\end{redtext}
%%
%%
%%%%%%%%%%%%%%%%%%%%%%%%%%
%%%%%%%%%%%%%%%%%%%%%%%%%%
%%%%%%%%%%%%%%%%%%%%%%%%%%
%%%%%%%%%%%%%%%%%%%%%%%%%%
%%\old-version-fin-tim-proof%%%%%%%
% \iffalse
% By utilizing Proposition \ref{prop:numerical-bound}, \ref{uni-onestep-finite-error} and Theorem 3.3 of \cite{yang2025non}, one can easily attain the desired result with constants explicitly given by 
%     \begin{align}
%         C^*(T):=exp(1+12L^*T), \quad C_{12} := \sqrt{(C_{10}+5C_{11})(1+\bar{K}_3)},
%     \end{align}
%     where $L^*:=\max\{\gamma+1, L\}$.
% \end{proof}

% \begin{proof}
%     \textbf{Proof of Proposition \ref{prop:finite-error-rs}.}
% By constructing the continuous and discrete dynamics on the same probability space 
% with identical initial conditions and driving noise, the pair 
% $(X(t_n),V(t_n))$ and $(\bar X(t_n),\bar V(t_n))$ defines an admissible coupling 
% between the laws $\nu p_{nh}$ and $\nu q_n$. Hence,
% \begin{align}
%     \mathcal W_2(\nu p_{nh},\nu q_n)
%     \leq&
%     \big(\E[|X_{t_n}-\X_n|^2+|V_{t_n}-\V_n|^2]\big)^{1/2}\nonumber\\
%     \leq&
%     C^*(T)C_{12}\big((\E[|X_0|^2+|V_0|^2])^{1/2}+\sqrt{d}\big)h^{3/2}\nonumber\\
%     =&C^*(T)C_{12}\big(\mathcal{W}_2(\nu, \delta_0)+\sqrt{d}\big)h^{3/2}.
% \end{align}
% Therefore, letting 
% \begin{align}
%     \bar K_2:=C^*(T)C_{13},
% \end{align}
% then indicates the desired result.
% \fi
%%%%%%%%%%%%%%%%%%%%%%%%%%
%%%%%%%%%%%%%%%%%%%%%%%%%%
%%%%%%%%%%%%%%%%%%%%%%%%%%
%%%%%%%%%%%%%%%%%%%%%%%%%%
%%\old-version-fin-tim-proof%%%%%%%

\begin{redtext}

\begin{proof}
\textbf{Proof of Proposition \ref{prop:finite-error-UBU}.}
We first consider the situation without the Hessian Lipschitz condition.
Since $\boldsymbol{\eta}_\mathrm{E}\geq \boldsymbol{\eta}_\mathrm{M}$ componentwise, the moment estimate obtained in Proposition \ref{prop:numerical-bound} remains valid. This, together with Lemma \ref{lem:ULD-moment bound}, implies that the moment conditions in \cite[Theorem 3.3]{yang2025non} are satisfied with
\begin{align}
C_0^*=&\bar K_1,
\qquad
\hat C_0^*=\bar K_1d,
\qquad
C_1^*=\bar K_2,
\qquad
\hat C_1^*=\bar K_2d,
\qquad
h_0
=
h_\star
\wedge
\tfrac{1}{2L_b}.
\end{align}

% Since $h \le h_4 \le h_\star \le 1$, Assumption \ref{as:coeff-approx-general} with $\boldsymbol{\eta}=\boldsymbol{\eta}_\mathrm{U}$ implies the case $\boldsymbol{\eta}=\boldsymbol{\eta}_\mathrm{R}$.
% Moreover, the proof of Propositions \ref{prop:numerical-bound} and
% \ref{prop:uniform-fourth-moment-u-ulmc} use only
% \(0\leq\tau_k\leq1\), rather than the distribution of
% \(\tau_k\). Therefore, moment estimates remain
% valid for the deterministic choice \(\tau_k=\frac12\).
% On the other hand, taking \(p=2\) in
% \eqref{eq:ULD-moment-lya-bound} and using
% \eqref{eq:lya-geq} and \eqref{eq:u-x-bound} gives
% \eqref{eq:uniform-fourth-moment-ULD}.

As shown in the proof of Proposition
\ref{prop:finite-error-rs}, the drift of ULD
is globally Lipschitz with constant \(L_b\). Hence, in the
notation of \cite[Theorem 3.3]{yang2025non}, we may take
\[
L^*=L_b,
\qquad
L_f^*=\tfrac13L_b,
\qquad
r_0=0.
\]
Moreover, Lemma
\ref{lem:one-step-weak-strong-error-UBU} verifies the local
error conditions of the theorem with
\[
p_1=2,
\qquad
p_2=\tfrac32,
\qquad
r=1,
\]
and
\[
\hat K_1^*=\mathcal T_{\mathrm{U}_1}'d,
\qquad
K_1^*=\mathcal T_{\mathrm{U}_1},
\qquad
\hat K_2^*=\mathcal S_{\mathrm{U}_1}'d,
\qquad
K_2^*=\mathcal S_{\mathrm{U}_1},
\]

Consequently, following the argument of \cite[Theorem 3.3]{yang2025non} with the constants identified above, we obtain the desired result with constants
\begin{equation}\label{eq:constants-c-f-U1}
\begin{aligned}
\mathcal C_{F,{\mathrm{U}_1}}&:
=
\bar K_2\big(\mathcal{T}_{\mathrm{U}_1}
+
(3+2\bar K_1)\mathcal{S}_{\mathrm{U}_1}\big),
\\
\mathcal C'_{F,{\mathrm{U}_1}}&:
=
\mathcal{T}_{\mathrm{U}_1}'
+
(3+2\bar K_1)\mathcal{S}_{\mathrm{U}_1}'
+
\bar K_2\mathcal{T}_{\mathrm{U}_1}
+
(3+2\bar K_1)\bar K_2\mathcal{S}_{\mathrm{U}_1},
\end{aligned}
\end{equation}

% The exact and numerical fourth-moment estimates correspond to
% By Lemma \ref{lem:ULD-moment bound} and Proposition \ref{prop:uniform-fourth-moment-u-ulmc}, the moment conditions in Theorem 3.3 of \cite{yang2025non} are satisfied with
In addition, if Assumption \ref{as:Hessian-lip} holds and  Assumption \ref{as:coeff-approx-general} is satisfied with $\boldsymbol{\eta}=\boldsymbol{\eta}_{\mathrm{U}}$, Proposition \ref{prop:uniform-fourth-moment-u-ulmc}, Lemmas \ref{lem:ULD-moment bound} and \ref{lem:one-step-weak-strong-error-UBU} imply that we may now take
\begin{align}
p_1=3,
\qquad
p_2=&\tfrac52,
\qquad
r=2.
\\
C_0^*=\bar K_1,
\qquad
\hat C_0^*=\bar K_1d^2,
\qquad
C_1^*=&\bar K_2,
\qquad
\hat C_1^*=\bar K_2d^2,
\qquad
h_0=h_{\star\star}\wedge \tfrac{1}{2L_b};
\\
\hat K_1^*=\mathcal T_{\mathrm{U}_2}'d^2,
\qquad
K_1^*=\mathcal T_{\mathrm{U}_2},
&\qquad
\hat K_2^*=\mathcal S_{\mathrm{U}_2}'d^2,
\qquad
K_2^*=\mathcal S_{\mathrm{U}_2},
\end{align}

Finally, we obtain the desired result with constants
\begin{equation}\label{eq:constants-c-f-U2}
\begin{aligned}
\mathcal C_{F,{\mathrm{U}_2}}&:
=
\bar K_2\big(\mathcal{T}_{\mathrm{U}_2}
+
(3+2\bar K_1)\mathcal{S}_{\mathrm{U}_2}\big),
\\
\mathcal C'_{F,{\mathrm{U}_2}}&:
=
\mathcal{T}_{\mathrm{U}_2}'
+
(3+2\bar K_1)\mathcal{S}_{\mathrm{U}_2}'
+
\bar K_2\mathcal{T}_{\mathrm{U}_2}
+
(3+2\bar K_1)\bar K_2\mathcal{S}_{\mathrm{U}_2},
\end{aligned}
\end{equation}
as required.
%
%
% For \(r=2\) and \(r_0=0\), only the fourth-moment estimates
% displayed above are used in the proof of
% \cite[Theorem 3.3]{yang2025non}. Since
% \(p_1=p_2+\frac12\), that theorem yields the global
% mean-square convergence order
% \[
% p_2-\tfrac12=2.
% \]
% Substituting the preceding constants into the
% \(r_0=0\) specialization of that theorem gives precisely
% \eqref{eq:sup-second-moment-error-finite-time-UBU} and
% \eqref{eq:constants-finite-error-UBU}. The final assertion
% follows immediately from the assumed initial fourth-moment
% bound.
\end{proof}

\end{redtext}
%%%%%%%%%%%%%%%%%%%%%%%%%%%%%%
\section{Numerical experiments} \label{sec:num-ex} 
To complement the theoretical analysis, we conduct numerical experiments on three representative models. 
First, for a linear underdamped Ornstein--Uhlenbeck (OU) process, we compare the long-time root mean-square errors of the randomized and UBU-type schemes.
Second, for non-convex Gaussian mixture models, we investigate the step-size and dimension dependence of the randomized schemes, and verify the convergence order of the Euler-type and UBU-type schemes. 
Finally, for a Student-$t$-type potential that satisfies strong convexity at infinity, we present the step-size convergence of all three families: Euler-type, randomized, and UBU-type schemes. 
%
%The parameter configurations for all three experiments are detailed in their respective subsections.

%\subsection{Long-time position-error comparison for linear Ornstein--Uhlenbeck dynamics}
%
\subsection{Linear Ornstein--Uhlenbeck (OU) dynamics}
\label{subsec:experiment-linear}

As the first test model, we look at the one-dimensional linear underdamped Langevin dynamics
\begin{equation}
\begin{cases}
\mathrm{d}X_t=V_t\,\mathrm{d}t,\\
\mathrm{d}V_t=-\gamma V_t\,\mathrm{d}t-\alpha X_t\,\mathrm{d}t
+\sqrt{2\gamma\alpha}\,\mathrm{d}W_t,
\end{cases}
\label{eq:linear-ou}
\end{equation}
whose potential is given by
\(
U_0(x)=\frac{1}{2}\,x^2.
\)
Evidently, the quadratic potential $U_0$ satisfies Assumptions \ref{as:Lip}–\ref{as:dissipativity} and \ref{as:strongly-convex} with $L=m=\mu=1$, $\mu'=0$, and $L_1=L_2=0$. 
In the following experiments, we set $\gamma=3$ and $\alpha=1$ for the OU dynamics, satisfying the friction condition \eqref{eq:condition-of-friction-coefficient}, and let $h=0.05$, $T=10$, $(X_0,V_0)=(1,0)$.

Since \eqref{eq:linear-ou} is a linear system, its solution can be simulated exactly on grid-points. 
The exact as well as the numerical solutions are computed along the same Brownian path, and the expectations are approximated by computing the average over $10^4$ independent samples with stepsize $h=0.05$.

\begin{figure}[!tbp]
  \centering
  % 左子图
  \begin{subfigure}{0.48\textwidth}
    \centering
    \includegraphics[width=\textwidth]{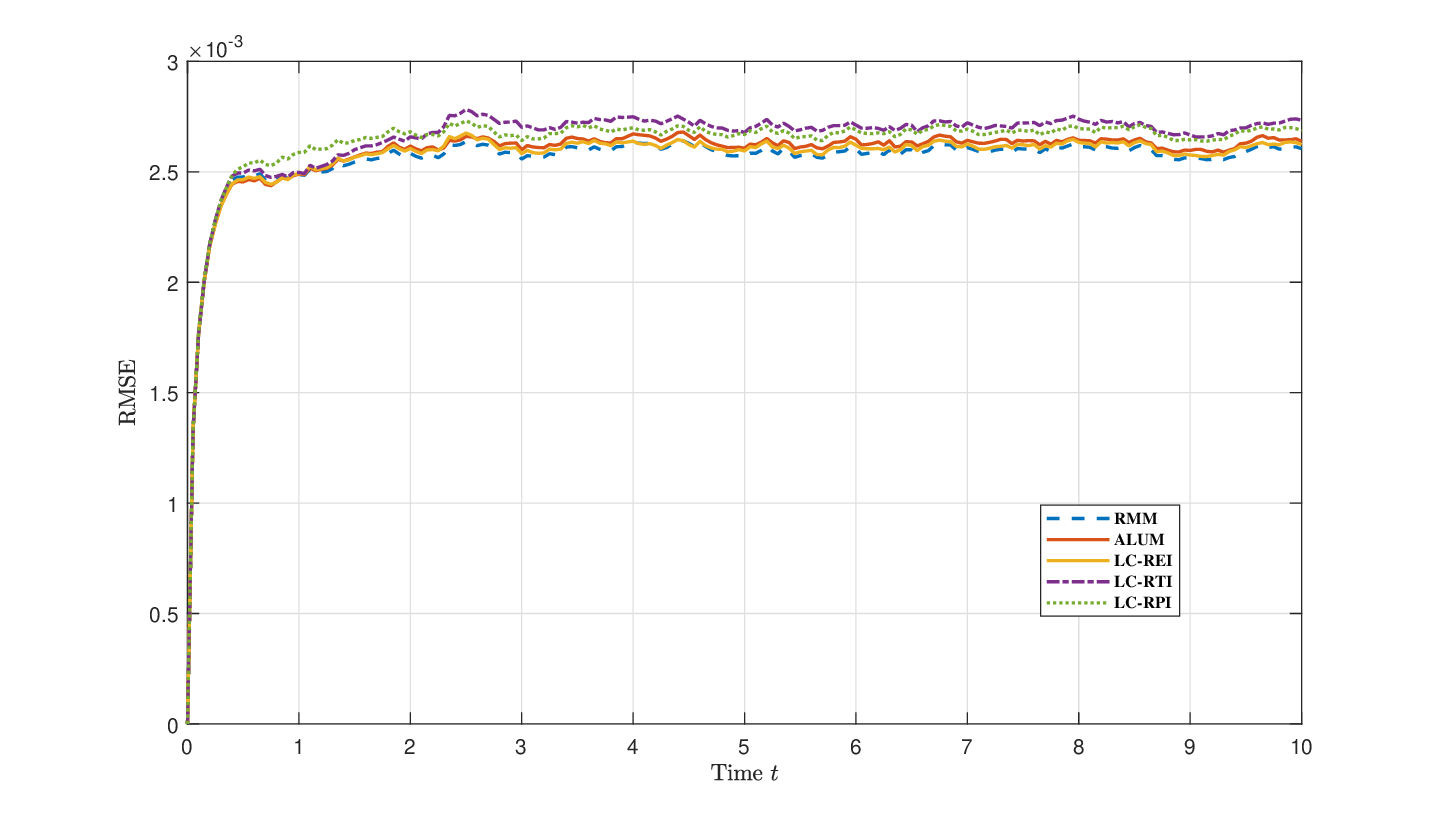}
    \caption{Error evolution over $t\in[0,10]$.}
    \label{fig:linear-all}
  \end{subfigure}
  \hfill
  % 右子图
  \begin{subfigure}{0.48\textwidth}
    \centering
    \includegraphics[width=\textwidth]{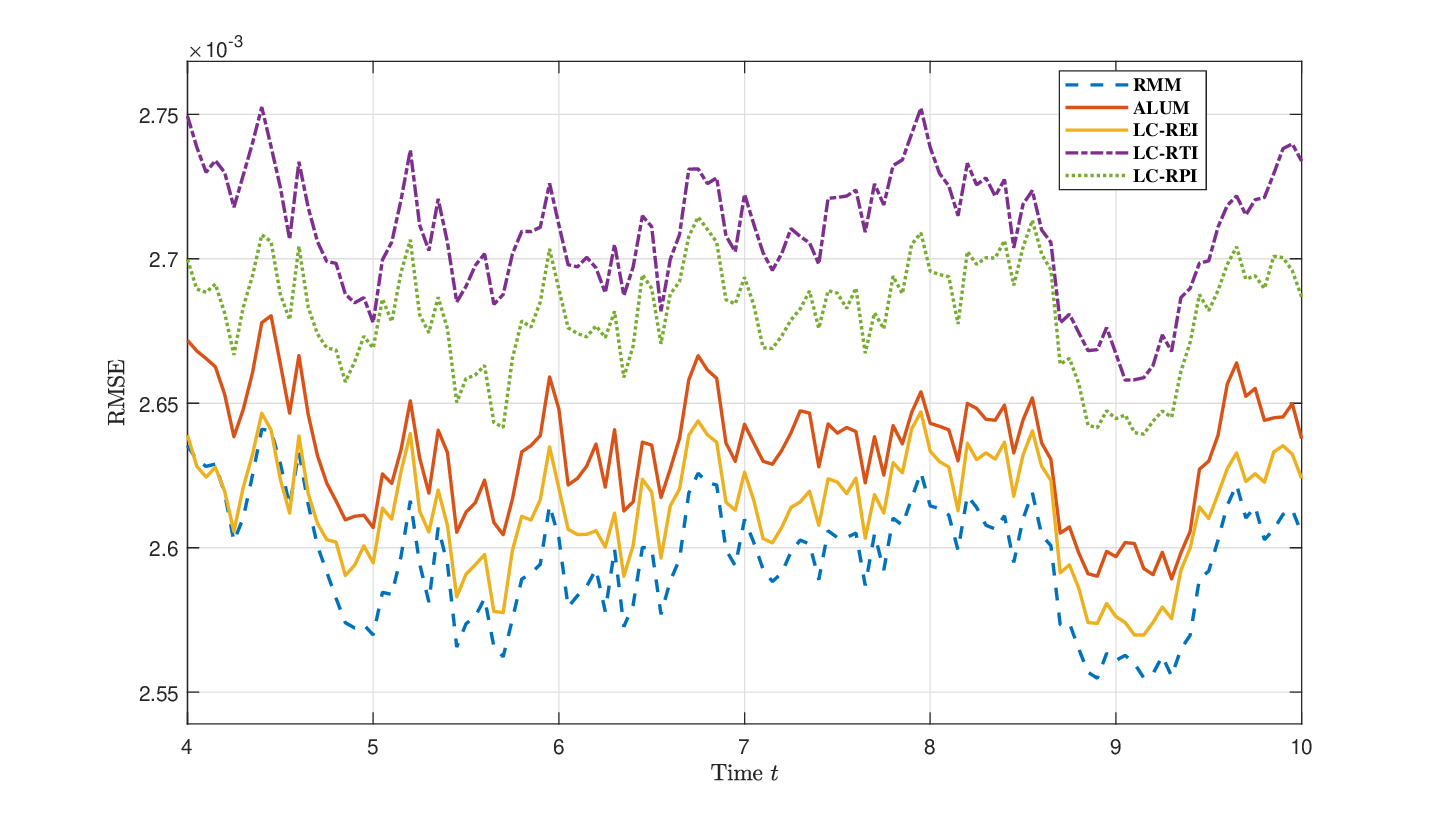}
    \caption{Enlarged view of the regime $t\in[4,10]$.}
    \label{fig:linear-local}
  \end{subfigure}
  % 大图总标题（核心：两张图共用一个 figure 环境，整体一张大图）
  \caption{RMSE of randomized algorithms for the position of the underdamped OU dynamics.}
  \label{fig:linear-rmse-compare}
\end{figure}

\begin{figure}[!tbp]
  \centering
  % 左子图
  \begin{subfigure}{0.48\textwidth}
    \centering
    \includegraphics[width=\textwidth]{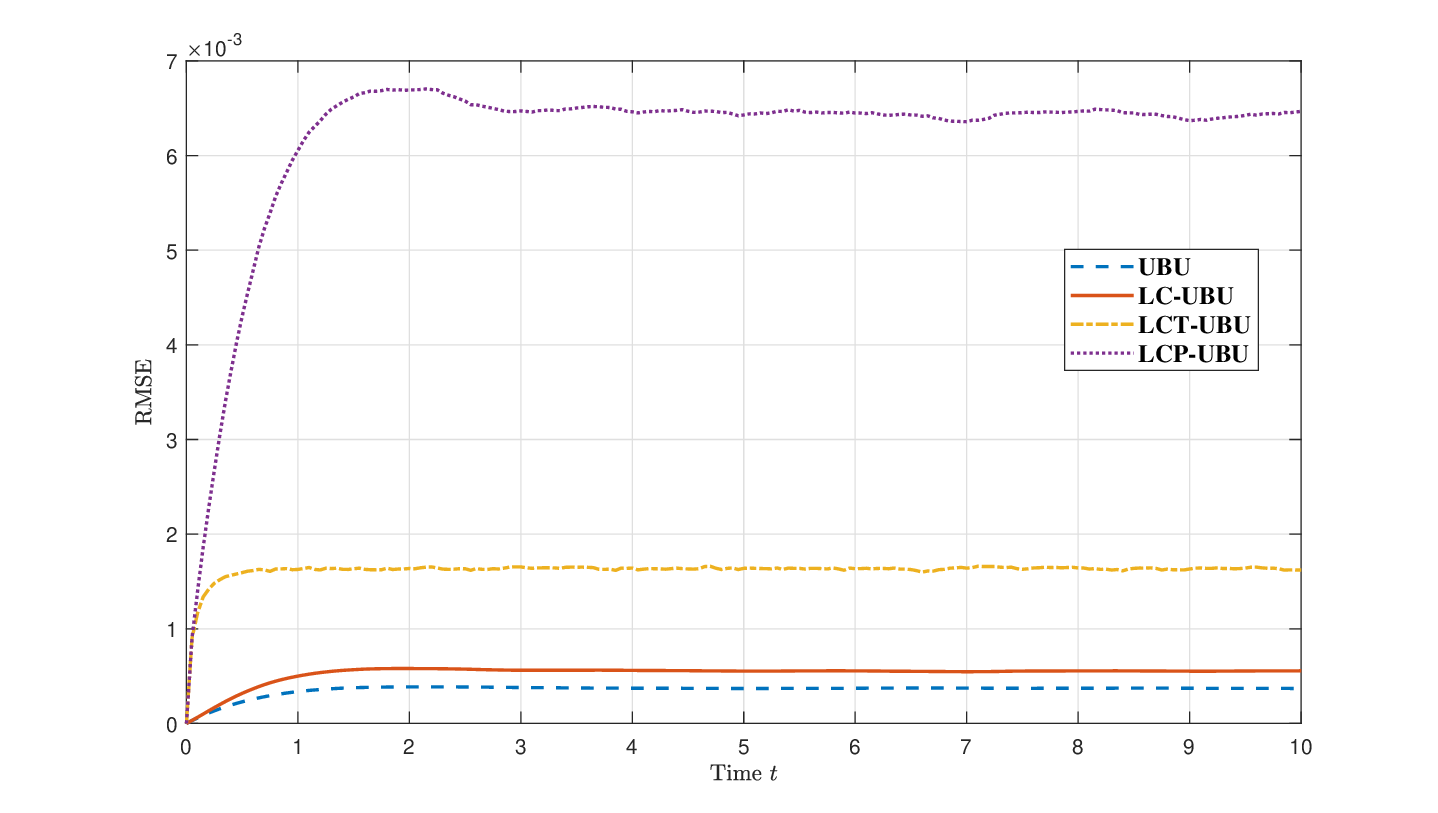}
    \caption{Error evolution over $t\in[0,10]$.}
    \label{fig:linear-all-ubu}
  \end{subfigure}
  \hfill
  % 右子图
  \begin{subfigure}{0.48\textwidth}
    \centering
    \includegraphics[width=\textwidth]{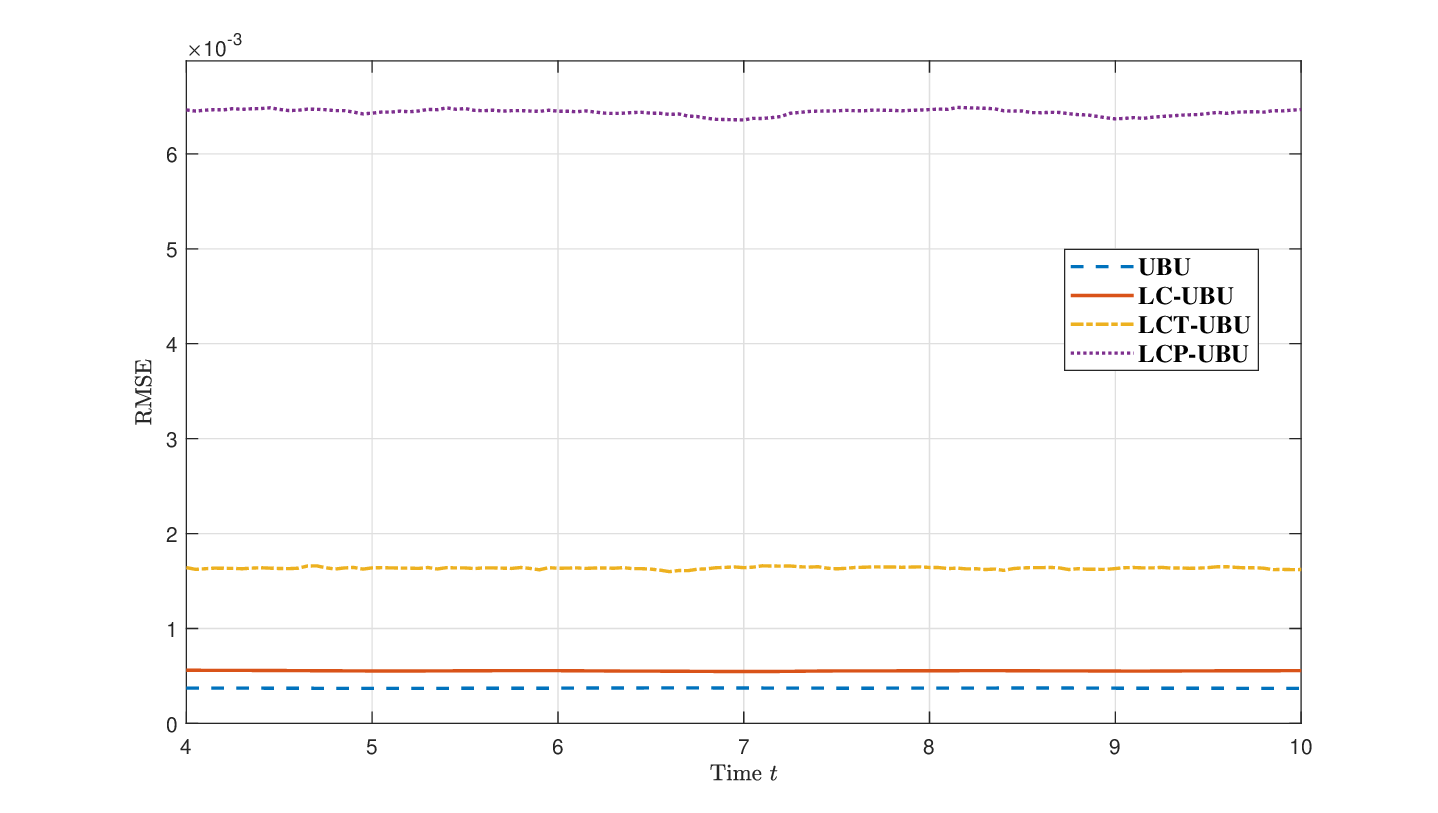}
    \caption{Enlarged view of the regime $t\in[4,10]$.}
    \label{fig:ou-local}
  \end{subfigure}
  % 大图总标题（核心：两张图共用一个 figure 环境，整体一张大图）
  \caption{RMSE of UBU-type algorithms for the position of the underdamped OU dynamics.}
  \label{fig:linear-rmse-compare-ubu}
\end{figure}

\begin{figure}
\begin{subfigure}{0.48\textwidth}
\includegraphics[width=\textwidth]{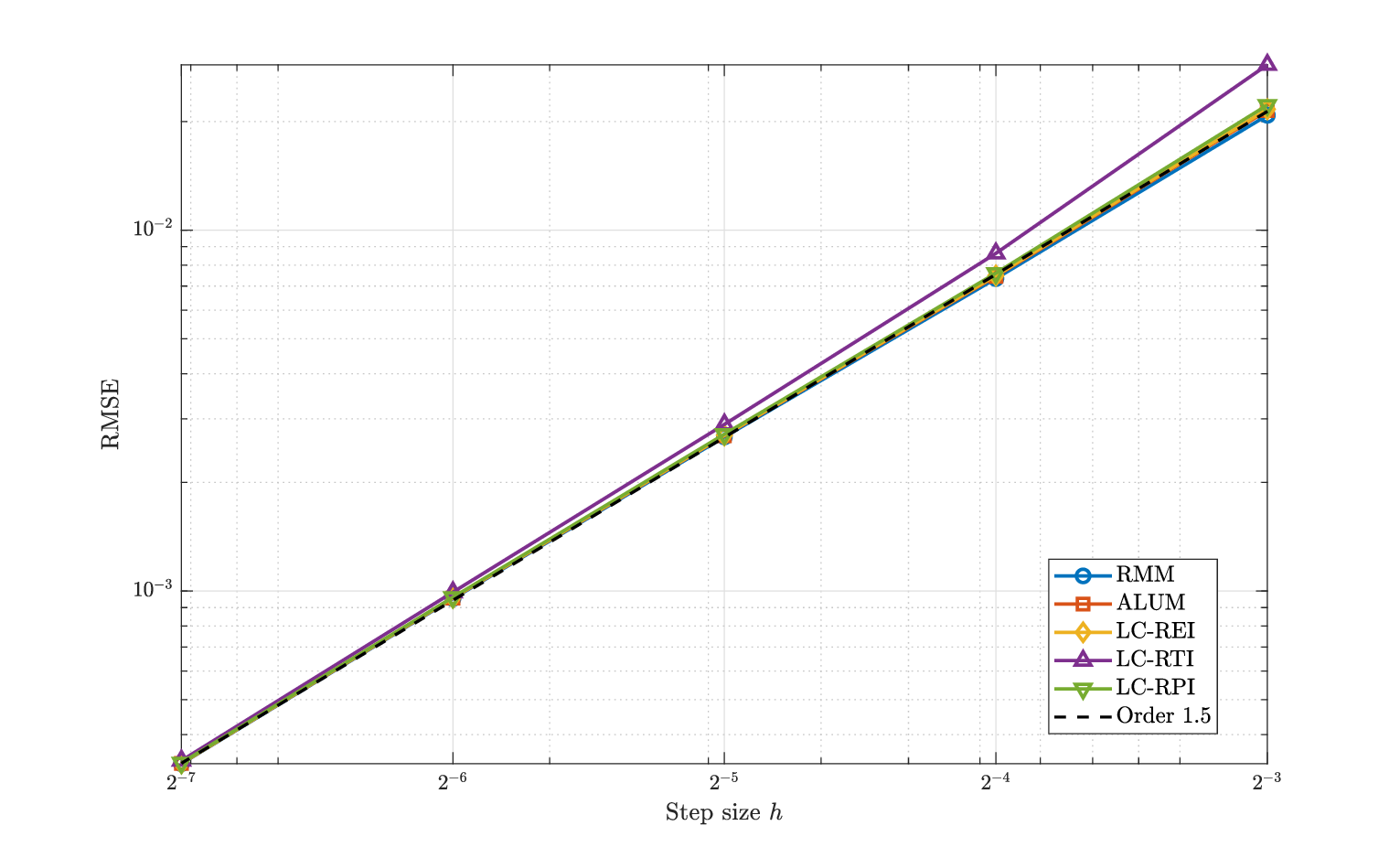}
\caption{Order $1.5$ convergence of randomized schemes.}
\label{fig:linear-stepsize}

\end{subfigure}
\hfill
\begin{subfigure}{0.48\textwidth}
\includegraphics[width=\textwidth]{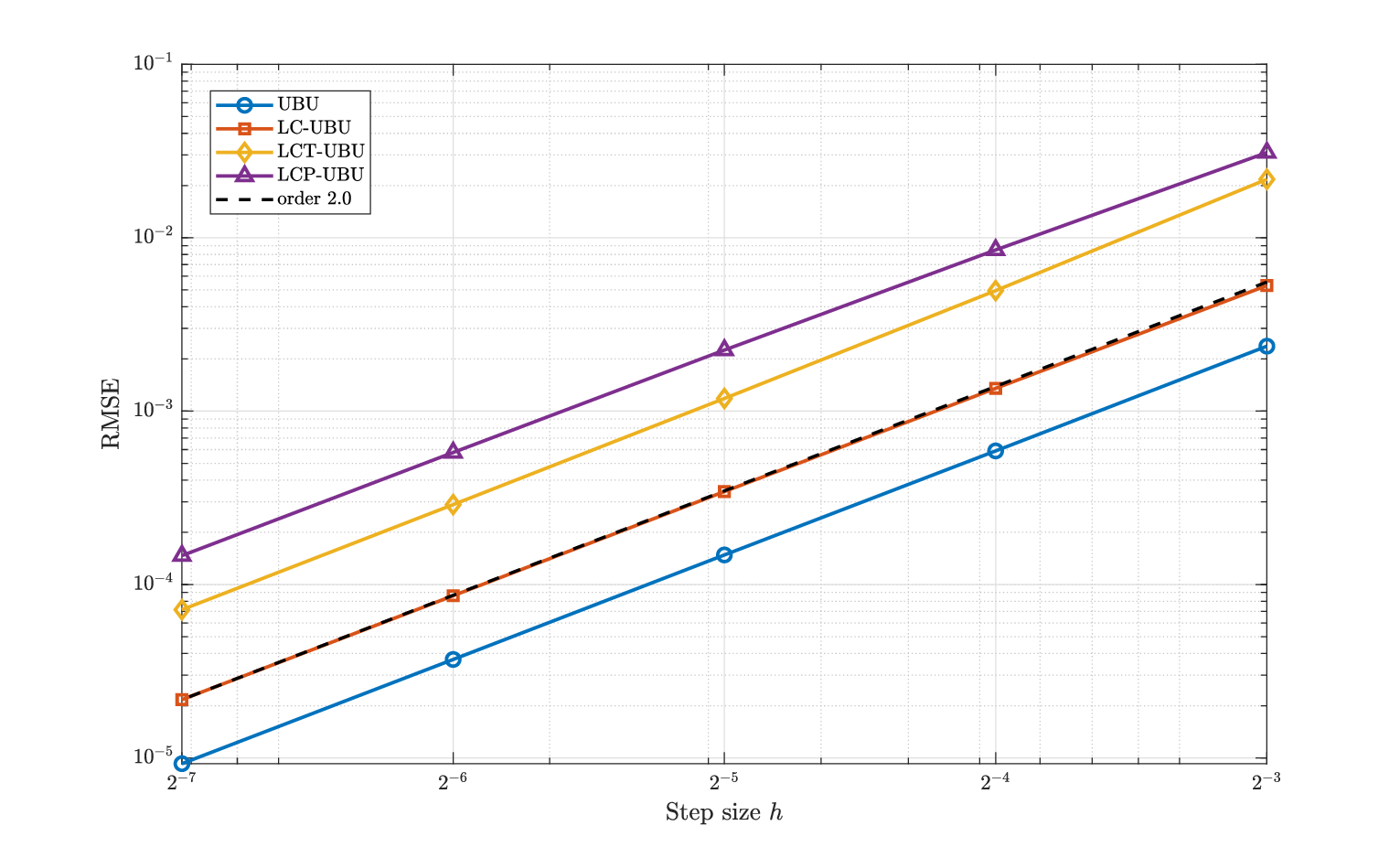}
    \caption{Order $2$ convergence of UBU-type schemes.}
    \label{fig:linear-stepsize-ubu}
\end{subfigure}
\caption{Convergence of ULMC algorithms for the underdamped OU dynamics} 
\label{fig:convergence-linear}
\end{figure}

% Figure \ref{fig:linear-rmse-compare} shows that the position errors of all methods approach a stable regime after the initial transient period. More importantly, at every recorded time point over $t\in[4,10]$, the position root mean-square error (RMSE) of LC-REI remains below that of ALUM. 
% This persistent separation indicates that, for the linear dynamics considered here, removing the stochastic perturbation from the randomized predictor improves the long-time positional accuracy.

Figures~\ref{fig:linear-rmse-compare} and \ref{fig:linear-rmse-compare-ubu} show the long-time RMSE of the position variable for the randomized and UBU-type schemes, respectively. For both families, the errors approach a stable regime after the initial transient period.
An interesting observation from Figure \ref{fig:linear-rmse-compare} is, at every recorded time point over $t\in[4,10]$, errors exhibit the empirical ordering
\[
\text{RMM}<\text{LC-REI}<\text{ALUM}<\text{LC-RPI}<\text{LC-RTI}.
\]
This may be related to the structural differences of the schemes. The smaller error of RMM may benefit from a more accurate randomized predictor, while the comparison between LC-RPI and LC-RTI reveals higher accuracy and better dissipative behavior of the Pad\'e approximation. 
%These observations are heuristic and are not implied by the common convergence result.
%
Of particular interest is that, despite fewer Gaussians, LC-REI is more accurate than ALUM in the long-time regime. One possible explanation is that removing the stochastic perturbation from the intermediate predictor reduces the random fluctuations entering the force evaluation and may lead to a more favorable propagation of positional errors. Of course, these observations are heuristic.

To detect convergence rates, we simulate all five randomized schemes and four UBU-type schemes using stepsizes $h \in \{2^{-i}, i=3,4,5,6,7\}$, and compute the terminal ($T=1$) RMSE for each stepsize over $10^4$ independent samples. As illustrated in Figure~\ref{fig:convergence-linear}, randomized schemes achieve the expected order $1.5$ convergence and UBU-type schemes achieve the expected order $2$ convergence.

\subsection{Non-convex Gaussian mixtures}
\label{subsec:experiment-mixture}

As the second example, we conduct numerical experiments on Gaussian mixtures. The parameters of the continuous dynamics are set to be $\gamma=1$, $\alpha=1$ and the potential function of the target distribution is specified as follows:
\begin{equation}
    U_1(x)
    =
    \tfrac{1}{2a_2^2} |x-a_1|^2
    -
    \log\bigl(1+\exp(\tfrac{-2\langle x,a_1\rangle}{a_2^2})\bigr),
    \qquad
    x\in \mathbb R^d ,
\end{equation}
where \(a_1\in\mathbb R^d, a_2>0 \).
As verified in \cite[Proposition 3.1]{neufeld2025non}, the target Gaussian mixture satisfies the dissipativity condition with $\mu= \frac{1}{2a_2^2}$, $\mu' = \frac{2|a_1|^2}{a_2^2}$, the gradient Lipschitz condition with $L = \frac{1}{a_2^2} + \frac{4|a_1|^2}{a_2^4}$, and the Hessian Lipschitz condition with $
L_H=\frac{4|a_1|^3}{3\sqrt3\,a_2^6}$. 
Moreover, following the argument in the proof
of Proposition A.1 in the supplementary material to
\cite{mou2022improved}, Assumption \ref{as:convex-at-infinity}
can be verified with $a=\frac{1}{2a_2^2}$ and $R=4|a_1|$.
%Furthermore, it is shown by Ma et al. \cite{ma2019sampling} that for the posterior distribution of Gaussian mixture models when the prior is set to be strong enough outside a ball, the potential function is strongly convex outside a ball.

For our numerical experiments, let $|a_1|=2$ and $a^2_2=1$, where all components of $a_1$ are equal. 
We emphasize that the potential $U_1$ of the Gaussian mixture is non-convex in our setting (see Example 1 of \cite{dalalyan2017theoretical} for a convex setting, i.e., $|a_1| < 1$).
We fix a time interval $T=1$ and initial value $(X_0,V_0)=0$. 
The dimensions and stepsizes are chosen as follows:
\[
    d\in\{20,40,60,80,100\},
    \qquad
    h \in \{2^{-i}, i=3,4,5,6,7\}.
\]
With stepsize $h_{\rm ref} = 2^{-13}$, the reference solutions are computed using a fine-grid EE for the Euler-type schemes, a fine-grid RMM for the randomized schemes, and a fine-grid UBU for the UBU-type schemes.  For each pair \((h,d)\), the RMSE is computed over \(10^4\) independent samples. 

As shown in Figures \ref{fig:RMM_GMM}-\ref{fig:LC-RPI_GMM}, all five randomized schemes exhibit the expected convergence rate \(\mathcal{O}(d^{\frac12}h^{\frac32})\), in agreement with our theoretical results.
Furthermore, results for the Euler‑type and UBU‑type schemes with dimension $d=20$ are summarized in Table~\ref{tab:gmm-six-methods-error}. The observed convergence orders are close to $1$ for both EM and EE, and near $2$ for UBU-type schemes. These observations are consistent with the convergence orders derived in our theoretical analysis.
% \begin{redtext}
% Meanwhile, the UBU-type schemes numerically exhibit an $\mathcal{O}(d^{1/2}h^2)$ error bound. While our theoretical upper bound yields an order $1$ dimension dependence, this gap arises as our proof does not impose stronger regularity conditions on the potential.
% \end{redtext}

To further illustrate the sampling performance, we now set $a_2^2=0.1$, $d=10$, $h=2^{-7}$ and generate $3000$ independent trajectories for the five randomized algorithms together with UBU and LC-UBU. 
The terminal position samples at $T=5$ and the contours of the target Gaussian mixture, both projected onto the first two dimensions, are presented in Figure~\ref{fig:gmm10D-comparison}.

\begin{figure}[!tbp]
    \centering
    \begin{subfigure}{0.48\textwidth}
        \centering
        \includegraphics[width=\textwidth]{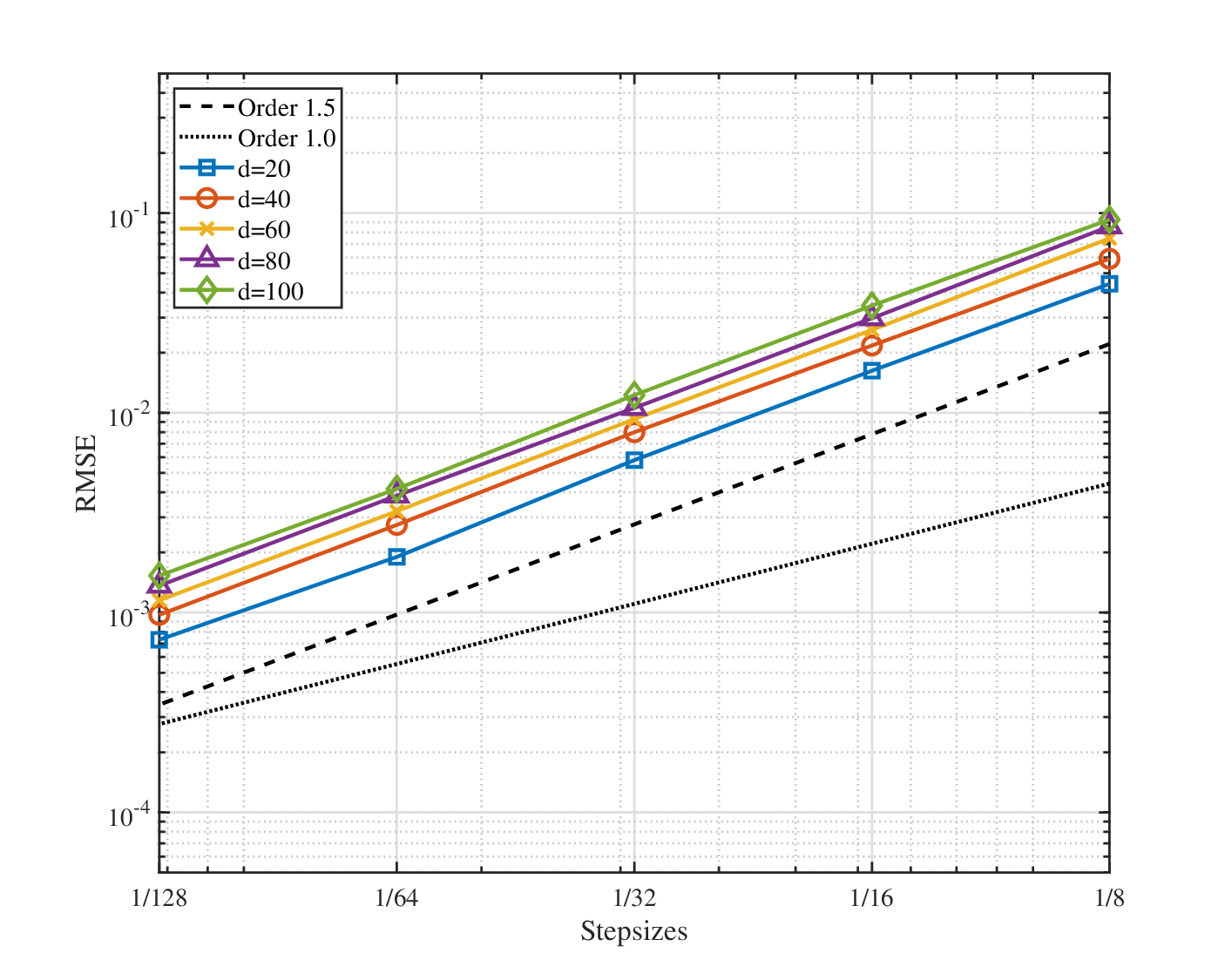}
        \caption{Order $1.5$ convergence.}
        \label{fig:RMM_h}
    \end{subfigure}
    \hfill
    \begin{subfigure}{0.48\textwidth}
        \centering
        \includegraphics[width=\textwidth]{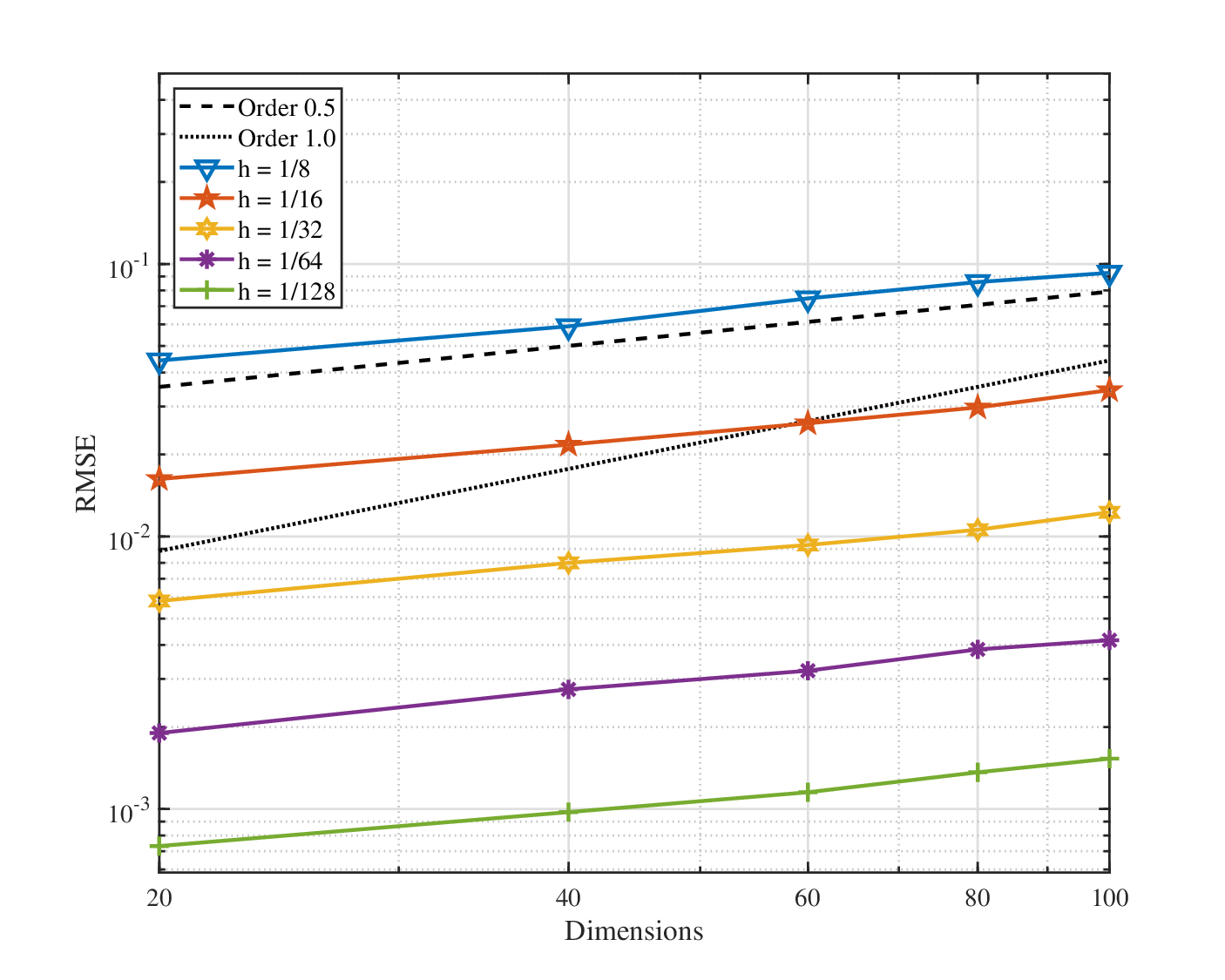}
        \caption{Order $0.5$ dimension dependence.}
        \label{fig:RMM_d}
    \end{subfigure}
    \caption{Error bounds of order $\mathcal{O}(d^{\frac{1}{2}}h^{\frac{3}{2}})$ of RMM for Gaussian mixtures.}
    \label{fig:RMM_GMM}
\end{figure}

\begin{figure}[!tbp]
    \centering
    \begin{subfigure}{0.48\textwidth}
        \centering
        \includegraphics[width=\textwidth]{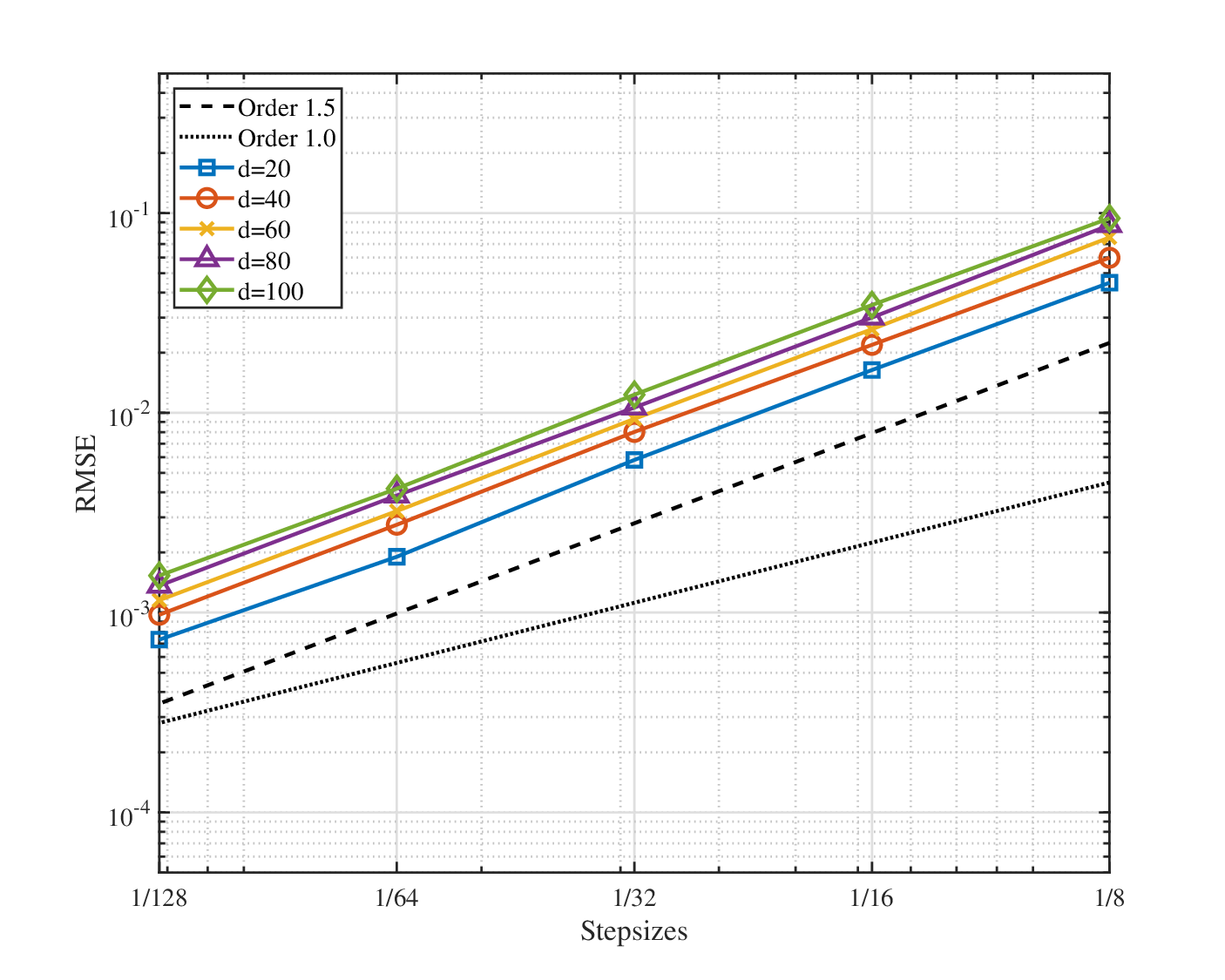}
        \caption{Order $1.5$ convergence.}
        \label{fig:ALUM_h}
    \end{subfigure}
    \hfill
    \begin{subfigure}{0.48\textwidth}
        \centering
        \includegraphics[width=\textwidth]{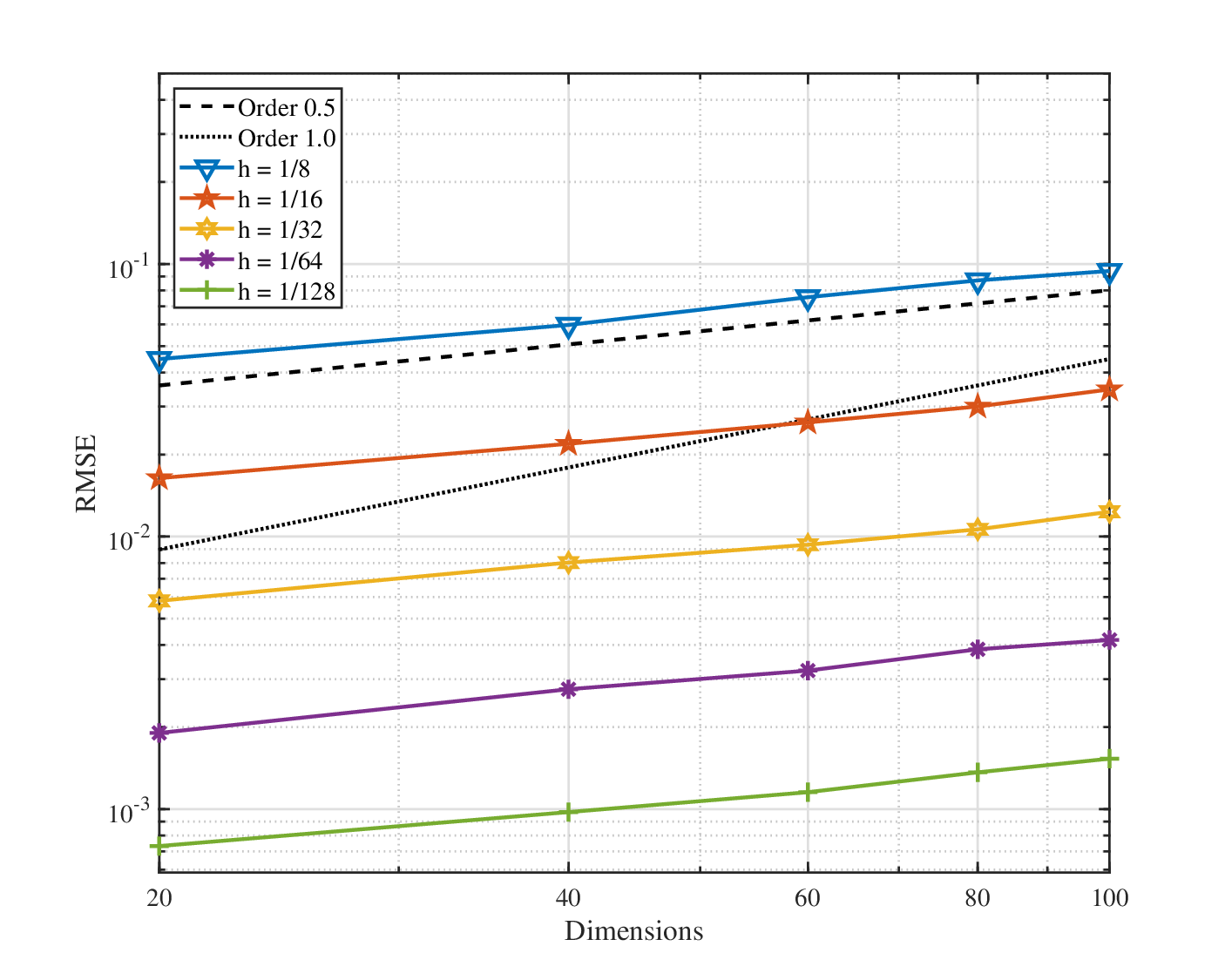}
        \caption{Order $0.5$ dimension dependence.}
        \label{fig:ALUM_d}
    \end{subfigure}
    \caption{Error bounds of order $\mathcal{O}(d^{\frac{1}{2}}h^{\frac{3}{2}})$ of ALUM for Gaussian mixtures.}
    \label{fig:ALUM_GMM}
\end{figure}

\begin{figure}[!tbp]
    \centering
    \begin{subfigure}{0.48\textwidth}
        \centering
        \includegraphics[width=\textwidth]{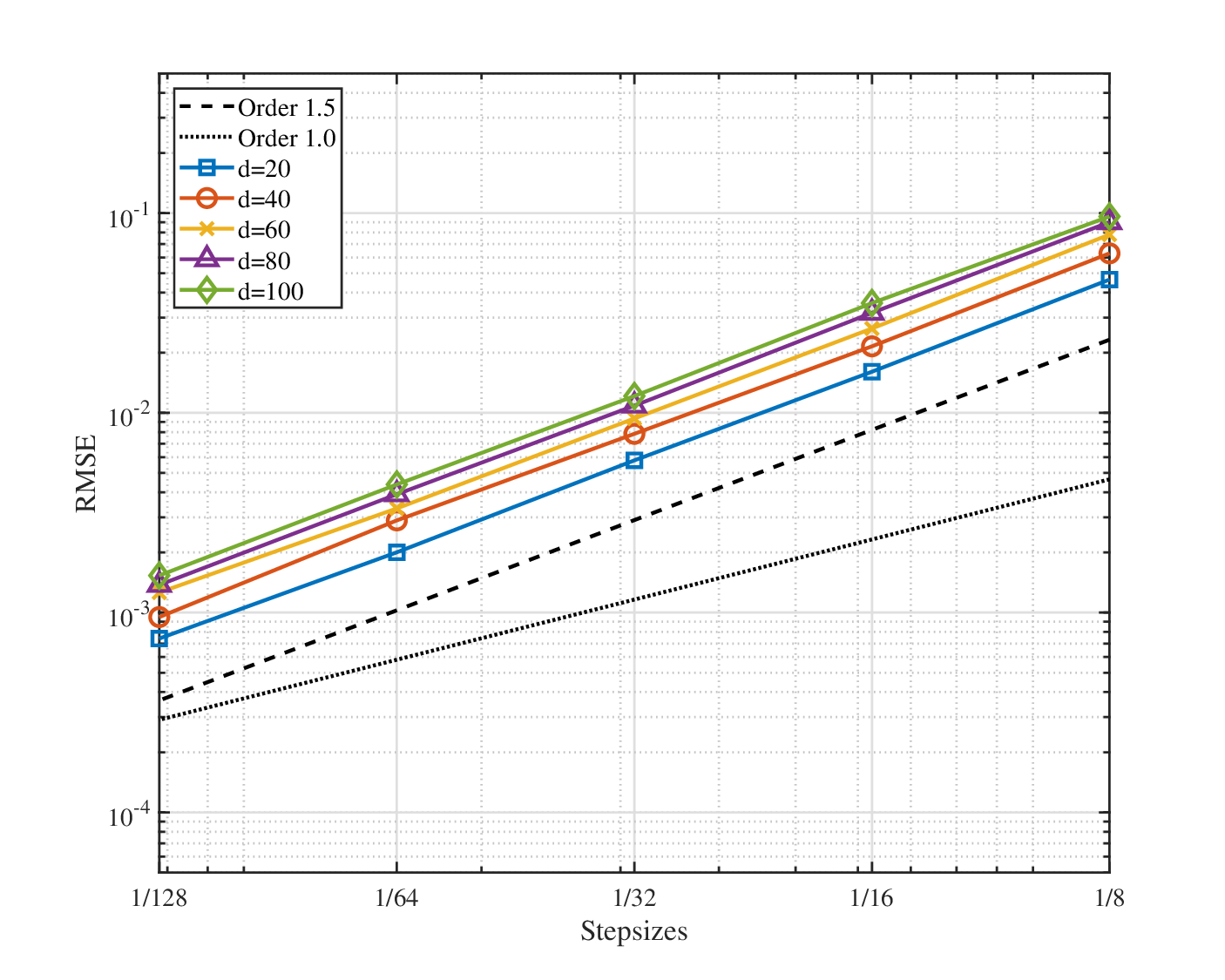}
        \caption{Order $1.5$ convergence.}
        \label{fig:LC-REI_h}
    \end{subfigure}
    \hfill
    \begin{subfigure}{0.48\textwidth}
        \centering
        \includegraphics[width=\textwidth]{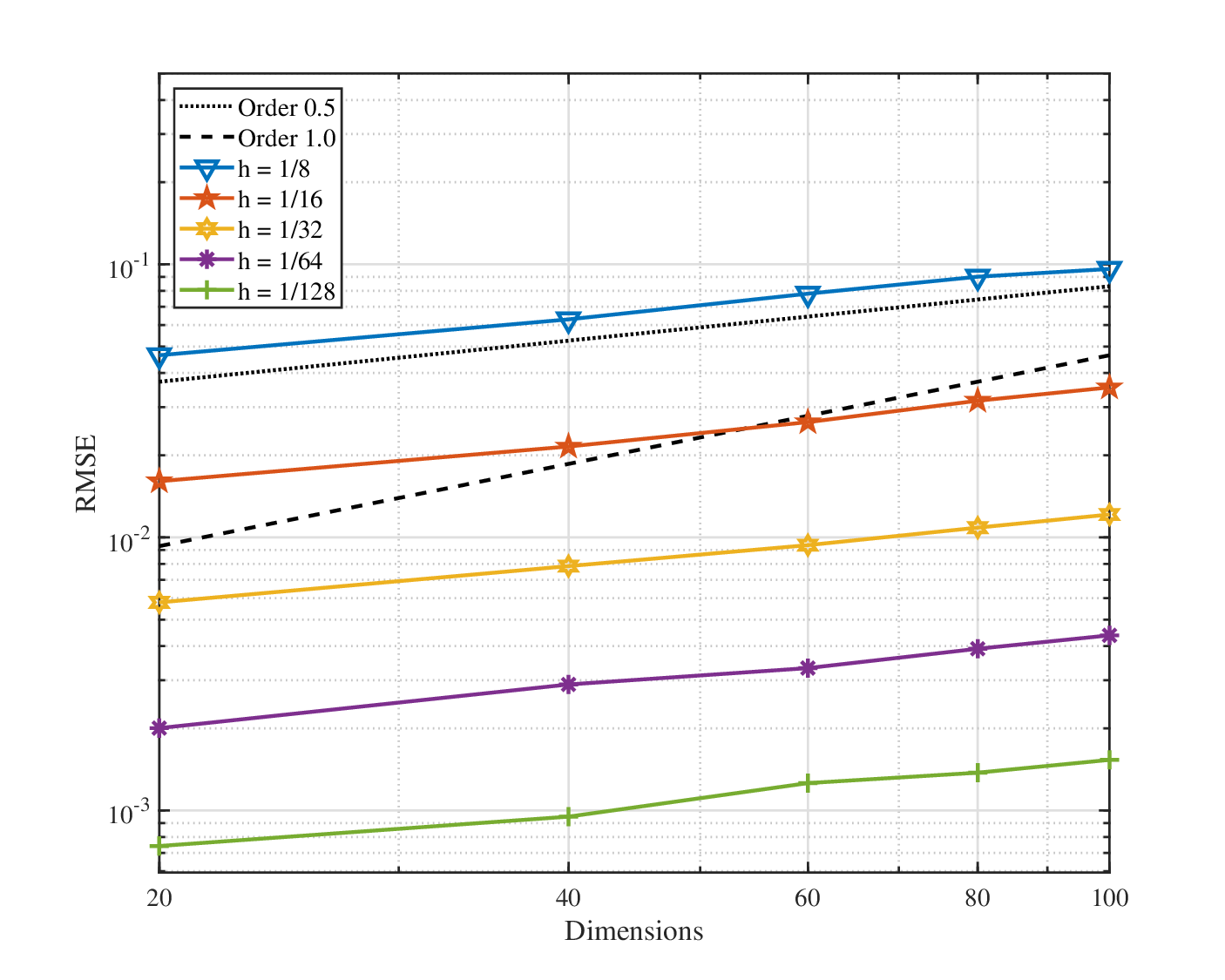}
        \caption{Order $0.5$ dimension dependence.}
        \label{fig:LC-REI_d}
    \end{subfigure}
    \caption{Error bounds of order $\mathcal{O}(d^{\frac{1}{2}}h^{\frac{3}{2}})$ of LC-REI for Gaussian mixtures.}
    \label{fig:LC-REI_GMM}
\end{figure}

\begin{figure}[!tbp]
    \centering
    \begin{subfigure}{0.48\textwidth}
        \centering
        \includegraphics[width=\textwidth]{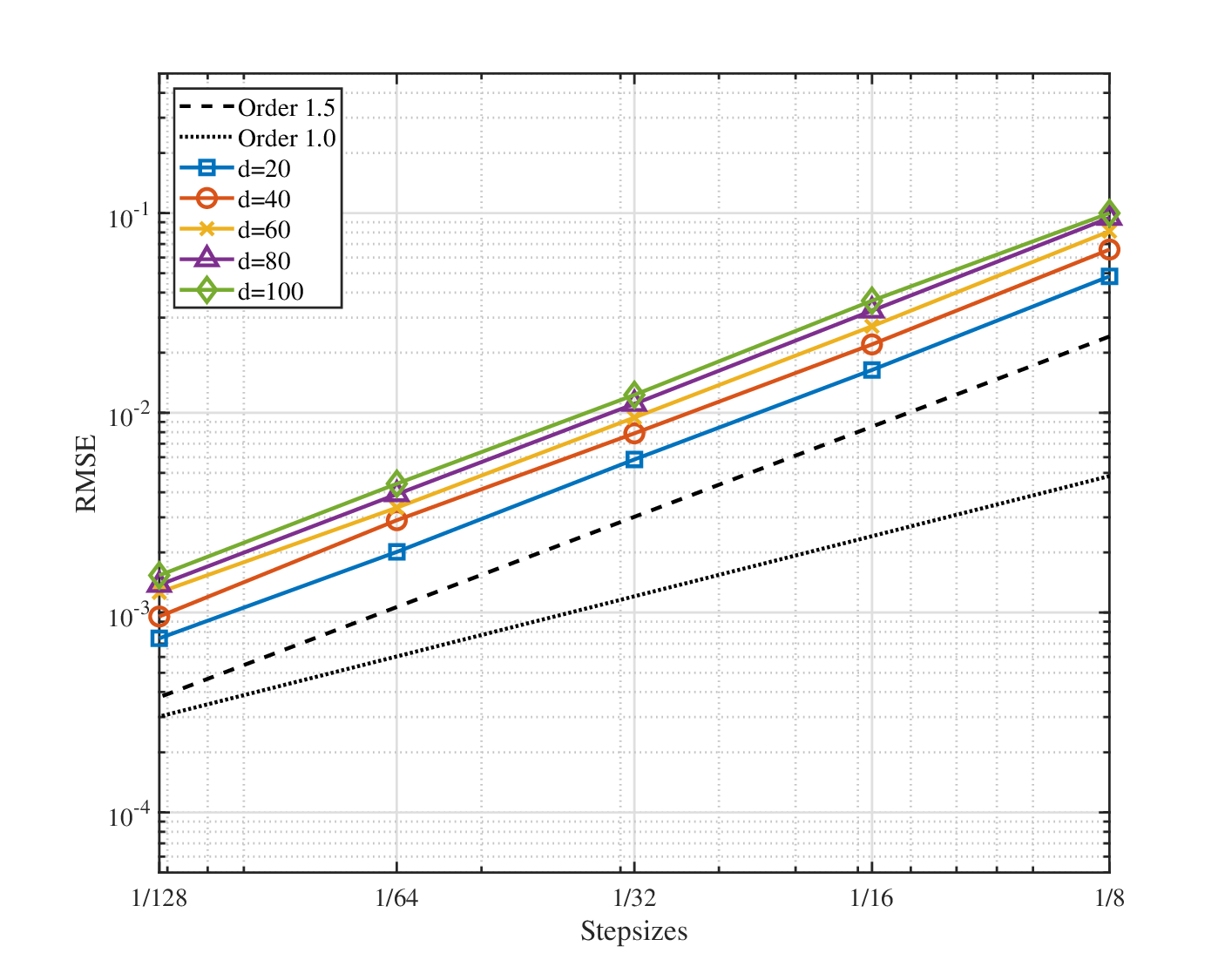}
        \caption{Order $1.5$ convergence.}
        \label{fig:LC-RTI_h}
    \end{subfigure}
    \hfill
    \begin{subfigure}{0.48\textwidth}
        \centering
        \includegraphics[width=\textwidth]{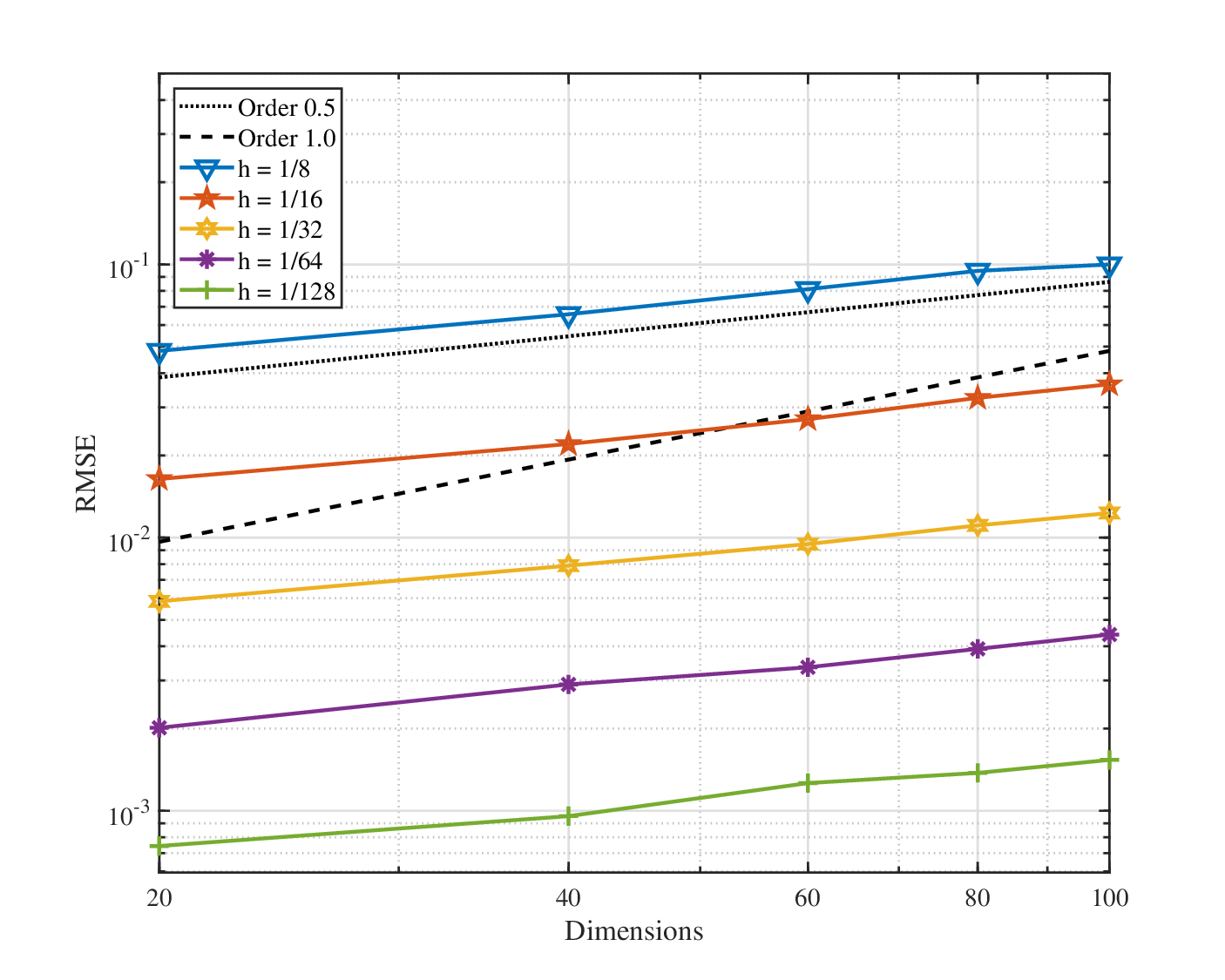}
        \caption{Order $0.5$ dimension dependence.}
        \label{fig:LC-RTI_d}
    \end{subfigure}
    \caption{Error bounds of order $\mathcal{O}(d^{\frac{1}{2}}h^{\frac{3}{2}})$ of LC-RTI for Gaussian mixtures.}
    \label{fig:LC-RTI_GMM}
\end{figure}

\begin{figure}[!tbp]
    \centering
    \begin{subfigure}{0.48\textwidth}
        \centering
        \includegraphics[width=\textwidth]{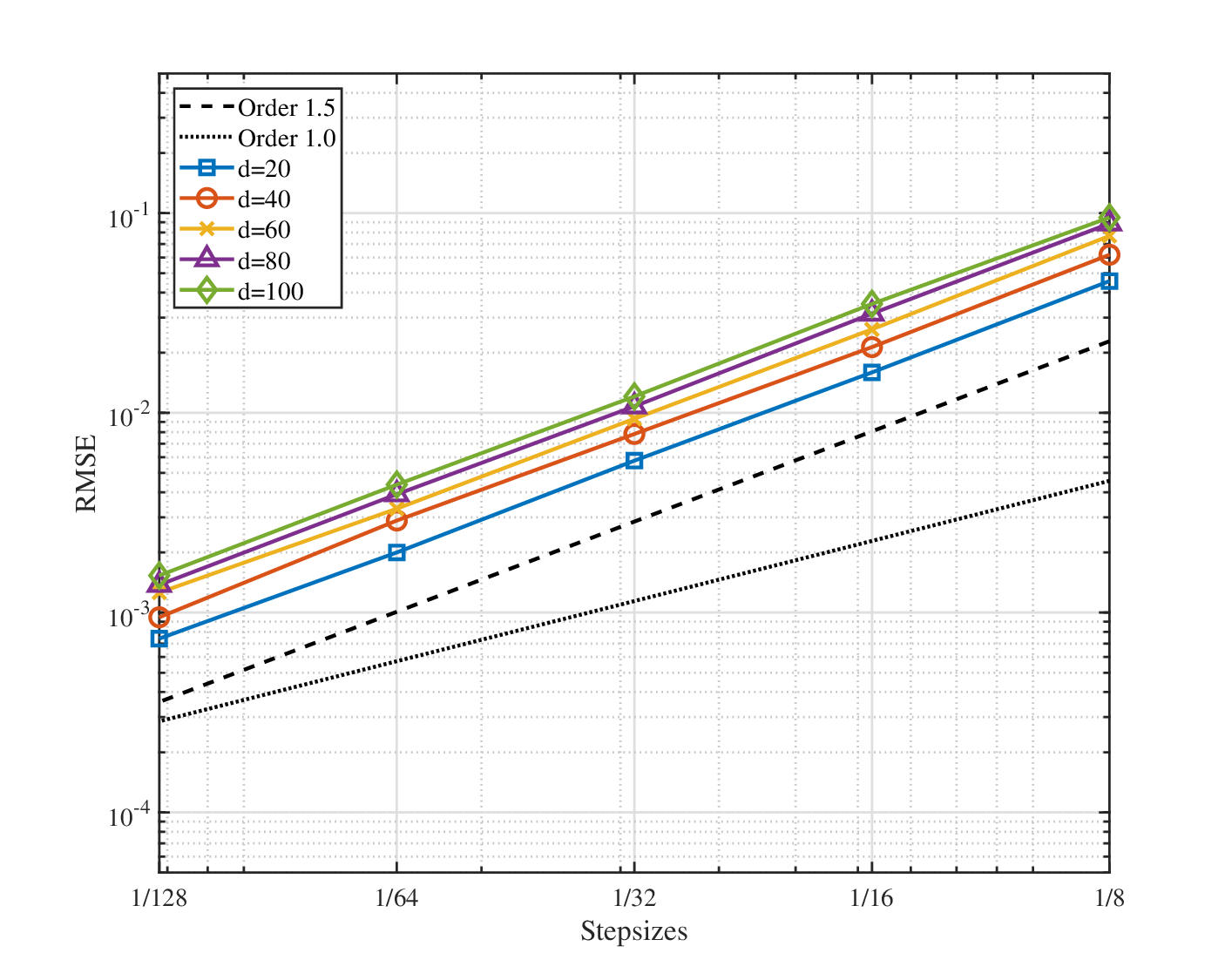}
        \caption{Order $1.5$ convergence.}
        \label{fig:LC-RPI_h}
    \end{subfigure}
    \hfill
    \begin{subfigure}{0.48\textwidth}
        \centering
        \includegraphics[width=\textwidth]{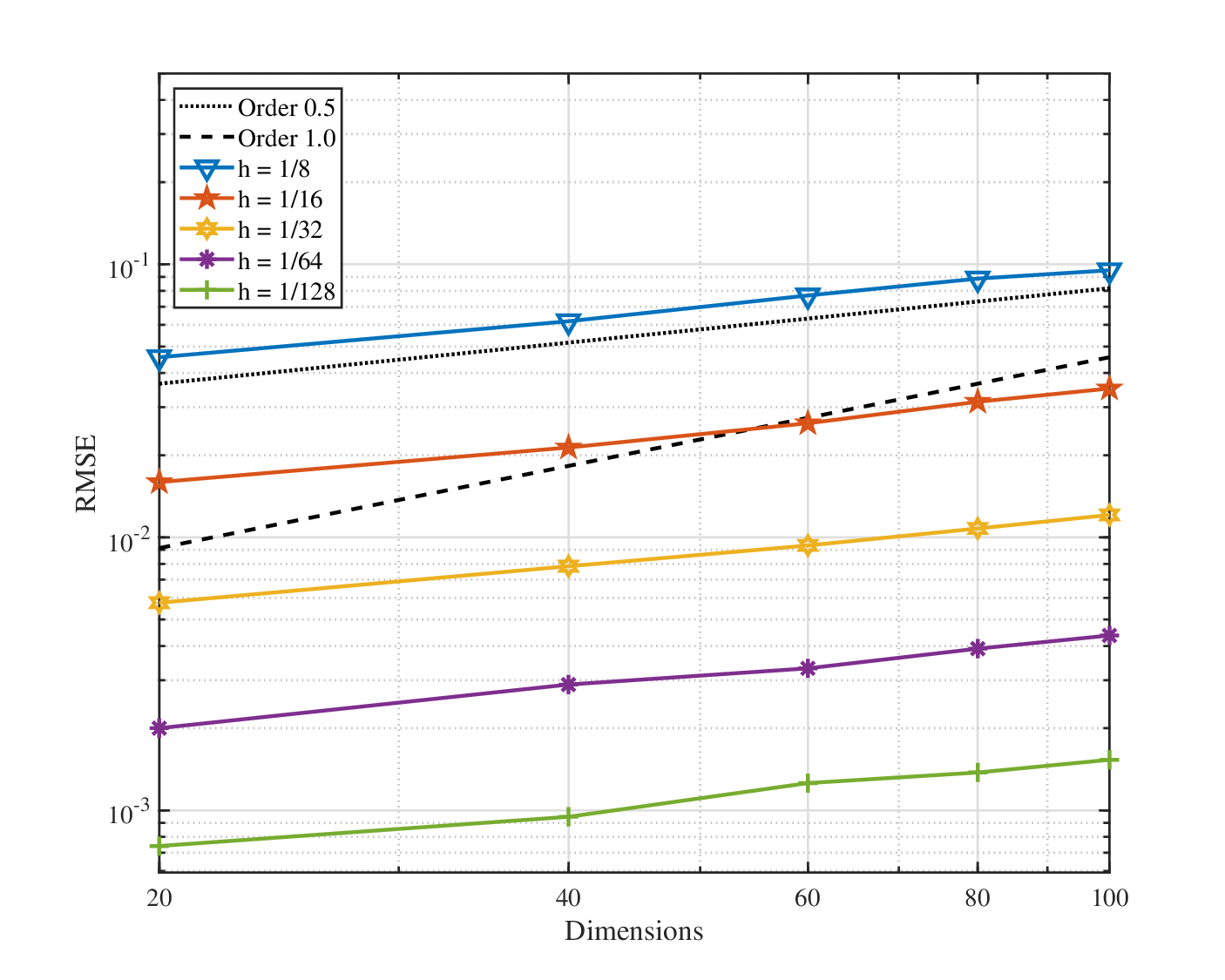}
        \caption{Order $0.5$ dimension dependence.}
        \label{fig:LC-RPI_d}
    \end{subfigure}
    \caption{Error bounds of order $\mathcal{O}(d^{\frac{1}{2}}h^{\frac{3}{2}})$ of LC-RPI for Gaussian mixtures.}
    \label{fig:LC-RPI_GMM}
\end{figure}

\begin{table}[!tbp]
\centering
\caption{Terminal RMSE and observed orders for the Gaussian mixtures.}
\small
\setlength{\tabcolsep}{3.5pt}
\begin{tabular}{ccccccccccccc}
\toprule
Stepsize & \multicolumn{2}{c}{EM} & \multicolumn{2}{c}{EE} & \multicolumn{2}{c}{UBU} & \multicolumn{2}{c}{LC-UBU} & \multicolumn{2}{c}{LCT-UBU} & \multicolumn{2}{c}{LCP-UBU} \\
\cmidrule(lr){2-3}\cmidrule(lr){4-5}\cmidrule(lr){6-7}\cmidrule(lr){8-9}\cmidrule(lr){10-11}\cmidrule(lr){12-13}
 & Error & Order & Error & Order & Error & Order & Error & Order & Error & Order & Error & Order \\
\midrule
$2^{-3}$   & $4.34_{-1}$ &   & $1.10_{-1}$ &  & $4.90_{-3}$ &   & $1.73_{-2}$ &   & $1.91_{-2}$ &   & $3.60_{-2}$ &   \\
$2^{-4}$  & $2.15_{-1}$ & 1.01 & $5.54_{-2}$ & 0.99 & $1.22_{-3}$ & 2.01 & $4.38_{-3}$ & 1.98 & $4.79_{-3}$ & 1.99 & $9.21_{-3}$ & 1.97 \\
$2^{-5}$  & $1.07_{-1}$ & 1.01 & $2.73_{-2}$ & 1.02 & $3.09_{-4}$ & 1.98 & $1.10_{-3}$ & 1.99 & $1.15_{-3}$ & 2.06 & $2.36_{-3}$ & 1.97 \\
$2^{-6}$  & $5.24_{-2}$ & 1.03 & $1.35_{-2}$ & 1.01 & $7.77_{-5}$ & 1.99 & $2.77_{-4}$ & 2.00 & $2.93_{-4}$ & 1.97 & $5.96_{-4}$ & 1.98 \\
$2^{-7}$ & $2.60_{-2}$ & 1.01 & $6.72_{-3}$ & 1.01 & $1.94_{-5}$ & 2.00 & $6.96_{-5}$ & 1.99 & $7.32_{-5}$ & 2.00 & $1.49_{-4}$ & 2.00 \\
\bottomrule
\end{tabular}
\vspace{0.5em}
\caption*{\footnotesize\centering \textbf{Note:} 
The notation $a_b$ here represents $a \times 10^b$, e.g. $4.34_{-1}=4.34\times 10^{-1}$.}
\label{tab:gmm-six-methods-error}
\end{table}

% \begin{figure}
%     \begin{minipage}{0.48\textwidth}
%     \centering
%     \includegraphics[width=\textwidth]{fig/GMM_UBU/UBU_Stepsize.eps}
%     \caption{\textcolor{red}{Order $2$ convergence of UBU for Gaussian mixtures with varying $d$.}}
%     \label{fig:ubu_h}
% \end{minipage}
% \hfill
% \begin{minipage}{0.48\textwidth}
%     \centering
%     \includegraphics[width=\textwidth]{fig/GMM_UBU/UBU_dimension.eps}
%     \caption{\textcolor{red}{Order $0.5$ dimension dependence of UBU for Gaussian mixtures with varying $h$.}}
%     \label{fig:ubu_d}
% \end{minipage}
% \end{figure}

% \begin{figure}
%     \begin{minipage}{0.48\textwidth}
%     \centering
%     \includegraphics[width=\textwidth]{fig/GMM_UBU/LC-UBU_Stepsize.eps}
%     \caption{\textcolor{red}{Order $2$ convergence of LC-UBU for Gaussian mixtures with varying $d$.}}
%     \label{fig:LC-UBU_h}
% \end{minipage}
% \hfill
% \begin{minipage}{0.48\textwidth}
%     \centering
%     \includegraphics[width=\textwidth]{fig/GMM_UBU/LC-UBU_dimension.eps}
%     \caption{\textcolor{red}{Order $0.5$ dimension dependence of LC-UBU for Gaussian mixtures with varying $h$.}}
%     \label{fig:LC-UBU_d}
% \end{minipage}
% \end{figure}

\begin{figure}[!tbp]
    \centering

    % 第一行
    \begin{subfigure}[t]{0.32\textwidth}
        \centering
        \includegraphics[width=\textwidth]{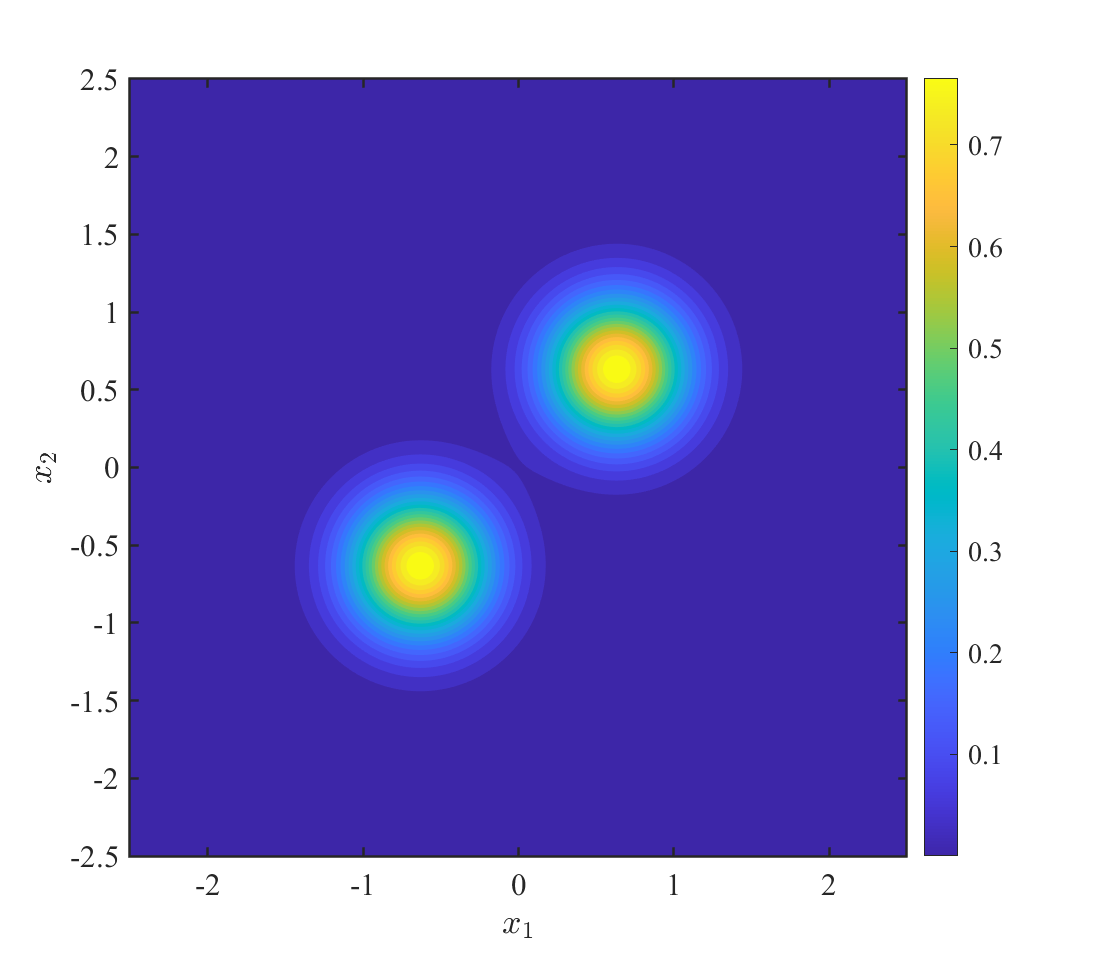}
        \caption{Target distribution}
        \label{fig:gmm10D-target}
    \end{subfigure}
    \hfill
    \begin{subfigure}[t]{0.32\textwidth}
        \centering
        \includegraphics[width=\textwidth]{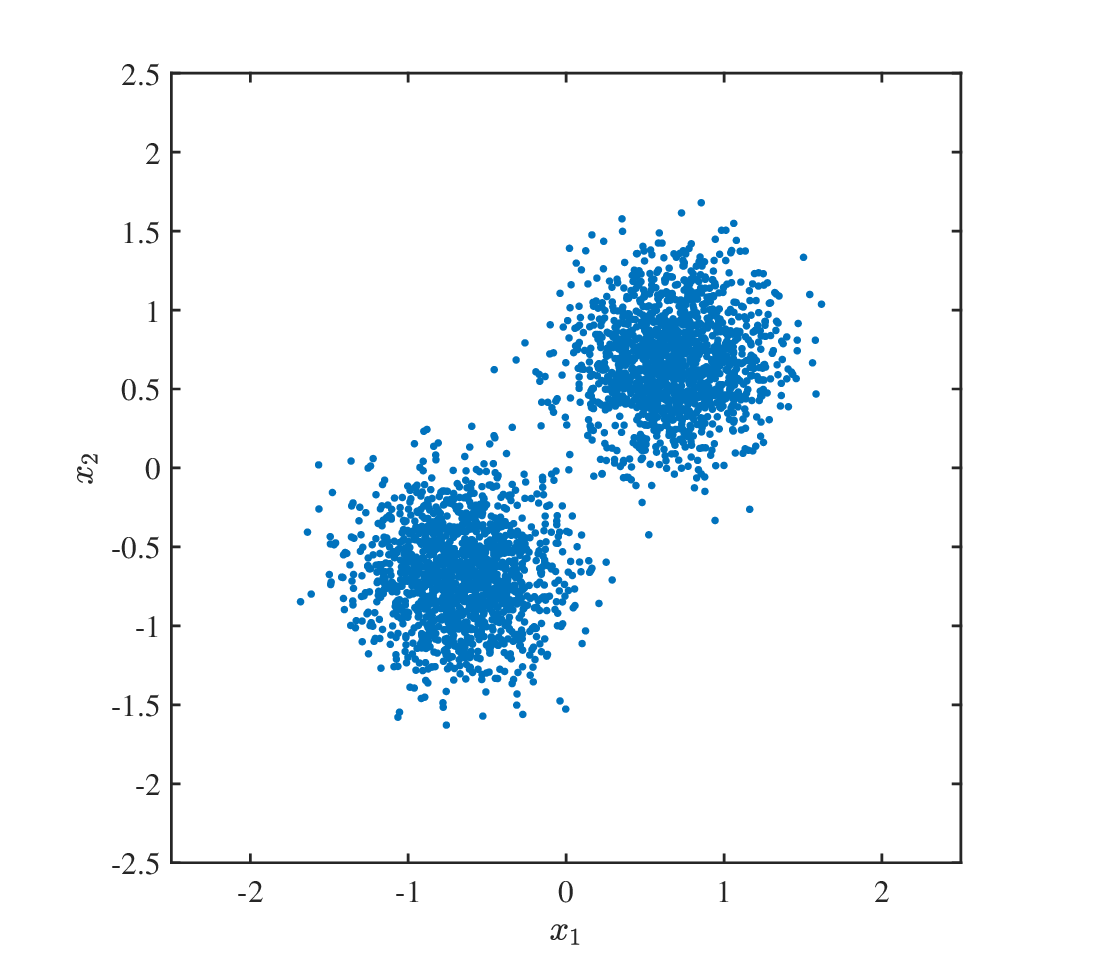}
        \caption{RMM}
        \label{fig:gmm10D-rmm}
    \end{subfigure}
    \hfill
    \begin{subfigure}[t]{0.32\textwidth}
        \centering
        \includegraphics[width=\textwidth]{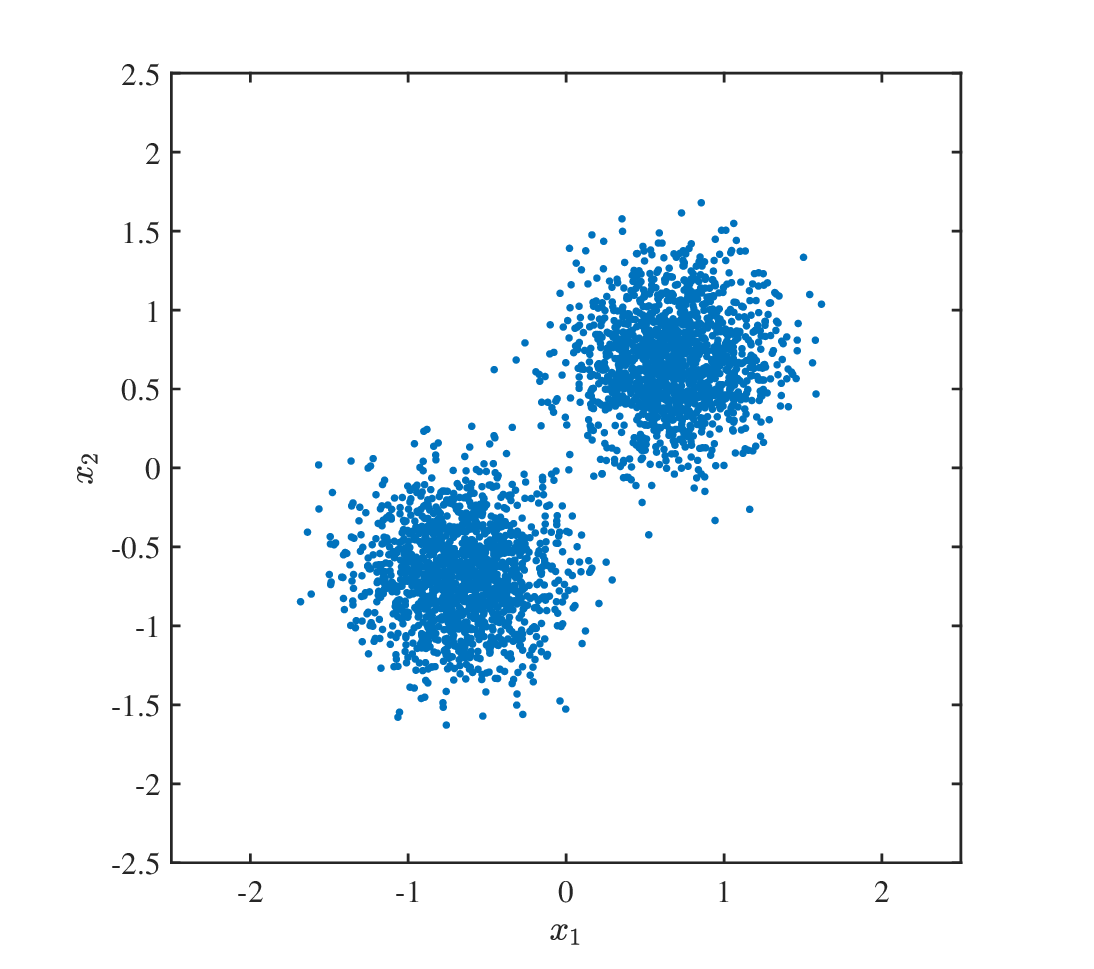}
        \caption{ALUM}
        \label{fig:gmm10D-alum}
    \end{subfigure}

    \vspace{0.4cm}

    % 第二行
    \begin{subfigure}[t]{0.32\textwidth}
        \centering
        \includegraphics[width=\textwidth]{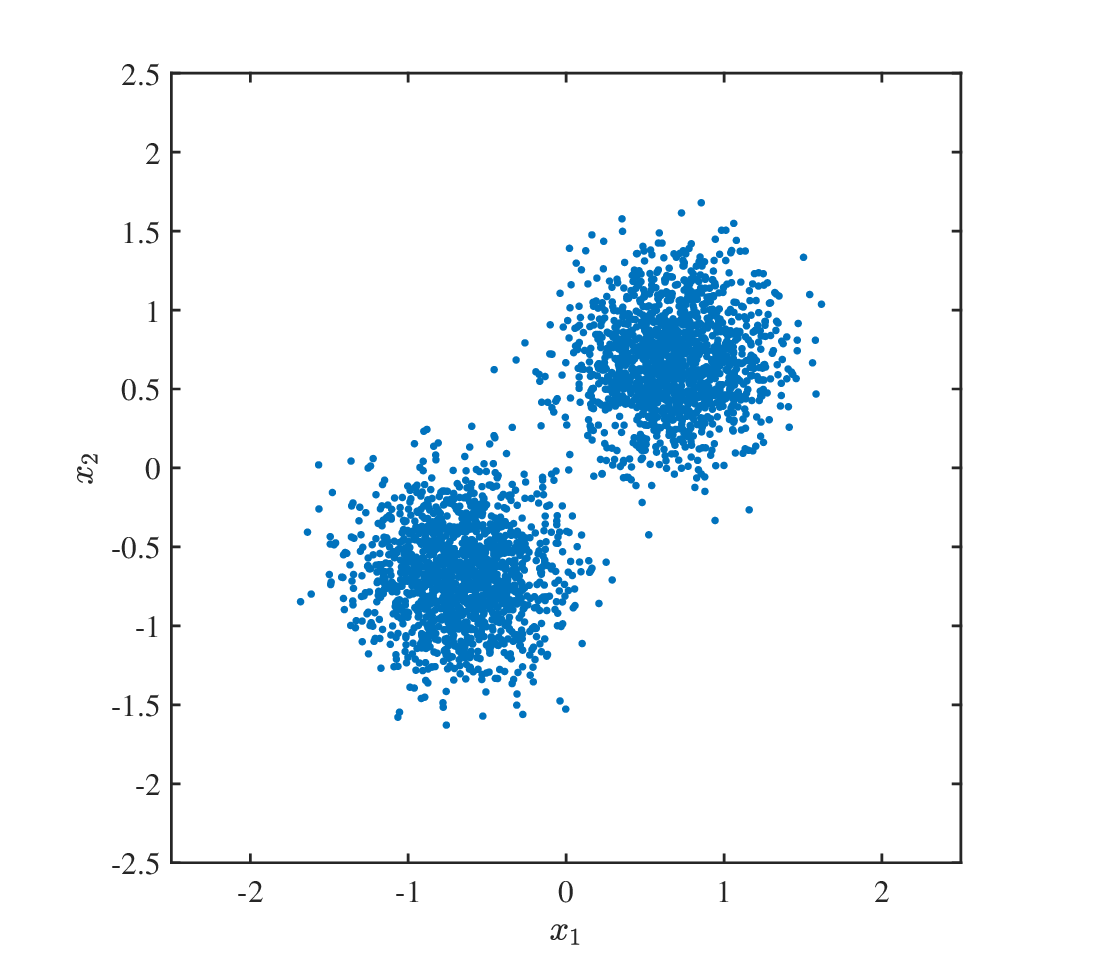}
        \caption{LC-REI}
        \label{fig:gmm10D-lcrmm}
    \end{subfigure}
    \hfill
    \begin{subfigure}[t]{0.32\textwidth}
        \centering
        \includegraphics[width=\textwidth]{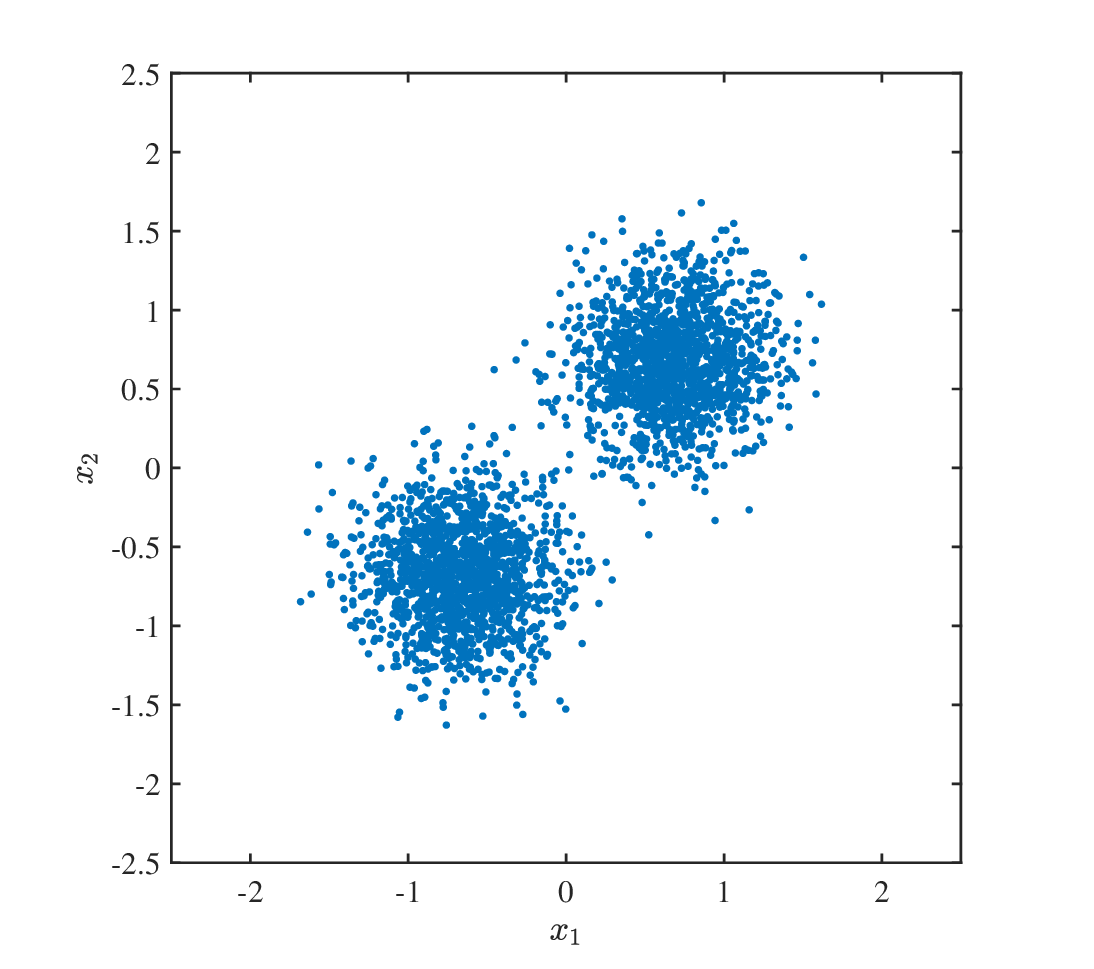}
        \caption{LC-RTI}
        \label{fig:gmm10D-LC-RTI}
    \end{subfigure}
    \hfill
    \begin{subfigure}[t]{0.32\textwidth}
        \centering
        \includegraphics[width=\textwidth]{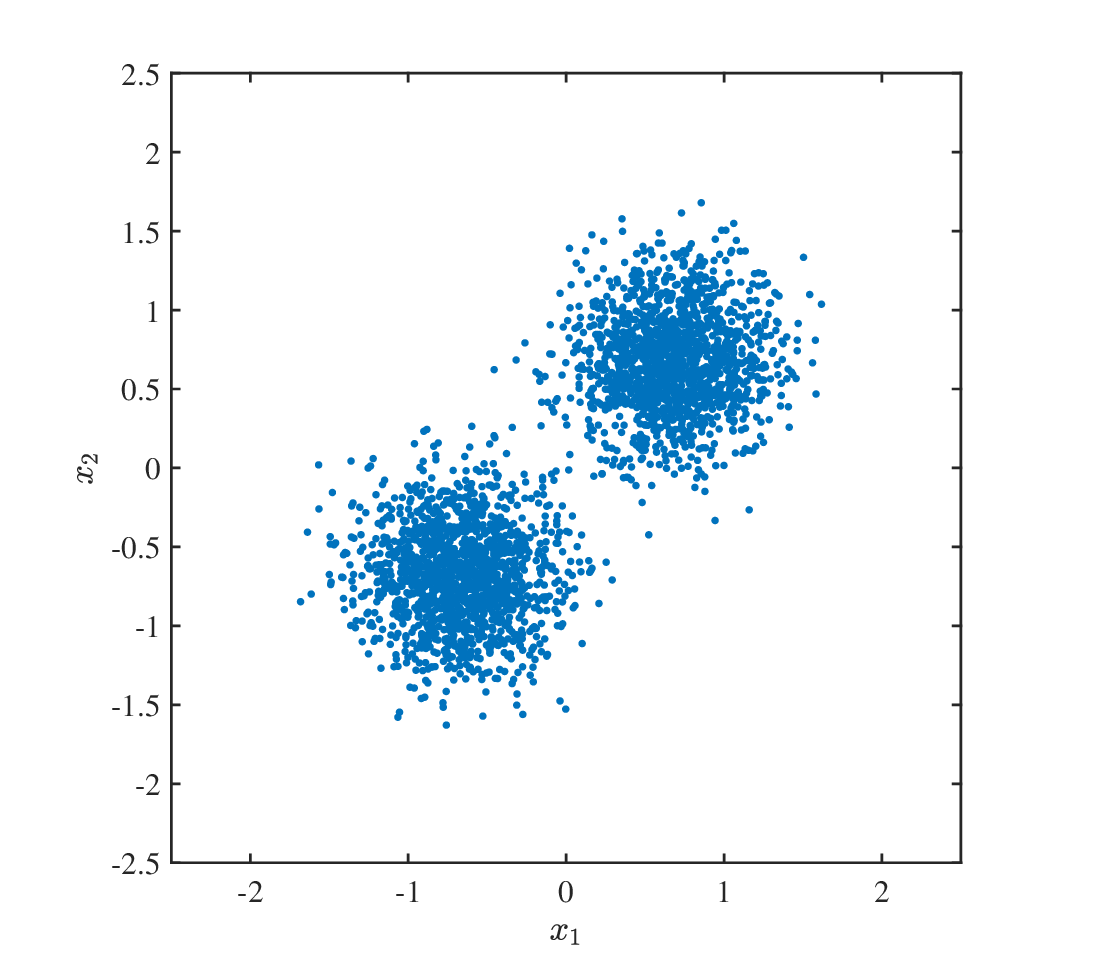}
        \caption{LC-RPI}
        \label{fig:gmm10D-LC-RPI}
    \end{subfigure}

    %第三行
    \centering
    \begin{subfigure}[t]{0.32\textwidth}
        \centering
        \includegraphics[width=\textwidth]{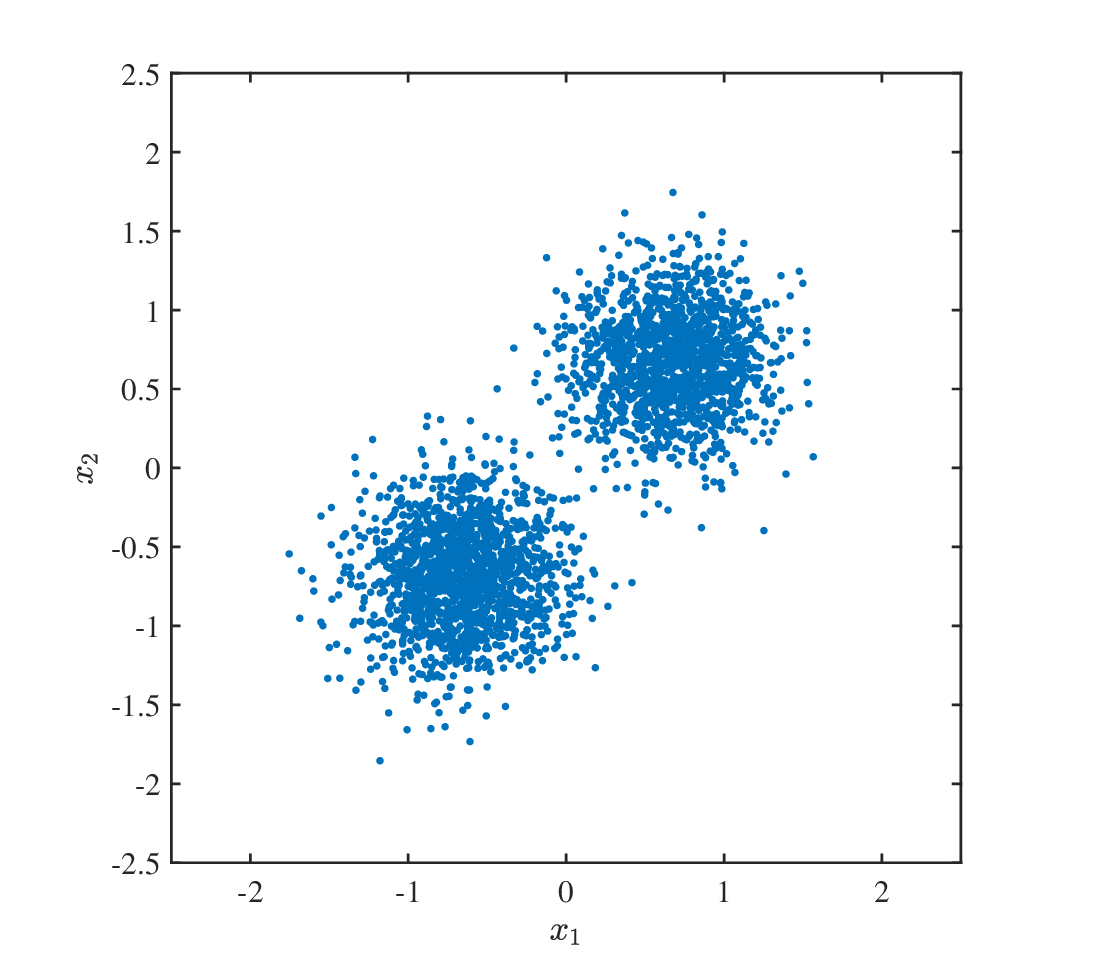}
        \caption{UBU}
        \label{fig:gmm10D-UBU}
    \end{subfigure}
    \hspace{0.01\textwidth}
    \begin{subfigure}[t]{0.32\textwidth}
        \centering
        \includegraphics[width=\textwidth]{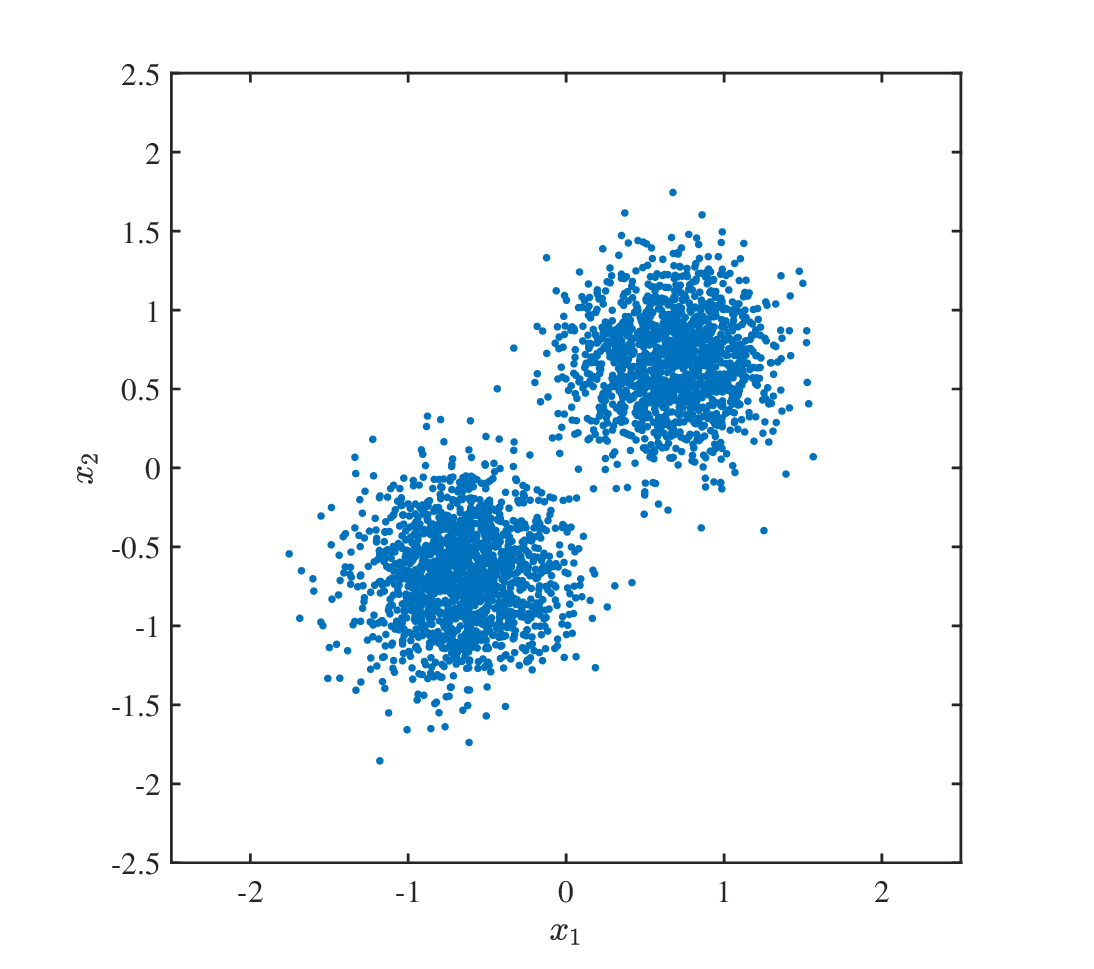}
        \caption{LC-UBU}
        \label{fig:gmm10D-LC-UBU}
    \end{subfigure}

    \caption{Sampling from 10-dimensional Gaussian mixtures via ULMC algorithms.}
    \label{fig:gmm10D-comparison}
\end{figure}

\subsection{A Student-t-type potential with a Gaussian prior}

Motivated by Bayesian linear regression models with Student-$t$ errors, 
we consider the following radial prototype of a Student-$t$-type likelihood 
combined with an isotropic Gaussian prior:
$$U_2(x) = \tfrac{1}{2}\log\bigl(1+|x|^2\bigr) + \tfrac{a_3}{2}|x|^2, \qquad x\in\mathbb R^d.$$
\begin{bluetext}
We set $a_3=0.12<1/8$, for which $U_2$ is non-convex;
see also \cite[Section 4.2]{erdogdu2022convergence}.
It is direct to show that $\nabla U_2$ and
$\nabla^2 U_2$ are globally Lipschitz with
constants $L=1+a_3$ and $L_H=3$, respectively.
Moreover, $U_2\geq0$, $U_2(0)=0$, $\nabla U_2(0)=0$, and
$\langle x,\nabla U_2(x)\rangle\geq a_3|x|^2$.
Thus Assumptions \ref{as:Lip}--\ref{as:dissipativity}
and \ref{as:Hessian-lip} hold.
Since $|x|/(1+|x|^2)\leq1/2$, Assumption \ref{as:convex-at-infinity} holds with
$a=\frac{a_3}{2}$ and $R=\frac{2}{a_3}$.
\end{bluetext}
% Setting $a_3=0.12$, Assumptions \ref{as:Lip}-\ref{as:dissipativity}, \ref{as:Hessian-lip} and \ref{as:convex-at-infinity} have already been verified in the example of \cite{erdogdu2022convergence}.
%
For ULD, we set the parameters to $\gamma=1$ and $\alpha=5$ over a time horizon $T=1$. The process is initialized at
$$X_0=\sqrt{\tfrac{3}{d}}(1,\ldots,1)^\top, \qquad V_0=0.$$

 To quantify the strong convergence, RMSE at the terminal point $T=1$ is estimated using a Monte Carlo ensemble of $10000$ independent realizations. As the analytical solution is unavailable, a fine-grid EE for the Euler-type schemes, a fine-grid RMM for the randomized schemes, and a fine-grid UBU for the UBU-type schemes, all using a stepsize $h_{\rm ref}=2^{-13}$, serve as the reference solution.

%Our empirical evaluation is explicitly designed to decouple two critical factors. 
We fix the dimension at $d=20$ and vary the step size across $h \in \{2^{-i}, i=3,4,5,6,7\}$. As illustrated in Figure~\ref{fig:student_all}, the terminal RMSE results confirm our theoretical analysis: all five randomized schemes consistently exhibit the predicted scaling of $\mathcal O\bigl(h^{3/2}\bigr)$, while the Euler-type and UBU-type ULMC exhibit the first and second order convergence rates, respectively.

% To further illustrate the sampling performance of the five randomized algorithms for the Student-$t$-type model, 
% we set $d=10, h=2^{-10}$ and generate $3000$ independent trajectories for each scheme. 
% The terminal position samples at $T=5$ and the contours of the target distribution, both projected onto the first two dimensions,
% are displayed in Figure~\ref{fig:stu10D-comparison}.

\begin{figure}[!htb]
    \centering
    %%%%%%%%%%%%% 第一行：子图(a)，适当放大，不占满整行 %%%%%%%%%%%%%
    \begin{subfigure}{0.48\textwidth}
        \centering
        \includegraphics[width=\textwidth]{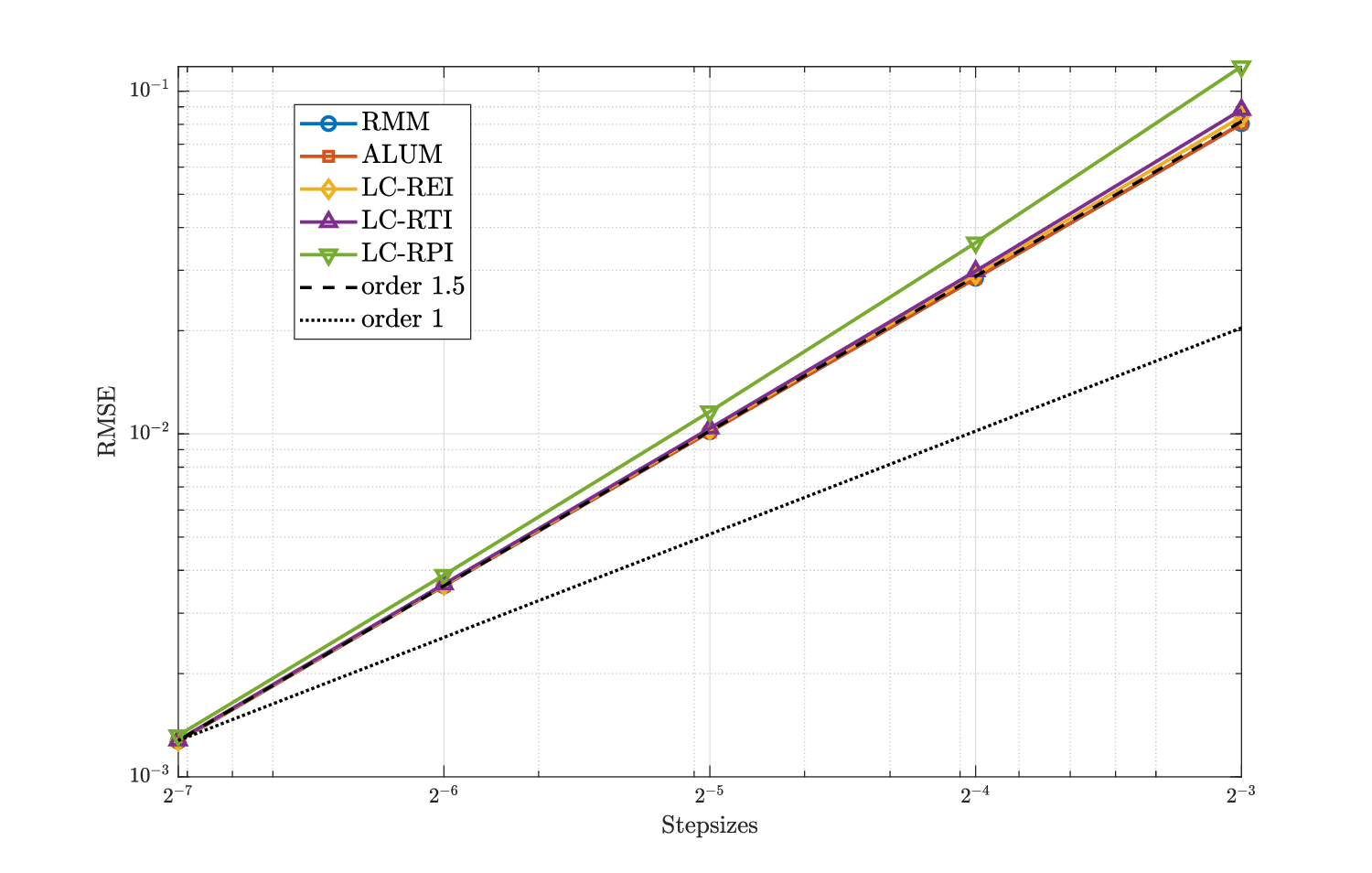}
        \caption{Order $1.5$ convergence of randomized schemes.}
        \label{fig:student_h}
    \end{subfigure}

    \vspace{8pt} % 两行之间垂直间距，按需调大/调小

    %%%%%%%%%%%%% 第二行：子图(b)(c) 左右并排 %%%%%%%%%%%%%
    \begin{subfigure}{0.48\textwidth}
        \centering
        \includegraphics[width=\textwidth]{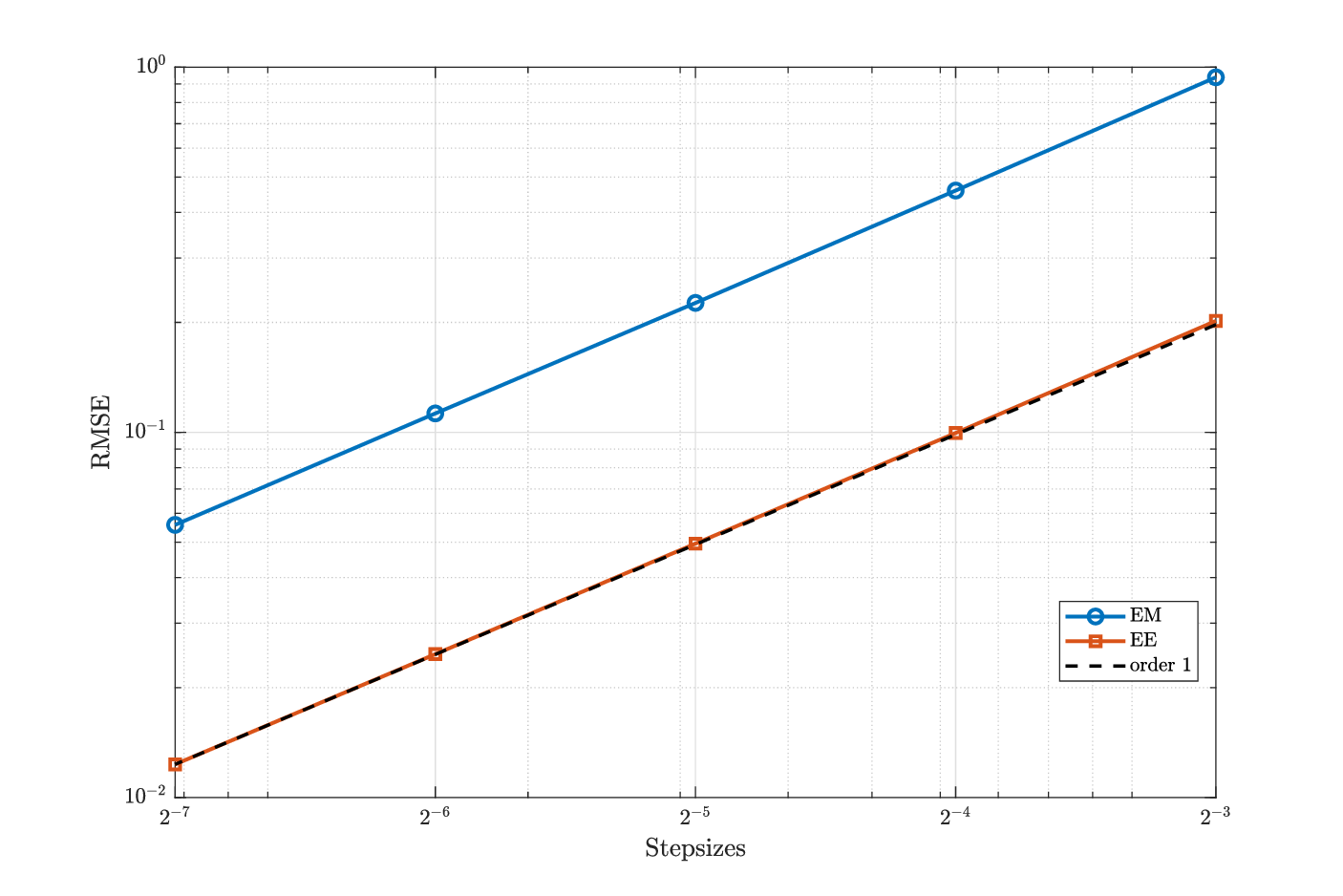}
        \caption{Order $1$ convergence of Euler-type schemes.}
        \label{fig:student_euler_h}
    \end{subfigure}
    \hfill
    \begin{subfigure}{0.48\textwidth}
        \centering
        \includegraphics[width=\textwidth]{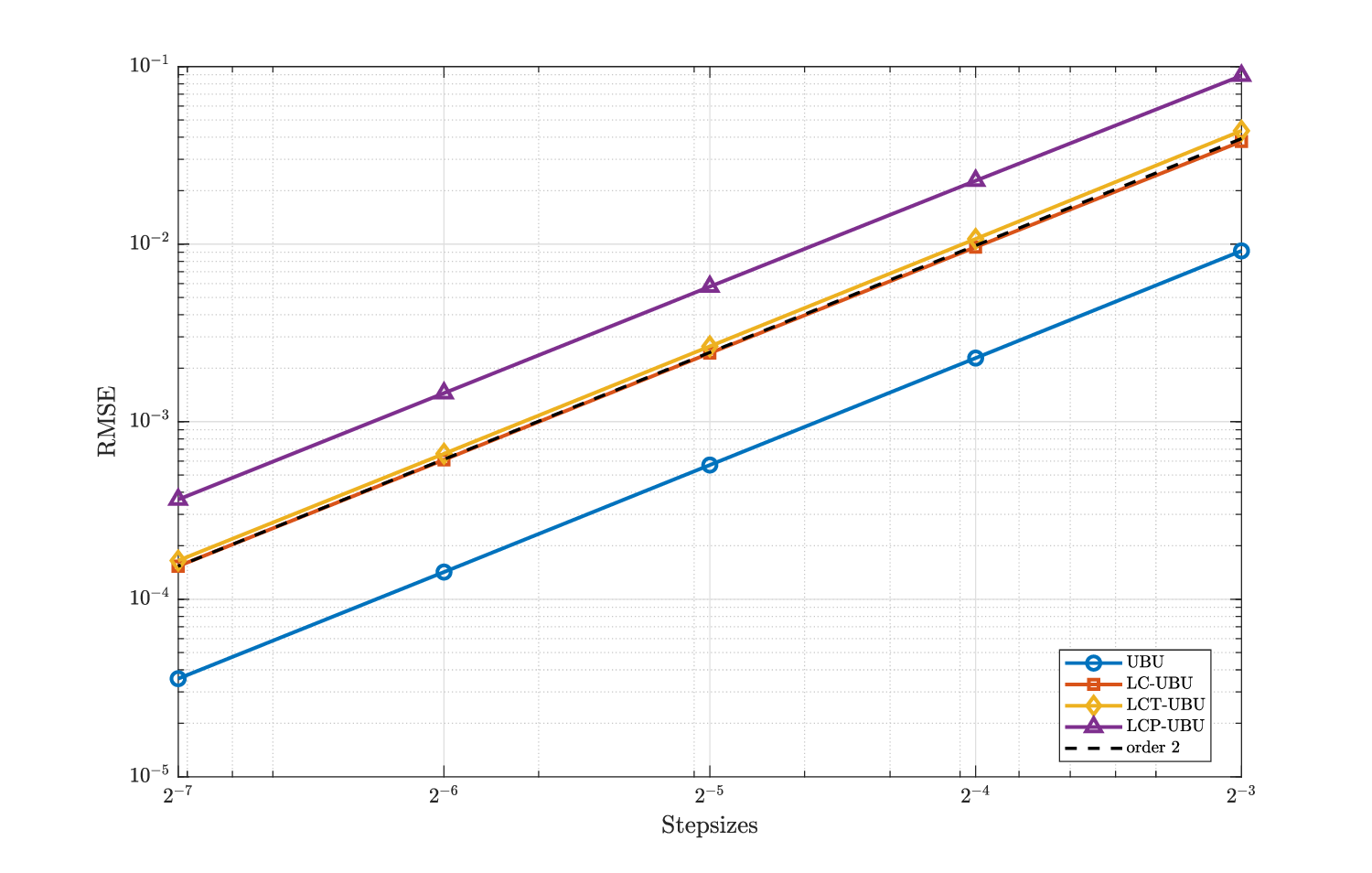}
        \caption{Order $2$ convergence of UBU-type schemes.}
        \label{fig:student_UBU_h}
    \end{subfigure}

    %%%%%%%%%%%%% 总标题与总标签 %%%%%%%%%%%%%
    \caption{Strong convergence rates for the Student-t-type potential}
    \label{fig:student_all}
\end{figure}

\section{Conclusion}\label{sec:conclusion}

% In this work, we propose a new class of low-cost randomized ULMC schemes, termed LC-RIs, including exponential-free variants based on polynomial and rational approximations.
% The proposed methods considerably reduce the number of gradient evaluations or Gaussians per iteration required by existing randomized methods.
% %
% %
% By developing a general framework for long-time error analysis and a unified randomized formulation covering RMM, ALUM and LC-RIs, we established a
% %n order-optimal 
% non-asymptotic \(\mathcal W_1\)-error bound of order \(\mathcal O
% (d^{\frac12}h^{\frac32})\) beyond strong convexity, leading to mixing time complexity \(\widetilde{\mathcal O}(d^{\frac13}\epsilon^{-\frac23})\).
% %
% %
% For the strongly convex case, the same methodology yields the identical convergence rate and  mixing time complexity in \(\mathcal W_2\)-distance.
\begin{redtext}
In this work, we introduce a ``universal'' predictor-corrector integrator for ULD \eqref{eq:ULD}, bridging Euler-type, UBU-type and randomized schemes through different choices of method parameters. More importantly, the universal formulation induces two novel classes of low-cost integrators, termed LC-RIs and LC-UBUIs, which require only one gradient evaluation and two Gaussians per iteration, thereby reducing the computational costs compared with their existing counterparts.

Within the universal formulation \eqref{eq:uni-process}, we establish uniform-in-time moment bounds of Euler-type, UBU-type and randomized schemes in a unified way. A general framework for the non-asymptotic error analysis of general methods is also developed. Armed with the finite-time error estimates, we employ the framework to obtain the non-asymptotic convergence rates for both old and new integrators in a non-convex setting. To the best of our knowledge, this paper gives the first Wasserstein guarantee for randomized ULMC methods beyond log-concavity.
In the future, we intend to propose and analyze higher-order integrators for ULD.
%Numerical experiments further corroborate the theoretical convergence rates and illustrate the effectiveness of the proposed low-cost schemes.
\end{redtext}

\bibliography{reference}

\end{document}